\documentclass{amsart}
\usepackage{amssymb
,amsthm
,amsmath
,amscd
,mathtools
}
\usepackage[all]{xy}
\usepackage[top=30truemm,bottom=30truemm,left=25truemm,right=25truemm]{geometry}

\allowdisplaybreaks

\newcommand{\C}{\mathbb{C}}
\newcommand{\R}{\mathbb{R}}
\newcommand{\Z}{\mathbb{Z}}
\renewcommand{\H}{\mathbb{H}}

\renewcommand{\1}{\mathbf{1}}
\newcommand{\WD}{\mathit{WD}}

\renewcommand{\AA}{\mathcal{A}}
\newcommand{\Sc}{\mathcal{S}}
\newcommand{\TT}{\mathcal{T}}
\newcommand{\UU}{\mathcal{U}}
\newcommand{\VV}{\mathcal{V}}

\newcommand{\oo}{\mathfrak{o}}
\newcommand{\p}{\mathfrak{p}}
\newcommand{\w}{\mathfrak{w}}

\newcommand{\GL}{\mathrm{GL}}
\newcommand{\SL}{\mathrm{SL}}
\newcommand{\Sp}{\mathrm{Sp}}
\newcommand{\Mp}{\mathrm{Mp}}
\newcommand{\Oo}{\mathrm{O}}
\newcommand{\SO}{\mathrm{SO}}
\newcommand{\U}{\mathrm{U}}
\newcommand{\St}{\mathrm{St}}
\newcommand{\Hom}{\mathrm{Hom}}
\newcommand{\Gal}{\mathrm{Gal}}
\newcommand{\Rep}{\mathrm{Rep}}
\newcommand{\Isom}{\mathrm{Isom}}
\newcommand{\Sym}{\mathrm{Sym}}
\newcommand{\Asai}{\mathrm{Asai}}

\newcommand{\Frob}{\mathrm{Frob}}
\newcommand{\MVW}{\mathrm{MVW}}
\newcommand{\disc}{\mathrm{disc}}
\newcommand{\temp}{\mathrm{temp}}
\newcommand{\Irr}{\mathrm{Irr}}
\newcommand{\Ind}{\mathrm{Ind}}
\newcommand{\down}{\mathrm{down}}
\newcommand{\up}{\mathrm{up}}
\newcommand{\tr}{\mathrm{tr}}
\newcommand{\id}{\mathrm{id}}

\newcommand{\resp}{resp.~}
\newcommand{\pair}[1]{\langle #1 \rangle}
\newcommand{\half}[1]{\frac{#1}{2}}
\newcommand{\cl}[1]{\widetilde{#1}}
\newcommand{\iif}{&\quad&\text{if }}
\newcommand{\other}{&\quad&\text{otherwise}}
\newcommand{\ep}{\varepsilon}
\newcommand{\lam}{\lambda}
\newcommand{\bchi}{{\boldsymbol \chi}}

\newtheorem{thm}{Theorem}[section]
\newtheorem{prop}[thm]{Proposition}
\newtheorem{lem}[thm]{Lemma}
\newtheorem{des}[thm]{Desideratum}
\newtheorem{rem}[thm]{Remark}
\newtheorem{cor}[thm]{Corollary}

\title{Local Theta correspondence of Tempered Representations and Langlands parameters}
\author{Hiraku Atobe \and Wee Teck Gan}
\date{}
\address{Department of mathematics, Kyoto University, Kitashirakawa-Oiwake-cho, Sakyo-ku, Kyoto, 606-8502, Japan}
\email{atobe@math.kyoto-u.ac.jp}
\address{Department of Mathematics, National University of Singapore, 10 Lower Kent Ridge Road, Singapore 119076}
\email{matgwt@nus.edu.sg}

\setcounter{tocdepth}{1}

\begin{document}
\maketitle

\begin{abstract}
In this paper, we give an explicit determination of the theta lifting
for symplectic-orthogonal and unitary dual pairs
over a nonarchimedean field $F$ of characteristic $0$.
We determine when theta lifts of tempered representations are nonzero, 
and determine the theta lifts in terms of the local Langlands correspondence.
\end{abstract}

\tableofcontents

%\section{Introduction}
%\section{Introduction}
\section{Introduction}\label{intro}
The theory of local theta correspondence was initiated by Roger Howe almost 40 years ago 
and has since been a major theme in representation theory and the theory of automorphic forms. 
In this paper, we shall address some basic questions concerning the local theta correspondence. 
Let us briefly recall the setup in broad strokes, 
leaving the precise exposition to the main body of the paper.  
\vskip 5pt

Let $F$ be a nonarchimedean local field of characteristic $0$ and 
let $E$ be $F$ itself or a quadratic field extension of $F$. 
Fix $\epsilon = \pm 1$ and set $\epsilon_0 = \epsilon$ if $E = F$ and 
$\epsilon_0 = 0$ if $E$ is a quadratic field. 
Consider a $-\epsilon$-Hermitian space $W_n$ over $E$ of dimension $n$ with 
associated isometry group $\U(W_n)$. 
Likewise, let $V_m$ be  an $\epsilon$-Hermitian space over $E$ of dimension $m$ with 
associated isometry group $\U(V_m)$. 
Then 
\[  
\U(W_n)  \times \U(V_m) \subset \Sp( \mathrm{Res}_{E/F}(W_n \otimes_E V_m))  
\]
forms a reductive dual pair in the above symplectic group. 
\vskip 5pt

After fixing some extra data, the dual pair  
$\U(W_n)  \times \U(V_m)$ has a Weil representation $\omega_{W_n,V_m}$. 
For an irreducible representation $\pi$ of $\U(W_n)$, 
the maximal $\pi$-isotypic quotient of $\omega_{W_n, V_m}$ has the form 
\[  
\pi \boxtimes \Theta_{W_n, V_m}(\pi)  
\]
for some smooth representation $ \Theta_{W_n, V_m}(\pi)$ of $\U(V_m)$ 
(known as the big theta lift of $\pi$).  
It was shown by Kudla that $ \Theta_{W_n, V_m}(\pi)$ has finite length (possibly zero).
The following basic result is known as the Howe duality conjecture 
(see \cite{Ho}, \cite{W1}, \cite{GT1} and \cite{GT2}):
\vskip 5pt

\begin{thm}
If $ \Theta_{W_n, V_m}(\pi)$ is nonzero, 
then it has a unique irreducible quotient $ \theta_{W_n, V_m}(\pi)$. 
\end{thm}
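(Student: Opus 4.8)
The plan is to follow the strategy of Waldspurger~\cite{W1} and Gan--Takeda~\cite{GT1,GT2}. Write $\omega = \omega_{W_n, V_m}$ and abbreviate $\Theta(\pi) = \Theta_{W_n, V_m}(\pi)$. Since $\Theta(\pi)$ has finite length by Kudla's theorem, it has a well-defined cosocle (its largest semisimple quotient), which is nonzero because $\Theta(\pi) \neq 0$; the assertion to be proved is exactly that this cosocle is irreducible. I would deduce this from two sub-statements: \emph{(i)} the multiplicity-one bound
\[
\dim \Hom_{\U(W_n) \times \U(V_m)}\bigl(\omega, \pi \boxtimes \sigma\bigr) \leq 1 \qquad \text{for every } \sigma \in \Irr(\U(V_m)),
\]
and \emph{(ii)} the cosocle of $\Theta(\pi)$ has at most one isomorphism class of irreducible summands. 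Since $\Hom_{\U(W_n) \times \U(V_m)}(\omega, \pi \boxtimes \sigma) \cong \Hom_{\U(V_m)}(\Theta(\pi), \sigma)$, statement (i) says the cosocle is multiplicity free, so (i) and (ii) together give the theorem.

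For (i), I would induct on $\dim_E W_n + \dim_E V_m$. The base case is $\pi$ supercuspidal: here one uses Kudla's analysis of the theta tower of a supercuspidal representation --- at the first occurrence index the big theta lift is irreducible (indeed supercuspidal), and beyond it one controls $\Theta(\pi)$ through its Jacquet modules --- to check the bound directly, and similarly when one of the two spaces is small. For the inductive step, suppose $\pi$ is not supercuspidal and fix a proper parabolic $P = M_P N_P \subseteq \U(W_n)$ with $M_P \cong \GL_k(E) \times \U(W_{n_0})$ together with an embedding $\pi \hookrightarrow \Ind_P^{\U(W_n)}(\tau \boxtimes \pi_0)$. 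By Frobenius reciprocity, $\Hom_{\U(W_n) \times \U(V_m)}(\omega, \pi \boxtimes \sigma)$ injects into a Hom space governed by the Jacquet module $R_{N_P}(\omega)$; this Jacquet module carries Kudla's filtration, whose successive quotients are, up to twists and parabolic inductions, assembled from the smaller Weil representations $\omega_{W_{n_0}, V_m}$ and $\omega_{W_n, V_{m'}}$ together with tensor factors over general linear groups. Feeding this into the Bernstein--Zelevinsky geometric lemma and matching against the Jacquet modules of $\sigma$ reduces the bound to multiplicity-one statements for theta lifts attached to strictly smaller dual pairs, where the inductive hypothesis applies; various see-saw dual-pair identities are used along the way to organize the bookkeeping.

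For (ii), I would run the analysis symmetrically on the two members of the dual pair, using that $\sigma$ is an irreducible quotient of $\Theta_{W_n, V_m}(\pi)$ if and only if $\pi$ is an irreducible quotient of $\Theta_{V_m, W_n}(\sigma)$. Combining this with the compatibility of the theta correspondence with contragredients and with the MVW involution (so that $\Theta(\pi^{\MVW}) \cong \Theta(\pi)^{\MVW}$ and $\pi^{\MVW} \cong \pi^\vee$), and with the Jacquet-module filtration of the previous paragraph, one shows that two non-isomorphic irreducible quotients $\sigma_1, \sigma_2$ of $\Theta(\pi)$ must share cuspidal support and behave identically under these operations; a final matching argument on Jacquet modules --- invoking the inductive hypothesis now in the opposite tower --- then forces $\sigma_1 \cong \sigma_2$.

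I expect the main obstacle to be the inductive steps in (i) and (ii): controlling \emph{all} the graded pieces of Kudla's filtration, in particular the degenerate boundary terms and those not immediately covered by the inductive hypothesis, and making the geometric-lemma bookkeeping sharp enough to yield the bound $\leq 1$ rather than mere finiteness. In residue characteristic $2$ Waldspurger's original argument breaks down, and it is precisely here that the extra input of~\cite{GT1,GT2} --- a careful direct analysis of the pertinent Jacquet modules together with a reduction to low-rank dual pairs treated by hand --- becomes essential.
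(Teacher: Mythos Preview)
The paper does not contain its own proof of this statement. It is stated as Theorem~1.1 (and again as Theorem~2.3) and immediately attributed to the literature: ``The following result is proven by Waldspurger \cite{W1} when $p\not=2$ and by \cite{GT1} and \cite{GT2} in general.'' The Howe duality conjecture is used throughout the paper as a foundational black box, not as something to be established. So there is no ``paper's own proof'' against which to compare your proposal.

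That said, your outline is a fair high-level summary of the strategy in the cited references: reduction via Kudla's filtration of Jacquet modules of the Weil representation, induction on the size of the dual pair, the MVW involution, and special handling of the supercuspidal/base cases. Two cautions, however. First, your step~(i), the bound $\dim\Hom(\omega,\pi\boxtimes\sigma)\le1$, is not a lemma on the way to Howe duality but essentially an equivalent reformulation of it; in practice \cite{GT1,GT2} do not separate the problem cleanly into your (i) and (ii) but instead prove a statement of the form ``if $\sigma_1,\sigma_2$ are both irreducible quotients of $\Theta(\pi)$ then $\sigma_1\cong\sigma_2$'' directly, via a doubling/see-saw argument and a careful analysis of specific degenerate principal series, rather than a generic Bernstein--Zelevinsky bookkeeping. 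Second, the phrase ``a final matching argument on Jacquet modules \dots\ forces $\sigma_1\cong\sigma_2$'' hides exactly the hard part: in residue characteristic~$2$ the lattice-model arguments of \cite{W1} are unavailable, and \cite{GT2} replaces them with an argument using the theta correspondence in the stable range and a result of Li, not merely ``low-rank dual pairs treated by hand.'' Your sketch is not wrong as an impression of the landscape, but it would not serve as a proof plan without importing those specific ingredients.
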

\vskip 5pt

We call  $\theta_{W_n, V_m}(\pi)$ the small theta lift of $\pi$ to $H(V_m)$ and shall interpret it to be $0$ if $ \Theta_{W_n, V_m}(\pi)$ is zero.  
After the above theorem, it is natural to consider the following two basic problems:
\vskip 5pt

\noindent{\bf Problem A:} Determine precisely when $\theta_{W_n, V_m}(\pi)$ is nonzero.
\vskip 10pt

\noindent{\bf Problem B:} Determine $ \theta_{W_n, V_m}(\pi)$ precisely when it is nonzero.
\vskip 10pt

\noindent In this paper, we shall address these two problems for tempered representations $\pi$.  
\vskip 10pt

To formulate answers to these two problems, especially Problem B, 
it is necessary to have some sort of classification of irreducible representations of the groups 
$\U(W_n)$ and $\U(V_m)$. 
Such a classification is provided by the local Langlands correspondence (LLC). 
The recent results of Arthur \cite{Ar}, Mok \cite{Mo}, 
Kaletha--M\'{i}nguez--Shin--White \cite{KMSW} and Gan--Savin \cite{GS} 
meant that the LLC is almost completely known for the groups considered in this paper. 
\vskip 5pt

The LLC classifies the irreducible representations $\pi$ of $\U(W_n)$ 
by their $L$-parameters $(\phi, \eta)$, where
\[
\phi \colon \WD_E \rightarrow  {}^L\U(W)    
\]
is a conjugate self-dual representation of the Weil--Deligne group 
$\WD_E = W_E \times \SL_2(\C)$ with a certain sign, 
and
\[
\eta \in \Irr(A_{\phi}) 
\]
is an irreducible character of the component group $A_{\phi}$ associated to $\phi$. 
We may think of $\phi$ as the last name of the representation $\pi$ and $\eta$ its first name.  
Thus we shall address Problems A and B in terms of the last names and first names of 
tempered representations.
\vskip 10pt

Before going on,  let us give a reformulation of Problem A.
Let $\VV = (V_m)$ be a Witt tower of $\epsilon$-Hermitian spaces over $E$ so that 
$V_{m+2}  = V_m  + \H$, where $\H$ is the hyperbolic plane. 
In particular, $m = \dim_E(V_m)$ is of a fixed parity. 
Then one has a Witt tower of local theta correspondence 
associated to the dual pair $\U(W_n)  \times \U(V_m)$.  
It is known by Kudla that the number 
\[
m_{\VV}(\pi)  = \min \{  m\ |\   \Theta_{V_m,W_n}(\pi) \not= 0 \}  
\]
is finite. 
Moreover, $\Theta_{V_m,W_n}(\pi) \not= 0$ for all $m \geq m_{\VV}(\pi)$. 
The number $m_{\VV}(\pi) $ is called the first occurrence index of $\pi$ in the Witt tower $\VV$.  
Addressing Problem A for $\pi$ is equivalent to determining the first occurrence index 
$m_{\VV}(\pi)$ of $\pi$ in every Witt tower $\VV$.
\vskip 5pt

For this purpose, the so-called conservation relation reduces our workload by half. 
More precisely, given any Witt tower $\VV$, there is a companion Witt tower $\VV' = (V'_m)$. 
We shall denote the two Witt towers by $(V_m^+)$ and $(V_m^-)$ 
and denote the first occurrence indices of $\pi$ by $m^{\pm}(\pi)$ accordingly.
The conservation relation, shown by Kudla--Rallis \cite{KR} and Sun--Zhu \cite{SZ}, says that
\[  
m^+(\pi) + m^-(\pi)  = 2 \cdot( n+ \epsilon_0 +1). 
\]
This shows that 
\[  
m^{\down}(\pi) = \min\{m^+(\pi),  m^-(\pi)\}  \leq n + \epsilon_0  +1 
\]
and
\[  
m^{\up}(\pi) =  \max\{m^+(\pi),  m^-(\pi)\}  \geq n + \epsilon_0  +1. 
\]
\vskip 5pt

To address Problems A and B, we need to determine:
\vskip 5pt
\begin{itemize}
\item the value of $m^{\down}(\pi)$ and which of $m^{\pm}(\pi)$ it is equal to;
\item the $L$-parameter $(\theta^{\pm}_m(\phi), \theta^{\pm}_m(\eta))$ of 
$\theta_{V^{\pm}_m,W_n}(\pi)$ if it is nonzero;
\end{itemize}
in terms of the $L$-parameter $(\phi, \eta)$ of $\pi$.
\vskip 5pt

Let us describe our results in the special case of discrete series representations 
when $\U(W) \times \U(V)  = \Mp_{2n}  \times \Oo_{2m+1}$. 
More precisely, let  $W_{2n}$ be the $2n$-dimensional symplectic space and 
$V^{\pm}_{2m+1}$ be the two $(2m+1)$-dimensional quadratic spaces of discriminant $1$, 
with $V_{2m+1}^+$ the split quadratic space. 
Let $\pi$ be an irreducible (genuine) discrete series representation of $\Mp(W_{2n})$, 
with $L$-parameter $(\phi, \eta)$. 
Thus
\[  
\phi =  \bigoplus_{i=1}^r  \phi_i  
\]
is a direct sum of distinct irreducible symplectic representations of the Weil--Deligne group 
$\WD_F = W_F \times \SL_2(\C)$ of $F$ and $\eta$ is a character of the component group
\[  
A_{\phi}  = \bigoplus_{i=1}^r  \Z/2\Z a_i,  
\]
which is a $\Z/2\Z$-vector space with a canonical basis $\{a_i \}$ 
indexed by the summands $\phi_i$ of $\phi$. 
Let $z_{\phi}$ denote the element $\sum_{i=1}^r a_i \in A_{\phi}$.
On the other hand, since $\Oo(V_{2m+1}^{\pm})  \cong \SO(V_{2m+1}^{\pm}) \times \Z/2\Z$, 
an irreducible representation of $\Oo(V_{2m+1}^{\pm})$ is parametrized by $(\phi', \eta', \nu')$ 
where
\vskip 5pt
\begin{itemize}
\item $\phi'$ is a symplectic representation of $\WD_F$;
\item $\eta'$ is an irreducible character of the component group $A_{\phi'}$; 
\item $\nu'  = \pm 1$ is a sign, with $\nu'=1$ corresponding to the trivial character of $\Z/2\Z$.
\end{itemize}
\vskip 5pt

Now  we consider the theta liftings of $\pi$ 
to the two Witt towers $\VV^{\pm}$. 
The conservation relation says that 
\[  
m^{\down}(\pi) + m^{\up}(\pi)  = 4n+4, 
\]
so that
\[  
m^{\down}(\pi) \leq 2n+1 \quad \text{and} \quad m^{\up}(\pi) \geq 2n+3. 
\]
Our main results in this case are summarized in the following three theorems:
\vskip 5pt

\begin{thm}
\begin{enumerate}
\item
$m^{\down}(\pi)  = m^{\epsilon}(\pi)$ if and only if $ \epsilon =  \eta( z_{\phi})$. 
We call $\VV^{  \eta( z_{\phi})}$ the going-down tower, 
and $\VV^{  -\eta( z_{\phi})} $ the going-up tower.
\item
Consider the set $\TT$ containing $0$ and all even integers $l >0$ 
satisfying the following conditions:
\begin{itemize}
\item (chain condition) $\phi$ contains $S_2 + S_4 +\dots+S_l$, 
where $S_k$ denotes the (unique) $k$-dimensional irreducible representation of $\SL_2(\C)$;
\vskip 5pt

\item (initial condition) if $e_k$ denotes the basis element of $A_{\phi}$ associated to $S_k$, 
then  $\eta(e_2) = 1$; 
\vskip 5pt

\item (alternating condition) $\eta(e_i)  = - \eta(e_{i+2})$ for even $2 \leq i \leq l-2$.  
\end{itemize}
\end{enumerate}
Let
\[  
l(\pi) = \max \, \TT.  
\]
Then
\[  
m^{\down}(\pi) =  2n+1 - l(\pi) \quad \text{and} \quad   m^{\up}(\pi) = 2n+3  + l(\pi). 
\]
\end{thm}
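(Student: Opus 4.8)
The plan is to reduce the computation of the first-occurrence indices to a careful analysis along the ``going-down'' tower, exploiting the already-established part (1) of the theorem together with the conservation relation. Once part (1) is known, the whole statement is equivalent to determining the single number $m^{\down}(\pi)$, since $m^{\up}(\pi) = 4n+4 - m^{\down}(\pi)$. So I would concentrate on proving $m^{\down}(\pi) = 2n+1 - l(\pi)$, where $l(\pi)$ is read off from the set $\TT$ of admissible lengths of ``initial chains'' $S_2 + S_4 + \dots + S_l$ inside $\phi$ satisfying the initial and alternating sign conditions.

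The main engine will be the standard local theta-correspondence toolkit for discrete series: (a) the fact that for $m$ in the stable range the lift is nonzero, together with Kudla's persistence; (b) the explicit computation of the theta lift of a tempered representation one step below the first occurrence, via Kudla's filtration of the Weil representation restricted to a maximal parabolic; and (c) the description of how the $L$-parameter changes under a single theta step in the going-down direction, in particular the ``$\phi \mapsto \phi \oplus S_1$ or $\phi \mapsto \phi \ominus S_2$'' type moves and their effect on the first name $\eta$. Concretely, I would argue that going down the tower corresponds to successively stripping off the pieces $S_l, S_{l-2}, \dots, S_2$ of the initial chain: each time the top piece $S_k$ of the current parameter is an endpoint of a maximal chain with the correct alternating sign pattern, the lift to the space two dimensions smaller is again (essentially) discrete series with parameter obtained by replacing $S_k$ by $S_{k-2}$ (and adjusting $\eta$ accordingly), and nonvanishing persists; when no such piece remains — i.e. when the chain/initial/alternating conditions fail — the process stops, and that stopping point is exactly $2n+1 - l(\pi)$.

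Thus the key steps, in order, are: first, set up the induction on $n$ (or on $l(\pi)$), with base case $l(\pi)=0$, where I must show the lift is already nonzero at $m = 2n+1$ and zero below it on the going-down tower; here the conservation relation pins $m^{\up} = 2n+3$ and one checks $m^{\down}=2n+1$ directly using the nonvanishing of the ``almost equal rank'' theta lift for discrete series. Second, the inductive step: assuming the parameter $\phi$ admits an initial chain of length $l \geq 2$ with the prescribed sign behavior, compute $\theta_{V^\epsilon_{2n-1}, W_{2n}}(\pi)$ using Kudla's filtration and an analysis of Jacquet modules, identifying it with the discrete series of $\Mp$ or the relevant group whose parameter has $S_l$ replaced by $S_{l-2}$, and whose $l(\cdot)$ has dropped by $2$; then invoke the inductive hypothesis. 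Third, the converse direction: show that if the initial/alternating/chain conditions fail at some stage, the corresponding Jacquet module vanishes so that the lift is zero two steps down, giving the sharp lower bound for $m^{\down}(\pi)$.

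The hard part will be the precise bookkeeping in the inductive step --- controlling the $L$-parameter (both last name $\phi$ and first name $\eta$, including the behavior of the distinguished element $z_\phi$ and the basis elements $e_k$) under a single going-down theta step, and matching the resulting sign conditions with the definition of $\TT$. This requires combining the known compatibility of the local theta correspondence with the LLC for discrete series (which at this point in the paper may itself be part of what is being proven, so some care is needed about what can be cited versus what must be bootstrapped), a delicate Jacquet-module computation à la Mœglin or Gan--Ichino to see that the lift remains tempered (indeed discrete series on the going-down tower until the first occurrence is reached), and the verification that the ``alternating condition'' is exactly the obstruction that makes successive going-down steps possible. I expect the parity and sign subtleties in the definition of the component group characters $\eta(e_k)$ --- and the interaction with the metaplectic-vs-odd-orthogonal normalization --- to be where most of the work lies.
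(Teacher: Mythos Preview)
Your plan is in the spirit of Mui\'c's earlier approach (Kudla's filtration plus Jacquet-module analysis), which the paper explicitly contrasts with its own method. The outline is not unreasonable, but there is a genuine gap at the crucial point, and the paper's route is quite different.

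The gap is this: Jacquet-module techniques and Kudla's filtration are adequate to pin down the \emph{last name} $\phi$ of a theta lift (via cuspidal support), but they give essentially no direct handle on the \emph{first name} $\eta$. Yet the (initial condition) and (alternating condition) defining $\TT$ are conditions on $\eta(e_k)$, so your inductive step --- ``verify that the resulting sign conditions match the definition of $\TT$'' --- cannot be carried out without an independent computation of how $\eta$ transforms under a single going-down step. You flag this yourself (``the known compatibility of the local theta correspondence with the LLC \dots\ may itself be part of what is being proven''), but you offer no mechanism to break the circularity. Also, a minor point: going down one step removes $S_l$ from $\phi$ rather than ``replacing $S_l$ by $S_{l-2}$''; the chain in $\phi$ is fixed, and it is $l$ that decreases as $m$ decreases.

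The paper's approach avoids this circularity entirely. For the chain condition it uses the pole of the standard $\gamma$-factor $\gamma(s,\pi,\chi_V^{-1},\psi)$ at $s=\tfrac{l+1}{2}$ to force $\chi_V S_l \subset \phi$. For the last name of the lift it compares Plancherel measures. The key new ingredient --- and this is what your plan is missing --- is that the first name $\theta(\eta)$ is computed by feeding $\pi$ and its theta lift into a see-saw diagram and invoking the (by now proven) local Gross--Prasad conjecture on both sides; the resulting epsilon-factor identities give an explicit formula for $\theta(\eta)(a)/\eta(a)$. Evaluating this at $a=z_{\theta(\phi)}$ and using the tower property then yields the initial and alternating conditions directly. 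The converse (that $l(\pi)+2\notin\TT$) is obtained by running the same argument on the first lift to the going-up tower, after establishing its temperedness. None of this relies on an induction that strips summands from $\phi$; the sign behaviour of $\eta$ is accessed through Gross--Prasad, not through Jacquet modules.
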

\vskip 5pt

\noindent In particular, the above theorem addresses Problem A. 
\vskip 5pt

\begin{thm}
Consider the going-down tower $\VV^{\eta(z_{\phi})}$. 
For each $V_{2m+1}$ in this Witt tower, with $2m+1 \geq m^{down}(\pi) = 2n+1- l(\pi)$, 
consider the theta lift $\theta_{W_{2n}, V_{2m+1}}(\pi)$ 
and let its $L$-parameter be given by 
$(\theta_{2m+1}(\phi),  \theta_{2m+1}(\eta), \nu_{2m+1}(\phi, \eta))$.  
\begin{enumerate}
\item
One has:
\[ 
\nu_{2m+1}(\phi, \eta) =   \eta(z_{\phi})  \cdot \epsilon(1/2, \phi). 
\]
\item
If $m^{\down}(\pi) \leq \dim V_{2m+1}  < 2n+1$, then 
\[  
\theta_{2m+1}(\phi)  = \phi  -  S_{2n-2m}.
\]
Hence $\theta_{2m+1}(\phi)$ is a discrete series parameter and there is a natural injection 
$A_{\theta_{2m+1}(\phi)} \hookrightarrow A_{\phi}$. 
For the basis element $a_i$ of $A_{\theta_{2m+1}(\phi)}$ 
associated to an irreducible summand $\phi_i$, 
one has
\begin{align*}
\theta_{2m+1}(\eta) (a_i)  /  \eta(a_i)  
&=  
\epsilon(1/2, \phi_i \otimes S_{2(n-m)-1}) \cdot \epsilon(1/2, \phi_i)
\\&=
\left\{\begin{aligned}
&-1	\iif \text{$\phi_i = S_{2k}$ for some $1 \leq k \leq n-m-1$}, \\
&1	\other.
\end{aligned}\right.
\end{align*}
\item
If $m = n$, then 
\[
\theta_{2m+1}(\phi)  = \phi
\quad\text{and}\quad 
\theta_{2m+1}(\eta) = \eta.
\]
Hence $\theta_{2m+1}(\phi)$ is a discrete series parameter.
\item
If $m > n$, then $\theta_{2m+1}(\pi)$ is non-tempered and 
is the unique Langlands quotient of the standard module
\[     
\times_{i=1}^{m-n}  |\cdot|^{m-n+  \frac{1}{2}-i}  \rtimes \theta_{2n+1}(\pi).    
\]
In particular, 
\[  
\theta_{2m+1}(\phi)  =   
\phi \oplus 
\left( \bigoplus_{i=1}^{m-n} |\cdot|^{m-n +\frac{1}{2} - i}  \oplus |\cdot|^{-(m-n  + \frac{1}{2} -i)} \right),
\]
so that there is a natural identification $A_{\theta_{2m+1}(\phi)}  \cong A_{\theta_{2n+1}(\phi)}$, 
and
\[  
\theta_{2m+1}(\eta)  =  \theta_{2n+1}(\eta). 
\]
\end{enumerate}
\end{thm}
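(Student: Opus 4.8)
The plan is to establish all four assertions simultaneously, by induction on $|m-n|$, using the Howe duality theorem, the preceding theorem on first occurrence (Problem A), Kudla's filtration of the Weil representation along maximal parabolics, and the conservation relation. The anchor is part (3): when $m=n$ the correspondence $\Mp(W_{2n})\leftrightarrow\Oo(V_{2n+1})$ is (up to the choice of additive character and quadratic space) the one Gan--Savin use to \emph{define} the LLC for $\Mp_{2n}$, so $\theta_{2n+1}(\phi)=\phi$ and $\theta_{2n+1}(\eta)=\eta$ provided one checks that $V_{2n+1}$ lies on the going-down tower $\VV^{\eta(z_{\phi})}$ and identifies the sign $\nu_{2n+1}$ --- but these are exactly the $m=n$ instances of part (1), so it suffices to prove part (1) in general. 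Part (4) is then the ``going up the tower'' phenomenon beyond first occurrence: applying Kudla's filtration along the maximal parabolic of $\Oo(V_{2m+1})$ with Levi $\GL_1\times\Oo(V_{2m-1})$, together with the vanishing of lower theta lifts and induction on $m-n$, one shows that $\Theta_{W_{2n},V_{2m+1}}(\pi)$ is a quotient of $|\cdot|^{m-n-\frac12}\rtimes\Theta_{W_{2n},V_{2m-1}}(\pi)$; by Howe duality and the Langlands classification it is the asserted Langlands quotient, and its parameter is read off from compatibility of the LLC with parabolic induction.

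For part (1) I would compute directly the action of the central element $-1\in\Oo(V_{2m+1})$ on the big theta lift. Since $(-1_{\Oo(V_{2m+1})},-1_{\Sp(W_{2n})})$ maps to the identity of $\Sp(V_{2m+1}\otimes W_{2n})$, the operator $\omega_{W_{2n},V_{2m+1}}(-1_{\Oo})$ equals an explicit Weil-index constant times $\omega_{W_{2n},V_{2m+1}}(-1_{\Sp})^{-1}$, and on the $\pi$-coinvariants the latter is governed by the value of the central character of $\pi$ at $-1\in\Mp(W_{2n})$. Substituting the known formula for the central character of a genuine discrete series of $\Mp_{2n}$ in terms of its parameter --- which contributes precisely the factors $\eta(z_{\phi})$ and $\epsilon(1/2,\phi)$ --- yields the stated value, which is visibly independent of $m$; in particular the going-down tower attached to $V_{2n+1}$ is $\VV^{\eta(z_{\phi})}$, closing the reduction of the previous paragraph.

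Part (2) is the core. I would induct on $n-m\ge 1$. Kudla's filtration computes the Jacquet modules of $\Theta_{W_{2n},V_{2m+1}}(\pi)$ along the maximal parabolics of $\Oo(V_{2m+1})$; comparing them with the Jacquet modules of the discrete series $\pi$, which are described explicitly by the M\oe glin--Tadi\'{c} theory in terms of the distinct summands $\phi_i$ of $\phi$, simultaneously shows that the unique irreducible quotient $\theta_{2m+1}(\pi)$ is again a discrete series and pins down its supercuspidal support, forcing $\theta_{2m+1}(\phi)=\phi-S_{2n-2m}$ --- here $S_{2n-2m}$ is indeed a summand of $\phi$ by the chain condition, since the first-occurrence theorem gives $2n-2m\le l(\pi)$. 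For the first name I would use the see-saw pair with $\Oo(\H)$ to move one step along the tower from $V_{2m+1}$ to $V_{2m+3}$, whose lift is known by induction, and track the component-group character through Arthur's local intertwining relation and the behaviour of the normalized intertwining operators; the ratio $\theta_{2m+1}(\eta)(a_i)/\eta(a_i)$ then comes out as the product of local root numbers $\epsilon(1/2,\phi_i\otimes S_{2(n-m)-1})\cdot\epsilon(1/2,\phi_i)$, and its evaluation to the stated $\pm1$ is a routine calculation with epsilon factors of tensor products of $\SL_2(\C)$-representations.

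The main obstacle throughout is the sign bookkeeping: fixing once and for all the splitting of the Weil representation over $\Oo\times\Sp$ and its dependence on $\psi$, keeping track of the discriminant and Hasse--Witt invariants of the spaces involved, handling the metaplectic $\mu_2$-ambiguity consistently, and then matching the resulting Weil-index constants with local root numbers and with the component-group characters dictated by the LLC. This is where part (1) and the root-number identity in part (2) demand real care; by contrast, once the signs are under control, the parameter computations in parts (2) and (4) reduce to combining Kudla's filtration with the Langlands and M\oe glin--Tadi\'{c} descriptions of Jacquet modules.
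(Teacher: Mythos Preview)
Your treatment of parts (1), (3), and (4) is essentially correct and matches the paper: (3) is the Gan--Savin definition of the LLC for $\Mp_{2n}$, (4) is \cite[Proposition 3.2]{GT1} verbatim, and (1) follows from the compatibility of central characters under theta (cf.\ \cite[\S5.2 and Theorem 11.1]{GI1}), which is exactly what the paper invokes in Proposition~\ref{sign}.

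The substantive divergence is in part (2). For the last name, the paper does \emph{not} go through Jacquet modules or the M\oe glin--Tadi\'c classification; instead it compares Plancherel measures across the theta correspondence (Theorem~\ref{Pmeas}) and applies a converse-type lemma for gamma factors (Lemma~\ref{converse}) to conclude $\theta_{2m+1}(\phi)=\phi-S_{2(n-m)}$. Your Jacquet-module approach is the Mui\'c route and can be made to work, but it is considerably heavier and the paper explicitly flags the Plancherel argument as one of the simplifications over Mui\'c.

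For the first name your proposal has a genuine gap. The paper's key innovation --- and the point of the whole paper, as stated in the introduction --- is to use the \emph{Gross--Prasad conjecture} to pin down $\theta(\eta)$. Concretely, one uses the see-saw
\[
\xymatrix{
\Mp(W_{2n})\times\Mp(W_{2n}) \ar@{-}[dr] \ar@{-}[d] & \Oo(V_{2m+2}) \ar@{-}[d]\\
\Sp(W_{2n}) \ar@{-}[ur] & \Oo(V_{2m+1})\times \Oo(L)
}
\]
with $L$ a \emph{line} (not $\H$), together with the Fourier--Jacobi GP for $(\Sp_{2n},\Mp_{2n})$ and the Bessel GP for $(\Oo_{2m+1},\Oo_{2m+2})$, to compute $\theta(\eta)(a)/\eta(a)$ directly as a product of root numbers (Theorem~\ref{main2.1}). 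Your sketch instead appeals to ``the see-saw with $\Oo(\H)$'' and ``Arthur's local intertwining relation'' to move one step along the tower, but there is no see-saw of that shape relating $\Oo(V_{2m+1})$ to $\Oo(V_{2m+3})$ that would let you compare the component-group characters, and the local intertwining relation by itself does not furnish such a comparison between discrete series on groups of different rank. Absent the GP input (or some other genuinely new idea), this step does not go through as written.
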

\vskip 5pt

\begin{thm}
Consider the going-up tower $\VV^{-\eta(z_{\phi})}$. 
For each $V_{2m+1}$ in this Witt tower,  
consider the theta lift $\theta_{W_{2n}, V_{2m+1}}(\pi)$ 
and let its $L$-parameter be given by
$(\theta_{2m+1}(\phi),  \theta_{2m+1}(\eta),  \nu_{2m+1}(\phi, \eta))$.  
\begin{enumerate}
\item
One has:
\[ 
\nu_{2m+1}(\phi, \eta) =   \eta(z_{\phi})  \cdot \epsilon(1/2, \phi). 
\]
\item
If $\dim V_{2m+1} = m^{\up}(\pi)$, then $\theta_{2m+1}(\pi)$ is a tempered representation with
\[  
\theta_{2m+1}(\phi)  = \phi  +  S_{l(\pi) +2}, 
\]
so that there is a natural inclusion 
\[  
A_{\phi}  \hookrightarrow A_{\theta_{2m+1}(\phi)}. 
\]
For the basis element $a_i$ of $A_{\theta_{2m+1}(\phi)}$ 
associated to an irreducible summand $\phi_i$, one has
\begin{align*}
\theta_{2m+1}(\eta)(a_i)  /  \eta(a_i)  
&=
\epsilon(1/2, \phi_i \otimes S_{l(\pi)+1}) \cdot \epsilon(1/2, \phi_i)
\\&=
\left\{\begin{aligned}
&-1	\iif \text{$\phi_i = S_{2k}$ for some $1 \leq k \leq l(\pi)/2$},\\
&1	\other.
\end{aligned}\right.
\end{align*}
\item
If $\dim V_{2m+1}  >  m^{\up}(\pi)$ (so that $m-n-1 - l(\pi) > 0$), 
then $\theta_{2m+1}(\pi)$ is non-tempered and 
is the unique Langlands quotient of the standard module
\[    
\times_{i=1}^{m-n-1-l(\pi)/2} |\cdot|^{m-n+ \frac{1}{2} - i}  \rtimes \theta_{m^{\up}(\pi)}(\pi).  
\]
In particular, 
\[  
\theta_{2m+1}(\phi)  =   \phi \oplus  S_{l(\pi)+2}  \oplus 
\left( \bigoplus_{i=1}^{m-n-1 - l(\pi)/2}   |\cdot|^{m-n +\frac{1}{2} - i}  
\oplus |\cdot|^{-(m-n +\frac{1}{2} -i)} \right), 
\]
so that there is a natural identification $A_{\theta_{2m+1}(\phi)}  \cong A_{\theta_{m^{\up}(\pi)}(\phi)}$ 
and
\[  
\theta_{2m+1}(\eta)  =  \theta_{m^{\up}(\pi)}(\eta).  
\]
\end{enumerate}
\end{thm}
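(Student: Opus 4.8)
\noindent The plan is to deduce part (3) from part (2) using the compatibility of the theta correspondence with parabolic induction, which at the same time reduces the sign assertion in part (1) to the computation of $\nu$ at first occurrence; the substantive work is then part (2), together with that value of $\nu$ and of the first name, the first occurrence index $m^{\up}(\pi) = 2n + 3 + l(\pi)$ being already supplied by the earlier theorem. The parabolic-induction input is Kudla's filtration of the Jacquet modules of the Weil representation: whenever $\dim V_{2m+2}$ exceeds the first occurrence index one has $\Theta_{W_{2n}, V_{2m+2}}(\pi) \cong |\cdot|^{s_m} \rtimes \Theta_{W_{2n}, V_{2m}}(\pi)$ for an appropriate half-integral exponent $s_m$. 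On the going-up tower every $\dim V_{2m+1} > m^{\up}(\pi)$ is strictly above first occurrence, so iterating this identity shows that $\Theta_{W_{2n}, V_{2m+1}}(\pi)$ is the full standard module written in (3) and that $\theta_{W_{2n}, V_{2m+1}}(\pi)$ is its unique irreducible Langlands quotient, by the Howe duality theorem. The extra summands of $\theta_{2m+1}(\phi)$ are then the general-linear datum $\times_i |\cdot|^{m-n+\frac{1}{2}-i}$ of the inducing parabolic and contribute nothing to the component group, whence $A_{\theta_{2m+1}(\phi)} \cong A_{\theta_{m^{\up}(\pi)}(\phi)}$ and $\theta_{2m+1}(\eta) = \theta_{m^{\up}(\pi)}(\eta)$; likewise $\nu_{2m+1}(\phi, \eta) = \nu_{m^{\up}(\pi)}(\phi, \eta)$, since inducing by the unramified character $|\cdot|^{s_m}$ does not change the $\Oo$ versus $\SO$ sign. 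Everything thus reduces to the single level $\dim V_{2m+1} = m^{\up}(\pi)$.

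For part (2) I would first invoke the fact that at first occurrence the theta lift of a tempered representation is tempered; in this going-up situation this follows from Kudla's bounds on the exponents of $\Theta$ together with the conservation relation, which forces $m^{\up}(\pi)$ to be large enough ($m^{\up}(\pi) - 1 \geq \dim \phi + 2$) that no positive exponents can occur. Hence $\theta_{m^{\up}(\pi)}(\phi) = \phi \oplus \phi^{\up}$ for a tempered symplectic parameter $\phi^{\up}$ of dimension $l(\pi) + 2$, and the task is to identify $\phi^{\up}$ with the single irreducible $S_{l(\pi)+2}$ and to compute $\theta_{m^{\up}(\pi)}(\eta)$. To pin down $\phi^{\up}$ I would compute the Jacquet module of $\theta_{m^{\up}(\pi)}(\pi)$ along the maximal parabolic of $\Oo(V_{m^{\up}(\pi)})$ with Levi $\GL_1 \times \Oo(V_{m^{\up}(\pi)-2})$ by means of Kudla's filtration, and match it against the Jacquet module of the discrete series $\pi$ of $\Mp_{2n}$ supplied by the local Langlands correspondence, the latter being governed precisely by the chain $S_2 + S_4 + \cdots + S_{l(\pi)}$ inside $\phi$ and by the alternating condition on $\eta$; the resulting combinatorics forces the extra constituent to be exactly $S_{l(\pi)+2}$. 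Equivalently, one can argue by induction on $l(\pi)$: using the compatibility of theta with parabolic descent one relates $\pi$ to a discrete series of $\Mp_{2n-2}$ with $l$ decreased by $2$, and the base case $l(\pi) = 0$ (where $m^{\up}(\pi) = 2n+3$ and $\phi^{\up} = S_2$) is settled directly.

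It remains to determine the first name $\theta_{m^{\up}(\pi)}(\eta)$ and the sign $\nu_{m^{\up}(\pi)}(\phi, \eta)$. Here I would pass to local root numbers by the doubling method: the Lapid--Rallis theory of the doubling zeta integral, together with Yamana's refinements, expresses the ratio $\theta_{m^{\up}(\pi)}(\eta)(a_i)/\eta(a_i)$ as a product of local $\epsilon$-factors, which the explicit LLC evaluates to $\epsilon(1/2, \phi_i \otimes S_{l(\pi)+1}) \cdot \epsilon(1/2, \phi_i)$, unwinding to the stated value in $\{\pm 1\}$; this is the Gross--Prasad-type dichotomy for theta lifts, proven in the same spirit as the local Gross--Prasad conjecture. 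The remaining sign $\nu$ is obtained either from Prasad's analysis of the one-dimensional ``extra'' piece --- comparing the lift to $\Oo(V)$ with the lift to $\Oo(V \oplus \ell)$ for an anisotropic line $\ell$ --- or, more economically, from the Rallis inner product formula, which reduces $\nu$ to a product of local root numbers telescoping to $\eta(z_{\phi}) \cdot \epsilon(1/2, \phi)$; that this is independent of $m$ is consistent with part (1) here and with the going-down theorem.

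The main obstacle is precisely this last step: matching the theta correspondence with $\epsilon$-factors so as to read off the first name and the sign. The shape of the parameter --- that exactly one summand $S_{l(\pi)+2}$ is added at first occurrence --- is comparatively soft, resting on dimension counts, temperedness at first occurrence, and the already-established going-down theorem; by contrast the determination of the first name forces either a delicate local computation with doubling zeta integrals or the import of a global argument, and the real care lies in keeping all normalizations --- the additive character $\psi$ defining the Weil representation, the chosen models and splitting of the metaplectic cover, and the $\Oo$ versus $\SO$ dichotomy on the target --- consistent throughout.
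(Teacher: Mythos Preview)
Your overall architecture --- reduce to first occurrence via the tower property, establish temperedness there, then determine $(\theta(\phi),\theta(\eta),\nu)$ --- matches the paper, and your treatment of part~(3) is essentially the paper's (it invokes \cite[Proposition~3.2]{GT1}; one must check that proposition applies, which in the discrete-series case is immediate since the first lift is tempered). But two of your substantive steps diverge from the paper, and one of them contains a real gap.

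\textbf{Last name at first occurrence.} You propose to pin down the extra summand $S_{l(\pi)+2}$ by Jacquet-module bookkeeping or by induction on $l(\pi)$. The paper does something cleaner: once temperedness of $\sigma=\theta_{m^{\up}(\pi)}(\pi)$ is known (Proposition~\ref{temp}(2)), it simply runs the \emph{going-down} argument in the reverse direction. That is, $\pi=\theta_{W_{2n},V_{m^{\up}}}(\sigma)$ is a nonzero tempered lift with positive gap $l'=l(\pi)+2$, so Theorem~\ref{param} (proved via the Plancherel-measure identity of \cite[Theorem~12.1]{GI1} and a converse lemma) gives $\phi_\pi=\phi_\sigma-S_{l(\pi)+2}$, hence $\phi_\sigma=\phi+S_{l(\pi)+2}$. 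No Jacquet-module chase is needed. Your approach might be made to work, but the Plancherel route is both shorter and what the paper actually does.

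\textbf{First name at first occurrence.} Here is the gap. You write that the Lapid--Rallis doubling zeta integral ``expresses the ratio $\theta(\eta)(a_i)/\eta(a_i)$ as a product of local $\epsilon$-factors.'' It does not: the doubling gamma factor is an invariant of the $L$-\emph{packet} (it depends only on $\phi$), so it cannot separate members of $\Pi_\phi$ and cannot see $\eta$ at all. The paper's mechanism is different and is the main innovation of the work: for tempered $\pi$ one produces, via Lemma~\ref{Lemma 12.5}, an auxiliary tempered $\pi'$ on the companion group with $\Hom(\pi\otimes\pi',\omega)\ne 0$; a see-saw then yields $\Hom(\sigma\otimes\sigma',\C)\ne 0$ for a suitable subquotient $\sigma'$ of $\Theta(\pi'^\vee)$. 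Now the Gross--Prasad conjecture (already established for these tempered pairs) computes \emph{both} $\eta$ and $\theta(\eta)$ as explicit products of root numbers, and taking their ratio gives exactly $\epsilon(\phi_i\otimes S_{l(\pi)+1})\cdot\epsilon(\phi_i)$ (Theorem~\ref{main2.1}). So GP is not merely ``in the same spirit'' --- it is the actual input, fed through a see-saw. Without it (or some replacement such as the local intertwining relation) your proposal has no way to access $\eta$.

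\textbf{The sign $\nu$.} Your Rallis-inner-product intuition is on target: the paper reduces to $m\ge n+1$ using that $\nu$ is independent of $m$ (\cite[\S5.2]{GI1}), then quotes \cite[Theorem~11.1]{GI1} to get $\nu(\sigma)=\epsilon(\phi_\sigma)\cdot\epsilon(V)$, and finishes with the already-computed $\phi_\sigma$ and the tower identification of $\epsilon(V)$ (Proposition~\ref{sign}).
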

\vskip 5pt

\noindent Taken together, the above two theorems give  precise determination of the theta lifts of any discrete series representation $\pi$ of $\Mp(W_{2n})$. In the case of tempered $\pi$, the results are in the same spirit, though slightly more involved to state.
\vskip 5pt

We note that Problems A and B have been extensively studied by Mui\'c (\cite{Mu1}--\cite{Mu4}) 
and M{\oe}glin (\cite{Moe1}, \cite{Moe2}), at least for the symplectic-orthogonal dual pairs. 
Their work uses the M{\oe}glin--Tadi\'c classification of discrete series representations of 
classical groups in terms of supercuspidal representations. 
At that point, the M{\oe}glin--Tadi\'c classification was conditional, 
and it may be viewed as a preliminary form of the LLC. 
As such, the formulation of the answers to Problems A and B in the various papers of Mui\'c 
may seem somewhat complicated, as are the proofs. 
The formulation of our main results and their proofs are neater and more transparent. 
There are several reasons for this:
\vskip 5pt

\begin{itemize}
\item 
the LLC affords a more efficient language to describe the answers;
\vskip 5pt

\item 
the theory of local theta correspondence is in a more mature state today 
than at the time of Mui\'c's work. 
For example, the conservation relation is now known and we do exploit it to simplify life.  
\vskip 5pt

\item 
we make use of a wider spectrum of tools than Mui\'c. 
For example, we use results of Gan--Ichino \cite{GI1} on the behaviour of the standard gamma factors and
Plancherel measures in the local theta correspondence, 
as well as results of Gan--Takeda \cite{GT1} and Gan--Savin \cite{GS}. 
In the proofs of some of these results, the doubling see-saw diagram plays a crucial role.
In addition, Problems A and B in the almost equal rank case were resolved in 
\cite{GI2} for the unitary case and \cite{At} for symplectic-orthogonal case 
by the local intertwining relation given by Arthur \cite{Ar}.
Mui\'c, on the other hand, mainly made use of the computation of Jacquet modules and 
Kudla's filtration.
\end{itemize}
\vskip 5pt

However, the main innovation of this paper is the exploitation of the local Gross--Prasad conjecture
(GP), which is now established, in addressing Problems A and B. 
Recall that the GP conjecture comes in two flavours:  
the Bessel case and the Fourier--Jacobi case. 
For tempered representations, the Bessel case was proved by Waldpsurger (\cite{W2}--\cite{W5}) 
for special orthogonal groups, and Beuzart-Plessis (\cite{BP1}--\cite{BP3}) for unitary groups. 
In \cite{GI2} and \cite{At}, the Fourier--Jacobi case (for tempered representations) was 
deduced from the Bessel case by using the theta correspondence in the almost equal rank case. 
In particular, in the almost equal rank case, Problems A and B were fully addressed in 
\cite{GI2} for unitary dual pairs, \cite{At} and \cite{AG} for symplectic-orthogonal dual pairs, 
and \cite{GS} for metaplectic-orthogonal dual pairs, 
and these allow one to deduce the Fourier--Jacobi case of the GP conjecture from the Bessel case.  
In this paper, with the GP conjecture in hand, 
we turn the table around and use it to understand the theta correspondence for general dual pairs.  
 \vskip 5pt

Let us give a brief summary of the contents of this paper. 
After describing some background material on theta correspondence and the LLC 
in Sections 2 and 3, our main results are given in Section 4.    
In order not to overburden the reader with too much background material, 
we have placed the more precise description of LLC in Appendix A and B.
The local Gross--Prasad conjecture and Prasad's conjectures  
(which resolve Problems A and B  for almost equal rank dual pairs)
are placed in Appendix C and D, respectively. 
Note that in a prequel to this paper \cite{AG}, 
we have discussed the LLC for full orthogonal groups and
established the GP conjecture for full orthogonal groups.
Finally the proofs of the main results are given in Sections 5 and 6. 
\vskip 5pt

\subsection*{Acknowledgments}
This project was initiated when the first author visited National University of Singapore in March 2015. 
He would like to thank NUS for the hospitality.
He is also grateful to Professor Gordan Savin and Professor Atsushi Ichino for their helpful comments.
The project was completed when  the second author visited Kyoto University in December 2015
as Distinguished Visiting Project Professor (Japan Gateway: Kyoto University Top Global Program) 
of Center for the Promotion of Interdisciplinary Education and Research. 
The second author would like to thank Kyoto University for its generous support.
The first author is supported by JSPS KAKENHI Grant Number 26-1322.
The second author is partially supported by 
a Singapore government MOE Tier 2 grant R-146-000-175-112.

%\section{Preliminaries}
%\section{Preliminaries}
\section{Local theta correspondence}
In this section, we fix some notations.
\vskip 5 pt

%\subsection{Firlds}
\subsection{Fields}
Let $F$ be a nonarchimedean local field of characteristic $0$ and residue characteristic $p$.
Let $\oo_F$ be the ring of integers of $F$, 
$\p_F$ be the maximal ideal of $\oo_F$, 
$\varpi_F$ be a uniformizer of $\oo_F$,
and $q_F$ be the cardinality of $\oo_F/\p_F$.
The absolute value $|\cdot|_F$ on $F$ is normalized by $|\varpi_F|_F=q_F^{-1}$.
We fix a non-trivial additive character $\psi$ of $F$.
\vskip 5pt

Let $E$ be either $F$ itself or a quadratic extension of $F$,
and $\omega_{E/F}$ be the quadratic character of $F^\times$ corresponding to $E$ via 
the local class field theory.
We denote the generator of $\Gal(E/F)$ by $c$.
We define a non-trivial additive character $\psi_E$ of $E$ by $\psi_E=\psi \circ \tr_{E/F}$.
If $E\not=F$, we fix an element $\delta \in E^\times$ such that $\tr_{E/F}(\delta)=0$, 
and set 
\[
\psi^E_a(x)=\psi(\half{a}\tr_{E/F}(\delta x))
\]
for $x\in E$ and $a \in F^\times$.
If $a=1$, we simply write $\psi^E=\psi^E_1$.
One should not confuse $\psi_E$ with $\psi^E$.
If $E=F$, we set
\[
\psi_a(x)=\psi(ax)
\]
for $x\in F$ and $a \in F^\times$.
\vskip 5 pt

%\subsection{Spaces}
\subsection{Spaces}\label{spaces}
Fix $\epsilon=\pm1$ in $E^\times$.
Let 
\begin{align*}
W_n&=\text{a $-\epsilon$-Hermitian space over $E$ of dimension $n$ over $E$},\\
V_m&=\text{an $\epsilon$-Hermitian space over $E$ of dimension $m$ over $E$}.
\end{align*}
We set 
\[
l=n-m+\epsilon_0
\quad\text{with}\quad
\epsilon_0=\left\{
\begin{aligned}
&\epsilon \iif E=F,\\
&0 \iif E\not= F,
\end{aligned}
\right.
\]
and
\[
\kappa=\left\{
\begin{aligned}
&1	\iif \text{$l$ is odd}, \\
&2	\iif \text{$l$ is even}.
\end{aligned}
\right.
\]
We define the discriminant $\disc(V_m)$ and $\disc(W_n)$ as in \cite[\S 2.2]{GI1}.
Note that
\[
\disc(V_m) \in 
\left\{
\begin{aligned}
&F^\times/F^{\times2}	\iif E=F,\\
&F^\times/N_{E/F}(E^\times)	\iif \text{$E\not=F$ and $\epsilon=+1$},\\
&\delta^m \cdot F^\times/N_{E/F}(E^\times)	\iif \text{$E\not=F$ and $\epsilon=-1$}.
\end{aligned}
\right.
\]
\vskip 5 pt

%\subsection{Groups}
\subsection{Groups}\label{group}
We will consider the isometry groups associated to the pair $(V_m,W_n)$ of 
$\pm\epsilon$-Hermitian spaces. 
More precisely, we set:
\[
G(W_n)=\left\{
\begin{aligned}
&\text{the metaplectic group $\Mp(W_n)$},\iif \text{$E=F$, $\epsilon=+1$ and $m$ is odd},\\
&\text{the isometry group of $W_n$}, \other.
\end{aligned}
\right.
\]
We define $H(V_m)$ similarly by switching the roles of $W_n$ and $V_m$.
\vskip 5pt

For a vector space $X$ over $E$, 
we denote the general linear group of $X$ by $\GL(X)$.
Let $\det_X=\det_{\GL(X)}$ be the determinant on $\GL(X)$.
\vskip 5 pt

%\subsection{Representations}
\subsection{Representations}
Let $G$ be a $p$-adic group.
We denote the category of smooth representations of $G$ by $\Rep(G)$.
Let $\Irr(G)$ be 
the set of equivalence classes of irreducible smooth (genuine) representations of $G$.
We also denote by $\Irr_\temp(G)$ (\resp $\Irr_\disc(G)$) 
the subset of $\Irr(G)$ of classes of irreducible tempered representations
(\resp discrete series representations).
\vskip 5pt

For a parabolic subgroup $P=MN$ of $G$, let $\delta_P$ be the modulus character of $P$.
For $(\pi_0,\VV_0) \in \Rep(M)$, we define the normalized induction $\Ind_P^G(\pi_0)$ by 
the space of smooth functions $f \colon G \rightarrow \VV_0$ such that
\[
f(mng)=\delta_P(m)^{\half{1}} \cdot \pi_0(m)f(g) 
\quad \text{for $m \in M$, $n\in N$ and $g \in G$}.
\]
The group $G$ acts on $\Ind_P^G(\pi_0)$ by right translation.
For $(\pi,\VV) \in \Rep(G)$, we define the normalized Jacquet module $R_P(\pi)$ by
$R_P(\pi)=\VV/\VV(N)$, where $\VV(N)$ is the subspace generated by 
$\pi(n)v-v$ for $n\in N$ and $v\in \VV$.
Note that $\VV(N)$ is an $M$-subrepresentation of $\VV$.
The group $M$ acts on $R_P(\pi)$ by
\[
m\cdot (v \bmod \VV(N)) = \delta_P(m)^{-\half{1}} \cdot \pi(m)v \bmod \VV(N)
\]
for $m \in M$ and $v \in \VV$.
\vskip 5pt

We have the normalized induction functor
\[
\Ind_P^G \colon \Rep(M) \rightarrow \Rep(G)
\]
and the normalized Jacquet functor
\[
R_P \colon \Rep(G) \rightarrow \Rep(M).
\]
Let $\overline{P}= M\overline{N}$ be the opposite parabolic subgroup to $P$.
Then there exist two Frobenius' reciprocity formulas:
\[
\Hom_G(\pi,\Ind_P^G(\pi_0)) \cong \Hom_M(R_P(\pi),\pi_0)
\quad\text{(standard Frobenius reciprocity)}
\]
and
\[
\Hom_G(\Ind_P^G(\pi_0),\pi) \cong \Hom_M(\pi_0, R_{\overline{P}}(\pi))
\quad\text{(Bernstein's Frobenius reciprocity)}.
\]
\vskip 5 pt

%\subsection{Parabolic inductions}
\subsection{Parabolic inductions}
We shall use Tadi\'{c}'s notation for induced representations.
Let $W_n$ be a $-\epsilon$-Hermitian space, and $G(W_n)$ as in \S \ref{group}.
If $X_t$ is a $t$-dimensional isotropic subspace of $W_n$, we decompose
\[
W = X_t \oplus W_{n-2t} \oplus X_t^*,
\]
where $X_t^*$ a $t$-dimensional isotropic subspace of $W_n$
such that $X_t \oplus X_t^*$ is non-degenerate, and
$W_{n-2t}$ is the orthogonal complement of $X_t \oplus X_t^*$ in $W_n$.
We denote by $P(X_t) = L(X_t) \cdot U(X_t)$
the maximal parabolic subgroup stabilizing $X_t$, 
where $L(X_t) = \GL(X_t) \times G(W_{n-2t})$
is the Levi subgroup of $P(X_t)$ stabilizing $X_t^*$.
If $\tau \in \GL(X_t)$ and $\pi_0 \in \Irr(G(W_{n-2t}))$, 
we write
\[
\tau \rtimes \pi_0 \coloneqq \Ind_{P(X_t)}^{G(W_n)}(\tau \otimes \pi_0).
\]
More generally, a standard parabolic subgroup $P$ of $G(W)$ has the Levi factor of the form 
$\GL_{n_1}(E) \times \dots \times \GL_{n_r}(E) \times G(W_{n_0})$, and we set
\[
\tau_1 \times \dots \times \tau_r \rtimes \pi_0 \coloneqq
\Ind_P^{G(W_n)}(\tau_1 \otimes \dots \tau_r \otimes \pi_0),
\]
where $\tau_i$ is a representation of $\GL_{n_i}(E)$ and 
$\pi_0$ is a representation of $G(W_{n_0})$. 
When $G(W_n) = \Mp(W_n)$ is a metaplectic group, 
we will follow the convention of \cite[\S 2.2--2.5]{GS} for the normalized parabolic induction.
\vskip 5 pt

%\subsection{Galois conjugate}
\subsection{Galois conjugate}
Recall that $c$ denotes the generator of $\Gal(E/F)$.
Let $X$ be a vector space over $E$ of dimension $t$.
Choose a basis $\{x_j\}$ of $X$, and we set
\[
i \colon \GL_t(E) \rightarrow \GL(X),\quad
g\mapsto [(x_1,\dots,x_t)\mapsto (x_1,\dots,x_t)g].
\]
For a representation $\tau$ of $\GL(X)$, 
we define the $c$-conjugate ${}^c\tau$ of $\tau$ by
\[
{}^c\tau(h)\coloneqq \tau(i \circ c \circ i^{-1}(h))
\]
for $h \in \GL(X)$.
Let $\{x'_j\}$ be another basis of $X$ and 
we denote by $i' \colon \GL_t(E) \rightarrow \GL(X)$ the corresponding map.
If $A \in \GL_t(E)$ satisfies
\[
(x'_1,\dots,x'_t)=(x_1,\dots,x_t) \cdot A, 
\]
then we have $i'(g)=i(A \cdot g \cdot A^{-1})$, and so that
\[
i' \circ c \circ i'^{-1}(h)=i(A \cdot {}^c A^{-1}) \cdot i \circ c \circ i^{-1}(h) \cdot i(A \cdot {}^c A^{-1})^{-1}
\]
for $h \in \GL(X)$.
This shows that the equivalence class of ${}^c\tau$ 
is independent of the choice of a basis of $X$.
\vskip 5 pt

%\subsection{$\MVW$ functor}
\subsection{$\MVW$ functor}
Let $\delta$ be an $F$-linear automorphism on $W_n$ such that 
$\delta G(W_n) \delta^{-1} = G(W_n)$. 
For a representation $\pi$ of $G(W_n)$, 
we denote by $\pi^\delta$ the representation of $G(W_n)$ 
defined by conjugation, i.e., $\pi^\delta(g) = \pi( \delta g \delta^{-1})$.
The following proposition is in Chapter 4.$\mathrm{II}$.1 in \cite{MVW}.
\begin{prop}\label{mvw}
Let $\pi$ be an irreducible admissible representation of $G(W_n)$ and 
$\pi^\vee$ be the contragredient of $\pi$. 
Let $\delta$ be an $E$-conjugate linear automorphism on $W_n$ such that 
\[
\pair{\delta x, \delta y} = \pair{y,x}
\]
for $x, y \in W_n$.
Here, $\pair{-,-}$ denotes the Hermitian pairing of $W_n$.
Then, $\pi^\delta \cong \pi^\vee$.
\end{prop}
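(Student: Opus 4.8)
The plan is to prove this via the uniqueness of the Weil representation under a twisted MVW-type involution, exactly as in \cite{MVW}, Chapter 4.II.1. First I would observe that an $E$-conjugate linear automorphism $\delta$ with $\pair{\delta x, \delta y} = \pair{y,x}$ is precisely an $F$-linear isometry from the Hermitian space $(W_n, \pair{-,-})$ to the space $(W_n, \overline{\pair{-,-}})$ obtained by transporting along $c$ the $c$-semilinear structure; in particular such a $\delta$ exists (one may build it from a Witt-basis decomposition of $W_n$), and it normalizes $G(W_n)$, so $\pi^\delta$ makes sense. The key point is that conjugation by $\delta$ on $G(W_n)$ is, up to an inner automorphism, the canonical anti-involution $g \mapsto {}^tg^{-1}$ relative to the Hermitian form — i.e. $\delta g \delta^{-1}$ and the "inverse-transpose" $g^{*}$ of $g$ differ by conjugation inside $G(W_n)$. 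This is the crucial structural fact and it follows from a direct matrix computation after choosing a convenient basis; this is the step I expect to be the main obstacle, since one has to check that the inner automorphism identifying the two is by an element of $G(W_n)$ itself and not merely of a larger similitude group, and to be careful about the metaplectic case where $G(W_n)=\Mp(W_n)$ is a nontrivial central extension.

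Granting this, the argument proceeds as follows. For any irreducible admissible $\pi$, the representation $\pi^{*}$ defined by $\pi^{*}(g) = \pi(g^{*})$ (the "outer" contragredient) is abstractly isomorphic to $\pi^\vee$, because $g \mapsto g^{*}$ is an anti-automorphism and $\pi^\vee(g) = {}^t\pi(g^{-1})$; since $g^{*}$ is conjugate in $G(W_n)$ to $g^{-1}$-transposed-back, $\pi^{*} \cong \pi^\vee$. Composing with the inner automorphism from the previous paragraph gives $\pi^\delta \cong \pi^{*} \cong \pi^\vee$, which is the assertion. In the $E \neq F$ case the same reasoning applies with ${}^c\tau$-type twists folded into the definition of $g^{*}$; the $c$-conjugate linearity of $\delta$ is exactly what is needed so that $\delta g \delta^{-1}$ lands back in $G(W_n)$ rather than in a unitary group for a conjugate form.

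An alternative, perhaps cleaner, route that I would keep in reserve is the original MVW strategy using the Weil representation: realize $\omega_{W_n, V_m}$ for a suitable $V_m$, note that the pullback of $\omega$ along $\delta \times \mathrm{id}$ (or $\mathrm{id} \times \delta'$ on the other factor) is isomorphic to $\overline{\omega}$, the complex conjugate, by uniqueness of the Weil representation for a fixed additive character; then comparing the $\pi$-isotypic quotients on both sides and using the Howe duality (Theorem~1.1 above) forces $\theta(\pi)^{\delta'} \cong \theta(\pi)^\vee$ for all $\pi$, and a descent/induction argument on $\dim W_n$ (or a direct argument for small $n$) upgrades this to $\pi^\delta \cong \pi^\vee$ for all irreducible $\pi$ of $G(W_n)$. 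The potential obstacle in this second approach is handling representations not in the image of any theta lift, which is why I would prefer to present the first, purely structural, argument as the main proof and relegate the Weil-representation argument to a remark.
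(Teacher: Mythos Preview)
The paper does not give a proof of this proposition; it simply records it as a citation of Chapter~4.II.1 in \cite{MVW}. So the comparison is between your sketch and the actual argument in \cite{MVW}.

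Your first (``structural'') approach has a genuine gap. You assert that conjugation by $\delta$ agrees, up to a single inner automorphism of $G(W_n)$, with the ``inverse--transpose'' map $g\mapsto {}^tg^{-1}$. This cannot be right as stated. If the transpose is taken relative to the Hermitian form, then ${}^tg^{-1}=g$ for every $g\in G(W_n)$ (that is the definition of the isometry group), so the map is the identity and carries $\pi$ to $\pi$, not to $\pi^\vee$. If instead you mean the naive matrix transpose, then $g\mapsto {}^tg^{-1}$ need not even preserve $G(W_n)$, and in the unitary case it coincides with entrywise Galois conjugation; either way, the statement that this automorphism differs from $g\mapsto \delta g\delta^{-1}$ by one fixed inner automorphism is false in general (if it were true, $\pi^\delta$ would be determined by an inner twist of $\pi$, hence isomorphic to $\pi$, again not $\pi^\vee$). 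Your second paragraph compounds the confusion: $\pi^{*}(g)=\pi(g^{*})$ with $g\mapsto g^{*}$ an anti-automorphism is not a representation at all, and the claimed isomorphism $\pi^{*}\cong\pi^\vee$ is not a formal consequence of the formula $\pi^\vee(g)={}^t\pi(g^{-1})$.

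What \cite{MVW} actually proves is the correct, \emph{pointwise} statement: for each $g\in G(W_n)$ the element $\delta g\delta^{-1}$ is conjugate \emph{in $G(W_n)$} to $g^{-1}$, with the conjugating element depending on $g$. This is a nontrivial piece of linear algebra over $E$ (a classification of pairs consisting of a $(\pm\epsilon)$-Hermitian form and an isometry), and it is the step you should identify as the main obstacle. Once this is known, Harish--Chandra's theory of distribution characters gives
\[
\Theta_{\pi^\delta}(g)=\Theta_\pi(\delta g\delta^{-1})=\Theta_\pi(g^{-1})=\Theta_{\pi^\vee}(g),
\]
hence $\pi^\delta\cong\pi^\vee$. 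Your ``reserve'' approach via the Weil representation is closer in spirit to other arguments in \cite{MVW}, but as you correctly note it does not directly cover representations not appearing as theta lifts, and in any case using Howe duality here would be logically awkward in the present paper, since Proposition~\ref{mvw} is invoked before the theta correspondence machinery is developed.
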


Fix $\delta$ as in Proposition \ref{mvw}.
We define a functor 
\[
\MVW \colon \Rep(G(W_n)) \rightarrow \Rep(G(W_n))
\]
by $\pi^\MVW = \pi^\delta$.
Note that $\MVW$ is independent of the choice of $\delta$.
By the definition and Proposition \ref{mvw}, we see that
\begin{itemize}
\item
$\MVW$ is an involution, i.e., $(\pi^\MVW)^\MVW \cong \pi$;
\item
$\MVW$ is a covariant functor;
\item
$\Ind_{P(X_t)}^{G(W_n)}(\tau \otimes \pi_0)^\MVW
\cong \Ind_{P(X_t)}^{G(W_n)}({}^c\tau \otimes \pi_0^\MVW)$
for $\tau \in \Irr(\GL(X_t))$ and $\pi_0 \in \Rep(G(W_{n-2t}))$;
\item
if $\pi$ is irreducible, then $\pi^\MVW \cong \pi^\vee$.
\end{itemize}
\vskip 5pt

We will use $\MVW$ in the following form.
\begin{lem}\label{sub-quot}
Let $P$ be a standard parabolic subgroup of $G(W_n)$ with
the Levi factor of the form $\GL_{n_1}(E) \times \dots \times \GL_{n_r}(E) \times G(W_{n_0})$.
Then for $\tau_i \in \Irr(\GL_{n_i}(E))$, $\pi_0 \in \Irr(G(W_{n_0}))$ and $\pi \in \Irr(G(W_{n}))$, 
the following are equivalent:
\begin{enumerate}
\item
$\pi$ is a subrepresentation of $\tau_1\times \dots \times \tau_r \rtimes \pi_0$;
\item
$\pi$ is a quotient of 
${}^c\tau_1^\vee\times \dots \times {}^c\tau_r^\vee \rtimes \pi_0$.
\end{enumerate}
\end{lem}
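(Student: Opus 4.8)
The plan is to deduce the statement from the two properties of the $\MVW$ functor recorded just above, together with standard Frobenius reciprocity and the contragredient functor. First I would observe that the contragredient functor $\pi \mapsto \pi^\vee$ is an exact contravariant involution on admissible representations, so $\pi$ is a subrepresentation of $\sigma$ if and only if $\pi^\vee$ is a quotient of $\sigma^\vee$. Applying this with $\sigma = \tau_1 \times \dots \times \tau_r \rtimes \pi_0$, and using that the contragredient commutes with normalized parabolic induction, i.e.
\[
(\tau_1 \times \dots \times \tau_r \rtimes \pi_0)^\vee \cong \tau_1^\vee \times \dots \times \tau_r^\vee \rtimes \pi_0^\vee ,
\]
we see that (1) is equivalent to: $\pi^\vee$ is a quotient of $\tau_1^\vee \times \dots \times \tau_r^\vee \rtimes \pi_0^\vee$.

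Next I would apply the $\MVW$ functor. Since $\MVW$ is a covariant functor and an involution, $\pi^\vee$ is a quotient of $\tau_1^\vee \times \dots \times \tau_r^\vee \rtimes \pi_0^\vee$ if and only if $(\pi^\vee)^\MVW$ is a quotient of $(\tau_1^\vee \times \dots \times \tau_r^\vee \rtimes \pi_0^\vee)^\MVW$. For the right-hand side, the listed compatibility of $\MVW$ with parabolic induction (iterated over the factors $\GL_{n_i}$) gives
\[
(\tau_1^\vee \times \dots \times \tau_r^\vee \rtimes \pi_0^\vee)^\MVW \cong {}^c\tau_1^\vee \times \dots \times {}^c\tau_r^\vee \rtimes (\pi_0^\vee)^\MVW .
\]
For the left-hand side, since $\pi$ and hence $\pi^\vee$ is irreducible, the last listed property gives $(\pi^\vee)^\MVW \cong (\pi^\vee)^\vee \cong \pi$; the same property applied to the irreducible $\pi_0$ gives $(\pi_0^\vee)^\MVW \cong \pi_0$. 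Substituting these identifications, (1) is equivalent to: $\pi$ is a quotient of ${}^c\tau_1^\vee \times \dots \times {}^c\tau_r^\vee \rtimes \pi_0$, which is exactly (2).

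The only genuinely delicate point is the bookkeeping when $G(W_n)$ is a metaplectic group, since then the induction $\rtimes$ is the one normalized as in \cite[\S 2.2--2.5]{GS} rather than literal induction from a parabolic of $G(W_n)$; I would remark that both the contragredient and the $\MVW$ functor are compatible with this normalization (the metaplectic cover being split over the relevant $\GL$-factors and over $\overline{N}$), so the argument goes through verbatim. A minor subtlety, harmless here, is that the isomorphism $\MVW$-with-induction involves a choice of the automorphism $\delta$ and of bases underlying ${}^c(-)$, but since $\MVW$ is independent of these choices and the equivalence class of ${}^c\tau_i^\vee$ is basis-independent, no ambiguity remains. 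Assembling the chain of equivalences above completes the proof.
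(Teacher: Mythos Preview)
Your proof is correct and follows exactly the approach indicated in the paper, namely applying the contragredient functor and then the $\MVW$ functor while using the listed compatibilities with parabolic induction and irreducibles. The paper's own proof is the one-line ``Use both the contragredient functor and the $\MVW$ functor,'' and you have simply unpacked that line faithfully.
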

\begin{proof}
Use both the contragredient functor and the $\MVW$ functor.
\end{proof}
\vskip 5 pt

%\subsection{Weil representations}
\subsection{Weil representations}
Let $(V,W)=(V_m,W_n)$ be as in \S \ref{spaces}.
We consider the Weil representation of the pair $G(W) \times H(V)$. 
We fix a pair of characters $\bchi=(\chi_{V_m},\chi_{W_n})$ of $E^\times$
as in \cite[\S 3.2]{GI1}.
When there is no fear of confusion,
$\chi_{V_m}$ and $\chi_{W_n}$ are simply denoted by $\chi_V$ and $\chi_W$, respectively.
Note that ${}^c \chi_V^{-1}=\chi_V$ and ${}^c \chi_W ^{-1} =\chi_W$.
Moreover, if $V_m$ (\resp $W_n$) is a symplectic space, then
$\chi_V=\1$ (\resp $\chi_W=\1$).
These data and $\psi$ give a splitting 
$G(W) \times H(V) \rightarrow \Mp(W \otimes V)$ of the dual pair. 
More precisely, see \cite{Ku2}, \cite{HKS} and \cite[\S 3.3]{GI1}.
Pulling back the Weil representation of $\Mp(W \otimes V)$ to $G(W) \times H(V)$ via this splitting, 
we obtain the associated Weil representation $\omega_{V,W,\bchi,\psi}$ of $G(W) \times H(V)$. 
We simply write $\omega_{V,W}$ for the Weil representation.
\vskip 5 pt

%\subsection{Theta correspondence}
\subsection{Theta correspondence}
Let $\omega_{V,W}$ be the Weil representation of $G(W) \times H(V)$.
For $\pi\in\Irr(G(W))$, the maximal $\pi$-isotypic quotient of $\omega_{V,W}$ is of the form
\[
\pi \boxtimes \Theta_{V,W}(\pi),
\]
where $\Theta_{V,W}(\pi)$ is a smooth representation of $H(V)$.
We emphasize that  $\Theta_{V,W}(\pi)$ depends on $\bchi$ and $\psi$ also.
It was shown by Kudla \cite{Ku1} that $\Theta_{V,W}(\pi)$ is either zero or of finite length.

The following result is proven by Waldspurger \cite{W1} when $p\not=2$ and
by \cite{GT1} and \cite{GT2} in general.

\begin{thm}[Howe duality conjecture]\label{howe}
If $\Theta_{V,W}(\pi)$ is nonzero, then
$\Theta_{V,W}(\pi)$ has a unique irreducible quotient $\theta_{V,W}(\pi)$.
\end{thm}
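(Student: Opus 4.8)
The plan is to prove the Howe duality conjecture (Theorem \ref{howe}) by reducing to the supercuspidal case and then handling that core case via Jacquet module analysis and the structure of the Weil representation. First I would observe that the statement is equivalent to saying $\dim\Hom_{H(V)}(\Theta_{V,W}(\pi), \sigma) \leq 1$ for all $\sigma \in \Irr(H(V))$, together with the non-vanishing of $\Theta_{V,W}(\pi)$ forcing at least one such $\sigma$ to occur as a quotient. By the definition of $\Theta_{V,W}(\pi)$ as (the second factor of) the maximal $\pi$-isotypic quotient of $\omega_{V,W}$, Frobenius-type reciprocity identifies $\Hom_{H(V)}(\Theta_{V,W}(\pi),\sigma)$ with $\Hom_{G(W)\times H(V)}(\omega_{V,W}, \pi\boxtimes\sigma)$, so the heart of the matter is the statement that this last Hom-space has dimension at most one for all irreducible $\pi, \sigma$ — i.e., the multiplicity-one statement for the theta correspondence viewed as a bilinear form.

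The key steps, in order, are as follows. First, I would treat the case where $\pi$ is supercuspidal (and, by symmetry of the argument under swapping the two members of the dual pair, also dispose of the case where $\sigma$ is supercuspidal). In the supercuspidal case, one uses Kudla's filtration of the Jacquet modules of the Weil representation along maximal parabolics of $G(W)$ and $H(V)$: because $\pi$ is supercuspidal, its Jacquet modules vanish, which collapses most of the terms of Kudla's filtration and pins down $\Theta_{V,W}(\pi)$ rather tightly. One then shows that any two irreducible quotients $\sigma_1,\sigma_2$ of $\Theta_{V,W}(\pi)$ must coincide, typically by a ``doubling'' or ``exchange'' argument comparing their behavior under the theta correspondence going back to $G(W)$, or by the argument of Waldspurger using the see-saw pair and the fact that theta lifts of supercuspidals have controlled Jacquet modules. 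Second, for general irreducible tempered or non-tempered $\pi$, I would write $\pi$ as a subrepresentation (or quotient) of a parabolically induced representation $\tau \rtimes \pi_{\mathrm{sc}}$ with $\pi_{\mathrm{sc}}$ supercuspidal, and propagate the multiplicity-one bound through parabolic induction using the compatibility of theta correspondence with induction (the ``tower property'' and the behavior of $\Theta$ under $\Ind_P^{G(W)}$), together with Lemma \ref{sub-quot} and the $\MVW$ functor to pass freely between subrepresentations and quotients. Third, I would conclude the existence of the unique irreducible quotient: since $\Theta_{V,W}(\pi)$ is nonzero and of finite length (Kudla), it has at least one irreducible quotient, and the multiplicity-one bound forces it to be unique.

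The main obstacle — and the genuinely hard part — is the supercuspidal base case, specifically establishing that $\Hom_{G(W)\times H(V)}(\omega_{V,W}, \pi\boxtimes\sigma) \leq 1$ there. Kudla's filtration gives an upper bound on the length of the relevant Jacquet modules, but squeezing this down to a multiplicity bound of exactly one requires a delicate analysis: one must rule out the possibility that $\Theta_{V,W}(\pi)$ has two distinct irreducible quotients, and the naive filtration bound is not by itself sharp enough. The resolution (following Waldspurger for $p \neq 2$, and Gan--Takeda in general, as cited) uses a more refined argument — either a careful study of the ``first occurrence'' and conservation phenomena combined with an induction on $\dim W + \dim V$, or the see-saw identity relating $\Hom$-spaces for the pair $(G(W), H(V))$ to those for a larger or smaller pair, bootstrapping from cases already known. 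A secondary technical point is ensuring the parabolic-induction step is airtight when $p=2$ and when $G(W)$ is the metaplectic group, where the splittings and the relevant Jacquet module computations require the conventions of \cite{GS}; but once the supercuspidal case is in hand, this propagation is essentially formal and I do not expect real difficulty there.
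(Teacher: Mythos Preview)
The paper does not prove Theorem \ref{howe}; it is stated as a known result, with the preceding sentence attributing it to Waldspurger \cite{W1} for $p \neq 2$ and to Gan--Takeda \cite{GT1}, \cite{GT2} in general. There is therefore no ``paper's own proof'' to compare your proposal against.

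That said, your sketch broadly reflects the architecture of the cited proofs, so let me comment on its accuracy as a summary. The reduction to the supercuspidal case via parabolic induction and Kudla's filtration is indeed the standard first step, and you correctly identify that the supercuspidal case is the crux. However, your description of how that case is resolved is vague in a way that would not survive an attempt to write it out. In particular, Waldspurger's argument for $p \neq 2$ relies on the existence of a lattice model and the resulting $K$-type analysis, which genuinely fails for $p = 2$; the Gan--Takeda proof \cite{GT2} circumvents this via a different mechanism (roughly, a reduction using the doubling method and the structure of degenerate principal series, together with the conservation relation of Sun--Zhu) rather than the see-saw/first-occurrence induction you gesture at. Your phrase ``a more refined argument --- either \ldots\ or \ldots'' papers over exactly the innovation that made the $p=2$ case hard for two decades. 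If you were actually asked to supply a proof here rather than cite one, this is the step where your proposal would stall.
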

\vskip 5 pt

%\subsection{First occurrence and tower property}
\subsection{First occurrence and tower property}\label{FT}
Fix $\epsilon=\pm1$.
Let $W_n$ be a $-\epsilon$-Hermitian space as in \S \ref{spaces}.
For an anisotropic $\epsilon$-Hermitian space $V_{m_0}$ and $r\geq0$, 
we put
\[
V_{m_0+2r}=V_{m_0} \oplus \H^r,
\]
where $\H$ is the hyperbolic plane.
The collection
\[
\VV=\{V_{m_0+2r}\ |\ r\geq0\}
\]
is called a Witt tower of spaces.
Note that $\disc(V_m)$ and the parity of $\dim(V_m)$ 
depend only on the Witt tower $\VV$ to which $V_m$ belongs.
One can consider a tower of the theta correspondence associated to 
reductive dual pairs $\{(G(W_n),H(V_{m}))\ | \ V_m \in \VV\}$. 
For $\pi\in\Irr(G(W_n))$, we have the representation $\Theta_{V_{m},W_n}(\pi)$ of $H(V_{m})$. 
The number
\[
m_\VV(\pi) = \min\{m\ |\ \Theta_{V_{m},W_n}(\pi)\not=0\}
\]
is finite and
is called the first occurrence index of $\pi$ for the Witt tower $\VV$, 
and the representation $\theta_{V_{m_\VV(\pi)},W_n}(\pi)$ is called 
the first occurrence of $\pi$ for this Witt tower.
\vskip 5pt

The following proposition is often referred to as the tower property of theta correspondence
(see {\cite{Ku1}}).
\begin{prop}\label{tower}
Let $m_\VV(\pi)$ be the first occurrence index of $\pi$ for the Witt tower $\VV=\{V_{m}\}$.
Then we have $\Theta_{V_{m},W_n}(\pi)\not=0$ for any $m \geq m_\VV(\pi)$.
\end{prop}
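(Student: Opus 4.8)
The plan is to reduce the statement to the single step from $V_m$ to $V_{m+2} = V_m \oplus \H$ and then iterate. Write $\VV = \{V_{m_0+2r}\ |\ r\geq0\}$ and $m_\VV(\pi) = m_0+2r_0$; since every member of $\VV$ of dimension $\geq m_\VV(\pi)$ is of the form $V_{m_0+2r}$ with $r\geq r_0$, and since $\Theta_{V_{m_\VV(\pi)},W_n}(\pi)\neq0$ by the definition (and finiteness) of $m_\VV(\pi)$, it suffices to prove the implication
\[
\Theta_{V_m,W_n}(\pi)\neq0 \quad\Longrightarrow\quad \Theta_{V_{m+2},W_n}(\pi)\neq0
\]
for $V_m,V_{m+2}\in\VV$ and then induct on $r-r_0$. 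Recall also that, as the maximal $\pi$-isotypic quotient of $\omega_{V,W_n}$ is $\pi\boxtimes\Theta_{V,W_n}(\pi)$, one has $\Theta_{V,W_n}(\pi)\neq0$ if and only if $\Hom_{G(W_n)}(\omega_{V,W_n},\pi)\neq0$: composing the surjection $\omega_{V,W_n}\twoheadrightarrow\pi\otimes\Theta_{V,W_n}(\pi)$ with $v\otimes w\mapsto\lambda(w)v$ for a nonzero linear functional $\lambda$ on $\Theta_{V,W_n}(\pi)$ gives a nonzero $G(W_n)$-map to $\pi$, and conversely any such map forces the $\pi$-isotypic quotient to be nonzero.

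The tool for the single step is Kudla's filtration \cite{Ku1}. Realize $\omega_{V_{m+2},W_n}$ on a mixed model adapted to an isotropic line $\ell$ in the added hyperbolic plane, and let $P=MN$ be the maximal parabolic of $H(V_{m+2})$ stabilizing $\ell$, with Levi $M\cong\GL_1(E)\times H(V_m)$. Restricting to $G(W_n)\times P$ and sorting the model according to the orbit structure of the unipotent radical $N$, Kudla's filtration produces a $G(W_n)\times P$-equivariant short exact sequence
\[
0\longrightarrow\omega'\longrightarrow\omega_{V_{m+2},W_n}\longrightarrow\omega''\longrightarrow0
\]
whose top quotient $\omega''$ is, as a representation of $G(W_n)$, isomorphic to $\omega_{V_m,W_n}$ (the $M$-action twists $\omega_{V_m,W_n}$ by a character of $\GL_1(E)$, but this is irrelevant below, as it does not alter the restriction to $G(W_n)$).

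Now assume $\Theta_{V_m,W_n}(\pi)\neq0$, so $\Hom_{G(W_n)}(\omega_{V_m,W_n},\pi)\neq0$ and hence $\Hom_{G(W_n)}(\omega'',\pi)\neq0$. Applying the left-exact contravariant functor $\Hom_{G(W_n)}(-,\pi)$ to the short exact sequence above gives an injection
\[
\Hom_{G(W_n)}(\omega'',\pi)\hookrightarrow\Hom_{G(W_n)}(\omega_{V_{m+2},W_n},\pi),
\]
so the right-hand side is nonzero, i.e.\ $\Theta_{V_{m+2},W_n}(\pi)\neq0$. This proves the single step, and the iteration completes the proof. I do not expect a genuine obstacle here: the one point demanding care is invoking Kudla's filtration with the correct normalizations, but since only the identification of its top quotient with $\omega_{V_m,W_n}$ on restriction to $G(W_n)$ is used, the character twists present in the filtration are harmless.
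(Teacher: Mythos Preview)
Your argument is correct and is precisely the standard proof due to Kudla that the paper simply cites without reproducing (see \cite{Ku1}); the paper offers no independent proof of this proposition. One minor terminological point: what you invoke is not quite ``Kudla's filtration'' in the sense of Lemma~\ref{kudla} (which filters a Jacquet module), but rather the underlying mixed-model realization of $\omega_{V_{m+2},W_n}$ on $\Sc(W_n)\otimes\omega_{V_m,W_n}$, where evaluation at $0\in W_n$ gives the $G(W_n)$-equivariant surjection $\omega_{V_{m+2},W_n}\twoheadrightarrow\omega_{V_m,W_n}$ you need---but this is exactly the construction in \cite{Ku1} from which the Jacquet-module filtration is derived, so the content is the same.
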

\vskip 5pt

If $E\not=F$ or $\epsilon=+1$,
for a given Witt tower $\VV=\{V_m\}$, 
there exists another Witt tower $\VV'=\{V'_{m'}\}$
such that
\begin{itemize}
\item
$\dim(V_m)\equiv \dim(V'_{m'})\bmod 2$;
\item
$\disc(V_{m})=\disc(V'_{m'})$ if $E=F$ and $\epsilon =+1$.
\end{itemize}
We call $\VV'$ the companion Witt tower of $\VV$.
Also, by a companion space of $V_m$, we mean $V_m$ or $V_m'$.
For each $\pi \in \Irr(G(W_n))$, 
we may consider two first occurrence indices $m_\VV(\pi)$ and $m_{\VV'}(\pi)$.
Let $\VV^+=\{V^+_{m}\}$ be the Witt tower whose anisotropic space is
\[
\left\{
\begin{aligned}
&0	\iif \text{$E\not=F$ and $m$ is even},\\
&(E,1)	\iif \text{$E\not=F$, $m$ is odd and $\epsilon=+1$},\\
&(E,\delta)	\iif \text{$E\not=F$, $m$ is odd and $\epsilon=-1$},\\
&0	\iif \text{$E=F$, $m$ is even and $\disc(V_m)=1$},\\
&(F(\sqrt{d}), \tr_{F(\sqrt{d})/F})	
\iif \text{$E=F$, $m$ is even and $d \coloneqq \disc(V_m)\not=1$ in $F^\times/F^{\times2}$},\\
&(F, 2\disc(V_m))	\iif \text{$E=F$ and $m$ is odd}.
\end{aligned}
\right.
\]
Here, we consider $V_m$ as a vector space equipped with a suitable Hermitian pairing.
For example, by $(F, 2\disc(V_m))$, we mean the one dimensional space equipped with
the bilinear form
\[
(x,y) \mapsto 2d xy, 
\]
where $d \in F^\times$ satisfies $d \bmod F^{\times2} = \disc(V_m)$ in $F^\times / F^{\times2}$.
Note that this space has discriminant $\disc(V_m)$.
We denote the other Witt tower by $\{V^-_{m}\}$.
Then for each $\pi \in \Irr(G(W_n))$, 
we have two first occurrence indices $m^\pm(\pi) \coloneqq m_{\VV^\pm}(\pi)$.
\vskip 5pt

On the other hand, 
if $E=F$ and $\epsilon=-1$, 
then there is only a single tower of symplectic spaces $\VV=\{V_{m}\}$.
In this case, a companion space of $V_m$ is just $V_m$.
However, since $\pi$ is a representation of the orthogonal group $G(W_n)=\Oo(W_n)$, 
we may consider its twist $\pi \otimes \det$. 
Thus we have the two towers of theta lifts
\[
\Theta_{V_{m},W_{n}}(\pi)
\quad\text{and}\quad
\Theta_{V_{m},W_{n}}(\pi \otimes \det).
\]
Hence we may define two first occurrence indices for each $\pi \in \Irr(G(W_n))$.
When $n$ is odd, we define $m^\pm(\pi)$ by
\[
m^\pm(\pi) \coloneqq \min\{m\ |\ 
\text{$\Theta_{V_m,W_n}(\pi') \not=0$ with $\pi' \in \{\pi, \pi\otimes \det\}$
such that $\pi'(-\1_{W_n})=\pm \id$}
\}.
\]
When $n$ is even, we define $m^\pm(\pi)$ by
\begin{align*}
m^+(\pi) \coloneqq \min\Big\{ 
\min\{m\ |\ \Theta_{V_m,W_n}(\pi) \not=0\},\ \min\{m\ |\ \Theta_{V_m,W_n}(\pi \otimes \det) \not=0\}
\Big\},\\
m^-(\pi) \coloneqq \max\Big\{ 
\min\{m\ |\ \Theta_{V_m,W_n}(\pi) \not=0\},\ \min\{m\ |\ \Theta_{V_m,W_n}(\pi \otimes \det) \not=0\}
\Big\}.
\end{align*}
\vskip 5pt

In any case, for each $\pi \in \Irr(G(W_n))$, 
we have two first occurrence indices $m^\pm(\pi)$.
We put
\begin{align*}
m^{\up}(\pi)=\max\{m^+(\pi), m^-(\pi)\}
\quad\text{and}\quad
m^{\down}(\pi)=\min\{m^+(\pi), m^-(\pi)\}.
\end{align*}
The following proposition is often referred to as the conservation relation (see \cite{SZ}).
\begin{prop}\label{cons}
For any $\pi\in\Irr(G(W_{n}))$, 
we have 
\[
m^\up(\pi)+m^\down(\pi)=2n+2+2\epsilon_0.
\]
\end{prop}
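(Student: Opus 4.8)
The plan is to reduce the statement to a computation with doubled groups via a see-saw argument, following the strategy of Kudla--Rallis and Sun--Zhu. First I would fix $\pi \in \Irr(G(W_n))$ and observe that the two first occurrence indices $m^+(\pi)$ and $m^-(\pi)$ are attached to the two companion Witt towers $\VV^+$, $\VV^-$ of $\epsilon$-Hermitian spaces (or, when $E = F$ and $\epsilon = -1$, to $\pi$ and $\pi \otimes \det$). The core input is the following dichotomy: for each $m$, exactly one of the two towers at level $m$ ``sees'' $\pi$ more deeply than a certain threshold, and the arithmetic $m^\up + m^\down = 2n + 2 + 2\epsilon_0$ is forced by comparing the nonvanishing of $\Theta$ on the two towers simultaneously. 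Concretely, set $N = n + 1 + \epsilon_0$; I would show the two inequalities
\[
m^\down(\pi) \le 2N - 2 \quad\text{(equivalently } m^\down(\pi) \le n + \epsilon_0 + 1\text{... no: } m^\down(\pi) \le N\text{)}
\]
and
\[
m^\up(\pi) \ge N,
\]
together with the exact relation $m^+(\pi) + m^-(\pi) = 2N$.

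The key steps, in order, are as follows. (1) Upper bound for $m^\down$: realize $\U(W_n) \times \U(W_n)$ diagonally inside $\U(W_n \oplus (-W_n))$ and use the doubling see-saw
\[
\xymatrix{
\U(W_n \oplus (-W_n)) \ar@{-}[dr] & \U(V_m) \times \U(V_m) \ar@{-}[dl] \\
\U(W_n) \ar@{-}[ur] & \U(V_m)^{\Delta} \ar@{-}[ul]
}
\]
to relate the theta lift of $\pi \boxtimes \pi^\vee$ to the degenerate principal series of the doubled orthogonal (or Hermitian) group; nonvanishing of the latter at the relevant point gives $\Theta_{V_m, W_n}(\pi) \ne 0$ for $m$ of size roughly $N$ on one of the two towers, hence the stated upper bound on $m^\down$. (2) Lower bound for $m^\up$: run the complementary see-saw, or invoke Kudla's persistence principle (Proposition \ref{tower}) together with the fact that $\Theta$ vanishes for $m$ small, to force $m^\up \ge N$. (3) The exact sum: this is the delicate part. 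One shows that the difference of the two first occurrence indices is controlled by a sign invariant --- essentially a local root number or a value of $\epsilon_{V_m/V_m'}$ --- and that as one moves up the two towers, the ``complementary'' tower's first occurrence is pinned down exactly, not merely bounded. The cleanest route is to cite the doubling zeta integral / local factor computation of Sun--Zhu \cite{SZ}, which directly yields $m^+(\pi) + m^-(\pi) = 2N$; alternatively one can reprove it by analyzing the reducibility points of the degenerate principal series $\mathrm{Ind}(\chi_V |\det|^s)$ on the doubled group and matching them against the two towers.

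The main obstacle I anticipate is Step (3): the upper and lower bounds (Steps 1 and 2) follow from fairly soft see-saw and persistence arguments, but upgrading the inequality $m^\down + m^\up \ge 2N$ to an \emph{equality} requires showing that whenever $\Theta_{V_m, W_n}(\pi) = 0$ on one tower, the lift to the companion space $V_m'$ at the \emph{same} level is forced to be nonzero once $m$ is large enough --- a ``one of the two must survive'' statement. This is where the precise structure of Kudla's filtration of the Weil representation restricted to a maximal parabolic, and the careful bookkeeping of which space in the companion pair carries the nonzero Jacquet module, enters; it is also where the normalization of $\epsilon_0$ and $\kappa$ from \S\ref{spaces} must be tracked exactly. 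Since this proposition is quoted from \cite{KR} and \cite{SZ}, in the paper itself I would simply record the statement with attribution and refer the reader there, but the above is the route a self-contained proof would take.
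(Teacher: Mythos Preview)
Your final conclusion is exactly right: the paper does not prove this proposition at all. It is stated with the preamble ``The following proposition is often referred to as the conservation relation (see \cite{SZ})'' and simply attributed to Sun--Zhu (with the earlier partial result of Kudla--Rallis \cite{KR} mentioned in the introduction). So there is no ``paper's own proof'' to compare against, and your instinct to record the statement with attribution is precisely what Atobe--Gan do.

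Regarding the sketch you offered: as an outline of the Sun--Zhu strategy it is broadly in the right spirit --- the doubling see-saw and the dichotomy between the two companion towers are indeed the engine --- but Step~(3) as written is too vague to stand on its own. The actual Sun--Zhu argument does not pass through a ``local root number sign invariant'' in the way you suggest; rather, it hinges on a delicate analysis of the boundary degenerate principal series and a careful induction showing that nonvanishing on one tower \emph{forces} vanishing on the other at the complementary level (the so-called ``theta dichotomy'' in the strong form). Your see-saw diagram is also not quite the relevant one: Sun--Zhu work with the doubled \emph{target} group rather than the doubled source group, and the key object is the filtration of the degenerate principal series on the doubled side. None of this matters for the present paper, since the result is used as a black box.
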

\vskip 5pt

This relation shows that
\[
m^\up(\pi) \geq n+1+\epsilon_0
\quad\text{and}\quad
m^\down(\pi) \leq n+1+\epsilon_0.
\]
If we put
\[
l=n-m^\down(\pi)+\epsilon_0,
\]
then we have $l \geq -1$.

%\section{Parametrization of irreducible representations}
%\section{Parametrization of irreducible representations}
\section{Parametrization of irreducible representations}\label{LLC}
In this section, we explain the local Langlands correspondence (LLC) quickly.
More precisely, see Appendix \ref{app LLC}.
\vskip 5pt

Let $\WD_E=W_E \times \SL_2(\C)$ be the Weil--Deligne group of $E$.
We define $\Phi(H(V_m))$, which is a set of equivalence classes of representations of $\WD_E$, 
in the various cases as follows:
\[
\left\{
\begin{aligned}
\Phi(\Oo(V_m))&=\{ \phi \colon \WD_F \rightarrow \Sp(m-1,\C)\}/\cong,\iif \text{$m$ is odd},\\
\Phi(\Sp(V_m))&=\{ \phi \colon \WD_F \rightarrow \SO(m+1,\C)\}/\cong,\\
\Phi(\Oo(V_m))&=\{ \phi \colon \WD_F \rightarrow \Oo(m,\C)\ |\ \det(\phi)=\chi_{V}\}/\cong,
\iif \text{$m$ is even},\\
\Phi(\Mp(V_m))&=\{ \phi \colon \WD_F \rightarrow \Sp(m,\C)\}/\cong.
\end{aligned}
\right.
\]
For the unitary group $\U(m)$, we define $\Phi(\U(m))$ to be 
the set of equivalence classes of conjugate self-dual representations of 
$\WD_E$ with sign $(-1)^{m-1}$.
For the notion of conjugate self-dual representations, 
see Appendix \ref{repWD}.
\vskip 5pt

We say that $\phi \in \Phi(H(V_m))$ is tempered if $\phi(W_E)$ is bounded.
We denote by $\Phi_\temp(H(V_m))$ the subset of equivalence classes of tempered $\phi$.
For $\phi \in \Phi(H(V_m))$, we denote by $L(s,\phi)$, $\ep(s,\phi,\psi')$, and $\gamma(s,\phi,\psi')$
the $L$-, $\ep$-, and $\gamma$-factors associated to $\phi$, respectively.
Here, $\psi'$ is a non-trivial additive character of $E$.
The root number $\ep(1/2,\phi,\psi')$ is also denoted by $\ep(\phi)$ or $\ep(\phi,\psi')$.
\vskip 5pt

For an irreducible representation $\phi_0$ of $\WD_E$, 
we denote the multiplicity of $\phi_0$ in $\phi$ by $m_\phi(\phi_0)$.
We can decompose 
\[
\phi=m_1\phi_1+\dots+m_r\phi_r+\phi'+{}^c\phi'^\vee,
\]
where $\phi_1,\dots, \phi_r$ are distinct irreducible representations
of $\WD_E$ of the same type as $\phi$, $m_i=m_\phi(\phi_i)$, 
and $\phi'$ is a sum of irreducible representations of $\WD_E$
which are not of the same type as $\phi$.
We define the component group $A_\phi$ by
\[
A_\phi=\bigoplus_{i=1}^{r}(\Z/2\Z) a_i \cong (\Z/2\Z)^r.
\]
Namely, $A_\phi$ is a free $\Z/2\Z$-module of rank $r$ and $\{a_1, \dots, a_r\}$
is a basis of $A_\phi$ with $a_i$ associated to $\phi_i$.
For $a=a_{i_1}+\dots+a_{i_k} \in A_\phi$ with $1\leq i_1 < \dots < i_k \leq r$, 
we put
\[
\phi^{a}=\phi_{i_1} \oplus \dots \oplus \phi_{i_k}.
\]
Also, we set
\[
z_\phi \coloneqq \sum_{i=1}^r m_\phi(\phi_i)\cdot a_i 
=\sum_{i=1}^r m_i \cdot a_i \in A_\phi.
\]
We call $z_\phi$ the central element in $A_\phi$.
There is a homomorphism
\begin{align*}
\det \colon A_\phi \rightarrow \Z/2\Z, \quad
\sum_{i=1}^r \ep_i a_i \mapsto \sum_{i=1}^r \ep_i \cdot \dim(\phi_i) \pmod 2,
\end{align*}
where $\ep_i\in\{0,1\} = \Z/2\Z$.
\vskip 5pt

The LLC classifies $\Irr(H(V_m))$ as follows:
\begin{des}\label{desLLC}
\begin{enumerate}
\item
There exists a partition
\[
\bigsqcup_{V_m^\bullet}\Irr(H(V_m^\bullet)) = \bigsqcup_{\phi \in \Phi(H(V_m))}\Pi_\phi,
\]
where $V_m^\bullet$ runs over all companion spaces of $V_m$.
We call $\Pi_\phi$ the $L$-packet of $\phi$.
\item
$\pi \in \Irr(H(V_m^\bullet))$ is tempered
if and only if
$\pi$ belongs to $\Pi_\phi$ for tempered $\phi$.
\item
There exists a map
\[
\iota \colon \Pi_\phi \rightarrow \widehat{A_\phi},
\]
which satisfies certain character identities. 
Here, we denote by $\widehat{A_\phi}$ the Pontryagin dual of $A_\phi$.
\item
The map $\iota$ is surjective unless $H(V_m) = \Sp(V_m)$ is a symplectic group.
In this case, 
the image of $\iota$ is given by
\[
\{\eta \in \widehat{A_\phi}\ |\ \eta(z_\phi)=1\}.
\]
\item
The map $\iota$ is injective unless $H(V_m)=\Oo(V_m)$ is an odd orthogonal group (i.e., $m$ is odd).
In this case, each fiber of this map is of the form
\[
\{\pi,\ \pi \otimes \det\}.
\]
Hence the map
\[
\Pi_\phi \rightarrow \widehat{A_\phi} \times \{\pm1\},\ 
\pi \mapsto (\iota(\pi), \omega_\pi(-1))
\]
is bijective, where $\omega_\pi$ is the central character of $\pi$.
\item
Suppose that $V_m^-$ exists.
Then $\pi \in \Pi_\phi$ is a representation of $H(V_m^-)$
if and only if $\iota(\pi)(z_\phi)=-1$.
\end{enumerate}
\end{des}
\vskip 5pt

Therefore, unless $H(V_m)=\Oo(V_m)$ is an odd orthogonal group, 
$\pi \in \Irr(H(V_m))$ is parametrized by $(\phi,\eta)$,
where $\phi \in \Phi(H(V_m))$ and $\eta \in \widehat{A_\phi}$.
If $H(V_m)=\Oo(V_m)$ is an odd orthogonal group, 
$\pi \in \Irr(H(V_m))$ is parametrized by the triple $(\phi, \eta, \nu)$,
where $\phi \in \Phi(H(V_m))$, $\eta \in \widehat{A_\phi}$ and $\nu \in \{\pm1\}$.
The pair $(\phi,\eta)$ is called the $L$-parameter for $\pi$.
We also call $\phi$ and $\eta$ the last name and the first name of $\pi$, respectively.

\begin{rem}
The map $\iota \colon \Pi_\phi \rightarrow \widehat{A_\phi}$ may not be canonical.
To specify $\iota$, we need to choose a Whittaker datum for $H(V_m)$.
More precisely, see Remark \ref{non-can} below.
\end{rem}
\vskip 5pt

Suppose that $H(V_m)=\Oo(V_m)$ is an even orthogonal group (i.e., $m$ is even).
Then the following are equivalent:
\begin{itemize}
\item
$\phi \in \Phi(\Oo(V_m))$ contains an irreducible orthogonal representation of $\WD_F$
with odd dimensional;
\item
some $\pi \in \Pi_\phi$ satisfies that $\pi \not\cong \pi \otimes \det$;
\item
any $\pi \in \Pi_\phi$ satisfies that $\pi \not\cong \pi \otimes \det$.
\end{itemize}

%\section{Main results}
%\section{Main results}
\section{Main results}
The purpose of this paper is to describe theta lifts of tempered representations 
in terms of the local Langlands correspondence.
In this section, we state the main results over $3$ theorems.
Though we formulate the main results as $3$ theorems, 
these are proven together (in \S \ref{pf}).
\vskip 5pt

We denote by $S_r$ 
the unique irreducible algebraic representation of $\SL_2(\C)$ with dimension $r$.
The first main theorem gives an answer to Problem A in \S \ref{intro} for tempered representations.
%main 1
\begin{thm}\label{main1}
Let $(V_m,W_n)$ and $\kappa\in\{1,2\}$ be as in \S \ref{spaces}, 
and $\pi \in \Irr_\temp(G(W_n))$ with $L$-parameter $(\phi,\eta)$.
\begin{enumerate}
\item
Consider the set $\TT$ containing $\kappa-2$ and all integers $l >0$ with $l \equiv \kappa \bmod 2$
satisfying the following conditions:
\begin{itemize}
\item (chain condition) $\phi$ contains $\chi_V S_{r}$ for $r=\kappa, \kappa+2, \dots, l$;
\item (odd-ness condition)
the multiplicity $m_\phi(\chi_V S_{r})$ is odd for $r=\kappa, \kappa+2, \dots, l-2$;
\item (initial condition) 
if $\kappa=2$, then
\[
\eta(e_2)=
\left\{
\begin{aligned}
&\epsilon \cdot \delta(\chi_V=\1)
\iif\text{$E=F$ and $m \not\equiv n \bmod 2$},\\
&-1
\iif\text{$E\not=F$ and $m \equiv n \bmod 2$};
\end{aligned}
\right.
\]
\item (alternating condition) 
$\eta(e_{r})=-\eta(e_{r+2})$ for $r=\kappa, \kappa+2, \dots, l-2$.
\end{itemize}
Here, $e_r$ is the element in $A_\phi$ corresponding to $\chi_V S_r$, 
and for a character $\chi$, we put
\[
\delta(\chi=\1)=\left\{
\begin{aligned}
&+1	\iif \chi=\1,\\
&-1	\other.
\end{aligned}
\right.
\]
Let
\[  
l(\pi) = \max \, \TT.  
\]
Then
\[  
m^{\down}(\pi) =  n + \epsilon_0 - l(\pi) 
\quad \text{and} \quad   
m^{\up}(\pi) = n+2+\epsilon_0 + l(\pi). 
\]
\vskip 5pt

\item
If $l(\pi)=-1$, then $m^\up(\pi)=m^\down(\pi)$.
Suppose that $l(\pi)\geq0$.
Then $\phi$ contains $\chi_V$ if $\kappa=1$.
Moreover, $m^\down(\pi)=m^\alpha(\pi)$ if and only if 
\[
\alpha=\left\{
\begin{aligned}
&\eta(z_\phi+e_1)
\iif \kappa=1,\\
&\eta(z_\phi)\cdot \ep(\phi)\cdot \ep(\phi \otimes \chi_V) \cdot \chi_V(-1)^{\half{n}}
\iif \text{$E=F$, $m\not\equiv n \bmod 2$ and $\epsilon=+1$},\\
&\eta(z_\phi)\cdot \ep(\phi)
\iif \text{$E=F$, $m\not\equiv n \bmod 2$ and $\epsilon=-1$},\\
&\eta(z_\phi) \cdot \ep(\phi \otimes \chi_V^{-1}, \psi_2^E)
\iif \text{$E\not=F$ and $m\equiv n \bmod 2$}.
\end{aligned}
\right.
\]
\end{enumerate}
We call $\VV^\down\coloneqq\VV^{\alpha}$ (\resp $\VV^\up\coloneqq\VV^{-\alpha}$)
the going-down tower (\resp the going-up tower)
with respect to $\pi$.
\end{thm}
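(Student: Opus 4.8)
The plan is to deduce Theorem~\ref{main1} from the known almost equal rank case together with an inductive descent argument, using as the main engine the local Gross--Prasad conjecture and Prasad's conjectures (Appendices C and D), supplemented by the behaviour of gamma factors and Plancherel measures under theta correspondence \cite{GI1} and the conservation relation (Proposition~\ref{cons}). Concretely, I would organize the proof around the ``two-step'' or ``see-saw'' mechanism: a theta lift from $G(W_n)$ to $H(V_m)$ with $|n-m|$ large is analysed by factoring it through an intermediate group, reducing the gap $|n-m+\epsilon_0|$ by one at each stage, so that eventually one lands in the almost equal rank situation where Prasad's conjecture gives the answer outright.

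First I would set up the base case $l(\pi) = -1$ (equivalently $\kappa = 1$ and the chain condition already fails at $r = 1$, or $\kappa = 2$ and the initial condition fails): here $m^\down(\pi) = m^\up(\pi) = n + \epsilon_0$ should follow directly from the almost equal rank results of \cite{GI2}, \cite{At}, \cite{AG}, \cite{GS}, which compute the first occurrence index and the going-down/going-up dichotomy in terms of the sign $\alpha$ written in part (2); in particular the sign conditions involving $\epsilon(\phi)$, $\epsilon(\phi\otimes\chi_V)$, $\chi_V(-1)^{n/2}$ and $\epsilon(\phi\otimes\chi_V^{-1},\psi_2^E)$ are precisely the outputs of Prasad's conjecture and I would simply cite Appendix D. Next, for the inductive step, suppose $l(\pi)\geq\kappa$; I would show that the first occurrence of $\pi$ in the going-down tower is governed by whether $\phi$ contains $\chi_V S_\kappa$ and whether the relevant component-group character satisfies the initial/alternating constraints. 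The key computational inputs are: (i) Kudla's filtration of the Jacquet module of the Weil representation, which relates $\Theta_{V_m,W_n}(\pi)$ to parabolically induced representations and hence lets one ``see'' the segment $S_{2n-2m}$ or $S_{l(\pi)+2}$ appearing; (ii) the conservation relation, which pins down $m^\up$ once $m^\down$ is known; and (iii) the GP/theta compatibility, which translates the non-vanishing of a theta lift one step up into a non-vanishing of a Bessel or Fourier--Jacobi period, controlled by root numbers $\epsilon(1/2,\phi_i\otimes S_{\ldots})$.

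The heart of the argument --- and what I expect to be the main obstacle --- is verifying that the combinatorial set $\TT$ (chain, odd-ness, initial, and alternating conditions) is exactly the set of $l$ for which the descent can proceed $l/\kappa$ steps before stalling. This requires a careful bookkeeping of how the component group $A_\phi$ and the character $\eta$ transform under each theta step: at each stage one either removes a summand $\chi_V S_r$ (when $\phi$ is a discrete series parameter and $m < n$, cf. Theorem on the going-down tower, $\theta_{2m+1}(\phi) = \phi - S_{2n-2m}$) or must track the sign flips $\theta(\eta)(a_i)/\eta(a_i) = \epsilon(1/2,\phi_i\otimes S_{\ast})\epsilon(1/2,\phi_i)$, and one must check that the alternating condition $\eta(e_r) = -\eta(e_{r+2})$ is exactly what keeps $\eta(z_\phi + \ldots)$ in the right coset at every step so that the going-down tower does not switch. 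I would handle this by induction on $n$, peeling off the top of the chain: given $\pi$ with parameter $(\phi,\eta)$ and $l(\pi)\geq\kappa$, I pass to the first occurrence $\pi_1$ on the going-up tower at level $n+2+\epsilon_0+(\kappa-2)$ (the almost-equal-rank lift when $l=\kappa-2$), whose parameter is $\phi + \chi_V S_\kappa$ with a controlled first name; then I check $l(\pi_1) = l(\pi) - \kappa$ and invoke the inductive hypothesis, finally unwinding via the conservation relation to get $m^\down(\pi) = n + \epsilon_0 - l(\pi)$. The delicate points are the precise normalization of additive characters and the $\chi_V$-twists (the $\psi_2^E$ versus $\psi^E$ distinction, and the factor $\delta(\chi_V = \1)$), and ensuring the ``odd-ness condition'' --- needed because a summand of even multiplicity does not contribute a new basis element of $A_\phi$ and so cannot carry the alternating sign --- is respected throughout; these I would verify by direct computation with the Gan--Ichino gamma factor identities rather than attempt to motivate abstractly.
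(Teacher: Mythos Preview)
Your ingredients list is essentially correct --- gamma factors, Kudla's filtration, the GP conjecture via see-saw, Prasad's conjectures for the base case, and the conservation relation are exactly what the paper uses. But your inductive mechanism, as written, does not work. You propose to ``pass to the first occurrence $\pi_1$ on the going-up tower at level $n+2+\epsilon_0+(\kappa-2)$'' and check $l(\pi_1)=l(\pi)-\kappa$; however, the almost-equal-rank level is not the first occurrence on the going-up tower (that occurs at $m^{\up}(\pi)=n+2+\epsilon_0+l(\pi)$, which is what you are trying to determine), and in any case a theta lift of $\pi$ lands in $\Irr(H(V_m))$, so ``$l(\pi_1)$'' is computed with respect to a different Witt tower and cannot be compared directly with $l(\pi)$. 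There is no clean way to reduce $l(\pi)$ by $\kappa$ at each step while staying on the $G(W)$-side.

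The paper's architecture is different and not an induction on $l(\pi)$. It proves separately the two halves of part (1): first, that nonvanishing of $\Theta_{V_m,W_n}(\pi)$ with $l=n-m+\epsilon_0>0$ forces $l\in\TT$ (chain and odd-ness via poles of the standard gamma factor plus an induction on $n$ through parabolic induction; initial and alternating via a see-saw computation of $\theta(\eta)(a)/\eta(a)$ using GP, applied at each level of the tower). Second, and this is the step you have not addressed, that $l(\pi)+2\notin\TT$: one shows the first occurrence $\sigma^{\up}$ on the going-up tower is tempered whenever $m_\phi(\chi_V S_{l(\pi)})$ is odd, then applies the already-established necessary conditions to $\sigma^{\up}$ to force the alternating condition to break at the next step. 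The temperedness of $\sigma^{\up}$ for general tempered $\pi$ is obtained by writing $\pi$ as a constituent of an induction from a discrete series $\pi_0$ and reducing to the discrete series case (where temperedness of the first going-up lift follows from a direct Jacquet-module argument). This reduction from tempered to discrete series, and the role of the going-up first lift in proving the upper bound on $l(\pi)$, are the main ideas missing from your outline.
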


\begin{rem}
Recall that when $(G(W_n),H(V_m)) = (\Oo(W_n), \Sp(V_{m}))$ with even $n$, 
by the definition, $m^\down(\pi) = m^+(\pi)$ for each $\pi \in \Irr(\Oo(W_n))$ (see \S \ref{FT}).
In this case, (2) asserts that 
if $\pi \in \Irr(\Oo(W_n))$ satisfies that 
$\Theta_{V_m,W_n}(\pi) \not=0$ and $\Theta_{V_m,W_n}(\pi \otimes \det) =0$ for 
some $m \leq n$, then the $L$-parameter $(\phi,\eta)$ of $\pi$ satisfies that
$\phi \supset \1$ and $\eta(z_\phi + e_1) =1$.
This follows from Prasad's conjecture (Theorem \ref{PA} below).
\end{rem}
\vskip 5pt

The proof of Theorem \ref{main1} is given in \S \ref{pf}.
We give an indication for the relevant result.
To prove (1), it is enough to show the following two statements:
\begin{itemize}
\item
If $\Theta_{V_m^\bullet,W_n}(\pi) \not=0$, then $l\coloneqq n-m+\epsilon_0 \in \TT$.
\item
$n-m^\down(\pi)+\epsilon_0+2 \not\in \TT$.
\end{itemize}
For the first assertion, 
(chain condition) and (odd-ness condition) follow from Corollary \ref{nus}, and
(initial condition) and (alternating condition) follow from Proposition \ref{alt}.
The second assertion follows from Corollary \ref{temp2}, Proposition \ref{non-alt}
and Prasad's conjecture (Theorem \ref{PA}).
The assertion (2) follows from Prasad's conjecture (Theorem \ref{PA}) together with
a comparison of central elements $z_\phi$ (Proposition \ref{center1})
unless $E=F$, $m\not\equiv n \bmod 2$ and $\epsilon=-1$.
In this case, we compare the central character of $\pi \in \Irr(\Oo(W_n))$
with the central element $z_\phi$ (Proposition \ref{sign2}). 
\vskip 5pt

The second and third main theorems describe the $L$-parameter for $\theta_{V_m,W_n}(\pi)$.
%main2
\begin{thm}\label{main2}
Let $(V_m,W_n)$ and $\kappa\in\{1,2\}$ be as in \S \ref{spaces}, 
and $\pi \in \Irr_\temp(G(W_n))$ with $L$-parameter $(\phi,\eta)$.
Assume that $V_m$ belongs to the going-down tower $\VV^\down$,
$m\geq m^\down(\pi)$ and $m\equiv m^\down(\pi) \bmod 2$.
Put $m_1=n+\epsilon_0+2-\kappa$.
Let $(\theta_m(\phi),\theta_m(\eta))$ be the $L$-parameter for $\theta_{V_m,W_n}(\pi)$.
\begin{enumerate}
\item
If $m^\down(\pi) \leq m < m_1$, then
\[
\theta_m(\phi)=(\phi \otimes \chi_V^{-1}\chi_W)-\chi_W S_l,
\]
where $l=n-m+\epsilon_0>0$.
In particular, there is a canonical injection $A_{\theta_m(\phi)} \hookrightarrow A_{\phi}$.
If $l=1$, then we have $\eta | A_{\theta_{m}(\phi)} = \theta_{m}(\eta)$.
If $l>1$, then $\theta_m(\eta)(a)/\eta(a)$ is equal to
\[
\left\{
\begin{aligned}
&\ep(\phi^a\chi_{V}^{-1} \otimes S_{l-1}) 
\cdot \ep(\phi^a) \cdot \chi_{V}(-1)^{\half{1}\dim(\phi^a)}
\iif \text{$E=F$, $\epsilon = +1$ and $m$ is odd},\\
&\ep(\phi^a\chi_{V}^{-1} \otimes S_{l-1}) 
\cdot \ep(\phi^a\chi_{W}) \cdot \chi_{W}(-1)^{\half{1}\dim(\phi^a)}
\iif \text{$E=F$, $\epsilon = -1$ and $n$ is odd},\\
&\ep(\phi^a\chi_{V}^{-1} \otimes S_{l-1}) \cdot \det(\phi^a \chi_V^{-1})(-1)^{\half{l-1}}
\iif \text{$E=F$ and $m$, $n$ are even},\\
&\ep(\phi^a\chi_{V}^{-1} \otimes S_{l-1},\psi^E_{2})
\iif \text{$E\not=F$},\\
\end{aligned}
\right.
\]
for any $a \in A_{\theta_m(\phi)} \subset A_{\phi}$.
\item
If $m=m_1$ and $\kappa=1$, then
\[
\theta_{m_1}(\phi)=(\phi \otimes \chi_V^{-1}\chi_W) \oplus \chi_W.
\]
In particular, there is a canonical injection $A_{\phi} \hookrightarrow A_{\theta_{m_1}(\phi)}$.
Moreover, we have $\theta_{m_1}(\eta) | A_{\phi} =\eta$.
\item
If $m=m_1$ and $\kappa=2$, then
\[
\theta_{m_1}(\phi)=\phi \otimes \chi_V^{-1}\chi_W.
\]
In particular, there is a canonical identification $A_{\phi} = A_{\theta_{m_1}(\phi)}$. 
Moreover, $\theta_m(\eta)(a)/\eta(a)$ is equal to
\[
\left\{
\begin{aligned}
&\ep(\phi^a) \cdot \ep(\phi^a \otimes \chi_V^{-1}\chi_W) \cdot 
(\chi_V^{-1}\chi_W)(-1)^{\half{1}\dim(\phi^a)}
\iif \text{$E=F$},\\
&\ep(\phi^a \otimes \chi_V^{-1}, \psi^E_2)
\iif \text{$E\not=F$}
\end{aligned}
\right.
\]
for any $a \in A_{\theta_{m_1}(\phi)} = A_{\phi}$.
\item
If $m>m_1$, then $\theta_m(\phi)$ is equal to
\[
\theta_{m_1}(\phi) \oplus 
\left(\bigoplus_{i=1}^{(m-m_1)/2}\left(\chi_W|\cdot|_E^{\half{m-n-\epsilon_0+1}-i} \oplus
\chi_W|\cdot|_E^{-\half{m-n-\epsilon_0+1}+i}\right)\right).
\]
In particular, there is a canonical identification $A_{\theta_m(\phi)} = A_{\theta_{m_1}(\phi)}$.
Moreover, we have $\theta_{m}(\eta) | A_{\theta_{m_1}(\phi)} =\theta_{m_1}(\eta)$.
\item
If $(G(W_n),H(V_m))=(\Mp(W_n),\Oo(V_m))$ with odd $m$, then
$\theta_{V_m,W_n}(\pi)$ is parametrized by $(\theta_m(\phi),\theta_m(\eta),\nu_m(\phi,\eta))$ with
\[
\nu_m(\phi,\eta)=
\eta_\pi(z_{\phi}) \cdot \ep(\phi) \cdot \chi_V(-1)^{\half{n}}.
\]
\end{enumerate}
\end{thm}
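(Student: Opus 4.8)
The plan is to prove Theorems \ref{main1}, \ref{main2} and \ref{main3} together, by a simultaneous induction in which the determination of the first occurrence indices (Theorem \ref{main1}, including the identification of the going-down tower $\VV^{\down}$) and of the $L$-parameters (Theorems \ref{main2} and \ref{main3}) are bootstrapped off one another. The base case is the \emph{almost equal rank} situation: on the going-down tower this is exactly the level $m=m_1=n+\epsilon_0+2-\kappa$, and the two possibilities $\kappa=2$ and $\kappa=1$ correspond respectively to $\dim V_{m_1}=n+\epsilon_0$ and $\dim V_{m_1}=n+\epsilon_0+1$, which are precisely the two cases covered by Prasad's conjecture. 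Thus parts (2) and (3) of Theorem \ref{main2}, together with the value of $\nu_{m_1}$ in part (5), are obtained by quoting Theorem \ref{PA} and feeding into it the explicit formula for the central character attached to an $L$-parameter by the local Langlands correspondence.

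For the \emph{going-up} range $m>m_1$, i.e.\ part (4), I would argue as follows. By the tower property (Proposition \ref{tower}) one has $\Theta_{V_m,W_n}(\pi)\not=0$, and Kudla's filtration of the Weil representation $\omega_{V_m,W_n}$ along the maximal parabolic of $H(V_m)$ with Levi $\GL_1(E)\times H(V_{m-2})$ exhibits $\Theta_{V_m,W_n}(\pi)$ as a quotient of $\chi_W|\cdot|_E^{s}\rtimes\Theta_{V_{m-2},W_n}(\pi)$ with $s=\half{m-n-\epsilon_0+1}-1$. Since, by the inductive hypothesis, $\theta_{V_{m-2},W_n}(\pi)$ is already known and is tempered when $m-2=m_1$, and since the relevant exponents form a strictly decreasing sequence of positive reals, the Howe duality conjecture (Theorem \ref{howe}) forces $\theta_{V_m,W_n}(\pi)$ to be the Langlands quotient of the standard module displayed in part (4). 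Its $L$-parameter and component-group character are then read off, the extra $\GL_1$-twists contributing nothing to the component group, so that $\theta_m(\eta)$ and, in the metaplectic--orthogonal case, the sign $\nu_m$ of part (5), simply propagate their values at $m=m_1$.

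The heart of the proof is the \emph{going-down} range $m^{\down}(\pi)\le m<m_1$, part (1), and it is here that the Gross--Prasad conjecture enters. First, an analysis of the behaviour of the standard $\gamma$- and $L$-factors and of the Plancherel measures under the theta correspondence, following Gan--Ichino \cite{GI1}, pins down the cuspidal support of $\theta_{V_m,W_n}(\pi)$ and shows that it is tempered in this range; this forces $\theta_m(\phi)=(\phi\otimes\chi_V^{-1}\chi_W)-\chi_W S_l$ with $l=n-m+\epsilon_0$, the subtraction of the single block $\chi_W S_l$ being legitimate precisely because of the chain and odd-ness conditions of Theorem \ref{main1}. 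It remains to identify $\theta_m(\eta)$, that is, to say which member of the tempered packet $\Pi_{\theta_m(\phi)}$ the lift is. For this I would use a see-saw dual pair which converts, for auxiliary representations $\sigma$ of smaller groups, the branching multiplicity $\dim\Hom_{H(V_m)}(\theta_{V_m,W_n}(\pi),\sigma)$ into a Bessel or Fourier--Jacobi multiplicity for a pair $(G(W_n),G(W_{n'}))$; the Gross--Prasad conjecture, now a theorem in the tempered case (Appendix C), evaluates the latter in terms of an explicit local root number, and combining this with Prasad's conjecture at $m=m_1$ as the base of a descending induction on $m$ yields the displayed formula for $\theta_m(\eta)(a)/\eta(a)$ in each of the four cases. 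Throughout, the conservation relation (Proposition \ref{cons}) is used to halve the work, since the going-up tower of Theorem \ref{main3} is determined by the going-down one.

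The step I expect to be the main obstacle is the precise determination of $\theta_m(\eta)$ uniformly over all cases. Getting the constants right --- the normalisations of additive characters ($\psi^E_2$ and the rest), the factors $\chi_V(-1)^{\half{1}\dim(\phi^a)}$ and $\det(\phi^a\chi_V^{-1})(-1)^{\half{l-1}}$, and, in the metaplectic case, the dependence of the local Langlands correspondence on the chosen Whittaker datum --- requires threading the explicit cocycle defining the splitting $G(W)\times H(V)\to\Mp(W\otimes V)$ and the character identities of the LLC through the see-saw and the root-number computation without sign errors; this bookkeeping, rather than any single conceptual point, is the technical crux. A secondary difficulty is arranging the joint induction so that non-vanishing, the temperedness of the lift for $m<m_1$, and the value of the parameter are established in a mutually consistent order, each being used in the argument for the others; the device that makes this possible is to treat the almost equal rank case as a sufficiently rich base case and to move one step along the Witt tower at a time.
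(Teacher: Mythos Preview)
Your proposal is correct and follows essentially the same approach as the paper: parts (2) and (3) are the almost equal rank cases handled by Prasad's conjectures (Theorems \ref{Mp}, \ref{PE}, \ref{PA}); part (4) is the Langlands quotient structure coming from Kudla's filtration (the paper cites \cite[Proposition 3.2]{GT1}, stated as Proposition \ref{L-quot}); part (1) is obtained by first determining the last name via Plancherel measures (Theorem \ref{param}) and then the first name via a see-saw combined with the Gross--Prasad conjecture (Theorem \ref{main2.1}); and part (5) is Proposition \ref{sign}. One small clarification: for part (1) the paper does not literally run a descending induction on $m$, but rather performs at each $l>1$ a single see-saw computation --- one starts with an auxiliary $\pi'$ satisfying a GP relation with $\pi$ on the $G(W)$-side (produced by Lemma \ref{Lemma 12.5}), transfers via the see-saw to a pairing $(\sigma,\sigma')$ on the $H(V)$-side, applies GP on both sides, and the resulting formula for $\theta(\eta)(a)/\eta(a)$ has an undetermined sign $\nu$ which is then pinned down using Prasad's conjecture together with the odd-ness condition of Corollary \ref{nus}.
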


\begin{rem}
In Theorem \ref{main1} (2), we note that 
\[
[A_{\theta_{m_1}(\phi)} : A_\phi] = 
\left\{
\begin{aligned}
&1	\iif	\text{$\phi$ contains $\chi_V$},\\
&2	\iif	\text{$\phi$ does not contain $\chi_V$}.
\end{aligned}
\right.
\]
If $\phi$ does not contain $\chi_V$, 
then $m^+(\pi) = m^-(\pi) = m_1$ for any $\pi \in \Pi_\phi$ by Theorem \ref{main1} (1), 
and $z_{\theta_{m_1}(\phi)}$ is not contained in $A_\phi$.
The value $\theta_{m_1}(\eta)(z_{\theta_{m_1}(\phi)})$ is determined by
Desideratum \ref{desLLC} (4) or (6).
\end{rem}

The assertion (1) will be shown in \S \ref{corres char}.
The assertions (2) and (3) are the (almost) equal rank cases
(Theorems \ref{Mp}, \ref{PE} and \ref{PA}).
The assertion (4) follows from \cite[Proposition 3.2]{GT1}
(see Proposition \ref{L-quot} below).
The assertion (5) is Proposition \ref{sign}.
\vskip 5 pt

%main3
\begin{thm}\label{main3}
Let $(V_m,W_n)$ and $\kappa\in\{1,2\}$ be as in \S \ref{spaces}, 
and $\pi \in \Irr_\temp(G(W_n))$ with $L$-parameter $(\phi,\eta)$.
Assume that $V_m$ belongs to the going-up tower $\VV^\up$
and 
\[
m \geq m^\up(\pi)\geq n+\epsilon_0+2
\quad \text{i.e.,}\quad
l(\pi)\geq0.
\]
Let $(\theta_m(\phi),\theta_m(\eta))$ be the $L$-parameter for 
$\theta_{V_m,W_n}(\pi)$.
We put $l=m-n-\epsilon_0-2 \geq 0$.
\begin{enumerate}
\item
Suppose that $m=m^\up(\pi)$ so that $l=l(\pi)$.
If $l=0$ or $m_\phi(\chi_V S_l)$ is odd, then
\[
\theta_{m}(\phi)=(\phi \otimes \chi_V^{-1}\chi_W) \oplus \chi_W S_{l+2},
\]
so that $\theta_{V_m,W_n}(\pi)$ is tempered.
In particular, there is a canonical injection $A_{\phi} \hookrightarrow A_{\theta_{m}(\phi)}$.
Moreover, $\theta_{m}(\eta)(a)/\eta(a)$ is equal to
\[
\left\{
\begin{aligned}
&\ep(\phi^a\chi_{V}^{-1} \otimes S_{l+1}) 
\cdot \ep(\phi^a) \cdot \chi_{V}(-1)^{\half{1}\dim(\phi^a)}
\iif \text{$E=F$, $\epsilon = +1$ and $m$ is odd},\\
&\ep(\phi^a\chi_{V}^{-1} \otimes S_{l+1}) 
\cdot \ep(\phi^a\chi_{W}) \cdot \chi_{W}(-1)^{\half{1}\dim(\phi^a)}
\iif \text{$E=F$, $\epsilon = -1$ and $n$ is odd},\\
&\ep(\phi^a\chi_{V}^{-1} \otimes S_{l+1}) \cdot \det(\phi^a\chi_V^{-1})(-1)^{\half{l+1}}
\iif \text{$E=F$ and $m$, $n$ are even},\\
&\ep(\phi^a\chi_{V}^{-1} \otimes S_{l+1},\psi^E_{2})
\iif \text{$E\not=F$},\\
\end{aligned}
\right.
\]
for $a \in A_\phi \subset A_{\theta_{m}(\phi)}$.
\item
Suppose that $m=m^\up(\pi)$ so that $l=l(\pi)$.
If $l>0$ and $m_\phi(\chi_V S_l)=2h>0$, then
\[
\theta_{m^\up}(\phi)=
\Big((\phi \otimes \chi_V^{-1}\chi_W)-\chi_WS_l \Big) \oplus 
\Big( \chi_W S_{l+1} \otimes (|\cdot|_E^{\half{1}}+|\cdot|_E^{-\half{1}}) \Big),
\]
so that $\theta_{V_m,W_n}(\pi)$ is not tempered.
In this case,  
$\pi \subset \chi_V \St_l \times \dots \times \chi_V \St_l \rtimes \pi_0$, 
where $\pi_0 \in \Irr_\temp(G(W_{n-2lh}))$ has the $L$-parameter $(\phi_0,\eta_0)$
given by $\phi_0=\phi-(\chi_V \St_l)^{\oplus 2h}$ and $\eta_0=\eta|A_{\phi_0}$.
Then 
\[
m_0\coloneqq m^\up(\pi_0)=m-2lh-2 
\quad\text{and}\quad
\theta_{m_0}(\phi_0)=(\phi \otimes \chi_V^{-1}\chi_W)-(\chi_WS_l)^{\oplus (2h-1)}.
\]
In particular, there is a canonical identification 
$A_{\theta_{m_0}(\phi_0)} = A_{\theta_{m}(\phi)}$.
Moreover, we have 
$\theta_{m}(\eta) | A_{\theta_{m_0}(\phi_0)} =\theta_{m_0}(\eta_0)$.
\item
Suppose that $m>m_1\coloneqq m^\up(\pi)$.
Then $\theta_m(\phi)$ is equal to
\[
\theta_{m_1}(\phi)\oplus
\left(\bigoplus_{i=1}^{(m-m_1)/2}\left(\chi_W|\cdot|_E^{\half{m-n-\epsilon_0+1}-i}
\oplus \chi_W|\cdot|_E^{-\half{m-n-\epsilon_0+1}+i}\right)\right).
\]
In particular, there is a canonical identification $A_{\theta_m(\phi)} = A_{\theta_{m_1}(\phi)}$.
Moreover, we have $\theta_{m}(\eta) | A_{\theta_{m_1}(\phi)} =\theta_{m_1}(\eta)$.
\item
If $(G(W_n),H(V_m))=(\Mp(W_n),\Oo(V_m))$ with odd $m$, then
$\theta_{V_m,W_n}(\pi)$ is parametrized by $(\theta_m(\phi),\theta_m(\eta),\nu_m(\phi,\eta))$ with
\[
\nu_m(\phi,\eta)=
\eta_\pi(z_{\phi}) \cdot \ep(\phi) \cdot \chi_V(-1)^{\half{n}}.
\]
\end{enumerate}
\end{thm}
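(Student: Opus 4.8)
The plan is to prove Theorems \ref{main1}, \ref{main2} and \ref{main3} together in \S\ref{pf}; for Theorem \ref{main3} I would organise the argument around the observation that parts (3) and (4) are formal, while (1) and (2) carry the content. Part (3) describes the lifts above the going-up first occurrence: once $\theta_{V_{m^\up(\pi)},W_n}(\pi)$ is understood, the lifts $\theta_{V_m,W_n}(\pi)$ for larger $m$ in the same Witt tower are, by \cite[Proposition 3.2]{GT1} (Proposition \ref{L-quot} below), the Langlands quotients of the standard modules obtained by inducing with the characters $\chi_W|\cdot|_E^{\half{m-n-\epsilon_0+1}-i}$, which simultaneously produces the displayed $\theta_m(\phi)$ and the identity $\theta_m(\eta)|A_{\theta_{m_1}(\phi)}=\theta_{m_1}(\eta)$ --- exactly as for Theorem \ref{main2} (4). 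Part (4), the sign $\nu_m(\phi,\eta)$ attached to an odd orthogonal $H(V_m)$, is the value at $-\1_{V_m}$ of the central character of $\theta_{V_m,W_n}(\pi)$ computed from the Weil representation; since this is insensitive to which tower $V_m$ lies in, it is literally Proposition \ref{sign}, i.e.\ the same statement and proof as Theorem \ref{main2} (5).

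Next I would reduce part (2) to part (1). When $l=l(\pi)>0$ and $m_\phi(\chi_V S_l)=2h>0$, the LLC for tempered representations together with a Jacquet module computation yields an embedding $\pi\hookrightarrow(\chi_V\St_l)^{\times h}\rtimes\pi_0$, where $\pi_0\in\Irr_\temp(G(W_{n-2lh}))$ has parameter $\phi_0=\phi-(\chi_V\St_l)^{\oplus 2h}$ and $\eta_0=\eta|A_{\phi_0}$. The odd-ness condition defining $l(\pi)$ forces $m_\phi(\chi_V S_{l-2})$ to be odd, so $l(\pi_0)=l-2$ with $m_{\phi_0}(\chi_V S_{l-2})$ odd; hence (1) applies to $\pi_0$ and gives $\theta_{m_0}(\pi_0)$ tempered with $\theta_{m_0}(\phi_0)=(\phi_0\otimes\chi_V^{-1}\chi_W)\oplus\chi_W S_l$ and $m_0=m^\up(\pi)-2lh-2$. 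Transporting this up through the induction via the compatibility of the theta correspondence with parabolic induction (Kudla's filtration), $\theta_{V_{m^\up},W_n}(\pi)$ comes out as the Langlands quotient of $\chi_W|\cdot|_E^{\half{1}}\St_{l+1}\rtimes\sigma_0$ for a suitable tempered constituent $\sigma_0$ of $(\chi_W\St_l)^{\times(h-1)}\rtimes\theta_{m_0}(\pi_0)$; its parameter works out to be exactly the expression displayed in (2), and matching first names across the induction gives $\theta_{m^\up}(\eta)|A_{\theta_{m_0}(\phi_0)}=\theta_{m_0}(\eta_0)$. A parallel but simpler reduction inside (1) itself brings the case ``$m_\phi(\chi_V S_l)$ odd'' down to the base cases $l=0$ or $m_\phi(\chi_V S_l)=1$, in which $\pi$ admits no further reduction of this kind.

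For those base cases of (1) I would argue as follows. Since $m^\up(\pi)$ is the first occurrence in the going-up tower, the back-and-forth property gives $\theta_{W_n,V_{m^\up}}(\theta_{V_{m^\up},W_n}(\pi))=\pi$, so $\sigma\coloneqq\theta_{V_{m^\up},W_n}(\pi)$ has parameter containing $\phi\otimes\chi_V^{-1}\chi_W$; a dimension count, using $m^\up(\pi)=n+\epsilon_0+2+l$ from Theorem \ref{main1}, then forces this parameter to be $(\phi\otimes\chi_V^{-1}\chi_W)\oplus\rho$ with $\dim\rho=l+2$. To identify $\rho=\chi_W S_{l+2}$ and to see that $\sigma$ is tempered, I would combine the behaviour of standard $\gamma$-factors and Plancherel measures along the theta tower from \cite{GI1} (Corollary \ref{nus} and its relatives), which constrains which $\chi_W S_r$ and which unramified twists can occur in $\rho$ and detects any positive-exponent Langlands quotient structure, with the chain, initial and alternating conditions characterising $l(\pi)=l$, which then leave $\chi_W S_{l+2}$ as the only admissible completion. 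The first name $\theta_{m^\up}(\eta)$ is computed by a see-saw argument that reduces the ratios $\theta_{m^\up}(\eta)(a)/\eta(a)$ to Gross--Prasad branching for $\pi$ and its almost equal rank theta lifts; the Gross--Prasad conjecture, now a theorem, evaluates these in terms of the root numbers $\epsilon(1/2,\phi^a\chi_V^{-1}\otimes S_{l+1})$, the case division reflecting the type of the dual pair and the almost equal rank inputs being Prasad's conjectures (Theorems \ref{Mp}, \ref{PE}, \ref{PA}).

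The hard part will be this last base-case analysis in (1): showing that the self-dual completion $\rho$ is precisely $\chi_W S_{l+2}$, and not a decomposition of it into smaller tempered summands or a non-tempered piece. This is exactly where the full strength of the conditions defining $l(\pi)$ must be used, and where the \cite{GI1} input on $\gamma$-factors and Plancherel measures and the Gross--Prasad evaluation of the first name have to be coordinated; by comparison, carrying the $\epsilon$-factor bookkeeping through the four cases ($E=F$ with $\epsilon=\pm1$ and the various parities of $m,n$, versus $E\ne F$) is tedious but routine.
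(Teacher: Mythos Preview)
Your organisation of (3) and (4) as ``formal'' and (2) as reducible to (1) is broadly correct, but there are two genuine gaps.

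\textbf{Part (1): temperedness is not proved.} Your base-case argument is circular. You propose to identify $\rho=\chi_W S_{l+2}$ and simultaneously establish temperedness of $\sigma=\theta_{V_{m^\up},W_n}(\pi)$ via the Plancherel/gamma-factor machinery of \cite{GI1}. But the converse theorem for Plancherel measures (Lemma \ref{converse}) requires both parameters to be \emph{tempered} as an input; it cannot be used to detect or rule out non-tempered summands. Likewise, the ``back-and-forth'' containment $\phi\otimes\chi_V^{-1}\chi_W\subset\phi_\sigma$ only makes sense once you know $\sigma$ is tempered. The paper's route is entirely different: temperedness is proved \emph{first} and \emph{independently} (Corollary \ref{temp2}), by reducing to the discrete-series case. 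One writes $\pi$ as a subquotient of $\chi_V\tau_1\times\dots\times\chi_V\tau_r\rtimes\pi_0$ with $\pi_0$ discrete series, shows via Theorem \ref{main1} (already established for discrete series) that $m^\up(\pi_0)=n_0+\epsilon_0+l+2$ exactly when $m_\phi(\chi_V S_l)$ is odd (Proposition \ref{pi0down}), and then invokes Proposition \ref{temp}(2), which says that for a discrete-series $\pi_0$ the first going-up lift has only tempered subquotients. Once $\sigma$ is known to be tempered, the last name and first name follow by applying the \emph{already-proven} going-down results (Theorem \ref{param} and Theorem \ref{main2.1}) to $\sigma$ in the reverse direction, with $m-n-\epsilon_0=l+2>0$. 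There is no direct identification of $\rho$ from gamma factors alone.

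\textbf{Part (3): the non-tempered case is not covered by \cite[Proposition 3.2]{GT1} as stated.} That proposition (Proposition \ref{L-quot} here) assumes $\theta_{V_m,W_n}(\pi)$ is tempered at the starting level. In case (2) the first going-up lift is \emph{not} tempered, so you cannot simply cite it. The paper proves separately (Proposition \ref{GT32}) that the argument of \cite[Proposition 3.2]{GT1} still goes through: one must check that only the bottom piece $R_0$ of the relevant Kudla filtration can contribute, which requires a Casselman-criterion argument ruling out the higher pieces. This is a genuine extra step, not a formality.

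Your outline for (2) is in the right spirit but skips the hard part: the paper needs Propositions \ref{r=1} and its preliminaries (showing the Langlands data of the non-tempered $\sigma$ has exactly one factor $\chi_W\St_{l+1}|\cdot|^{1/2}$), and then a delicate Kudla-filtration computation to pin down $\sigma_0=\theta_{V_{m_0},W_{n_0}}(\pi_0)$. ``Transporting up through parabolic induction'' does not capture this.
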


\begin{rem}
Note that in (1), $A_{\phi}$ can have index $2$ in $A_{\theta_{m}(\phi)}$.
In this case, we see that 
\[
A_{\theta_{m}(\phi)} =A_\phi \oplus (\Z/2\Z)e_{l(\pi)+2}.
\] 
By Theorem \ref{main1} (1), we have $\theta_{m}(\eta)(e_{l(\pi)+2}) = - \theta_{m}(\eta)(e_{l(\pi)})$.
Together with this equation, 
we see that (1) describes $\theta_{m^\up}(\eta)$ completely.
\end{rem}
\vskip 5pt

Under the assumption of Theorem \ref{main3} (1), 
we will show that $\theta_{V_m,W_n}(\pi)$ is tempered (Corollary \ref{temp2}).
If we knew the temperedness of $\theta_{V_m,W_n}(\pi)$, 
we obtain Theorem \ref{main3} (1) by applying Theorem \ref{main2} (1)
to $\theta_{V_m,W_n}(\pi)$.
The assertions (2) and (3) will be shown in \S \ref{first} and \S \ref{higher}, respectively.
The assertion (4) is Propositions \ref{sign}.
\vskip 5pt

The twisted epsilon factors appearing in Theorems \ref{main2} and \ref{main3} 
can be computed by using the following lemma.
\begin{lem}
Let $l \geq 3$ be an integer, $\chi_V$ be a character of $E^\times$ and
$\phi$ be a representation of $\WD_E$ such that
$\phi \chi_V^{-1}$ is (conjugate) self-dual with sign $(-1)^{l-1}$.
\begin{enumerate}
\item
If $E=F$ and $l$ is even, then
\[
\ep(\phi\chi_V^{-1} \otimes S_{l-1})
= (-1)^{m_\phi(\chi_V S_{l-2}) + \dots + m_\phi(\chi_V S_{2})} \cdot \ep(\phi\chi_V^{-1}).
\]
\item
If $E=F$ and $l$ is odd, then
\[
\ep(\phi\chi_V^{-1} \otimes S_{l-1}) \cdot \det(\phi\chi_V^{-1})(-1)^{\half{l-1}}
= (-1)^{m_\phi(\chi_V S_{l-2}) + \dots + m_\phi(\chi_V S_{1})}.
\]
\item
If $E\not=F$ and $l$ is even, then
\[
\ep(\phi\chi_V^{-1} \otimes S_{l-1}, \psi_{2}^E)
= (-1)^{m_\phi(\chi_V S_{l-2}) + \dots + m_\phi(\chi_V S_{2})} \cdot \ep(\phi\chi_V^{-1}, \psi_{2}^E).
\]
\item
If $E\not=F$ and $l$ is odd, then
\[
\ep(\phi\chi_V^{-1} \otimes S_{l-1}, \psi_{2}^E) 
= (-1)^{m_\phi(\chi_V S_{l-2}) + \dots + m_\phi(\chi_V S_{1})}.
\]
\end{enumerate}
\end{lem}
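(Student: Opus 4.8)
The plan is to reduce the four identities to root numbers of twisted‑Steinberg Weil--Deligne representations, using only the additivity of $\ep$‑factors in direct sums together with the $\SL_2(\C)$ Clebsch--Gordan rule $S_a\otimes S_b=\bigoplus_{k=0}^{\min(a,b)-1}S_{a+b-1-2k}$. Write $\phi'=\phi\chi_V^{-1}$; by hypothesis $\phi'$ is (conjugate) self-dual of sign $(-1)^{l-1}$, so it is symplectic (conjugate-symplectic) exactly when $l$ is even and orthogonal (conjugate-orthogonal) exactly when $l$ is odd. Decompose $\phi'=\bigoplus_i\rho_i\otimes S_{a_i}$ into indecomposable summands and sort them into: (i) the summands $\1\otimes S_a$, which occur with total multiplicity $m_\phi(\chi_V S_a)$; (ii) the summands $\nu\otimes S_a$ with $\nu$ a nontrivial unramified character of $W_E$; (iii) the summands $\rho\otimes S_a$ with $\rho$ ramified or $\dim\rho\geq2$. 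Expanding $\phi'\otimes S_{l-1}$ by multiplicativity and Clebsch--Gordan, $\ep(\phi'\otimes S_{l-1},\psi')$ becomes a product of factors $\ep(1/2,\rho_i\otimes S_m,\psi')$ in which, for each $i$, the integer $m$ runs over $a_i+l-2-2k$ for $k=0,\dots,\min(a_i,l-1)-1$.

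The first step would be to evaluate $\ep(1/2,\rho\otimes S_m,\psi')$ for an irreducible $\rho$. If $\rho$ is ramified or $\dim\rho\geq2$, then $\rho\otimes S_m$ has no inertia invariants, so its $\ep$-factor agrees with that of its Frobenius-semisimplification $\bigoplus_{j=0}^{m-1}\rho|\cdot|^{(m-1)/2-j}$; pairing $j$ with $m-1-j$ and invoking the functional equation $\ep(s,\rho,\psi')\ep(1-s,{}^c\rho^\vee,\psi')=\det\rho(-1)$ gives $\ep(1/2,\rho\otimes S_m,\psi')=\det(\rho)(-1)^{\lfloor m/2\rfloor}$, together with an extra factor $\ep(1/2,\rho,\psi')$ when $m$ is odd. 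For $E\neq F$ one uses $\psi'=\psi^E_2$ and the corresponding functional equation for conjugate-duality; the defining property $\psi^E_2\circ c=\overline{\psi^E_2}$ makes the right-hand side $1$ instead of $\det\rho(-1)$, which is exactly why no determinant correction appears in case (4). If $\rho=\nu$ is an unramified character, then $\rho\otimes S_m$ is the parameter of the twisted Steinberg $\nu\St_m$, and a short direct computation with $\gamma$- and $L$-factors yields $\ep(1/2,\nu\otimes S_m,\psi')=(-\nu(\varpi_E))^{m-1}$. This last value, which carries the monodromy correction, is the source of all the signs in the statement.

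The second step is substitution and collection. Since the sign $(-1)^{l-1}$ of $\phi'$ forces $a_i\equiv l\pmod2$ for every summand, all integers $m=a_i+l-2-2k$ above are even, and one computes $\sum_{k=0}^{\min(a_i,l-1)-1}(a_i+l-2-2k)=a_i(l-1)$. Hence a type-(iii) summand $\rho_i\otimes S_{a_i}$ contributes, after the Clebsch--Gordan product, exactly $\det(\rho_i)(-1)^{a_i(l-1)/2}$, which equals $\ep(1/2,\rho_i\otimes S_{a_i},\psi')$ when $l$ is even and $\det(\rho_i)(-1)^{(l-1)/2}$ (independent of $a_i$) when $l$ is odd; summing over $i$ these reassemble into $\ep(1/2,\phi',\psi')$ (case $l$ even) \resp $\det(\phi')(-1)^{(l-1)/2}$ (case $l$ odd). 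A type-(ii) summand contributes $1$, since $-\nu(\varpi_E)=1$ for $\nu$ quadratic and non-self-dual pairs cancel; this again matches its contribution to $\ep(1/2,\phi',\psi')$ and to $\det(\phi')(-1)$. Finally, a single copy of $\1\otimes S_a$ contributes $(-1)^{a(l-1)-\min(a,l-1)}$, and comparing this with $\ep(1/2,\1\otimes S_a,\psi')=(-1)^{a-1}$ and analysing the parity of $\min(a,l-1)$ shows that the ratio is $-1$ precisely when $a\leq l-2$ and $+1$ when $a\geq l$. Putting the three groups together, and using that $\det(\phi')$ is trivial when $\phi'$ is symplectic (the case $l$ even) while $\det(\phi')(-1)^{l-1}=1$ when $l$ is odd, one arrives at $\ep(\phi'\otimes S_{l-1},\psi')=(-1)^{\sum_r m_\phi(\chi_V S_r)}\,\ep(\phi',\psi')$ for $l$ even and $\ep(\phi'\otimes S_{l-1},\psi')\cdot\det(\phi')(-1)^{(l-1)/2}=(-1)^{\sum_r m_\phi(\chi_V S_r)}$ for $l$ odd, with $r$ ranging over $2,4,\dots,l-2$, \resp $1,3,\dots,l-2$. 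The cases $E\neq F$ are handled by the same argument with duals replaced by conjugate-duals and $\psi'=\psi^E_2$.

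The main obstacle will be the bookkeeping in the second step: one must carry the Steinberg correction $(-\nu(\varpi_E))^{m-1}$ through the Clebsch--Gordan expansion and run the parity computation of $\min(a,l-1)$ carefully enough that the signs collect into exactly the advertised window of $r$ --- in particular so that the summands $\chi_V S_a$ with $a\geq l$ cancel out completely. The auxiliary identity $\ep(1/2,\nu\St_m,\psi')=(-\nu(\varpi_E))^{m-1}$, while classical, also requires a short explicit verification with $L$- and $\gamma$-factors.
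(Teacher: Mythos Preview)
Your approach is a direct computation, whereas the paper's proof is a one-line telescoping argument: Lemma~\ref{crit} gives
\[
\frac{\ep(\phi'\otimes S_{j+1},\psi')}{\ep(\phi'\otimes S_{j-1},\psi')}\cdot\bigl(\text{det correction}\bigr)=(-1)^{m_{\phi'}(S_j)}
\]
for every $j\equiv l\pmod 2$, and multiplying these identities over $j=\kappa,\kappa+2,\dots,l-2$ (with the convention $\ep(\phi'\otimes S_0,\psi')=1$ when $\kappa=1$) yields all four statements at once. This is far less bookkeeping than your Clebsch--Gordan expansion.

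More importantly, your second step contains a genuine error. You assert that ``the sign $(-1)^{l-1}$ of $\phi'$ forces $a_i\equiv l\pmod 2$ for every summand'', and then use this to conclude that all the Clebsch--Gordan pieces $S_m$ have $m$ even and that the exponent $a_i(l-1)/2$ is an integer. This is false. An irreducible $\rho_i$ of $W_E$ that is itself (conjugate-)symplectic gives a summand $\rho_i\otimes S_{a_i}$ of sign $(-1)^{a_i}$, so for $l$ even it sits inside a symplectic $\phi'$ precisely when $a_i$ is \emph{odd}; likewise the hyperbolic part $\phi''\oplus{}^c(\phi'')^\vee$ imposes no parity constraint on the $a_i$'s at all. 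In either situation the integers $m=a_i+l-2-2k$ are all odd, your stated type-(iii) contribution $\det(\rho_i)(-1)^{a_i(l-1)/2}$ has a half-integer exponent, and the extra factor $\ep(\rho_i,\psi')$ you mention for odd $m$ actually enters.

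The gap is repairable --- one can treat symplectic $\rho_i$ and the hyperbolic pairs separately and check that their contributions match on both sides --- but at that point you are essentially reproving Lemma~\ref{crit} summand by summand. It is cleaner to prove the ratio identity $\alpha_j(\phi')=(-1)^{m_{\phi'}(S_j)}$ once (where all the parity cases collapse, since one only compares $S_{j+1}$ against $S_{j-1}$) and then telescope.
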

\begin{proof}
This follows from Lemma \ref{crit}.
\end{proof}

%\section{Irreducibility and temperedness of theta lifts}
%\section{Irreducibility and temperedness of theta lifts}
\section{Irreducibility and temperedness of theta lifts}
In this section, we recall Kudla's filtration of the normalized Jacquet module of Weil representations,
and prove the irreducibility and temperedness of theta lifts.
\vskip 5pt

%\subsection{Kudla's filtration and irreducibility of big theta lifts}
\subsection{Kudla's filtration and irreducibility of big theta lifts}
Let $(V_m,W_n)$ be a pair of spaces as in \S \ref{spaces}.
We denote the anisotropic space in the Witt tower $\VV=\{V_{m}\}$ by $V_{\mathrm{an}}$.
Decompose 
\[
W_n=X_k + W_{n-2k} + X_k^*
\quad\text{and}\quad
V_m=Y_a + V_{m-2a} + Y_a^*,
\]
where $X_k, X_k^*$ (\resp $Y_a, Y_a^*$) are $k$-dimensional (\resp $a$-dimensional)
isotropic subspaces of $W_n$ (\resp $V_m$) such that $X_k + X_k^*$ (\resp $Y_a + Y_a^*$)
is non-degenerate, and
$W_{n-2k}$ (\resp $V_{m-2a}$) is the orthogonal complement of 
$X_k + X_k^*$ (\resp $Y_a + Y_a^*$) in $W_n$ (\resp $V_m$).
Let $P(X_k)$ (\resp $Q(Y_a)$) be the maximal parabolic subgroup of 
$G(W_n)$ (\resp $H(V_m)$) stabilizing $X_k$ (\resp $Y_a$).
We denote the normalized Jacquet functor with respect to 
a parabolic subgroup $P$ by $R_P$.
\vskip 5pt

The following lemma is called Kudla's filtration.
\begin{lem}[{\cite{Ku1}}]\label{kudla}
The normalized Jacquet module $R_{P(X_k)}(\omega_{V_m,W_n})$ 
has an equivariant filtration
\[
R_{P(X_k)}(\omega_{V_m,W_n})=R^0 \supset R^1 \supset \dots \supset R^k \supset R^{k+1}=0,
\]
whose successive quotient $J^a=R^a/R^{a+1}$ is described as follows:
\[
J^a=\Ind_{P(X_{k-a},X_k) \times G(W_{n-2k}) \times Q(Y_a)}
^{\GL(X_k) \times G(W_{n-2k}) \times H(V_{m})}
(\chi_V|{\det}_{X_{k-a}}|_E^{\lam_{k-a}} \otimes \Sc(\Isom(Y_a,X'_a)) \otimes 
\omega_{V_{m-2a},W_{n-2k}}),
\]
where
\begin{itemize}
\item
$\lam_{k-a}=(m-n+k-a-\epsilon_0)/2$;
\item
$X_k=X_{k-a}+X'_a$ with $\dim(X_{k-a})=k-a$ and $\dim(X'_a)=a$, and
$P(X_{k-a},X_k)$ is the maximal parabolic subgroup of $\GL(X_k)$ stabilizing $X_{k-a}$;
\item
$\Isom(Y_a,X'_a)$ is the set of invertible $E$-conjugate linear maps from $Y_a$ to $X'_a$ and 
$\Sc(\Isom(Y_a,X'_a))$ is the space of locally constant compactly supported functions on 
$\Isom(Y_a,X'_a)$;
\item
$\GL(X'_a) \times \GL(Y_a)$ acts on $\Sc(\Isom(Y_a,X'_a))$ as
$((g,h) \cdot f)(x)=\chi_V(\det(g))\chi_W(\det(h))f(g^{-1}\cdot x \cdot h)$
for $(g,h) \in \GL(X'_a) \times \GL(Y_a)$, $f \in \Sc(\Isom(Y_a,X'_a))$ and $x \in \Isom(Y_a,X'_a)$.
\end{itemize}
If $m-2a < \dim(V_{\mathrm{an}})$, we interpret $R^a$ and $J^a$ to be $0$.
\end{lem}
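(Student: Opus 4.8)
The plan is to realize $\omega_{V_m,W_n}$ in a mixed model adapted to the decomposition $W_n=X_k\oplus W_{n-2k}\oplus X_k^*$, to compute the Jacquet module $R_{P(X_k)}(\omega_{V_m,W_n})$ as coinvariants along the unipotent radical $U(X_k)$, and to read off the filtration from a rank stratification. Concretely, I would fix a Lagrangian $\ell$ of $\mathrm{Res}_{E/F}(W_{n-2k}\otimes V_m)$; then $(X_k\otimes V_m)\oplus\ell$ is a Lagrangian of $\mathrm{Res}_{E/F}(W_n\otimes V_m)$, complementary to $(X_k^*\otimes V_m)\oplus\ell'$, and the associated Schr\"odinger model gives a $\GL(X_k)\times G(W_{n-2k})\times H(V_m)$-equivariant isomorphism
\[
\omega_{V_m,W_n}\;\cong\;\Sc\big(\Hom_E^{c}(X_k,V_m)\big)\otimes\omega_{V_m,W_{n-2k}},
\]
where $\Hom_E^{c}$ denotes $E$-conjugate-linear maps (ordinary linear maps if $E=F$) and $\omega_{V_m,W_{n-2k}}$ is realized on $\Sc(\ell')$. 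On top of this, $U(X_k)$ acts through its centre $Z$ — which multiplies a function supported near $x$ by $\psi$ of the pairing of $z\in Z$ with the Hermitian form $q(x)=x^{*}x$ on $X_k$ — and through $U(X_k)/Z$, which acts by a mixture of translations in $\Hom_E^{c}(X_k,V_m)$ and Heisenberg operators on the $\omega_{V_m,W_{n-2k}}$-factor. The splitting data $\bchi$ enter through the $\chi_V,\chi_W$-twists on the $\GL$-parts, and when $G(W_n)=\Mp(W_n)$ everything below takes place over $\GL$-type Levi subgroups, along which the metaplectic cover splits canonically, so the argument is uniform.

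Next I would take $U(X_k)$-coinvariants in two stages. Passing to $Z$-coinvariants localizes the Schwartz factor onto functions supported on the closed subvariety $Z_{\mathrm{is}}=\{x:q(x)=0\}$, i.e.\ onto $x$ with totally isotropic image in $V_m$. Stratify $Z_{\mathrm{is}}=\bigsqcup_{a=0}^{k}\mathcal O_a$ by rank, $\mathcal O_a=\{x:\mathrm{rank}(x)=a\}$. Since $\{\mathrm{rank}\ge a\}$ is open in $Z_{\mathrm{is}}$, extension by zero yields a decreasing filtration of $\Sc(Z_{\mathrm{is}})$ with successive quotients $\Sc(\mathcal O_a)$; carrying this, together with the remaining (exact) $U(X_k)/Z$-coinvariants, through to $R_{P(X_k)}(\omega_{V_m,W_n})$ produces $R^0\supset R^1\supset\cdots\supset R^{k+1}=0$ with $J^a=R^a/R^{a+1}$ supported on $\mathcal O_a$. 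If $m-2a<\dim V_{\mathrm{an}}$, then the Witt tower $\VV$ contains no isotropic $a$-plane realizing the stratum $\mathcal O_a$ with the required corank data, so $\mathcal O_a=\emptyset$ and $J^a=R^a=0$, which is the interpretation in the last sentence of the statement.

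Finally I would identify $J^a$. Fixing an isotropic $a$-plane $Y_a\subset V_m$ with isotropic complement $Y_a^*$ and writing $V_m=Y_a\oplus V_{m-2a}\oplus Y_a^*$, the stabiliser in $H(V_m)$ of the ``image'' datum is the parabolic $Q(Y_a)$, so a Mackey/closed-orbit argument gives $\Sc(\mathcal O_a)\cong\Ind_{Q(Y_a)}^{H(V_m)}(\cdots)$; choosing $X'_a\subseteq X_k$ complementary to $\ker x$, with complement $X_{k-a}$, turns the base into $\Sc(\Isom(Y_a,X'_a))$ carrying the $\GL(X'_a)\times\GL(Y_a)$-action twisted by $\chi_V\circ\det$ and $\chi_W\circ\det$, while $\GL(X_{k-a})$ survives only through a character $\chi_V|{\det}_{X_{k-a}}|_E^{\lam_{k-a}}$. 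The residual $U(X_k)/Z$-coinvariants, combined with the Jacquet module of $\omega_{V_m,W_{n-2k}}$ along $Q(Y_a)$ — essentially the $k=0$ case of the same computation, which peels the hyperbolic plane $Y_a\oplus Y_a^*$ off $V_m$ — produce the factor $\omega_{V_{m-2a},W_{n-2k}}$. Assembling the three pieces gives precisely
\[
J^a=\Ind_{P(X_{k-a},X_k)\times G(W_{n-2k})\times Q(Y_a)}^{\GL(X_k)\times G(W_{n-2k})\times H(V_m)}\Big(\chi_V|{\det}_{X_{k-a}}|_E^{\lam_{k-a}}\otimes\Sc(\Isom(Y_a,X'_a))\otimes\omega_{V_{m-2a},W_{n-2k}}\Big).
\]
The one genuinely delicate point — and the step I expect to be the main obstacle — is pinning down $\lam_{k-a}=(m-n+k-a-\epsilon_0)/2$ together with the exact power of $|{\det}_{X_{k-a}}|_E$. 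This is a bookkeeping of the product of the $\delta_{P(X_k)}^{1/2}$- and $\delta_{Q(Y_a)}^{1/2}$-normalisations of the Jacquet and induction steps, the intrinsic $|\det|$-scaling of the Gaussian in the mixed model by the $\GL$-part of the Levi, and the shift in ``size'' in passing from $\omega_{V_m,W_n}$ to $\omega_{V_{m-2a},W_{n-2k}}$; I would isolate it by first doing the rank-one computations ($a=1$ with $k$ arbitrary, and $k=1$) by hand and then chaining them, or equivalently by evaluating the relevant operators on a single split-torus element of the Levi.
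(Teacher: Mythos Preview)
The paper does not prove this lemma at all: it is stated with the citation \cite{Ku1} and used as a black box, so there is no ``paper's own proof'' to compare against. Your sketch is essentially a faithful reconstruction of Kudla's original argument in \cite{Ku1} (see also the exposition in \cite{MVW}): realize $\omega_{V_m,W_n}$ in the mixed Schr\"odinger model attached to the Lagrangian $(X_k\otimes V_m)\oplus\ell$, take $U(X_k)$-coinvariants in two steps (center first, then the abelian quotient), obtain the rank stratification of the resulting Schwartz space on $\{x:q(x)=0\}$, and identify each stratum via Mackey/transitivity of induction. The structure you outline is correct, including the point that the $\chi_V,\chi_W$ twists arise from the choice of splitting data and that the metaplectic case is handled by the canonical splitting over $\GL$-type Levi factors.

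One small caution on your last paragraph: the exponent $\lam_{k-a}=(m-n+k-a-\epsilon_0)/2$ is not merely the product of $\delta_{P(X_k)}^{1/2}$ and $\delta_{Q(Y_a)}^{1/2}$; a further contribution comes from the $|\det|$-weight by which $\GL(X_{k-a})$ acts on the fiber of the mixed model (the measure on $\Hom_E^c(X_{k-a},V_{m-2a})$ absorbed in the $U(X_k)/Z$-coinvariant step), together with the normalization of the Weil representation itself. Your proposed strategy of checking the rank-one cases $a=0,1$ and $k=1$ by hand and then inducting is exactly how one pins this down in practice, so the plan is sound --- just be aware that three, not two, modulus-type factors need to be tracked.
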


For a representation $\UU$ of a totally disconnected locally compact group $G$,
we denote by $\UU_\infty$ the smooth part of $\UU$, i.e.,
the $G$-submodule of smooth vectors in $\UU$.
Note that for $\pi \in \Irr(G(W_n))$, we have an isomorphism
\[
\Hom_{G(W_{n})}(\omega_{V_{m},W_{n}}, \pi)_\infty \cong \Theta_{V_m,W_n}(\pi)^\vee
\]
as representations of $H(V_m)$.
\vskip 5pt

The following proposition is useful.
\begin{prop}\label{Ja}
Assume that $l=n-m+\epsilon_0>0$ and $k>0$.
Let $\pi_0$ be an admissible representation of $G(W_{n-2k})$, and 
$\tau$ be an irreducible essentially discrete series representation of $\GL(X_k)$. 
Then $\Hom_{\GL(X_k) \times G(W_{n-2k})}(J^a, \chi_V {}^c\tau^\vee \otimes \pi_0)_\infty$
is isomorphic to 
\[
\left\{
\begin{aligned}
&\Ind_{Q(Y_k)}^{H(V_m)}\left(\chi_W^{-1} \tau^\vee \otimes
\Hom_{G(W_{n-2k})}(\omega_{V_{m-2k},W_{n-2k}}, \pi_0)_\infty\right)
\iif a=k,\\
&\Ind_{Q(Y_{k-1})}^{H(V_m)}\left(\chi_W^{-1} \St_{k-1}|{\det}_{Y_{k-1}}|_E^{\half{k-l+1}} \otimes
\Hom_{G(W_{n-2k})}(\omega_{V_{m-2k+2},W_{n-2k}}, \pi_0)_\infty\right)
\iif \text{$a=k-1$ and $\tau=\St_{k}|{\det}_{X_k}|_E^{\half{l-k}}$},\\
&0	\other
\end{aligned}
\right.
\]
as representations of $H(V_m)$.
\end{prop}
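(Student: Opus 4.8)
The plan is to compute $\Hom_{\GL(X_k)\times G(W_{n-2k})}(J^a, \chi_V\,{}^c\tau^\vee\otimes\pi_0)_\infty$ by working through the successive quotients of Kudla's filtration (Lemma \ref{kudla}) one layer at a time, using Frobenius reciprocity together with the structure of $\Sc(\Isom(Y_a,X'_a))$ as a $\GL(X'_a)\times\GL(Y_a)$-module. First I would recall that, by the Mackey-theoretic description of $\Sc(\Isom(Y_a,X'_a))$, this space is (as a $\GL(X'_a)\times\GL(Y_a)$-representation) the compact induction from the diagonal $\GL$, so that for an irreducible $\sigma$ of $\GL(X'_a)$ one has
\[
\Hom_{\GL(X'_a)}\big(\Sc(\Isom(Y_a,X'_a)),\ \chi_V\sigma\big)_\infty \cong \chi_W^{-1}\,\sigma^\vee
\]
as a $\GL(Y_a)$-module (after matching $a = \dim X'_a = \dim Y_a$ via the conjugate-linear identification). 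Applying $\Hom$ into $\chi_V\,{}^c\tau^\vee\otimes\pi_0$ against the formula for $J^a$ and using standard Frobenius reciprocity on the $\GL(X_k)$-side, the contribution of the $\GL(X_{k-a})$-factor forces a character $\chi_V|\det_{X_{k-a}}|_E^{\lambda_{k-a}}$ to appear as a Jacquet-module constituent of ${}^c\tau^\vee$ along the parabolic $P(X_{k-a},X_k)$.

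The next step is the combinatorial heart: since $\tau$ is an irreducible essentially discrete series representation of $\GL(X_k)$, its Jacquet modules are highly constrained (by the Bernstein–Zelevinsky theory / Steinberg segment structure). Writing $\tau = \St_k\otimes|\det|_E^{s}$ for the appropriate twist, the Jacquet module $R_{P(X_{k-a},X_k)}({}^c\tau^\vee)$ is irreducible and of the form $\big(\St_{k-a}|\det|^{\bullet}\big)\otimes\big(\St_a|\det|^{\bullet}\big)$ with a specific exponent. The requirement that its $\GL(X_{k-a})$-part be the character $\chi_V|\det_{X_{k-a}}|_E^{\lambda_{k-a}}$ (a one-dimensional representation) forces either $k-a=0$, i.e. $a=k$, which gives the first case; or $k-a=1$ together with a precise identification of the exponent $s$, which pins down $\tau = \St_k|\det_{X_k}|_E^{(l-k)/2}$ and $a=k-1$, giving the second case. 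In every other situation the $\GL(X_{k-a})$-Jacquet constituent is not one-dimensional (here $l>0$ and $k>0$ are used to exclude degenerate coincidences of exponents), so the $\Hom$ vanishes; this yields the "$0$ otherwise" case.

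Finally, in the two surviving cases I would read off the $H(V_m)$-module from the remaining factors of $J^a$: the residual $\Sc$-factor contributes $\chi_W^{-1}\tau^\vee$ (resp. $\chi_W^{-1}\St_{k-1}|\det_{Y_{k-1}}|_E^{(k-l+1)/2}$, after substituting the value of the exponent $\lambda_{k-a}$ with $a=k-1$), while the factor $\omega_{V_{m-2a},W_{n-2k}}$ pairs with $\pi_0$ to produce $\Hom_{G(W_{n-2k})}(\omega_{V_{m-2a},W_{n-2k}},\pi_0)_\infty$; the outer $\Ind$ over the parabolic $P(X_{k-a},X_k)\times G(W_{n-2k})\times Q(Y_a)$ collapses on the $\GL(X_k)$ and $G(W_{n-2k})$ sides (those slots have been consumed by the $\Hom$) and leaves exactly $\Ind_{Q(Y_a)}^{H(V_m)}$ of the tensor product, matching the claimed formulas with $a=k$ and $a=k-1$ respectively. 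The main obstacle I anticipate is the careful bookkeeping of normalization: tracking the half-powers of modulus characters through each Frobenius reciprocity and through the conjugate-linear duality on $\Sc(\Isom(Y_a,X'_a))$, so that the exponent $\lambda_{k-a}=(m-n+k-a-\epsilon_0)/2 = (k-a-l)/2$ converts correctly into the exponent $(k-l+1)/2$ appearing in the $a=k-1$ case and into the Steinberg twist that rigidifies $\tau$; everything else is a routine application of the Bernstein–Zelevinsky machinery and Lemma \ref{kudla}.
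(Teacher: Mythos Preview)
Your approach is essentially the paper's: apply Frobenius reciprocity to the induced representation $J^a$ on the $\GL(X_k)$-side, then use the Bernstein--Zelevinsky description of Jacquet modules of essentially discrete series to force $k-a\in\{0,1\}$, and read off the $H(V_m)$-module from the remaining factors. Two points of precision, however. First, since $J^a$ is \emph{induced} and you are mapping \emph{out} of it, the adjunction you need is Bernstein's second one, which produces the Jacquet module $R_{\overline{P(X_{k-a},X_k)}}(\chi_V\,{}^c\tau^\vee)$ along the \emph{opposite} parabolic; writing ``standard Frobenius reciprocity'' and ``along $P(X_{k-a},X_k)$'' as you do would flip the ordering $e_1<e_2$ of the exponents and give the wrong twist on $\tau$ in the $a=k-1$ case. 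Second, you should not write $\tau=\St_k|\det|_E^s$ from the outset: an irreducible essentially discrete series of $\GL_k$ has the form $\St_d(\rho)|\det|^s$ for a supercuspidal $\rho$ of $\GL_r(E)$ with $rd=k$, and the paper first observes (citing \cite[p.~105]{Z}) that the $\GL(X_{k-a})$-part of the opposite Jacquet module can be one-dimensional only when $r=1$, so that $\tau$ is actually a twist of $\St_k$. With these two adjustments your outline matches the paper's proof.
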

\begin{proof}
We put $\tau'={}^c\tau^\vee$.
For $a=k$,
it is easy to see that
\[
\Hom_{\GL(X_k) \times G(W_{n-2k})}(J^k, \chi_V \tau' \otimes \pi_0)_\infty
\cong
\Ind_{Q(Y_k)}^{H(V_m)}\left(\chi_W^{-1} \tau^\vee \otimes
\Hom_{G(W_{n-2k})}(\omega_{V_{m-2k},W_{n-2k}}, \pi_0)_\infty\right)
\]
(c.f., \cite[p. 1674--1676]{GS}).
\vskip 5pt

Next, we assume that $a<k$.
By Bernstein's Frobenius reciprocity, we have
\begin{align*}
&\Hom_{\GL(X_k) \times G(W_{n-2k})}(J^a, \chi_V \tau' \otimes \pi_0)\\
&\cong
\Hom_{\GL(X_{k-a}) \times \GL(X'_a) \times G(W_{n-2k})}
\left(\chi_V|{\det}_{X_{k-a}}|_E^{\lam_{k-a}} \otimes \Sc(\Isom(Y_a,X'_a)) \otimes 
\omega_{V_{m-2a},W_{n-2k}}, R_{\overline{P(X_{k-a},X_k)}}(\chi_V\tau') \otimes \pi_0\right),
\end{align*}
where $\overline{P(X_{k-a},X_k)}$ is the parabolic subgroup of $\GL(X_k)$ 
opposite to $P(X_{k-a},X_k)$.
By \cite[Proposition 9.5]{Z}, the normalized Jacquet module 
$R_{\overline{P(X_{k-a},X_k)}}(\chi_V\tau')$ is given by
\[
R_{\overline{P(X_{k-a},X_k)}}(\chi_V\tau') \cong
\chi_V \tau_1 |{\det}_{X_{k-a}}|_E^{e_1} \otimes \chi_V \tau_2 |{\det}_{X'_{a}}|_E^{e_2},
\]
where $\tau_1$ (\resp $\tau_2$) is an irreducible (unitary) discrete series representation
of $\GL(X_{k-a})$ (\resp $\GL(X'_a)$),
and $e_1, e_2 \in \R$ such that 
\[
e_1 < e_2
\quad\text{and}\quad
e_1 \cdot (k-a) + e_2 \cdot a=0.
\]
Since $\GL(X_{k-a})$ acts on 
$\chi_V|{\det}_{X_{k-a}}|_E^{\lam_{k-a}} \otimes \Sc(\Isom(Y_a,X'_a)) \otimes 
\omega_{V_{m-2a},W_{n-2k}}$ by the character $\chi_V|{\det}_{X_{k-a}}|_E^{\lam_{k-a}}$, 
if $\Hom_{\GL(X_k) \times G(W_{n-2k})}(J^a, \chi_V \tau' \otimes \pi_0)\not=0$, 
then we must have $k-a=1$.
Moreover, by \cite[p. 105]{Z}, we must have $\tau'=\St_k|{\det}_{X_k}|_E^e$ for some $e \in \R$.
Then we have
\[
e_1=e-\half{k-1}
\quad\text{and}\quad
e_2=e+\half{1}.
\]
We must have $e_1=\lam_1$ so that $e=(k-l)/2$.
In this case, we have
\begin{align*}
&\Hom_{\GL(X_k) \times G(W_{n-2k})}(J^{k-1}, \chi_V \tau' \otimes \pi_0)_\infty\\
&\cong
\Hom_{\GL(X'_{k-1}) \times G(W_{n-2k})}
(\Sc(\Isom(Y_{k-1},X'_{k-1})) \otimes 
\omega_{V_{m-2k+2},W_{n-2k}}, \chi_V\St_{k-1}|{\det}_{X'_{k-1}}|_E^{\half{k-l+1}} \otimes \pi_0
)_\infty,\\
&\cong
\Ind_{Q(Y_{k-1})}^{H(V_m)}\left(\chi_W^{-1} \St_{k-1}|{\det}_{X'_{k-1}}|_E^{\half{k-l+1}} \otimes
\Hom_{G(W_{n-2k})}(\omega_{V_{m-2k+2},W_{n-2k}}, \pi_0)_\infty\right)
\end{align*}
(c.f., \cite[p. 1674--1676]{GS}).
Hence the proposition.
\end{proof}

\begin{cor}\label{pi0}
We put $n_0=n-2k$ and $m_0=m-2k$.
Let $\pi \in \Irr(G(W_n))$, $\pi_0\in \Irr(G(W_{n_0}))$ and 
$\tau$ be an irreducible essentially discrete series representation of $\GL(X_k)$. 
Assume that 
\begin{itemize}
\item
$l=n-m+\epsilon_0>0$;
\item
$\tau \not\cong \St_k|{\det_{X_k}}|_E^{\half{l-k}}$;
\item
$\Ind_{P(X_k)}^{G(W_n)}(\chi_V \tau \otimes \pi_0) \twoheadrightarrow \pi$.
\end{itemize}
Then we have 
\[
\Ind_{Q(Y_k)}^{H(V_m)}(\chi_W \tau \otimes \Theta_{V_{m_0},W_{n_0}}(\pi_0))
\twoheadrightarrow \Theta_{V_m,W_n}(\pi).
\]
\end{cor}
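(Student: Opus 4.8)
The plan is to deduce the corollary from Proposition \ref{Ja} by a formal argument based on the contragredient functor and Kudla's filtration, in the spirit of \cite{GT1} and \cite{GS}; the key input is the isomorphism $\Hom_{G(W_n)}(\omega_{V_m,W_n},\pi)_\infty\cong\Theta_{V_m,W_n}(\pi)^\vee$. We may assume $k>0$, the assertion being trivial when $k=0$.

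First I would rephrase the hypothesis as an embedding. Since $\Ind_{P(X_k)}^{G(W_n)}(\chi_V\tau\otimes\pi_0)\twoheadrightarrow\pi$, Lemma \ref{sub-quot} together with ${}^c\chi_V^{-1}=\chi_V$ gives
\[
\pi\hookrightarrow\Ind_{P(X_k)}^{G(W_n)}(\chi_V\,{}^c\tau^\vee\otimes\pi_0).
\]
Applying the left-exact functor $\Hom_{G(W_n)}(\omega_{V_m,W_n},-)$, passing to smooth vectors, and using the standard Frobenius reciprocity, I obtain an $H(V_m)$-equivariant injection
\[
\Theta_{V_m,W_n}(\pi)^\vee\hookrightarrow\Hom_{L(X_k)}\bigl(R_{P(X_k)}(\omega_{V_m,W_n}),\,\chi_V\,{}^c\tau^\vee\otimes\pi_0\bigr)_\infty,
\]
where $L(X_k)=\GL(X_k)\times G(W_{n-2k})$.

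Next I would feed Kudla's filtration (Lemma \ref{kudla}) into $\Hom_{L(X_k)}(-,\,\chi_V\,{}^c\tau^\vee\otimes\pi_0)$. The short exact sequences $0\to R^{a+1}\to R^a\to J^a\to 0$, combined with the computation of $\Hom_{L(X_k)}(J^a,\chi_V\,{}^c\tau^\vee\otimes\pi_0)_\infty$ in Proposition \ref{Ja}, show that under the hypotheses $l=n-m+\epsilon_0>0$ and $\tau\not\cong\St_k|{\det_{X_k}}|_E^{\half{l-k}}$ every graded piece with $a<k$ contributes $0$ after smoothing. Hence a d\'evissage yields an injection
\[
\Hom_{L(X_k)}\bigl(R_{P(X_k)}(\omega_{V_m,W_n}),\,\chi_V\,{}^c\tau^\vee\otimes\pi_0\bigr)_\infty
\hookrightarrow
\Hom_{L(X_k)}\bigl(J^k,\,\chi_V\,{}^c\tau^\vee\otimes\pi_0\bigr)_\infty,
\]
and Proposition \ref{Ja} identifies the right-hand side, using $m-2k=m_0$ and $n-2k=n_0$, with $\Ind_{Q(Y_k)}^{H(V_m)}\bigl(\chi_W^{-1}\tau^\vee\otimes\Theta_{V_{m_0},W_{n_0}}(\pi_0)^\vee\bigr)$. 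Composing the two injections gives
\[
\Theta_{V_m,W_n}(\pi)^\vee\hookrightarrow\Ind_{Q(Y_k)}^{H(V_m)}\bigl(\chi_W^{-1}\tau^\vee\otimes\Theta_{V_{m_0},W_{n_0}}(\pi_0)^\vee\bigr).
\]

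Finally, both $\Theta_{V_m,W_n}(\pi)$ and $\Theta_{V_{m_0},W_{n_0}}(\pi_0)$ have finite length by Kudla, hence are admissible and satisfy $\sigma^{\vee\vee}\cong\sigma$; since the contragredient also commutes with normalized parabolic induction of admissible data and $(\chi_W^{-1}\tau^\vee)^\vee=\chi_W\tau$, dualizing the last display produces the desired surjection
\[
\Ind_{Q(Y_k)}^{H(V_m)}(\chi_W\tau\otimes\Theta_{V_{m_0},W_{n_0}}(\pi_0))\twoheadrightarrow\Theta_{V_m,W_n}(\pi).
\]
I expect the only genuinely delicate point to be the smooth-vectors bookkeeping in the d\'evissage: one must verify that $(-)_\infty$ preserves the relevant left-exactness at each stage, so that the vanishing $\Hom_{L(X_k)}(J^a,-)_\infty=0$ for $a<k$ really does propagate to the claimed injection. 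Everything else is either formal or already contained in Proposition \ref{Ja}.
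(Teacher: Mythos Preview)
Your argument is correct and follows essentially the same route as the paper's proof: both use Lemma \ref{sub-quot} to turn the quotient into an embedding, apply $\Hom_{G(W_n)}(\omega_{V_m,W_n},-)_\infty$ together with Frobenius reciprocity, invoke Proposition \ref{Ja} to see that only the bottom piece $J^k$ of Kudla's filtration survives, and then dualize. The paper's version is simply more terse, writing the target directly as $\bigl(\Ind_{Q(Y_k)}^{H(V_m)}(\chi_W\tau\otimes\Theta_{V_{m_0},W_{n_0}}(\pi_0))\bigr)^\vee$ rather than first identifying it as $\Ind_{Q(Y_k)}^{H(V_m)}(\chi_W^{-1}\tau^\vee\otimes\Theta_{V_{m_0},W_{n_0}}(\pi_0)^\vee)$ and then dualizing.
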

\begin{proof}
By Lemma \ref{sub-quot}, we have
$\pi \hookrightarrow \Ind_{P(X_k)}^{G(W_n)}(\chi_V {}^c\tau^\vee \otimes \pi_0)$.
Hence we have
\begin{align*}
\Theta_{V_m,W_n}(\pi)^\vee
&\cong \Hom_{G(W_n)}(\omega_{V_m,W_n}, \pi)_\infty\\
&\hookrightarrow \Hom_{G(W_n)}(\omega_{V_m,W_n}, 
\Ind_{P(X_k)}^{G(W_n)}(\chi_V {}^c\tau^\vee \otimes \pi_0))_\infty\\
&\cong \Hom_{\GL(X_k) \times G(W_{n_0})}(R_{P(X_k)}(\omega_{V_m,W_n}), 
\chi_V {}^c\tau^\vee \otimes \pi_0)_\infty.
\end{align*}
Since $\tau \not\cong \St_k|{\det}_{X_k}|_E^{\half{l-k}}$, 
by Proposition \ref{Ja}, we have
\begin{align*}
\Hom_{\GL(X_k) \times G(W_{n_0})}(R_{P(X_k)}(\omega_{V_m,W_n}), 
\chi_V {}^c\tau^\vee \otimes \pi_0)_\infty
&\hookrightarrow \Hom_{\GL(X_k) \times G(W_{n_0})}(J^k, \chi_V {}^c\tau^\vee \otimes \pi_0)_\infty\\
&\cong \left(
\Ind_{Q(Y_k)}^{H(V_m)}(\chi_W \tau \otimes \Theta_{V_{m_0},W_{n_0}}(\pi_0))
\right)^\vee.
\end{align*}
Taking the contragredient functor, we get the corollary.
\end{proof}

Corollary \ref{pi0} implies an irreducibility condition of big theta lifts.
\begin{prop}\label{irred}
Let $\pi \in \Irr(G(W_{n}))$ whose last name is $\phi \in \Phi(G(W_n))$.
Assume that 
\begin{itemize}
\item
$\pi$ is tempered;
\item
$\Theta_{V_m,W_n}(\pi)\not=0$ for $l=n-m+\epsilon_0>0$;
\item
$\phi$ contains $\chi_V S_l$ with multiplicity one.
\end{itemize}
Then $\Theta_{V_m,W_n}(\pi)$ is irreducible and tempered.
\end{prop}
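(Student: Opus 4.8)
The strategy is to prove two things: first, that $\Theta \coloneqq \Theta_{V_m,W_n}(\pi)$ is irreducible, and second, that it is tempered. For irreducibility, the standard approach (following Kudla, Gan--Takeda and Gan--Ichino) is to show that $\Theta$ is both the unique irreducible quotient $\theta_{V_m,W_n}(\pi)$ and a subrepresentation of a suitable parabolically induced module whose socle is multiplicity-free. By Howe duality (Theorem \ref{howe}), $\Theta$ has a unique irreducible quotient $\theta \coloneqq \theta_{V_m,W_n}(\pi)$, so it suffices to rule out any submodule of $\Theta$ other than $\theta$ itself, i.e., to show the full $\Theta$ is semisimple of length one. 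The plan is to argue by induction on $n$ (or on the number of parabolic reductions), using Corollary \ref{pi0} together with the MVW functor (Lemma \ref{sub-quot}) to control $\Theta$ from above.

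First I would handle the case where $\pi$ is a discrete series representation. Since $\phi$ contains $\chi_V S_l$ with $l = n - m + \epsilon_0 > 0$ and $\pi$ is tempered, one can realize $\pi$ inside an induced representation built from $\chi_V \St_l$ (or rather, exploit that $\phi$ contains this summand with multiplicity one, so the chain/alternating analysis of Theorem \ref{main1} pins down the behaviour). The key point: because $m_\phi(\chi_V S_l) = 1$, the ``bad'' term $a = k-1$ in Proposition \ref{Ja}, which would produce an extra constituent, either does not arise or produces something that is killed after passing to the $\pi$-isotypic quotient. Concretely, if $\pi$ is not discrete series, write $\pi \hookrightarrow \tau \rtimes \pi_0$ (equivalently $\pi$ is a quotient of ${}^c\tau^\vee \rtimes \pi_0$ by Lemma \ref{sub-quot}) with $\tau$ an essentially discrete series representation of some $\GL$, $\pi_0$ tempered on a smaller group, and $\tau \not\cong \St_k|\det|^{(l-k)/2}$; then Corollary \ref{pi0} gives a surjection $\chi_W\tau \rtimes \Theta_{V_{m_0},W_{n_0}}(\pi_0) \twoheadrightarrow \Theta$. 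By the inductive hypothesis $\Theta_{V_{m_0},W_{n_0}}(\pi_0)$ is irreducible and tempered, so $\chi_W\tau \rtimes \Theta_{V_{m_0},W_{n_0}}(\pi_0)$ is a standard module whose Langlands quotient is irreducible; since $\Theta$ is a quotient of it and $\Theta$ already surjects onto the irreducible $\theta$, comparing Langlands data forces $\Theta = \theta$, which is irreducible. Temperedness of $\theta$ then follows because it is the theta lift in the going-up tower at first occurrence (or by Corollary \ref{temp2}, anticipated in the text), or directly: the inducing data $\chi_W\tau \otimes \Theta_{V_{m_0},W_{n_0}}(\pi_0)$ has a tempered piece and the exponent of $\tau$ is the one making the full induced module tempered.

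The base case of the induction is $\pi$ discrete series, where one instead uses the explicit structure: $\phi$ contains $\chi_V S_l$, so $\pi$ sits in the relevant Bernstein component, and a direct computation with the top piece $J^k$ of Kudla's filtration (Lemma \ref{kudla}, Proposition \ref{Ja} with $a = k$) shows $\Theta$ embeds into $\chi_W^{-1}\tau^\vee \rtimes (\text{something})$ in a way that, combined with the uniqueness of the irreducible quotient, gives irreducibility; temperedness is then read off from the fact that all exponents vanish (one is at first occurrence in a tower, or $m = m^\down(\pi)$-type situation, where $\theta_m(\phi)$ is a discrete series parameter as in Theorem \ref{main2}(2),(3)).

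The main obstacle I anticipate is bookkeeping the exponents and the precise inductive set-up so that Corollary \ref{pi0} applies — in particular ensuring the hypothesis $\tau \not\cong \St_k|\det_{X_k}|_E^{(l-k)/2}$ holds at each inductive step, which is exactly where the multiplicity-one assumption $m_\phi(\chi_V S_l) = 1$ is used (a second copy of $\chi_V S_l$ would force us into the excluded $\tau$ and the $a = k-1$ term of Proposition \ref{Ja}, breaking both irreducibility and the clean reduction). Verifying that the multiplicity-one hypothesis genuinely prevents this, uniformly across the metaplectic, orthogonal, symplectic and unitary cases (with the various $\chi_V, \chi_W$ twists and the parity conditions $\kappa = 1$ vs $\kappa = 2$), will be the delicate part; the rest is a fairly mechanical descent plus an application of the Langlands classification to identify $\Theta$ with its irreducible quotient.
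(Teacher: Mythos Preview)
Your overall inductive strategy on $n$ matches the paper's, as does the use of Corollary \ref{pi0} for the reduction step. However, there are two genuine gaps.

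First, in the inductive step you take $\tau$ to be ``essentially discrete series'' and then argue that $\chi_W\tau \rtimes \Theta_{V_{m_0},W_{n_0}}(\pi_0)$ is a standard module with a unique Langlands quotient. This is incorrect: since $\pi$ is tempered but not discrete series, one writes $\pi$ as a summand (hence quotient) of $\chi_V\tau \rtimes \pi_0$ with $\tau$ a \emph{unitary} discrete series representation of $\GL_k(E)$. Then $\chi_W\tau \rtimes \Theta_{V_{m_0},W_{n_0}}(\pi_0)$ is a parabolic induction from tempered data with no exponent, hence unitary and semisimple --- it is not a standard module, and the Langlands-quotient argument does not apply. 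The correct conclusion is that its quotient $\Theta_{V_m,W_n}(\pi)$ is therefore semisimple with tempered constituents, and since Howe duality gives it a unique irreducible quotient, it must be irreducible. Note also that the multiplicity-one hypothesis is used precisely to rule out $\tau \cong \St_l$; unitarity of $\tau$ already forces $(l-k)/2 = 0$ whenever $\tau = \St_k|{\det}|_E^{(l-k)/2}$, so the excluded case in Corollary \ref{pi0} reduces to $k=l$, $\tau=\St_l$.

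Second, your base case (discrete series $\pi$) is overcomplicated and does not work as sketched. The paper's argument is much simpler: by an argument analogous to \cite[Proposition C.1]{GI1}, every irreducible subquotient of $\Theta_{V_m,W_n}(\pi)$ is itself discrete series; hence $\Theta_{V_m,W_n}(\pi)$ is a direct sum of irreducible discrete series representations, and Howe duality then forces it to be irreducible. There is no need to invoke Kudla's filtration directly or to embed $\pi$ into an induced representation built from $\chi_V\St_l$. Finally, your appeal to Corollary \ref{temp2} for temperedness would be circular, as that corollary is established later and relies on the present proposition.
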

\begin{proof}
We prove this corollary by induction on $n$.
If $\pi$ is a discrete series representation, 
then by a similar argument to \cite[Proposition C.1]{GI1}, 
we see that
all irreducible subquotients of $\Theta_{V_m,W_n}(\pi)$ are 
discrete series representations.
Hence $\Theta_{V_m,W_n}(\pi)$ is a direct sum of irreducible discrete series representations, 
and so that $\Theta_{V_m,W_n}(\pi)$ is irreducible by the Howe duality conjecture 
(Theorem \ref{howe}).
\vskip 5pt

Suppose that $\pi$ is not a discrete series representation.
Then there exist $\tau \in \Irr_\disc(\GL(X_k))$ and 
$\pi_0 \in \Irr_\temp(G(W_{n_0}))$ with $n_0=n-2k$ such that
$\Ind_{P(X_k)}^{G(W_n)}(\chi_V \tau \otimes \pi_0) \twoheadrightarrow \pi$.
By our assumption, $\tau \not\cong \St_l$.
Also, $\tau \not \cong \St_{k}|{\det}_{X_k}|_E^{\half{l-k}}$ since $\tau$ is discrete series.
Hence we can apply Corollary \ref{pi0} to $\pi$.
We have 
\[
\Ind_{Q(Y_k)}^{H(V_m)}(\chi_W \tau \otimes \Theta_{V_{m_0},W_{n_0}}(\pi_0))
\twoheadrightarrow \Theta_{V_m,W_n}(\pi).
\]
By the induction hypothesis, we see that 
$\Theta_{V_{m_0},W_{n_0}}(\pi_0)$ is irreducible and tempered.
Hence so is $\Theta_{V_{m},W_{n}}(\pi)$.
\end{proof}

%\subsection{Temperedness of theta lifts 1}
\subsection{Temperedness of theta lifts 1}
First, we prove the following proposition.
\begin{prop}\label{temp}
Let $\pi\in\Irr(G(W_{n}))$ be such that $\Theta_{V_m,W_n}(\pi)\not=0$.
Assume one of the following:
\begin{enumerate}
\item
$\pi$ is tempered and $m \leq n+1+\epsilon_0$;
\item
$\pi$ is a discrete series representation and 
$\Theta_{V_m,W_n}(\pi)$ is the first lift to the going-up tower $\VV^\up$ so that
$m=m^\up(\pi)$.
\end{enumerate}
Then all irreducible subquotients of $\Theta_{V_{m},W_{n}}(\pi)$ are tempered.
\end{prop}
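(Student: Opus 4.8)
The plan is to verify Casselman's temperedness criterion for each irreducible subquotient $\sigma$ of $\Theta_{V_m,W_n}(\pi)$, using Kudla's filtration to control the relevant Jacquet modules. The first step is to describe, for a maximal parabolic $Q(Y_a)$ of $H(V_m)$ with $a\geq 1$, the irreducible subquotients of $R_{Q(Y_a)}(\Theta_{V_m,W_n}(\pi))$: applying $R_{Q(Y_a)}$ to $\Theta_{V_m,W_n}(\pi)^\vee\cong\Hom_{G(W_n)}(\omega_{V_m,W_n},\pi)_\infty$, feeding in the $V$-side analogue of Lemma \ref{kudla}, and using Bernstein's Frobenius reciprocity together with the contragredient and $\MVW$ functors, one finds that every such subquotient has the form $\tau\otimes\sigma_0$, where for some $0\leq b\leq a$ and some irreducible subquotient $\rho\otimes\pi_b$ of a Jacquet module $R_{\overline{P(X_b)}}(\pi)$: the representation $\sigma_0$ is an irreducible subquotient of $\Theta_{V_{m-2a},W_{n-2b}}(\pi_b)$, and $\tau$ is a constituent of an induced representation built from a twist of ${}^c\rho$ by $\chi_V^{-1}\chi_W$ together with a $\chi_W$-twisted generalized Steinberg of $\GL(Y_{a-b})$ whose $|\cdot|_E$-exponent is governed by the Kudla shift $\lambda_{a-b}=(l+a-b)/2$, where $l=n-m+\epsilon_0$ (when $b=a$ this Steinberg factor is absent and the exponent of $\tau$ comes entirely from $\rho$). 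Thus $\sigma$ is automatically tempered unless some such $\tau$ contributes an exponent violating Casselman's criterion, and it remains to exclude this.

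For part (1) I would induct on $n$. If $l\leq 0$, the hypothesis $m\leq n+1+\epsilon_0$ forces $m\in\{n+\epsilon_0,\,n+1+\epsilon_0\}$; these are the (almost) equal rank cases, and there the explicit description of $\Theta_{V_m,W_n}(\pi)$ for tempered $\pi$ — hence its temperedness — is contained in Prasad's conjectures and their variants (Theorems \ref{Mp}, \ref{PE}, \ref{PA}). So assume $l\geq 1$. If $\pi$ is not square-integrable, use Lemma \ref{sub-quot} to write $\chi_V\tau\rtimes\pi_0\twoheadrightarrow\pi$ with $k\geq 1$, $\tau$ a unitary discrete series representation of $\GL(X_k)$, and $\pi_0\in\Irr_\temp(G(W_{n-2k}))$; provided $\tau\not\cong\St_k|{\det}_{X_k}|_E^{\half{l-k}}$, Corollary \ref{pi0} gives $\chi_W\tau\rtimes\Theta_{V_{m-2k},W_{n-2k}}(\pi_0)\twoheadrightarrow\Theta_{V_m,W_n}(\pi)$ with $m-2k\leq(n-2k)+1+\epsilon_0$, so by the inductive hypothesis every irreducible subquotient of $\Theta_{V_{m-2k},W_{n-2k}}(\pi_0)$ is tempered; as $\chi_W\tau$ is a unitary discrete series representation of $\GL(X_k)$, every constituent of the induced representation $\chi_W\tau\rtimes(\text{tempered})$ is tempered, which finishes this case. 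Since $\tau$ is unitary, the excluded value occurs only when $k=l$ and $\tau=\St_l$, i.e. when $\phi$ contains $\chi_V S_l$; if its multiplicity is one this is precisely the situation of Proposition \ref{irred}, and if it is $\geq 2$ the case lies outside the reach of Corollary \ref{pi0} and requires a separate argument — analysing the second nontrivial layer of Kudla's filtration (Proposition \ref{Ja}), which contributes a piece carrying a $|\cdot|_E^{\half{1}}$-twist, and using a see-saw to show this potentially non-tempered contribution does not survive in $\Theta_{V_m,W_n}(\pi)$. Finally, if $\pi$ is square-integrable — the base case — then since $l\geq 1$ the shifts $\lambda_{a-b}$ are positive and, for $b\geq 1$, the $\rho$ occurring in $R_{\overline{P(X_b)}}(\pi)$ have $|{\det}_{X_b}|_E$-exponents strictly in the region required by Casselman's square-integrability criterion for $\pi$; the exponent bookkeeping then shows — by the argument of \cite[Proposition C.1]{GI1} already used in the proof of Proposition \ref{irred} — that every irreducible subquotient of $\Theta_{V_m,W_n}(\pi)$ is in fact square-integrable, hence tempered. (One can instead run this Jacquet-module argument directly for tempered $\pi$, avoiding the case distinction above, at the cost of tracking non-tempered constituents of $R_{\overline{P(X_b)}}(\pi)$.)

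For part (2), $\pi$ is square-integrable and $m=m^{\up}(\pi)\geq n+1+\epsilon_0$, so $l\leq -1$. Here the decisive observation is that the term $b=0$ in the first step is absent: there $\pi_b=\pi$ and $\sigma_0$ would be an irreducible subquotient of $\Theta_{V_{m-2a},W_n}(\pi)$, but $a\geq 1$ and $V_{m-2a}$ lies in the same Witt tower as $V_m$, so $m-2a<m=m^{\up}(\pi)$ forces $\Theta_{V_{m-2a},W_n}(\pi)=0$ by the definition of the first occurrence index. For $b\geq 1$, the square-integrability of $\pi$ constrains the exponents of the $\rho$ in $R_{\overline{P(X_b)}}(\pi)$ enough that, combined with the Kudla shift $\lambda_{a-b}$ (again as in \cite[Proposition C.1]{GI1}), no exponent of $R_{Q(Y_a)}(\Theta_{V_m,W_n}(\pi))$ violates Casselman's criterion; hence every irreducible subquotient of $\Theta_{V_m,W_n}(\pi)$ is tempered.

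The main obstacle, throughout, is the exponent bookkeeping of the first step — reading off from the two nontrivial layers of Kudla's filtration the precise exponents of $R_{Q(Y_a)}(\Theta_{V_m,W_n}(\pi))$ and checking each against Casselman's criterion, with care for the $\MVW$/contragredient twists (the $E$-conjugate-linear maps in $\Sc(\Isom(Y_a,X'_a))$ reverse the signs of exponents). I expect the most delicate point to be the excluded case $\tau\cong\St_l$ in part (1), where the second layer of Kudla's filtration produces an a priori non-tempered contribution that must be shown to be killed in $\Theta_{V_m,W_n}(\pi)$.
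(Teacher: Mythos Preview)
Your treatment of part~(2) is essentially correct and matches the paper's argument: both exploit first occurrence to eliminate the $b=0$ contribution and use the square-integrability of $\pi$ to control the exponents for $b\geq 1$. The paper phrases this as a proof by contradiction --- assuming a non-tempered subquotient, embedding it in $\Ind_Q(\tau|\det|^{-s_0}\otimes\sigma_0)$ with $s_0>0$, and then using Kudla's filtration (via the analysis in \cite[Proposition~3.1]{GT1}) to conclude that either $\Theta_{V_{m-2},W_n}(\pi)\ne 0$ (contradicting first occurrence) or $\pi$ is a quotient of $\chi_V\St_a\rtimes\pi_0$ for some tempered $\pi_0$ (contradicting discreteness) --- but this is the same exponent bookkeeping you describe, just organized differently.

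For part~(1), however, your inductive approach has a genuine gap precisely in the case you flag. When the only unitary discrete-series factor available to peel off from $\pi$ is $\St_l$, Corollary~\ref{pi0} does not apply, and the second layer $J^{l-1}$ in Proposition~\ref{Ja} contributes a piece of the shape $\chi_W\St_{l-1}|\det|_E^{-1/2}\rtimes\Theta_{V_{m-2l+2},W_{n-2l}}(\pi_0)$, which is a priori non-tempered. Your proposed fix --- ``a see-saw to show this potentially non-tempered contribution does not survive'' --- is not an argument: there is no see-saw relevant here, and nothing you have set up kills this piece. (A smaller issue: for $l\in\{0,-1\}$, Prasad's conjectures describe the small theta lift $\theta$, not the subquotient structure of the big $\Theta$, so your appeal to them for the base case is not sufficient as stated.)

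The route that actually works is the one you mention parenthetically and then abandon: run the Jacquet-module argument of your first paragraph \emph{directly} for tempered $\pi$, with no induction on $n$ and no case distinction on $\tau$. This is exactly what \cite[Proposition~C.1]{GI1} does, and the paper simply cites that reference for part~(1). The point is that Casselman's criterion for tempered $\pi$ bounds the exponents of every $R_{\overline{P(X_b)}}(\pi)$ uniformly, and since $m\leq n+1+\epsilon_0$ the Kudla shift on the $V$-side filtration is nonnegative; combining these, one checks that no exponent of $R_{Q(Y_a)}(\Theta_{V_m,W_n}(\pi))$ violates Casselman's temperedness bound. This handles all tempered $\pi$ at once, including the $\St_l$-heavy case that breaks your induction.
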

\begin{proof}
The first case is similar to \cite[Proposition C.1]{GI1}.
Hence we consider the second case.
So we assume that $\pi$ is a discrete series representation and $m=m^\up(\pi)$.
\vskip 5pt

Fix an $H(V_{m})$-invariant filtration of $\Theta_{V_{m},W_{n}}(\pi)$:
\[
\Theta_{V_{m},W_{n}}(\pi)
=\Sigma_0 \supset \Sigma_1 \supset \dots \supset \Sigma_c \supset \Sigma_{c+1}=0
\]
such that 
\[
\Pi_i\coloneqq \Sigma_i/\Sigma_{i+1}
\]
is irreducible for any $i$.
Suppose that $\Pi_k$ is non-tempered.
We may assume that $\Pi_i$ is tempered for $i=0,\dots,k-1$.
Then there exists a maximal parabolic subgroup $Q$ of $H(V_{m})$
with Levi component $L_Q=\GL_t(E)\times H(V_{m_0})$ such that
\[
\Pi_k \hookrightarrow \Ind_Q^{H(V_{m})}(\tau|\det|_E^{-s_0} \otimes \sigma_0),
\]
where $\tau\in\Irr_\disc(\GL_t(E))$, $s_0>0$ and $\sigma_0\in H(V_{m_0})$.
By a similar argument to \cite[Proposition C.1]{GI1}, 
we have a nonzero $H(V_{m})$-map
\[
\Theta_{V_{m},W_{n}}(\pi) \rightarrow
\Ind_Q^{H(V_{m})}(\tau|\det|_E^{-s_0} \otimes \sigma_0).
\]
Hence we have
\begin{align*}
\pi^\vee & \hookrightarrow 
\Hom_{H(V_{m})}(\omega_{V_{m},W_{n}}, 
\Ind_Q^{H(V_{m})}(\tau|\det|_E^{-s_0} \otimes \sigma_0))\\
& \cong \Hom_{\GL_t(E)\times H(V_{m_0})}
(R_Q(\omega_{V_{m},W_{n}}), \tau|\det|_E^{-s_0} \otimes \sigma_0),
\end{align*}
where $R_Q$ denotes the normalized Jacquet functor with respect to $Q$.
The last $\Hom$ space has been studied precisely in the proof of \cite[Proposition 3.1]{GT1}.
According to (the proof of) this proposition, 
one of the following must occur:
\begin{enumerate}
\renewcommand{\labelenumi}{(\alph{enumi})}
\item
$\Theta_{V_{m-2},W_{n}}(\pi)\not=0$;
\item
$\Ind_{P(X_a)}^{G(W_{n})}(\chi_V\St_a \otimes \pi_0)\twoheadrightarrow \pi$
for some $a$ and $\pi_0\in\Irr_\temp(G(W_{n_0}))$.
\end{enumerate}
However, (a) can not occur since $\Theta_{V_{m},W_{n}}(\pi)$ is the first occurrence.
Also, (b) contradicts that $\pi$ is a discrete series representation.
This completes the proof.
\end{proof}

We also need the following proposition in \cite{GT1}:
\begin{prop}[{\cite[Proposition 3.2]{GT1}}]\label{L-quot}
Let $\pi \in \Irr(G(W_n))$.
Assume that $l=n-m+\epsilon_0 \leq 0$ and
$\theta_{V_m,W_n}(\pi)$ is nonzero and tempered.
We put $V_{m+2r}=V_m\oplus\H^r$ for $r\geq0$.
Then $\theta_{V_{m+2r},W_n}(\pi)$ is the unique irreducible quotient of
the standard module
\[
\chi_W|\cdot|_E^{\half{2r-1-l}} \times
\chi_W|\cdot|_E^{\half{2r-3-l}} \times \dots \times
\chi_W|\cdot|_E^{\half{1-l}} \rtimes \theta_{V_m,W_n}(\pi).
\]
\end{prop}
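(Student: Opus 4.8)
The plan is to argue by induction on $r$, passing from $V_{m+2r-2}$ to $V_{m+2r}$ by means of Kudla's filtration (Lemma~\ref{kudla}). For $r=0$ the ``standard module'' is just $\theta_{V_m,W_n}(\pi)$ itself, so there is nothing to prove. All the lifts $\Theta_{V_{m+2r},W_n}(\pi)$ are nonzero by the tower property (Proposition~\ref{tower}), so the Howe duality conjecture (Theorem~\ref{howe}) is available at each stage. Since $l\leq0$ we are in the ``going-up'' range $m\geq n+\epsilon_0$, and the exponents $(2r-1-l)/2>(2r-3-l)/2>\dots>(1-l)/2$ are strictly decreasing and strictly positive; together with the temperedness of $\theta_{V_m,W_n}(\pi)$ this shows that the module in the statement is a genuine standard module, so that its ``unique irreducible quotient'' is its Langlands quotient, and moreover that the concatenation of exponents needed below stays strictly decreasing.

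The crucial step is to produce, for $r\geq1$, an $H(V_{m+2r})$-equivariant surjection
\[
\chi_W|\cdot|_E^{(2r-1-l)/2}\rtimes\Theta_{V_{m+2r-2},W_n}(\pi)\ \twoheadrightarrow\ \Theta_{V_{m+2r},W_n}(\pi).
\]
This is established by a Kudla-filtration argument of the type used for Corollary~\ref{pi0}, but now applying Kudla's filtration to the Jacquet module $R_{Q(Y_1)}(\omega_{V_{m+2r},W_n})$ with respect to the maximal parabolic $Q(Y_1)$ of $H(V_{m+2r})$ stabilizing an isotropic line (which exists since $r\geq1$). For the pair $(V_{m+2r},W_n)$ the relevant integer is $-(n-(m+2r)+\epsilon_0)=2r-l>0$, playing the role of the hypothesis ``$l>0$'' in Corollary~\ref{pi0}; the top graded piece $J^0$ is $\chi_W|\det_{Y_1}|_E^{\pm(2r-1-l)/2}\otimes\omega_{V_{m+2r-2},W_n}$, and $(2r-1-l)/2$ is the largest exponent that can occur. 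The analogue of Proposition~\ref{Ja} on the $H(V)$-side shows that the lower pieces $J^{a}$ ($a\geq1$), which involve $\omega_{V_{m+2r-2},W_{n-2a}}$ and hence groups of strictly smaller rank, contribute only constituents with strictly smaller exponents; together with the contragredient and $\MVW$ functors (as in Lemma~\ref{sub-quot} and the proof of Corollary~\ref{pi0}) this isolates the $J^0$-contribution and gives the displayed surjection. This Jacquet-module analysis is essentially \cite[Proposition~3.1]{GT1}.

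Granting the surjection, I would finish as follows. Inducing the canonical surjection $\Theta_{V_{m+2r-2},W_n}(\pi)\twoheadrightarrow\theta_{V_{m+2r-2},W_n}(\pi)$, one sees that $\theta_{V_{m+2r},W_n}(\pi)$ --- the unique irreducible quotient of $\Theta_{V_{m+2r},W_n}(\pi)$ by Theorem~\ref{howe} --- is a quotient of $\chi_W|\cdot|_E^{(2r-1-l)/2}\rtimes\theta_{V_{m+2r-2},W_n}(\pi)$. Since, by the inductive hypothesis, $\theta_{V_{m+2r-2},W_n}(\pi)$ is the Langlands quotient of a standard module all of whose $\GL$-exponents are $<(2r-1-l)/2$, this last induced representation is again a standard module, and $\theta_{V_{m+2r},W_n}(\pi)$ can only be its Langlands quotient: here one uses the extremality and positivity of the exponent $(2r-1-l)/2$, together with the fact that $\theta_{V_{m+2r-2},W_n}(\pi)$ occurs in $\Theta_{V_{m+2r-2},W_n}(\pi)$ at the top with multiplicity one, to rule out contributions from the kernel of $\Theta_{V_{m+2r-2},W_n}(\pi)\to\theta_{V_{m+2r-2},W_n}(\pi)$. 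Finally, transitivity of the Langlands quotient construction (the concatenated sequence of exponents being strictly decreasing and positive) identifies $\theta_{V_{m+2r},W_n}(\pi)$ with the unique irreducible quotient of the full standard module in the statement, completing the induction.

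The main obstacle is the surjection of the second paragraph: running Kudla's filtration on the $H(V_{m+2r})$-side in the regime $2r-l>0$ and checking rigorously that, after passing to the $\pi$-isotypic quotient on the $G(W_n)$-side, only the top piece $J^0$ survives. This is exactly where the hypothesis $l\leq0$ enters, and it is carried out by an argument of the same type as for Proposition~\ref{Ja} and Corollary~\ref{pi0}; once it is available, the remainder is formal manipulation with the Langlands classification and Howe duality.
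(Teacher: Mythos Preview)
The paper does not give its own proof of this proposition: it is quoted verbatim from \cite[Proposition~3.2]{GT1}. What the paper does supply is Proposition~\ref{GT32}, whose proof revisits the argument of \cite{GT1} in order to extend its applicability; from that proof one sees that the mechanism in \cite{GT1} is indeed Kudla's filtration on the $H(V)$-side (the pieces $R_t$ of \cite[Lemma~2.2]{GT1}), and that the substance of the argument is showing that only the top piece $R_0$ can contribute, the obstruction from $R_t$ with $t>0$ being ruled out by a Jacquet--module calculation that ultimately contradicts the temperedness hypothesis (there, temperedness of $\pi$). So your overall strategy---Kudla's filtration on the $Q(Y)$-side, isolation of the top piece, then Langlands--quotient bookkeeping---is the right one and matches what one can infer from the paper.

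That said, your inductive reduction from $\Theta_{V_{m+2r-2},W_n}(\pi)$ to $\theta_{V_{m+2r-2},W_n}(\pi)$ is the weakest link. From the surjection
\[
\chi_W|\cdot|_E^{(2r-1-l)/2}\rtimes\Theta_{V_{m+2r-2},W_n}(\pi)\ \twoheadrightarrow\ \theta_{V_{m+2r},W_n}(\pi)
\]
it does not formally follow that the map factors through $\chi_W|\cdot|_E^{(2r-1-l)/2}\rtimes\theta_{V_{m+2r-2},W_n}(\pi)$: you must show that $\chi_W|\cdot|_E^{(2r-1-l)/2}\rtimes K$ (with $K=\ker(\Theta\to\theta)$) has no map to $\theta_{V_{m+2r},W_n}(\pi)$, and ``extremality of the exponent'' alone does not give this without further control on the constituents of $K$. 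The argument in \cite{GT1} (as reflected in the proof of Proposition~\ref{GT32}) sidesteps this by working directly with all the pieces $R_t$ of the filtration at once rather than inducting through $\Theta$ at the previous stage, and then pinning down $\theta_{V_{m+2r},W_n}(\pi)$ via its Jacquet modules. If you want to keep the inductive shape, you should either prove the required vanishing for $K$ explicitly, or replace the induction by the direct filtration analysis as in \cite{GT1}.
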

\vskip 5pt

This proposition implies Theorem \ref{main2} (4).
In fact, \cite[Proposition 3.2]{GT1} can be applied to more general situation
as we shall show in Proposition \ref{GT32} below.
Theorem \ref{main3} (3) is proven by showing that we can apply \cite[Proposition 3.2]{GT1}
to $\theta_{V_{m^\up(\pi)},W_n}(\pi)$, which may be non-tempered, for $\pi \in \Irr_\temp(G(W_n))$.
\vskip 5pt

Also, Proposition \ref{L-quot} implies the following.
\begin{cor}\label{nontemp}
Let $\pi \in \Irr(G(W_n))$.
Assume that $l=n-m+\epsilon_0 < -1$ and
$\theta_{V_m,W_n}(\pi)$ is nonzero and tempered.
Let $V_{m_0}$ be the space which belongs to the same Witt tower as $V_m$, and
$l_0=n-m_0+\epsilon_0=0$ or $-1$.
Then $\Theta_{V_{m_0},W_n}(\pi)=0$.
\end{cor}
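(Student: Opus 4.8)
The plan is to argue by contradiction. Suppose $\Theta_{V_{m_0},W_n}(\pi)\ne 0$; I will show that this forces $\theta_{V_m,W_n}(\pi)$ to be non-tempered, against the hypothesis.

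The first step is to see that $\pi$ is itself tempered. Put $\sigma=\theta_{V_m,W_n}(\pi)$, which is nonzero and tempered by hypothesis. Since $\sigma$ is the unique irreducible quotient of $\Theta_{V_m,W_n}(\pi)$, the surjection $\omega_{V_m,W_n}\twoheadrightarrow\pi\boxtimes\sigma$ exhibits $\pi$ as a quotient of $\Theta_{W_n,V_m}(\sigma)$, so $\pi=\theta_{W_n,V_m}(\sigma)$ by the Howe duality conjecture (Theorem \ref{howe}). Now I apply Proposition \ref{temp} (1) to the dual pair $H(V_m)\times G(W_n)$ with input $\sigma$: the hypothesis $l=n-m+\epsilon_0<-1$ gives $m\ge n+\epsilon_0+2$, which comfortably exceeds what is needed for the rank inequality of Proposition \ref{temp} (1) in that direction (namely that $n$ not exceed $\dim V_m+1$ plus the analogue of $\epsilon_0$ for the reversed pair). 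Hence every irreducible subquotient of $\Theta_{W_n,V_m}(\sigma)$ is tempered; in particular $\pi$ is tempered.

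With $\pi$ tempered, I apply Proposition \ref{temp} (1) once more, now in the original direction: since $m_0\le n+1+\epsilon_0$, every irreducible subquotient of $\Theta_{V_{m_0},W_n}(\pi)$ is tempered, so by Theorem \ref{howe} the lift $\theta_{V_{m_0},W_n}(\pi)$ is nonzero and tempered. As $l_0=n-m_0+\epsilon_0\in\{0,-1\}$ is $\le 0$, Proposition \ref{L-quot} applies with $V_{m_0}$ playing the role of $V_m$, identifying $\theta_{V_{m_0+2r},W_n}(\pi)$, for each $r\ge 0$, with the Langlands quotient of the standard module
\[
\chi_W|\cdot|_E^{\half{2r-1-l_0}}\times\dots\times\chi_W|\cdot|_E^{\half{1-l_0}}\rtimes\theta_{V_{m_0},W_n}(\pi).
\]
Since $l$ and $l_0$ have the same parity (the two spaces lie in one Witt tower) and $l<-1$, $l_0\in\{0,-1\}$, we get $l\le l_0-2$, whence $m=n+\epsilon_0-l\ge m_0+2$ and $r:=(m-m_0)/2\ge 1$. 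Thus this standard module has $r\ge 1$ general linear factors, every exponent occurring in it being $\ge\half{1}>0$ because $l_0\le 0$; therefore its Langlands quotient $\theta_{V_{m_0+2r},W_n}(\pi)=\theta_{V_m,W_n}(\pi)$ is non-tempered, contradicting the hypothesis. Hence $\Theta_{V_{m_0},W_n}(\pi)=0$.

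The step I expect to require the most care is the first application of Proposition \ref{temp} (1), to the reversed dual pair: one must check that Proposition \ref{temp} is symmetric under interchanging the two members of a dual pair, tracking how $\epsilon$ — hence $\epsilon_0$, and also which factor of the pair is metaplectic — transforms under this interchange. Once that bookkeeping is dispatched, the needed rank inequality is immediate from $m\ge n+\epsilon_0+2$, and the remainder of the argument is formal.
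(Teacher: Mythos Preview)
Your argument is correct and follows the same contradiction strategy as the paper: assume $\Theta_{V_{m_0},W_n}(\pi)\ne 0$, use Proposition \ref{temp} (1) to see $\theta_{V_{m_0},W_n}(\pi)$ is tempered, then invoke Proposition \ref{L-quot} to conclude $\theta_{V_m,W_n}(\pi)$ is a Langlands quotient with strictly positive exponents, hence non-tempered.

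The one substantive addition you make is the preliminary step showing $\pi$ itself is tempered, by applying Proposition \ref{temp} (1) to the reversed dual pair. The paper's one-line proof simply writes ``it must be tempered by Proposition \ref{temp}'' without addressing this, effectively treating the temperedness of $\pi$ as given; since Corollary \ref{nontemp} is only ever applied in the paper to representations already known to be tempered, this causes no trouble downstream. Your bookkeeping for the reversed pair is right: swapping the roles of $V_m$ and $W_n$ replaces $\epsilon$ by $-\epsilon$, hence $\epsilon_0$ by $-\epsilon_0$, and the rank condition of Proposition \ref{temp} (1) becomes $n\le m+1-\epsilon_0$, which follows from $l=n-m+\epsilon_0<-1$. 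So your extra step is both correct and, given how the corollary is stated, a genuine improvement in rigor over the paper's sketch.
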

\begin{proof}
If $\theta_{V_{m_0},W_n}(\pi)$ were nonzero, it must be tempered by Proposition \ref{temp},
and so that $\theta_{V_{m},W_n}(\pi)$ is non-tempered by Proposition \ref{L-quot}.
It contradicts the temperedness of $\theta_{V_m,W_n}(\pi)$.
\end{proof}

%\section{Proof of main theorems}
%\section{Proof of main theorems}
\section{Proof of main theorems}\label{pf}
In this section, we prove Theorems \ref{main1}, \ref{main2} and \ref{main3}.
\vskip 5pt

%L-parameter
\subsection{Correspondence of last names}
First, we study the correspondence of last names.
\vskip 5pt

\begin{prop}\label{nu}
Let $\pi\in\Irr_\temp(G(W_{n}))$ with $L$-parameter $(\phi,\eta)$.
Assume that $\Theta_{V_m,W_n}(\pi)\not=0$ with $l=n-m+\epsilon_0>0$.
Then $\phi$ contains $\chi_V S_{l}$.
\end{prop}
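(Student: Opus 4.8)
The plan is to run an induction on $n$ (or equivalently on the dimension of $W_n$), exploiting Kudla's filtration (Lemma \ref{kudla}) through its packaged consequence in Proposition \ref{Ja} and Corollary \ref{pi0}. The base case is when $\pi$ is a discrete series representation: here one localizes the statement at the level of the Jacquet module of the Weil representation. Since $l = n-m+\epsilon_0 > 0$, the relevant nonvanishing $\Hom$-space computed in the proof of \cite[Proposition 3.1]{GT1} forces either $\Theta_{V_{m-2},W_n}(\pi) \neq 0$ or $\pi$ to be a quotient of some $\chi_V \St_a \rtimes \pi_0$; the latter is impossible for discrete series $\pi$, so in the discrete series case one is reduced to a minimality/first-occurrence argument. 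Then one reads off from the structure of $J^{k-1}$ in Lemma \ref{kudla} (the term involving $\St_{k-1}|{\det}|_E^{\half{k-l+1}}$, which is precisely the piece that survives when $\tau = \St_k|{\det}|_E^{\half{l-k}}$) that $\theta_{V_m, W_n}(\pi)$ must contain a Jacquet module along $Q(Y_1)$ on which $\GL_1$ acts by $\chi_W|\cdot|_E^{\pm(l-1)/2}$-type data, and transporting this back through the see-saw/doubling relations or through the LLC-compatibility of the $\gamma$-factors (as in \cite{GI1}) shows the parameter $\phi$ of $\pi$ must contain $\chi_V S_l$.

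For the inductive step, suppose $\pi$ is tempered but not discrete series. Then $\pi$ is a subquotient (indeed a quotient, by Lemma \ref{sub-quot} up to the obvious dualities) of $\Ind_{P(X_k)}^{G(W_n)}(\chi_V\tau \otimes \pi_0)$ for some irreducible discrete series $\tau$ of $\GL(X_k)$ and tempered $\pi_0 \in \Irr_\temp(G(W_{n-2k}))$. There are two cases. If $\tau \not\cong \St_k|{\det}_{X_k}|_E^{\half{l-k}}$, then Corollary \ref{pi0} applies and gives a surjection
\[
\Ind_{Q(Y_k)}^{H(V_m)}(\chi_W\tau \otimes \Theta_{V_{m_0},W_{n_0}}(\pi_0)) \twoheadrightarrow \Theta_{V_m,W_n}(\pi),
\]
with $m_0 = m-2k$, $n_0 = n-2k$, so in particular $\Theta_{V_{m_0},W_{n_0}}(\pi_0) \neq 0$; note $n_0 - m_0 + \epsilon_0 = l > 0$ still, and $\pi_0$ is tempered, so by the induction hypothesis the parameter $\phi_0$ of $\pi_0$ contains $\chi_V S_l$, and since $\phi \supseteq \phi_0$ (the parameter of $\pi$ is obtained from that of $\pi_0$ by adding $\tau \oplus {}^c\tau^\vee$, none of whose summands is $\chi_V S_l$ unless $\tau$ were that segment, which is excluded), we conclude $\phi$ contains $\chi_V S_l$. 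If instead $\tau \cong \St_k|{\det}_{X_k}|_E^{\half{l-k}}$ — but this can only occur when $k = l$ and then $\tau$ is not unitary, contradicting that $\pi$ is tempered unless $l - k = 0$, i.e. $k = l$ and $\tau = \St_l$ — then $\phi$ already contains $\chi_V S_l$ as the summand coming from $\tau$, and we are done directly.

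The main obstacle I anticipate is the careful bookkeeping in the non-unitary exceptional case and, more seriously, making the base case rigorous: extracting the precise statement "$\phi \supseteq \chi_V S_l$" (rather than merely some statement about occurrence indices) from the Jacquet-module computation of \cite[Proposition 3.1]{GT1} requires knowing how the parameter changes along the tower, which is exactly what is being proved in the later Proposition/Corollary chain (\ref{nus}, \ref{temp2}, etc.). So some care is needed to ensure the argument here does not secretly circularly invoke the main theorems; the clean way is to phrase the base case purely in terms of Plancherel measures / $\gamma$-factors à la \cite{GI1}, whose poles detect the summand $\chi_V S_l$ of $\phi$ directly, and only use Kudla's filtration to propagate nonvanishing. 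I expect the parameter-theoretic input to be the delicate point, while the inductive reduction via Corollary \ref{pi0} is routine.
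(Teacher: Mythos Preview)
Your inductive reduction via Corollary \ref{pi0} is sound, but the base case is not established: the dichotomy you extract from \cite[Proposition 3.1]{GT1} does not by itself produce the containment $\chi_V S_l \subset \phi$ for discrete series $\pi$. That proposition concerns the structure of theta lifts when they are non-tempered; it does not directly detect summands of $\phi$. The Jacquet-module reasoning you sketch (reading off a $\chi_W|\cdot|_E^{\pm(l-1)/2}$-eigenvalue and ``transporting back'') is vague, and making it precise would amount to re-deriving the gamma-factor pole statement anyway.

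In fact, the fix you propose at the end --- use the pole of the standard $\gamma$-factor from \cite[Theorem 11.2]{GI1} --- is exactly the paper's proof, and it requires no induction at all. The paper argues directly: since $\Theta_{V_m,W_n}(\pi)\neq 0$ with $l>0$, Proposition \ref{pole} together with Desideratum \ref{des} (\ref{G-hyp}) shows that $\gamma(s,\phi\otimes\chi_V^{-1},\psi_E)$ has a pole at $s=\tfrac{l+1}{2}$; since $\phi$ is tempered, decomposing $\phi=\bigoplus_i \phi_i\otimes S_i$ and examining $L(1-s,\phi^\vee\otimes\chi_V^{-1})$ shows that only the $S_l$-piece can contribute a pole at that point, forcing $\phi_l\otimes\chi_V^{-1}\supset \1$. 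This works uniformly for all tempered $\pi$, not only discrete series, so your entire inductive superstructure is unnecessary. The Kudla-filtration reduction you carry out is exactly what the paper uses later, in Corollary \ref{nus}, to upgrade the single containment $\chi_V S_l\subset\phi$ to the full chain and odd-ness conditions --- but for the bare statement of Proposition \ref{nu} it is overkill.
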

\begin{proof}
Consider the standard gamma factors (see Appendix \ref{std}).
By Proposition \ref{pole} and Desideratum \ref{des} (\ref{G-hyp}), 
the gamma factor
\[
\gamma(s,\phi\otimes\chi_V^{-1},\psi_E)=\ep(s,\phi\otimes\chi_V^{-1},\psi_E)
\frac{L(1-s,\phi^\vee\otimes\chi_V^{-1})}{L(s,\phi\otimes\chi_V^{-1})}
\]
has a pole at $s=\frac{l+1}{2}$.
This implies that 
$L(1-s,\phi^\vee\otimes\chi_V^{-1})$ has a pole at $s=\frac{l+1}{2}$.
We decompose
\[
\phi=\bigoplus_{i\geq1}\phi_i \otimes S_i,
\]
where $\phi_i$ is a tempered representation of $W_E$.
Then we have
\[
L(1-s,\phi^\vee \otimes \chi_V^{-1})=\prod_{i\geq 1}L(1-s+\frac{i-1}{2},\phi_i^\vee\otimes\chi_V^{-1}).
\]
Since $\phi_i$ is tempered, 
only $L(1-s+\frac{l-1}{2},\phi_{l}\otimes\chi_V^{-1})$ can have a pole at $s=\frac{l+1}{2}$.
Moreover, if it has a pole, then $\phi_{l}\otimes\chi_V^{-1}$ must contain the trivial representation.
Hence the proposition.
\end{proof}

\begin{cor}\label{nus}
Let $\pi\in\Irr_\temp(G(W_{n}))$ with $L$-parameter $(\phi,\eta)$.
Assume that $\Theta_{V_m,W_n}(\pi)\not=0$ with $l=n-m+\epsilon_0>0$.
Define $\kappa \in \{1,2\}$ by $\kappa \equiv l \bmod 2$.
Then $\phi$ contains $\chi_V S_{r}$ for $r=\kappa, \kappa+2, \dots, l$.
Moreover, the multiplicity $m_\phi(\chi_V S_{r})$ is odd for $r=\kappa, \kappa+2, \dots, l-2$.
\end{cor}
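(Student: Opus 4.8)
The plan is to derive the statement from Proposition \ref{nu} by an induction on $l$ (equivalently, on the rank of $W_n$), exploiting the tower property to strip off summands one step at a time. First I would record what Proposition \ref{nu} already gives: under the hypothesis $\Theta_{V_m,W_n}(\pi) \neq 0$ with $l = n-m+\epsilon_0 > 0$, the parameter $\phi$ contains $\chi_V S_l$. This establishes the base of the chain condition (the top term $r = l$), and the task is to propagate it downward through $r = l-2, l-4, \dots, \kappa$ while simultaneously controlling the parity of the multiplicities $m_\phi(\chi_V S_r)$ for $r \le l-2$.

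The key inductive step would go as follows. Since $\phi$ contains $\chi_V S_l$, write its multiplicity $m_\phi(\chi_V S_l) = \mu$. If $\mu$ is even, I want to peel off a pair of copies of $\chi_V S_l$: by the structure of tempered $L$-parameters there is a parabolically induced description $\pi \subset \chi_V\St_l \times \cdots \times \chi_V\St_l \rtimes \pi_0$ with $\pi_0 \in \Irr_\temp(G(W_{n_0}))$ tempered of strictly smaller rank, whose parameter $\phi_0$ has $m_{\phi_0}(\chi_V S_l) = \mu - (\text{number of peeled copies})$; one then uses Corollary \ref{pi0} (applicable because $\St_l \not\cong \St_k|\det|_E^{(l-k)/2}$ when the relevant $k = l$ gives exponent $0$, while our $\St_l$ carries no twist, so the excluded case does not occur) to conclude that $\Theta_{V_{m-2lh}, W_{n_0}}(\pi_0) \neq 0$ for the appropriate lower member of the Witt tower, with the same value of $l$. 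Applying the induction hypothesis to $\pi_0$ then yields the chain $\chi_V S_\kappa + \cdots + \chi_V S_l$ inside $\phi_0$, hence inside $\phi$, and the odd-ness of $m_{\phi_0}(\chi_V S_r) = m_\phi(\chi_V S_r)$ for $r \le l-2$. Iterating the peeling reduces $\mu$ modulo $2$, so the remaining case is $\mu$ odd. When $\mu$ is odd (in particular $\mu \ge 1$), one peels off $(\mu-1)/2$ pairs to reach a $\pi_0$ with $m_{\phi_0}(\chi_V S_l) = 1$, to which Proposition \ref{irred} applies: $\Theta_{V_m, W_n}$ of the corresponding representation is irreducible and tempered, and by first occurrence / tower considerations combined with Proposition \ref{nu} at the next level down (rank $n_0 = n - 2$, space $V_{m+2}$, so $l$ decreases by $2$), one obtains that $\phi_0$ — hence $\phi$ — contains $\chi_V S_{l-2}$; induction on this smaller datum finishes the chain down to $\kappa$ and furnishes the parity statement for all $r = \kappa, \dots, l-2$.

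To make the downward propagation precise I would set up the induction not on a single representation but on the pair (rank of $W_n$, value of $l$), with the inductive hypothesis being exactly the conclusion of the corollary; the reduction via Corollary \ref{pi0} decreases the rank while keeping $l$ fixed, and the reduction via Proposition \ref{nu} applied to the theta lift $\theta_{V_{m+2},W_n}(\pi)$ (now a representation on the $V$-side, viewing the roles of $V$ and $W$ as swapped) decreases $l$ by $2$ while re-anchoring the chain condition one notch lower. A small amount of bookkeeping is needed to check that the parity of $m_\phi(\chi_V S_r)$ is preserved under peeling off an even number of copies of $\chi_V\St_l$ — but this is immediate since only the multiplicity of $\chi_V S_l$ itself is altered, and $l$ is excluded from the range $r = \kappa, \dots, l-2$ in the odd-ness assertion.

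\textbf{Main obstacle.} The delicate point is the mechanism that forces the multiplicity $m_\phi(\chi_V S_r)$ to be \emph{odd} rather than merely positive for $r < l$. Positivity (the chain condition) follows cleanly from repeated application of Proposition \ref{nu} after peeling; but the parity claim requires that when $m_\phi(\chi_V S_r)$ is even one can peel off \emph{all} copies and land on a $\pi_0$ whose parameter no longer contains $\chi_V S_r$ at all, contradicting the chain condition for that smaller datum — so the argument is really a descending induction showing that an even multiplicity at level $r$ propagates a failure of the chain downward. Getting the inductive order right so that this contradiction is available at each level — i.e., ensuring the chain condition has already been established at level $r$ before one argues about its parity — is where the care lies. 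I expect this interleaving of the two reductions (rank-decreasing via Corollary \ref{pi0}, $l$-decreasing via Proposition \ref{nu}) to be the crux of the write-up.
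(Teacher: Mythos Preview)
There is a genuine gap in your proposal, and also an unnecessary complication.

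\medskip

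\textbf{The chain condition is much simpler than you suggest.} You do not need any induction, peeling, or side-switching to get $\chi_V S_r \subset \phi$ for $r=\kappa,\kappa+2,\dots,l$. The tower property (Proposition~\ref{tower}) says that $\Theta_{V_m,W_n}(\pi)\neq 0$ forces $\Theta_{V_{m'},W_n}(\pi)\neq 0$ for every $m' \geq m$ in the same Witt tower. For each such $m'$ with $l' \coloneqq n-m'+\epsilon_0 > 0$, Proposition~\ref{nu} applied to \emph{the same} $\pi$ gives $\chi_V S_{l'} \subset \phi$ directly. No theta lift back to the $V$-side is needed; in particular you avoid the circularity risk of invoking the $L$-parameter of $\theta_{V_{m+2},W_n}(\pi)$ (which is only determined later, in Theorem~\ref{param}).

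\medskip

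\textbf{The odd-ness argument fails as written.} Your inductive step peels off copies of $\chi_V\St_l$ and appeals to Corollary~\ref{pi0}. But in Corollary~\ref{pi0} the excluded representation is $\St_k|{\det}_{X_k}|_E^{(l-k)/2}$, and when you peel $\chi_V\St_l$ you have $\tau=\St_l$ and $k=l$, so the excluded case is $\St_l|\det|_E^{0}=\St_l$ --- precisely your $\tau$. Your parenthetical justification (``the excluded case does not occur'') is backwards: this is exactly the one case Corollary~\ref{pi0} does not cover. So you cannot conclude $\Theta_{V_{m_0},W_{n_0}}(\pi_0)\neq 0$ by this route, and the induction collapses.

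The paper's fix is to peel at level $r$ with $r<l$, not at the top level $l$. Fixing such an $r$ and assuming $m_\phi(\chi_V S_r)\geq 2$, one writes $\phi=\chi_V S_r\oplus\phi_0\oplus\chi_V S_r$ and realizes $\pi$ as a quotient of $\chi_V\St_r \rtimes \pi_0$ with $\pi_0$ tempered of parameter $\phi_0$. Now $\tau=\St_r$ with $k=r<l$, so the excluded representation $\St_r|\det|_E^{(l-r)/2}$ carries a nonzero exponent and is genuinely different from $\St_r$; Corollary~\ref{pi0} applies and gives $\Theta_{V_{m-2r},W_{n-2r}}(\pi_0)\neq 0$ with the \emph{same} value of $l$. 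Induction on $n$ then says $m_{\phi_0}(\chi_V S_r)$ is odd, hence $m_\phi(\chi_V S_r)=m_{\phi_0}(\chi_V S_r)+2$ is odd. Note that only the multiplicity at level $r$ changes, and $r$ ranges over $\kappa,\dots,l-2$ --- so nothing about the (possibly even) multiplicity at level $l$ is claimed or needed.
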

\begin{proof}
By Proposition \ref{nu} and Proposition \ref{tower}, 
we see that $\phi$ contains $\chi_V S_{r}$ for $r=\kappa, \kappa+2, \dots, l$.
\vskip 5pt

By an induction on $n$,
we prove that $m_\phi(\chi_V S_r)$ is odd for any $r=\kappa+2i$ with $0 \leq i < (l-\kappa)/2$.
We may assume that $m_\phi(\chi_V S_r) \geq 2$.
Then we can write
\[
\phi=\chi_V S_r \oplus \phi_0 \oplus \chi_V S_r
\]
for some $\phi_0 \in \Phi_\temp(G(W_{n_0}))$ with $n_0=n-2r$.
We can find $\pi_0 \in \Irr_\temp(G(W_{n_0}))$
such that there is a surjection
$\Ind_{P(X_r)}^{G(W_n)}(\chi_V \St_{r} \otimes \pi_0) \twoheadrightarrow \pi$.
Then the $L$-parameter of $\pi_0$ is given by $(\phi_0,\eta|A_{\phi_0})$.
Since $r < l$, by Corollary \ref{pi0}, we have a surjection
$\Ind_{Q(Y_r)}^{H(V_m)}(\chi_W \St_{r} \otimes \Theta_{V_{m_0},W_{n_0}}(\pi_0))
\twoheadrightarrow \Theta_{V_m,W_n}(\pi)$ with $m_0=m-2r$.
In particular, $\Theta_{V_{m_0},W_{n_0}}(\pi_0)$ is nonzero.
Since $n_0-m_0+\epsilon_0=l$, by the induction hypothesis, 
we see that $m_{\phi_0}(\chi_V S_{r})$ is odd.
Therefore $m_\phi(\chi_V S_{r})=m_{\phi_0}(\chi_V S_{r})+2$ is also odd.
\end{proof}

Corollary \ref{nus} gives the (chain condition) and the (odd-ness condition) in Theorem \ref{main1} (1).
Note that it is possible that $m_\phi(\chi_V S_l)$ is even as we shall see later.
The parity of $m_\phi(\chi_V S_l)$ determines the temperedness of
the first occurrence $\theta_{V'_{m^\up(\pi)},W_{n}}(\pi)$
to the going-up tower (Corollary \ref{temp2}).
\vskip 5pt

Next, we determine the last name of theta lifts in a special case for the going-down tower.
\begin{thm}\label{param}
Let $\pi \in \Irr_\temp(G(W_n))$ whose last name is $\phi\in \Phi_\temp(G(W_n))$.
Assume that $\Theta_{V_m,W_n}(\pi)\not=0$ with $l=n-m+\epsilon_0>0$.
Put
\[
\theta_{V_m,W_n}(\phi)=(\phi\otimes\chi_V^{-1}\chi_W)-\chi_W S_{l}.
\]
Then $\theta_{V_m,W_n}(\phi)\in \Phi(H(V_{m}))$ and it is the last name of 
$\theta_{V_m,W_n}(\pi)$.
\end{thm}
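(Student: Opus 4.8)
The plan is to reduce the statement to a chain of known inputs: the pole structure of standard gamma factors (already used in Proposition \ref{nu}), the compatibility of gamma factors under theta correspondence from Gan--Ichino \cite{GI1}, and an induction on $n$ that strips off parabolic inductions. First I would handle the case where $\pi$ is a discrete series representation. Here I expect $\theta_{V_m,W_n}(\pi)$ to be tempered (one has $l = n-m+\epsilon_0 > 0$, so $m < n + \epsilon_0$, which is in the ``going down'' range, and temperedness should follow from Proposition \ref{temp}~(1) after checking $m \le n+1+\epsilon_0$, together with the Howe duality conjecture, Theorem \ref{howe}, to get irreducibility). Granting temperedness, the last name $\psi$ of $\theta_{V_m,W_n}(\pi)$ is a tempered parameter of $H(V_m)$, and the relation between $\psi$ and $\phi$ is pinned down by matching standard gamma factors: by the Gan--Ichino results, $\gamma(s, \theta_{V_m,W_n}(\pi) \otimes \chi, \psi_E)$ and $\gamma(s, \pi \otimes \chi, \psi_E)$ differ by an explicit ratio of $L$-factors coming from $\chi_V, \chi_W$ and the shift $S_l$. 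Reading off the poles (as in the proof of Proposition \ref{nu}) forces $\psi \otimes \chi_W^{-1}\chi_V = \phi \otimes \chi_V^{-1}\chi_V$ minus the factor $\chi_W S_l / \chi_V \cdots$, i.e. exactly $\psi = (\phi \otimes \chi_V^{-1}\chi_W) - \chi_W S_l$. One must also check that this virtual sum is actually a genuine parameter in $\Phi(H(V_m))$ of the correct dimension and type: since $\phi$ contains $\chi_V S_l$ by Proposition \ref{nu}, the subtraction is legitimate, the dimension count $\dim \psi = \dim\phi - l = (\text{size of }{}^LG(W_n)) - l$ matches the size of ${}^LH(V_m)$, and the self-duality/sign conditions follow from those of $\phi$ since twisting by $\chi_V^{-1}\chi_W$ and removing the self-dual piece $\chi_V S_l$ preserves the relevant type (using ${}^c\chi_V^{-1} = \chi_V$, ${}^c\chi_W^{-1} = \chi_W$).

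Next I would treat general tempered $\pi$ by induction on $n$. Write $\pi \hookrightarrow \tau_1 \times \cdots \times \tau_r \rtimes \pi_0$ with $\pi_0$ a discrete series representation of $G(W_{n_0})$ and each $\tau_i$ an essentially discrete series representation of a general linear group. The key tool is Corollary \ref{pi0}: provided none of the $\tau_i$ equals the ``bad'' segment $\St_k|{\det}|_E^{(l-k)/2}$, one gets a surjection $\chi_W\tau_1 \times \cdots \times \chi_W\tau_r \rtimes \Theta_{V_{m_0},W_{n_0}}(\pi_0) \twoheadrightarrow \Theta_{V_m,W_n}(\pi)$ with $n_0 = n - 2(\sum \dim)$, $m_0 = m - 2(\sum\dim)$ and the same $l$. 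Since $\pi$ is tempered the $\tau_i$ are (unitary) discrete series, so they are never equal to that bad segment, and the hypothesis of Corollary \ref{pi0} is met. The parameter of $\pi_0$ is $(\phi_0, \eta_0)$ with $\phi_0 = \phi \ominus (\text{pieces coming from the }\tau_i)$, and by the inductive hypothesis $\theta_{V_{m_0},W_{n_0}}(\pi_0)$ has last name $(\phi_0 \otimes \chi_V^{-1}\chi_W) - \chi_W S_l$. Pushing this through the surjection and reading off the last name of the unique irreducible quotient (using the LLC compatibility with parabolic induction and the fact that conjugate-self-dual twisting intertwines ${}^c\tau^\vee$ with $\tau$, Lemma \ref{sub-quot}), the extra segments reassemble to give $\theta_{V_m,W_n}(\phi) = (\phi \otimes \chi_V^{-1}\chi_W) - \chi_W S_l$ as claimed.

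The main obstacle is the bookkeeping in the inductive step: one must verify that the last name of the irreducible quotient of the induced representation $\chi_W\tau_1 \times \cdots \times \chi_W\tau_r \rtimes \theta_{V_{m_0},W_{n_0}}(\pi_0)$ is obtained by simply adjoining the segments $\chi_W{}^c\tau_i^\vee \otimes \cdots$ to $\theta_{V_{m_0},W_{n_0}}(\phi_0)$, and that this is consistent regardless of which parabolic induction one starts from. The subtlety is that $\Theta_{V_m,W_n}(\pi)$ need not be irreducible a priori, so one is extracting the last name from a surjection rather than an isomorphism; here one leans on the Howe duality conjecture (Theorem \ref{howe}) to know $\theta_{V_m,W_n}(\pi)$ is the unique irreducible quotient, and on the precise form of Corollary \ref{pi0} which identifies the relevant induced module exactly. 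A secondary point to check carefully is the twist $\chi_V^{-1}\chi_W$: when $H(V_m)$ or $G(W_n)$ is symplectic (so $\chi_V = \1$ or $\chi_W = \1$) the formula should degenerate correctly, and when $E \ne F$ the conjugate-duality signs must be tracked, but these are routine given the normalizations fixed in Section 2.
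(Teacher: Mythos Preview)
Your approach differs from the paper's, and the base case carries a genuine gap. The paper does not argue by induction on $n$ at all: it works directly with the Plancherel measure identity (Theorem~\ref{Pmeas}). Combining that identity with Desideratum~\ref{des}~(\ref{P-hyp}) and Lemma~\ref{gamma} yields, for \emph{every} irreducible representation $\phi_\tau$ of $W_E$,
\[
\gamma(s,\phi_\tau\chi_W\otimes\phi_{\theta(\pi)}^\vee,\psi_E)\,
\gamma(-s,(\phi_\tau\chi_W)^\vee\otimes\phi_{\theta(\pi)},\psi_E^{-1})
=
\gamma(s,\phi_\tau\chi_W \otimes \theta_{V_m,W_n}(\phi)^\vee,\psi_E)\,
\gamma(-s,(\phi_\tau\chi_W)^\vee \otimes \theta_{V_m,W_n}(\phi),\psi_E^{-1}).
\]
Since $\phi_{\theta(\pi)}$ is tempered by Proposition~\ref{temp}~(1), the converse-type Lemma~\ref{converse} then forces $\phi_{\theta(\pi)}=\theta_{V_m,W_n}(\phi)$.

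The problem with your discrete-series base case is that ``matching standard gamma factors'' in the sense of \S\ref{std} and Desideratum~\ref{des}~(\ref{G-hyp}) only gives you $\gamma(s,\phi\otimes\chi,\psi_E)$ for \emph{characters} $\chi$ of $E^\times$. Knowing these for all $\chi$ does not determine a tempered parameter: reading off poles as in Proposition~\ref{nu} only detects the summands of the form $\chi^{-1}\boxtimes S_k$ together with their multiplicities, and is blind to any irreducible summand $\rho\boxtimes S_k$ with $\dim\rho>1$. So pole-matching cannot force $\phi_{\theta(\pi)}=(\phi\otimes\chi_V^{-1}\chi_W)-\chi_W S_l$ in general. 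What is actually needed is an equality of pair gamma factors against \emph{all} supercuspidals of $\GL_k(E)$, and that is precisely what the Plancherel measure supplies via Desideratum~\ref{des}~(\ref{P-hyp}).

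Your inductive step via Corollary~\ref{pi0} is sound in spirit, but note that to conclude the last name of $\theta_{V_m,W_n}(\pi)$ from the surjection you must know that all irreducible subquotients of $\Theta_{V_{m_0},W_{n_0}}(\pi_0)$ share the same last name; this is Lemma~\ref{lemma L-packet}, whose proof already rests on the Plancherel-measure machinery. Once you grant Theorem~\ref{Pmeas} and Lemma~\ref{converse}, the argument works uniformly for all tempered $\pi$, and the induction becomes redundant.
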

\begin{proof}
Let $\phi_{\theta(\pi)}$ be the last name of $\theta_{V_m,W_n}(\pi)$.
Consider the Plancherel measure (see Appendix \ref{Pm}).
By Theorem \ref{Pmeas}, we have
\[
\mu_\psi(\tau_s\chi_W \otimes \theta_{V_m,W_n}(\pi))
=\mu_\psi(\tau_s\chi_V \otimes \pi)
\gamma(s-\frac{l-1}{2},\tau,\psi_E)^{-1}\gamma(-s-\frac{l-1}{2},\tau^\vee,\psi_E^{-1})^{-1}
\]
for any supercuspidal representation $\tau$ of $\GL_k(E)$.
Using Desideratum \ref{des} (\ref{P-hyp}) and Lemma \ref{gamma},
for any irreducible representation $\phi_\tau$ of $W_E$, we have
\begin{align*}
&\gamma(s,\phi_\tau\chi_W\otimes\phi_{\theta(\pi)}^\vee,\psi_E)
\gamma(-s,(\phi_\tau\chi_W)^\vee\otimes\phi_{\theta(\pi)},\psi_E^{-1})\\
&=\frac{\gamma(s,\phi_\tau\chi_V\otimes\phi^\vee,\psi_E)
\gamma(-s,(\phi_\tau\chi_V)^\vee\otimes\phi,\psi_E^{-1})}
{\gamma(s-\frac{l-1}{2},\phi_\tau,\psi_E)\gamma(-s-\frac{l-1}{2},\phi_\tau^\vee,\psi_E^{-1})}\\
&=\gamma(s,\phi_\tau\chi_W \otimes \theta_{V_m,W_n}(\phi)^\vee,\psi_E)
\gamma(-s,(\phi_\tau\chi_W)^\vee \otimes \theta_{V_m,W_n}(\phi),\psi_E^{-1}).
\end{align*}
By Proposition \ref{temp} (1), 
we see that $\phi_{\theta(\pi)}$ is tempered.
Hence by Lemma \ref{converse}, we have
\[
\phi_{\theta(\pi)}=\theta_{V_m,W_n}(\phi),
\]
as desired.
In particular, we have $\theta_{V_m,W_n}(\phi)\in \Phi(H(V_{m}))$.
\end{proof}

%\subsection{Correspondence of characters}
\subsection{Correspondence of first names}\label{corres char}
In this subsection, we compare the first name of $\theta_{V_m,W_n}(\pi)$ with the one of $\pi$.
To do this, we need the following lemma.
\begin{lem}\label{lemma L-packet}
Let $\pi\in\Irr(G(W_n))$.
Assume that $\Theta_{V_m,W_n}(\pi)\not=0$ and
all irreducible subquotients of $\Theta_{V_m,W_n}(\pi)$ are tempered.
Then all irreducible subquotients of $\Theta_{V_m,W_n}(\pi)$
belong to the same $L$-packet.
\end{lem}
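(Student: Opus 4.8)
Let $\pi\in\Irr(G(W_n))$. Assume that $\Theta_{V_m,W_n}(\pi)\not=0$ and all irreducible subquotients of $\Theta_{V_m,W_n}(\pi)$ are tempered. Then all irreducible subquotients of $\Theta_{V_m,W_n}(\pi)$ belong to the same $L$-packet.

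**Proof proposal.**

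The plan is to show that all irreducible subquotients of $\Theta_{V_m,W_n}(\pi)$ share a common last name $\phi_{\theta(\pi)} \in \Phi_\temp(H(V_m))$; since an $L$-packet is precisely the set of tempered representations with a fixed parameter, this suffices. The key point is that the last name of an irreducible tempered representation $\sigma$ of $H(V_m)$ is detected by its standard $\gamma$-factors (or, equivalently, by its Plancherel measures): by Desideratum \ref{des} (\ref{P-hyp}) together with the converse-type statement Lemma \ref{converse}, the function $\gamma(s, \phi_\tau \chi_W \otimes \phi_\sigma^\vee, \psi_E)\,\gamma(-s, (\phi_\tau\chi_W)^\vee \otimes \phi_\sigma, \psi_E^{-1})$, as $\tau$ ranges over supercuspidal representations of $\GL_k(E)$ and $k$ varies, determines $\phi_\sigma$. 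So it is enough to prove that the Plancherel measure $\mu_\psi(\tau_s \chi_W \otimes \sigma)$ depends only on $\pi$ and not on the choice of irreducible subquotient $\sigma$ of $\Theta_{V_m,W_n}(\pi)$.

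First I would recall, as in the proof of Theorem \ref{param}, that the see-saw/doubling computation of Gan--Ichino (Theorem \ref{Pmeas}) gives, for the big theta lift, an identity of the shape
\[
\mu_\psi(\tau_s \chi_W \otimes \Theta_{V_m,W_n}(\pi)) = \mu_\psi(\tau_s\chi_V \otimes \pi) \cdot \gamma\!\left(s-\tfrac{l-1}{2}, \tau, \psi_E\right)^{-1} \gamma\!\left(-s-\tfrac{l-1}{2}, \tau^\vee, \psi_E^{-1}\right)^{-1},
\]
where the Plancherel measure of a (possibly reducible, finite-length) representation is understood via the normalized intertwining operators on the induced representation $\tau_s\chi_W \rtimes \Theta_{V_m,W_n}(\pi)$. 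The right-hand side is manifestly independent of which constituent of $\Theta_{V_m,W_n}(\pi)$ one looks at. On the other hand, the Plancherel measure is multiplicative in short exact sequences of the inducing datum: if $\Theta_{V_m,W_n}(\pi)$ has a filtration with irreducible tempered successive quotients $\sigma_1, \dots, \sigma_c$, then $\mu_\psi(\tau_s\chi_W \otimes \Theta_{V_m,W_n}(\pi)) = \prod_{j=1}^c \mu_\psi(\tau_s\chi_W \otimes \sigma_j)$ (each factor being a rational function of $q_E^{-s}$ in the tempered case). Comparing, the product $\prod_j \mu_\psi(\tau_s\chi_W \otimes \sigma_j)$ equals a fixed rational function for every $\tau$ and every $k$.

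To pass from equality of products of Plancherel measures to equality of the individual last names, I would argue as follows. For each tempered $\sigma_j$ with last name $\phi_j$, the Plancherel measure $\mu_\psi(\tau_s \chi_W \otimes \sigma_j)$ is an explicit product of $\gamma$-factors $\gamma(s, \phi_\tau\chi_W \otimes \phi_j^\vee, \psi_E)\gamma(-s, (\phi_\tau\chi_W)^\vee\otimes\phi_j,\psi_E^{-1})$ times a factor depending only on $\tau$ and the Witt tower (via Desideratum \ref{des} (\ref{P-hyp})). Hence the product over $j$ is $\gamma(s, \phi_\tau\chi_W \otimes (\bigoplus_j\phi_j)^\vee, \psi_E)\,\gamma(-s,\dots)$ up to a $\sigma$-independent factor, so the multiset data $\{\phi_j\}$ determines, and is determined by, the single parameter $\bigoplus_j \phi_j$ through these $\gamma$-factors. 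Now apply this with $\pi$ replaced by $\theta_{V_m,W_n}(\pi)$ itself, which is one of the $\sigma_j$ (the unique irreducible quotient, Theorem \ref{howe}) with last name $\phi_{\theta(\pi)}$ computed in Theorem \ref{param} (whenever $l>0$; in the equal-rank range the analogous input is available from Theorems \ref{Mp}, \ref{PE}, \ref{PA}): the displayed $\gamma$-factor identity forces $\bigoplus_j \phi_j = c \cdot \phi_{\theta(\pi)}$ as virtual parameters, where $c$ is the number of subquotients, and since each $\phi_j$ has the correct dimension $m$ (resp.\ $m\pm1$) this is only possible if $\phi_j = \phi_{\theta(\pi)}$ for all $j$. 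Therefore all the $\sigma_j$ lie in $\Pi_{\phi_{\theta(\pi)}}$.

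The main obstacle I anticipate is the bookkeeping in the last step: one must be careful that "equality of products of $\gamma$-factors for all supercuspidal $\tau$ and all $k$" genuinely pins down the underlying Weil--Deligne parameter, which is where the converse statement Lemma \ref{converse} (together with the fact that each $\phi_j$ is tempered, so has no cancellation between $L$-factors) does the real work — and that the $\tau$-dependent and Witt-tower-dependent normalizing factors in Desideratum \ref{des} (\ref{P-hyp}) really are the same for all $\sigma_j$, which uses that they all sit on the same space $H(V_m)$ (or its near-equal-rank companion) and have the same central character constraints. A secondary point is ensuring the multiplicativity of Plancherel measures in the reducible case is legitimate here, which follows from the standard factorization of the (unnormalized) intertwining operator through the filtration and the fact that temperedness guarantees the relevant operators are holomorphic and nonzero on the unitary axis.
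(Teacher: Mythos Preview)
Your overall strategy --- detect the last name of each tempered subquotient via Plancherel measures (Desideratum \ref{des} (\ref{P-hyp})) and then invoke the converse Lemma \ref{converse} --- is exactly the approach the paper takes through the cited Gan--Ichino lemmas. The gap is in the middle step.

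The claim that Plancherel measures are ``multiplicative in short exact sequences of the inducing datum,'' so that $\mu_\psi(\tau_s\chi_W\otimes\Theta_{V_m,W_n}(\pi))=\prod_j\mu_\psi(\tau_s\chi_W\otimes\sigma_j)$, is not correct. The Plancherel measure is the scalar by which the composite $J_{P|\overline{P}}\circ J_{\overline{P}|P}$ acts; for a filtered inducing datum this composite is merely an endomorphism respecting the filtration, acting by $\mu_j^{-1}$ on the $j$-th graded piece. There is no single scalar on the whole space in general, and no mechanism producing a product. What \cite[Lemma B.2, Proposition B.3]{GI1} and \cite[Lemma A.1]{GI2} actually supply is the sharper statement that Theorem \ref{Pmeas} holds for \emph{every} irreducible subquotient $\sigma$ of $\Theta_{V_m,W_n}(\pi)$: the see-saw computation is carried out at the level of the Weil representation itself, so the identity
\[
\mu_\psi(\tau_s\chi_W\otimes\sigma)=\mu_\psi(\tau_s\chi_V\otimes\pi)\,\gamma\!\left(s-\tfrac{l-1}{2},\tau,\psi_E\right)^{-1}\gamma\!\left(-s-\tfrac{l-1}{2},\tau^\vee,\psi_E^{-1}\right)^{-1}
\]
holds for each such $\sigma$ separately. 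Then Lemma \ref{converse} applied to any pair $\sigma,\sigma'$ gives $\phi_\sigma=\phi_{\sigma'}$ at once.

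Even if one granted multiplicativity, your closing dimension argument would not finish: from $\bigoplus_j\phi_j\cong c\cdot\phi_{\theta(\pi)}$ together with $\dim\phi_j$ fixed it does \emph{not} follow that each $\phi_j=\phi_{\theta(\pi)}$. For instance if $\phi_{\theta(\pi)}=A\oplus B$ with $A,B$ irreducible of the correct type and $\dim A=\dim B$, then $\phi_1=2A$, $\phi_2=2B$ already defeats the conclusion. So the product route cannot be rescued; you need the per-subquotient equality of Plancherel measures, which is precisely the content of the cited lemmas.
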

\begin{proof}
This follows from \cite[Lemma A.1]{GI2}, \cite[Lemma B.2, Proposition B.3]{GI1}
and \cite[Lemma A.6]{GI2}.
\end{proof}
\vskip 5pt

In the following theorem, to avoid a confusion, 
we denote the characters associated to $V_m$ and $W_n$ by
$\chi_{V_m}$ and $\chi_{W_n}$, respectively.
%\begin{thm}\label{main2.1}
\begin{thm}\label{main2.1}
Let $\pi \in \Irr_\temp(G(W_n))$ with $L$-parameter $(\phi, \eta)$.
Assume that $\Theta_{V_m,W_n}(\pi) \not=0$ with $l = n-m+\epsilon_0 > 1$.
Let $(\theta(\phi), \theta(\eta))$ be the $L$-parameter for $\theta_{V_m,W_n}(\pi) \in \Irr(H(V_m))$.
Then we have
\[
\theta(\eta)(a)/\eta(a)
=\left\{
\begin{aligned}
&\ep(\phi^a\chi_{V_m}^{-1} \otimes S_{l-1}) \cdot \ep(\phi^a) \cdot \chi_{V_m}(-1)^{\half{1}\dim(\phi^a)}
\iif \text{$E=F$, $\epsilon = +1$ and $m$ is odd},\\
&\ep(\phi^a\chi_{V_m}^{-1} \otimes S_{l-1}) \cdot \ep(\phi^a\chi_{W_n}) 
\cdot \chi_{W_n}(-1)^{\half{1}\dim(\phi^a)}
\iif \text{$E=F$, $\epsilon = -1$ and $n$ is odd},\\
&\ep(\phi^a\chi_{V_m}^{-1} \otimes S_{l-1}) \cdot \det(\phi^a\chi_{V_m}^{-1})(-1)^{\half{l-1}} \cdot \nu^{\det(a)}
\iif \text{$E=F$ and $m$, $n$ are even},\\
&\ep(\phi^a\chi_{V_m}^{-1} \otimes S_{l-1},\psi^E_{2})
\iif \text{$E\not=F$},\\
\end{aligned}
\right.
\]
where the constant $\nu \in \{\pm1\}$ is given by
\[
\nu = (-1)^{\half{l-1}} \cdot \eta(e_1+e_l).
\]
\end{thm}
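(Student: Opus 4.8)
The strategy is to reduce the general tempered case to the discrete series case via parabolic induction, and then to handle the discrete series case by an inductive argument that peels off one "rung" of the chain $\chi_V S_\kappa \subset \chi_V S_{\kappa+2} \subset \dots \subset \chi_V S_l$ at a time, ultimately landing in the almost equal rank situation where Prasad's conjecture and the results of \cite{GI2}, \cite{At}, \cite{GS} determine everything. Concretely, since $\theta(\eta)(a)/\eta(a)$ is multiplicative in $a$, it suffices to compute it on a single basis element $a = a_i$ of $A_{\theta_m(\phi)} \subset A_\phi$ attached to an irreducible summand $\phi_i$; here $\phi^{a_i} = \phi_i$. The governing principle, to be established first, is a character identity comparing the characters of $\pi$ and of $\theta_{V_m,W_n}(\pi)$ against elements of the component groups—this comes from the see-saw identity relating the doubling zeta integrals on $G(W_n)$ and $H(V_m)$ to those on the larger group, together with the behaviour of the doubling gamma factors under theta lifting established in Gan--Ichino \cite{GI1} (the results already cited as Desideratum \ref{des} and Theorem \ref{Pmeas} in the excerpt).

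\textbf{Key steps.} First I would record that the last name $\theta(\phi) = (\phi \otimes \chi_V^{-1}\chi_W) - \chi_W S_l$ is already known by Theorem \ref{param}, and that $\theta_{V_m,W_n}(\pi)$ lies in the $L$-packet of this tempered parameter (Proposition \ref{temp}, Lemma \ref{lemma L-packet}); the natural injection $A_{\theta_m(\phi)} \hookrightarrow A_\phi$ is the one induced by removing the summand $\chi_W S_l$. Second, using the local Gross--Prasad conjecture (Appendix C) one translates the question of the value $\theta(\eta)(a_i)$ into the (non)vanishing of a Bessel or Fourier--Jacobi period between $\theta_{V_m,W_n}(\pi)$ (or a member of its packet) and a suitable auxiliary representation; the see-saw identity then converts this period into one on $G(W_n)$ involving $\pi$ itself. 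The branching laws and the multiplicity-one statements of GP, combined with the epsilon-factor criterion that appears in GP (the value is $\epsilon(1/2, \phi_i \otimes \sigma)$ for an appropriate $\sigma$), produce an expression of the shape $\epsilon(\phi_i \chi_V^{-1} \otimes S_{l-1}, \psi_E') \cdot (\text{normalizing factors})$. Third, I would unwind the normalizing factors—these are precisely the discrepancies between the Whittaker normalizations of the LLC on $G(W_n)$ and $H(V_m)$, and between the characters $\psi_E$, $\psi_2^E$, $\psi_a$ used in defining the Weil representation—to arrive at the four displayed cases, keeping track of signs like $\chi_V(-1)^{\dim(\phi^a)/2}$ and $\det(\phi^a \chi_V^{-1})(-1)^{(l-1)/2}$ that encode root numbers of $\chi_V$-twists. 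The extra constant $\nu = (-1)^{(l-1)/2} \eta(e_1+e_l)$ in the even orthogonal case arises because there the map $\iota$ on the $L$-packet depends on a choice of orbit of Whittaker data, and switching $V_m \leftrightarrow V_m'$ (or equivalently twisting by $\det$) flips $\theta(\eta)$ on $e_{\det}$; its precise value is pinned down by comparing central elements via Proposition \ref{center1} / the analogue of the computation in \cite{GI1}.

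\textbf{The induction and the main obstacle.} For the inductive step on $n$: if $\pi$ is non-tempered-free, i.e. genuinely a discrete series, and $\phi$ contains $\chi_V S_r$ for some $r < l$ with multiplicity $\geq 2$, I would realize $\pi$ as a quotient of $\chi_V \St_r \rtimes \pi_0$ with $\pi_0$ a smaller discrete series, apply Corollary \ref{pi0} to get $\chi_W \St_r \rtimes \Theta_{V_{m_0},W_{n_0}}(\pi_0) \twoheadrightarrow \Theta_{V_m,W_n}(\pi)$, and invoke the induction hypothesis together with the compatibility of the LLC with parabolic induction (the normalized Jacquet module computation) to transport the formula. When $\pi$ is tempered but not discrete series, a parallel argument using the cuspidal support works. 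The base case is when the parameter $\phi$ has all the multiplicities along the chain equal to one and there is nothing left to peel, which forces $m$ down to $m^\down(\pi)$; there one is either in the almost equal rank case ($l = 1$, handled by Prasad's conjecture, Theorems \ref{Mp}, \ref{PE}, \ref{PA}) or one more reduction reaches it. \emph{The hard part} will be the bookkeeping of the normalizing constants: tracking how the Whittaker datum for $H(V_m)$ relates to the one for $G(W_n)$ under the theta correspondence (this is where the choice of $\psi$, $\bchi$, and the splitting character enter), and checking that the accumulated product of local root numbers of $\chi_V$- and $\chi_W$-twists reorganizes exactly into the four stated formulas — in particular that the $\psi_2^E$ appearing in the $E \neq F$ case (rather than $\psi^E$) is forced by the precise form of the splitting in \cite{GI1}, and that in the even orthogonal case the $\det$-ambiguity is resolved correctly by the central-element comparison rather than being an independent free sign.
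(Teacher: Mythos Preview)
Your proposal contains the right core ingredient---the see-saw plus the Gross--Prasad epsilon formula---but the architecture you build around it is both unnecessary and, as stated, does not close. Your inductive scheme ``peel off a summand $\chi_V S_r$ via $\chi_V\St_r \rtimes \pi_0 \twoheadrightarrow \pi$ and use Corollary~\ref{pi0}'' keeps $l = n-m+\epsilon_0$ fixed (both $n$ and $m$ drop by $2r$), so it never reduces $l$; and for a genuine discrete series $\pi$ all multiplicities are $1$, so there is nothing to peel while $l$ may still be large. Your sentence ``the base case \dots\ forces $m$ down to $m^{\down}(\pi)$; there one is either in the almost equal rank case ($l=1$) or one more reduction reaches it'' is exactly the gap: $m = m^{\down}(\pi)$ does not imply $l=1$, and you give no mechanism for that ``one more reduction.''

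The paper's proof is a single direct application of see-saw $+$ GP, with no induction on $l$ or $n$. One picks an auxiliary tempered $\pi'$ on the companion group $G'(W_n)$ (existence by Lemma~\ref{Lemma 12.5}) with $\Hom(\pi\otimes\pi',\omega)\neq 0$; the see-saw
\[
\begin{CD}
G(W_n)\times G(W_n) @. \qquad H(V_{m+1})\\
@| @|\\
G'(W_n) @. \qquad H(V_m)\times H(L)
\end{CD}
\]
produces a subquotient $\sigma'$ of $\Theta_{V_{m+1},W_n}(\pi'^\vee)$ with $\Hom_{H(V_m)}(\sigma\otimes\sigma'^\vee,\C)\neq 0$. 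Now apply the GP epsilon formula \emph{twice}: once to the pair $(\pi,\pi')$, giving $\eta(a)$ in terms of $\ep(\phi^a\otimes\phi_{\pi'},\dots)$, and once to $(\sigma,\sigma'^\vee)$, giving $\theta(\eta)(a)$ in terms of $\ep(\theta(\phi)^a\otimes\phi_{\sigma'^\vee},\dots)$. The crucial point you are missing is that only the \emph{last name} $\phi_{\sigma'^\vee}$ is needed, and this is already known by Theorem~\ref{param} (applied at level $l-1$) together with Lemma~\ref{lemma L-packet}: $\phi_{\sigma'^\vee} = (\phi_{\pi'}\otimes\chi_{V_{m+1}} - S_{l-1})\otimes\chi_W^{-1}$ (up to the $\chi_{-1}$ twist in one case). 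When you form the ratio $\theta(\eta)(a)/\eta(a)$, every factor involving the unknown $\phi_{\pi'}$ cancels, leaving exactly $\ep(\phi^a\chi_V^{-1}\otimes S_{l-1},\dots)$ times the normalization constants. No recursion, no first-name information about $\sigma'$.

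For the constant $\nu$ in the even--even case: the paper does not use Proposition~\ref{center1} as you suggest, but rather observes that $\sigma=\theta_{V_m,W_n}(\pi)$ has $\Theta_{W_m,V_m}(\sigma)=0$ by Corollary~\ref{nontemp} (since $\theta_{W_n,V_m}(\sigma)=\pi$ is tempered and $l>1$), so Prasad's conjecture forces $\theta(\eta)(z_{\theta(\phi)}+e_1)=-1$; evaluating the already-established formula at $a=z_{\theta(\phi)}+e_1$ and using Lemma~\ref{crit} with the odd-ness condition (Corollary~\ref{nus}) then pins down $\nu$.
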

\begin{proof}
If $E\not=F$, we choose a character $\chi$ of $E^\times$ such that 
$\chi | F^\times =\omega_{E/F}$.
We shall treat the cases $\epsilon = +1$ and $\epsilon = -1$ separately.
\vskip 5pt

%Suppose that $\epsilon = +1$.
Suppose that $\epsilon = +1$.
Put
\[
\omega=\left\{
\begin{aligned}
&\omega_{\psi}		\iif \text{$E=F$},\\ 
&\omega_{\psi,\chi}	\iif E\not=F.
\end{aligned}
\right.
\]
Let $L$ be the Hermitian space of dimension $1$ such that
\[
\disc(L)=
\left\{
\begin{aligned}
&(-1)^{m+1}	\iif E=F,\\
&(-1)^{m}	\iif E\not=F.
\end{aligned}
\right.
\]
Put $V_{m+1}=V_m \oplus L$.
If $E\not=F$, we set $\chi_L = \chi^{(-1)^{m}}$ and $\chi_{V_{m+1}}=\chi_{V_m}\chi_L$.
We denote by $(G'(W_n),H(V_{m+1}))$ the pair of groups associated to $(V_{m+1},W_n)$
defined in \S \ref{group}.
By Lemma \ref{Lemma 12.5}, we can find $\pi' \in \Irr_\temp(G'(W_{n}))$ such that
\[
\left\{
\begin{aligned}
&\Hom_{G'(W_{n})}(\pi \otimes \pi', \omega)\not=0
\iif \text{$E=F$ and $m\equiv 0 \bmod 2$, or $E\not=F$ and $m\equiv 1 \bmod 2$},\\
&\Hom_{G'(W_{n})}(\pi \otimes \pi', \overline{\omega})\not=0
\other
\end{aligned}
\right.
\]
so that
\[
\left\{
\begin{aligned}
&\Hom_{G'(W_{n})}(\pi \otimes \overline{\omega}, \pi'^\vee)\not=0
\iif \text{$E=F$ and $m\equiv 0 \bmod 2$, or $E\not=F$ and $m\equiv 1 \bmod 2$},\\
&\Hom_{G'(W_{n})}(\pi \otimes \omega, \pi'^\vee)\not=0
\other.
\end{aligned}
\right.
\]
We put $\sigma=\theta_{V_m,W_n}(\pi) \in \Irr_\temp(H(V_m))$.
Since $\pi \cong \theta_{W_n,V_m}(\sigma)$, we have
\[
\Hom_{G'(W_{n})}(\Theta_{W_n,V_m}(\sigma) \otimes \overline{\omega}, \pi'^\vee) \supset 
\Hom_{G'(W_{n})}(\pi \otimes \overline{\omega}, \pi'^\vee)\not=0
\]
or
\[
\Hom_{G'(W_{n})}(\Theta_{W_n,V_m}(\sigma) \otimes \omega, \pi'^\vee) \supset 
\Hom_{G'(W_{n})}(\pi \otimes \omega, \pi'^\vee)\not=0.
\]
The see-saw diagram
\[
\xymatrix{
G(W_{n})\times G(W_{n}) \ar@{-}[dr] \ar@{-}[d] & H(V_{m+1}) \ar@{-}[d]\\
G'(W_{n}) \ar@{-}[ur] & H(V_{m})\times H(L)
}
\]
implies that 
\[
\Hom_{H(V_m)}(\Theta_{V_{m+1},W_n}(\pi'^\vee), \sigma)\not=0.
\]
Hence $\Theta_{V_{m+1},W_{n}}(\pi'^\vee)$ has an irreducible subquotient $\sigma'$ such that
\[
\Hom_{H(V_m)}(\sigma', \sigma)\not=0
\quad\text{so that}\quad
\Hom_{H(V_m)}(\sigma^\vee \otimes \sigma',\C)\not=0.
\]
Since $\sigma^\vee$ and $\sigma'$ are tempered, they are unitary, so that
$\overline{\sigma^\vee} \cong \sigma$ and $\overline{\sigma'} \cong \sigma'^\vee$.
Hence we have
\[
\Hom_{H(V_m)}(\sigma \otimes \sigma'^\vee, \C)\not=0.
\]
By the GP conjectures 
(Theorems \ref{GGP-O} -- \ref{GGP-SH} and Corollary \ref{GGP-HS}), we have
\begin{align*}
\eta(a)&=
\left\{
\begin{aligned}
&\ep(\phi^a \otimes \phi_{\pi'} \chi_{-1}) \cdot \ep(\phi^a)
\cdot \chi_{-1}(-1)^{\half{1}\dim(\phi^a)}
\iif \text{$E=F$ and $m$ is odd},\\
&\ep(\phi^a \otimes \phi_{\pi'}) \cdot \ep(\phi \otimes \phi_{\pi'})^{\det(a)} \cdot 
\det(\phi^a)(-1)^{\half{1}\dim(\phi_{\pi'})}
\iif \text{$E=F$ and $m$ is even},\\
&\ep(\phi^a \otimes \phi_{\pi'} \otimes \chi^{-1}, \psi^E_{2})
\iif \text{$E\not=F$ and $m$ is odd},\\
&\omega_{E/F}(-1)^{\dim(\phi^a)} \cdot \ep(\phi^a \otimes \phi_{\pi'} \otimes \chi, \psi^E_{2})
\iif \text{$E\not=F$ and $m$ is even},\\
\end{aligned}
\right.\\
\theta(\eta)(a)&=
\left\{
\begin{aligned}
&\ep(\theta(\phi)^a \otimes \phi_{\sigma'^\vee}) 
\cdot \det(\phi_{\sigma'^\vee})(-1)^{\half{1}\dim(\theta(\phi)^a)}
\iif \text{$E=F$ and $m$ is odd},\\
&\ep(\theta(\phi)^a \otimes \phi_{\sigma'^\vee}) 
\cdot \det(\theta(\phi)^a)(-1)^{\half{1}\dim(\phi_{\sigma'^\vee})}
\cdot \nu(\sigma'^\vee)^{\det(a)}
\iif \text{$E=F$ and $m$ is even},\\
&\omega_{E/F}(-1)^{(m+1)\dim(\theta(\phi)^a)} 
\cdot \ep(\theta(\phi)^a \otimes \phi_{\sigma'^\vee}, \psi^E_{2})
\iif E\not=F
\end{aligned}
\right.
\end{align*}
for $a \in A_{\theta(\phi)} \subset A_{\phi}$.
Here, 
\begin{itemize}
\item
$\phi_{\pi'}$ and $\phi_{\sigma'^\vee}$ are the last names of $\pi'$ and $\sigma'^\vee$, respectively;
\item
$\nu(\sigma'^\vee) \in \{\pm1\}$ is the central value of $\sigma'^\vee$, 
i.e., $\sigma'^\vee(-\1_{V_{m+1}}) = \nu(\sigma'^\vee) \cdot \id$.
\end{itemize}
By Theorem \ref{param}, Lemma \ref{lemma L-packet}, Proposition \ref{contra} and 
Theorem \ref{Mp}, we have
\[
\theta(\phi)=(\phi \otimes \chi_{V_m}^{-1} - S_l) \otimes \chi_{W_n}
\quad\text{so that}\quad
\theta(\phi)^a = \phi^a \otimes \chi_{V_m}^{-1}\chi_{W_n}
\]
and
\[
\phi_{\sigma'^\vee}=
\left\{
\begin{aligned}
&(\phi_{\pi'} \otimes \chi_{V_{m+1}} - S_{l-1})\otimes \chi_{W_{n}}^{-1}
\iif \text{$E\not=F$ or $m$ is odd},\\
&(\phi_{\pi'} \otimes \chi_{V_{m+1}}\chi_{-1}- S_{l-1})\otimes \chi_{W_{n}}^{-1}
\iif \text{$E=F$ and $m$ is even}.
\end{aligned}
\right.
\]
Therefore we have
\[
\theta(\eta)(a)/\eta(a)
=\left\{
\begin{aligned}
&\ep(\phi^a\chi_{V_m}^{-1} \otimes S_{l-1}) 
\cdot \ep(\phi^a)	\cdot \chi_{V_m}(-1)^{\half{1}\dim(\phi^a)}
\iif \text{$E=F$ and $m$ is odd},\\
&\ep(\phi^a\chi_{V_m}^{-1} \otimes S_{l-1}) 
\cdot \det(\phi^a\chi_{V_m}^{-1})(-1)^{\half{l-1}} \cdot \nu^{\det(a)}
\iif \text{$E=F$ and $m$ is even},\\
&\ep(\phi^a\chi_{V_m}^{-1} \otimes S_{l-1},\psi^E_{2})
\iif \text{$E\not=F$},\\
\end{aligned}
\right.
\]
for some constant $\nu \in \{\pm1\}$.
\vskip 5pt

We shall determine this constant $\nu \in \{\pm1\}$.
So we assume that $E=F$ and $m$ is even, hence
$G(W_n) = \Sp(W_n)$ and $H(V_m) = \Oo(V_m)$.
Since $\sigma=\theta_{V_m,W_n}(\pi) \in \Irr_\temp(\Oo(V_m))$ satisfies that
$\pi \cong \theta_{W_n,V_m}(\sigma)$ is nonzero and tempered, by Corollary \ref{nontemp}, 
we have $\Theta_{W_m,V_m}(\sigma) = 0$.
By Prasad conjecture (Theorem \ref{PA}), we have $\theta(\eta)(z_{\theta(\phi)} + e_1) = -1$.
Since $z_{\theta(\phi)} + e_1 = z_\phi + e_1 + e_l$ in $A_\phi$, 
we have $\eta(z_{\theta(\phi)} + e_1) = \eta(e_1+e_l)$.
On the other hand, if $a = z_\phi + e_1 + e_l$, we have
\begin{align*}
\ep(\phi^a\chi_{V_m}^{-1} \otimes S_{l-1}) \cdot 
\det(\phi^a\chi_{V_m}^{-1})(-1)^{\half{l-1}} \cdot \nu^{\det(a)}
&=
\ep(\phi\chi_{V_m}^{-1} \otimes S_{l-1}) \cdot \ep((S_1 \oplus S_l) \otimes S_{l-1}) 
\cdot \chi_{V_{m}}(-1)^{\half{l-1}}\cdot \nu.
\end{align*}
We have $\ep((S_1 \oplus S_l) \otimes S_{l-1}) = -(-1)^{l-1} = -1$.
Also, since $\det(\phi\chi_{V_m}^{-1}) = \chi_{V_m}$, 
by Lemma \ref{crit} and (odd-ness condition) proved in Corollary \ref{nus}, we know 
\[
\ep(\phi\chi_{V_m}^{-1} \otimes S_{l-1}) \cdot \chi_{V_m}(-1)^{\half{l-1}} 
=(-1)^{m_\phi(\chi_{V_m}S_{l-2}) + m_{\phi}(\chi_{V_m}S_{l-4}) + \dots + m_{\phi}(\chi_{V_m}S_1)}
= (-1)^{\half{l-1}}.
\]
Hence we have $\nu = (-1)^{\half{l-1}} \cdot \eta(e_1+e_l)$, as desired.
\vskip 5pt

%Suppose that $\epsilon = -1$.
Now suppose that $\epsilon = -1$.
Then $n\geq m-\epsilon_0+2$. 
If $n \leq 2-\epsilon_0$, then $n=2-\epsilon_0$ and $m=0$.
In this case, the only representation of $G(W_n)$ which
participates in the theta correspondence with $H(V_0)$
is the trivial representation, so that
we have nothing to prove.
In the other cases, 
there is a line $L$ in $W_{n}$ such that 
\[
\disc(L) = 
\left\{
\begin{aligned}
&(-1)^{n} \iif E=F,\\
&(-1)^{n-1} \iif E \not=F.
\end{aligned}
\right.
\]
Let $W_{n-1}$ be the orthogonal complement of $L$ in $W_n$.
If $E\not=F$, we set $\chi_L=\chi^{(-1)^{n-1}}$ and $\chi_{W_{n-1}}=\chi_{W_n}\chi^{(-1)^n}$.
By Lemma \ref{Lemma 12.5}, we can find $\pi' \in \Irr_\temp(G(W_{n-1}))$ such that
\[
\Hom_{G(W_{n-1})}(\pi \otimes \pi', \C)\not=0
\quad\text{so that}\quad
\Hom_{G(W_{n-1})}(\pi, \pi'^\vee)\not=0.
\]
We put $\sigma=\theta_{V_m,W_n}(\pi) \in \Irr_\temp(H(V_m))$.
Since $\pi \cong \theta_{W_n,V_m}(\sigma)$, we have
\[
\Hom_{G(W_{n-1})}(\Theta_{W_n,V_m}(\sigma), \pi'^\vee) \supset 
\Hom_{G(W_{n-1})}(\pi, \pi'^\vee)\not=0.
\]
The see-saw diagram
\[
\xymatrix{
G(W_{n}) \ar@{-}[dr] \ar@{-}[d] & H(V_{m})\times H(V_{m}) \ar@{-}[d]\\
G(W_{n-1})\times G(L) \ar@{-}[ur] & H(V_{m})
}
\]
implies that
\[
\left\{
\begin{aligned}
&\Hom_{H(V_m)}(\Theta_{V_m,W_{n-1}}(\pi'^\vee) \otimes \omega, \sigma)\not=0
\iif \text{$E=F$ and $n \equiv 0 \bmod 2$, or $E\not=F$ and $n \equiv 1 \bmod 2$},\\
&\Hom_{H(V_m)}(\Theta_{V_m,W_{n-1}}(\pi'^\vee) \otimes \overline{\omega}, \sigma)\not=0
\other,
\end{aligned}
\right.
\]
where we put
\[
\omega=\left\{
\begin{aligned}
&\omega_{\psi}		\iif \text{$E=F$},\\ 
&\omega_{\psi,\chi}	\iif E\not=F.
\end{aligned}
\right.
\]
Hence $\Theta_{V_m,W_{n-1}}(\pi'^\vee)$ has an irreducible subquotient $\sigma'$ such that
\[
\left\{
\begin{aligned}
&\Hom_{H(V_m)}(\sigma' \otimes \omega, \sigma)\not=0
\iif \text{$E=F$ and $n \equiv 0 \bmod 2$, or $E\not=F$ and $n \equiv 1 \bmod 2$},\\
&\Hom_{H(V_m)}(\sigma' \otimes \overline{\omega}, \sigma)\not=0
\other,
\end{aligned}
\right.
\]
so that
\[
\left\{
\begin{aligned}
&\Hom_{H(V_m)}(\sigma \otimes \sigma'^\vee, \omega)\not=0
\iif \text{$E=F$ and $n \equiv 0 \bmod 2$, or $E\not=F$ and $n \equiv 1 \bmod 2$},\\
&\Hom_{H(V_m)}(\sigma \otimes \sigma'^\vee, \overline{\omega})\not=0
\other.
\end{aligned}
\right.
\]
Here we use the fact that $\sigma$, $\sigma'$ and $\omega$ are unitary.
By the GP conjectures 
(Theorems \ref{GGP-O} -- \ref{GGP-SH} and Corollary \ref{GGP-HS}), we have
\begin{align*}
\eta(a)&=
\left\{
\begin{aligned}
&\ep(\phi^a \otimes \phi_{\pi'}) 
\cdot \det(\phi_{\pi'})(-1)^{\half{1}\dim(\phi^a)}
\iif \text{$E=F$ and $n$ is odd},\\
&\ep(\phi^a \otimes \phi_{\pi'}) 
\cdot \det(\phi^a)(-1)^{\half{1}\dim(\phi_{\pi'})}
\cdot \nu(\pi')^{\det(a)}
\iif \text{$E=F$ and $n$ is even},\\
&\omega_{E/F}(-1)^{(n-1)\dim(\phi^a)} \cdot \ep(\phi^a \otimes \phi_{\pi'}, \psi^E_{2})
\iif E\not=F,
\end{aligned}
\right.\\
\theta(\eta)(a)&=
\left\{
\begin{aligned}
&\ep(\theta(\phi)^a \otimes \phi_{\sigma'^\vee} \chi_{-1}) \cdot \ep(\theta(\phi)^a)
\cdot \chi_{-1}(-1)^{\half{1}\dim(\theta(\phi)^a)}
\iif \text{$E=F$ and $n$ is odd},\\
&\ep(\theta(\phi)^a \otimes \phi_{\sigma'^\vee}) 
\cdot \ep(\theta(\phi) \otimes \phi_{\sigma'^\vee})^{\det(a)}
\cdot \det(\theta(\phi)^a)(-1)^{\half{1}\dim(\phi_{\sigma'^\vee})}
\iif \text{$E=F$ and $n$ is even},\\
&\ep(\theta(\phi)^a \otimes \phi_{\sigma'^\vee} \otimes \chi^{-1}, \psi^E_{2})
\iif \text{$E\not=F$ and $n$ is odd},\\
&\omega_{E/F}(-1)^{\dim(\theta(\phi)^a)} \cdot
\ep(\theta(\phi)^a \otimes \phi_{\sigma'^\vee} \otimes \chi, \psi^E_{2})
\iif \text{$E\not=F$ and $n$ is even}
\end{aligned}
\right.
\end{align*}
for $a \in A_{\theta(\phi)} \subset A_{\phi}$.
Here, 
\begin{itemize}
\item
$\phi_{\pi'}$ and $\phi_{\sigma'^\vee}$ are the last names for $\pi'$ and $\sigma'^\vee$,
respectively;
\item
$\nu(\pi') \in \{\pm1\}$ is the central value of $\pi'$, 
i.e., $\pi'(-\1_{V_{m+1}}) = \nu(\pi') \cdot \id$.
\end{itemize}
By Theorem \ref{param}, Lemma \ref{lemma L-packet}, Proposition \ref{contra}
and Theorem \ref{Mp}, we have
\[
\theta(\phi)=(\phi \otimes \chi_{V_m}^{-1} - S_l) \otimes \chi_{W_n}
\quad\text{so that}\quad
\theta(\phi)^a = \phi^a \otimes \chi_{V_m}^{-1}\chi_{W_n}
\]
and
\[
\phi_{\sigma'^\vee}=
\left\{
\begin{aligned}
&(\phi_{\pi'} \otimes \chi_{V_m} - S_{l-1})\otimes \chi_{W_{n-1}}^{-1}
\iif \text{$E\not=F$ or $n$ is odd},\\
&(\phi_{\pi'} \otimes \chi_{V_m}- S_{l-1})\otimes \chi_{W_{n-1}}^{-1}\chi_{-1}
\iif \text{$E=F$ and $n$ is even}.
\end{aligned}
\right.
\]
Therefore we have
\[
\theta(\eta)(a)/\eta(a)
=\left\{
\begin{aligned}
&\ep(\phi^a\chi_{V_m}^{-1} \otimes S_{l-1}) 
\cdot \ep(\phi^a\chi_{V_m}^{-1}\chi_{W_n}) \cdot \chi_{W_n}(-1)^{\half{1}\dim(\phi^a)}
\iif \text{$E=F$ and $n$ is odd},\\
&\ep(\phi^a\chi_{V_m}^{-1} \otimes S_{l-1}) \cdot \det(\phi^a\chi_{V_m}^{-1})(-1)^{\half{l-1}} 
\cdot \nu^{\det(a)}
\iif \text{$E=F$ and $n$ is even},\\
&\ep(\phi^a\chi_{V_m}^{-1} \otimes S_{l-1},\psi^E_{2})
\iif \text{$E\not=F$},\\
\end{aligned}
\right.
\]
for some constant $\nu \in \{\pm1\}$.
\vskip 5pt

We shall determine this constant $\nu \in \{\pm1\}$.
So we assume that $E=F$ and $n$ is even, hence
$G(W_n) = \Oo(W_n)$ and $H(V_m) = \Sp(V_m)$.
Note that $\theta(\eta)(z_{\theta(\phi)}) = 1$.
Also, by Prasad conjecture (Theorem \ref{PA}), we have $\eta(z_{\phi} + e_1) = 1$.
Since $z_{\theta(\phi)} = z_\phi + e_l$ in $A_\phi$, 
we have $\eta(z_{\theta(\phi)}) = \eta(e_1+e_l)$.
On the other hand, if $a = z_\phi + e_l$, we have
\begin{align*}
\ep(\phi^a\chi_{V_m}^{-1} \otimes S_{l-1}) \cdot \det(\phi^a\chi_{V_m}^{-1})(-1)^{\half{l-1}} 
\cdot \nu^{\det(a)}
&=
\ep(\phi\chi_{V_m}^{-1} \otimes S_{l-1}) \cdot \ep(S_l \otimes S_{l-1}) 
\cdot \chi_{W_n}(-1)^{\half{l-1}} \cdot \nu.
\end{align*}
We have $\ep(S_l \otimes S_{l-1}) = (-1)^{l-1}=1$.
Also, by Lemma \ref{crit} and (odd-ness condition) proved in Corollary \ref{nus}, we have
\[
\ep(\phi\chi_{V_m}^{-1} \otimes S_{l-1}) \cdot \chi_{W_n}(-1)^{\half{l-1}}  
(-1)^{m_\phi(\chi_{V_m}S_{l-2}) + m_{\phi}(\chi_{V_m}S_{l-4}) + \dots + m_{\phi}(\chi_{V_m}S_1)}
= (-1)^{\half{l-1}}.
\]
Hence we have 
$\nu = (-1)^{\half{l-1}} \cdot \eta(e_1+e_l)$, 
as desired.
This completes the proof.
\end{proof}

\begin{rem}
Suppose that $E=F$ and $m$, $n$ are even.
After Proposition \ref{alt}, which shows the (odd-ness condition), 
we will obtain $\eta(e_1+e_l) = (-1)^{\half{l-1}}$ so that $\nu=1$.
By using Theorems \ref{main2} (5) and \ref{main3} (4), which are proven in Proposition \ref{sign}, 
we can obtain $\nu=1$ directly.
\end{rem}

%\subsection{Comparison of central elements}
\subsection{Comparison of central elements}
Let $(V_m,W_n)$ and $l=n-m+\epsilon_0$ be as in \S \ref{spaces}.
Let $\pi \in \Irr_\temp(G(W_n))$. 
Assume that $l \geq 2$ and 
$\sigma=\theta_{V_m,W_n}(\pi)\not=0$ so that $\sigma \in \Irr(H(V_m))$.
We denote the $L$-parameters for $\pi$ and $\sigma$ by $(\phi_\pi,\eta_\pi)$ and
$(\phi_\sigma,\eta_\sigma)$, respectively.
In this subsection, we compare ``$\eta_\pi(z_{\phi_\pi})$'' with ``$\eta_\sigma(z_{\phi_\sigma})$''.
\vskip 5pt

Let $\phi \in \Phi(G(W_n))$ (\resp $\phi' \in \Phi(H(V_m))$).
If $l=n-m+\epsilon_0$ is odd and $\phi$ contains $\chi_V$ (\resp $\phi'$ contains $\chi_W$), 
then we denote by $e_1$ the element in $A_\phi$ (\resp $A_{\phi'}$)
corresponding to $\chi_V$ (\resp $\chi_W$), i.e., $\phi^{e_1}=\chi_V$ (\resp $\phi'^{e_1} = \chi_W$).

\begin{prop}\label{center1}
Let $\pi \in \Irr_\temp(G(W_n))$ such that 
$\sigma=\theta_{V_m,W_n}(\pi) \in \Irr(H(V_m))$ is nonzero.
Assume that $l=n-m+\epsilon_0 \geq 2$.
We denote the $L$-parameters for $\pi$ and $\sigma$ by $(\phi_\pi,\eta_\pi)$ and
$(\phi_\sigma,\eta_\sigma)$, respectively.
Then we have the following:
\begin{enumerate}
\item
If $l$ is odd, then $\phi_\pi \supset \chi_V$ and $\phi_\sigma \supset \chi_W$.
\item
If $E=F$, $m \not\equiv n \bmod2$ and $\epsilon=+1$, then
\[
\eta_\pi(z_{\phi_\pi})=\eta_\sigma(z_{\phi_\sigma}) \cdot 
\ep(\phi_\pi) \cdot \ep(\phi_\pi \otimes \chi_V) \cdot \chi_V(-1)^{\half{n}}.
\]
\item
If $E=F$, $m \not\equiv n \bmod 2$ and $\epsilon=-1$, then
\[
\eta_\pi(z_{\phi_\pi})=-\eta_\sigma(z_{\phi_\sigma}) \cdot \delta(\chi_W=\1) \cdot
\ep(\phi_\pi) \cdot \ep(\phi_\pi \otimes \chi_W) \cdot \chi_W(-1)^{\half{n-1}}.
\]
\item
If $E=F$, $m \equiv n \bmod2$ and $\epsilon = +1$, then 
$\eta_\pi(z_{\phi_\pi}+e_1) = \eta_\sigma(z_{\phi_\sigma})$.
\item
If $E=F$, $m \equiv n \bmod2$ and $\epsilon = -1$, then 
$\eta_\pi(z_{\phi_\pi}) = -\eta_\sigma(z_{\phi_\sigma} + e_1)$.
\item
If $E\not=F$ and $l$ is even, then
\[
\eta_\sigma(z_{\phi_\sigma}) = 
\ep(\phi_\pi \otimes \chi_V^{-1}, \psi^E_2) \cdot \eta_\pi(z_{\phi_\pi}).
\]
\item
If $E\not=F$ and $l$ is odd, then $\eta_\pi(e_1)=-\eta_\sigma(e_1)$ and
$\eta_\pi(z_{\phi_\pi}+e_1)=\eta_\sigma(z_{\phi_\sigma})$.
\end{enumerate}
\end{prop}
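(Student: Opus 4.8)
The plan is to deduce (2)--(7) from Theorem~\ref{param}, which gives $\phi_\sigma = (\phi_\pi \otimes \chi_V^{-1}\chi_W) - \chi_W S_l$ and hence a canonical embedding $A_{\phi_\sigma} \hookrightarrow A_{\phi_\pi}$, together with Theorem~\ref{main2.1}, which (since $l \ge 2 > 1$) expresses $\eta_\sigma(a)/\eta_\pi(a)$ for every $a \in A_{\phi_\sigma}$ through twisted $\ep$-factors. Part (1) is immediate: if $l$ is odd, Corollary~\ref{nus} gives $\chi_V = \chi_V S_1 \subset \phi_\pi$, and since $l \ge 3$ the removal of $\chi_W S_l$ from $\phi_\pi \otimes \chi_V^{-1}\chi_W$ does not touch the summand $\chi_V \cdot \chi_V^{-1}\chi_W = \chi_W$, so $\phi_\sigma \supset \chi_W$.

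The first step toward (2)--(7) is an elementary multiplicity count: since twisting by a character merely relabels constituents, the multiplicity of every irreducible summand is the same for $\phi_\pi$ and for $\phi_\sigma$ except that $m_{\phi_\sigma}(\chi_W S_l) = m_{\phi_\pi}(\chi_V S_l) - 1$; hence, under $A_{\phi_\sigma} \hookrightarrow A_{\phi_\pi}$, one has $z_{\phi_\sigma} = z_{\phi_\pi} + e_l$, with the obvious reading when $m_{\phi_\pi}(\chi_V S_l) = 1$ and $e_l \notin A_{\phi_\sigma}$. Feeding $a = z_{\phi_\sigma}$ into Theorem~\ref{main2.1} and using $\eta_\pi(z_{\phi_\sigma}) = \eta_\pi(z_{\phi_\pi})\,\eta_\pi(e_l)$ gives
\[
\eta_\sigma(z_{\phi_\sigma}) = \eta_\pi(z_{\phi_\pi}) \cdot \eta_\pi(e_l) \cdot \Xi\!\left(\phi_\pi^{z_{\phi_\sigma}}\right),
\]
where $\Xi(\rho)$ is the relevant $\ep$-factor expression (e.g.\ $\ep(\rho\chi_V^{-1}\otimes S_{l-1})\,\ep(\rho)\,\chi_V(-1)^{\frac12\dim\rho}$ in case (2)) and $\phi_\pi^{z_{\phi_\sigma}}$ is the multiplicity-free sum of the summands of $\phi_\pi$ having odd multiplicity in $\phi_\sigma$.

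It remains to simplify $\eta_\pi(e_l)\cdot\Xi(\phi_\pi^{z_{\phi_\sigma}})$ to the asserted closed forms. The factor $\eta_\pi(e_l)$ is controlled by the initial and alternating conditions, available here by Proposition~\ref{alt} (its hypotheses holding since $\Theta_{V_m,W_n}(\pi)\ne 0$): for $l$ odd, $\eta_\pi(e_l) = (-1)^{(l-1)/2}\eta_\pi(e_1)$, so the surviving $\eta_\pi(e_1)$ is exactly what appears in (4), (5), (7); for $l$ even, $\eta_\pi(e_l) = (-1)^{l/2-1}\eta_\pi(e_2)$ with $\eta_\pi(e_2)$ fixed by the initial condition, so no dependence on $\eta$ remains in (2), (3), (6). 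To simplify $\Xi(\phi_\pi^{z_{\phi_\sigma}})$ one decomposes $\phi_\pi$ as $\phi_\pi^{z_{\phi_\sigma}}$ plus a self-dual part in which every constituent has even multiplicity (the summand $\chi_V S_l$ requiring a short separate treatment according to the parity of $m_{\phi_\pi}(\chi_V S_l)$) plus a sum of pairs $\mu \oplus {}^c\mu^\vee$; the even-multiplicity part contributes trivially to all $\ep$-factors, while each pair contributes a $\det(-1)$- (and $\chi$-at-$-1$-) type factor tracked via $\ep(\mu)\,\ep(\mu^\vee) = \det(\mu)(-1)$ and its conjugate-self-dual analogue. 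The $S_{l-1}$-twist is then stripped off by the lemma following Theorem~\ref{main3} (equivalently Lemma~\ref{crit}): since each $\chi_V S_r$ with $\kappa \le r \le l-2$ has odd multiplicity in $\phi_\pi$, hence in $\phi_\sigma$ (Corollary~\ref{nus}), this twist contributes precisely the sign $(-1)^{\#\{r \,:\, \kappa \le r \le l-2\}}$, which cancels the $(-1)$-powers coming from Proposition~\ref{alt}. Running this through the four normalization regimes of Theorem~\ref{main2.1} produces (2)--(7); the additional assertion $\eta_\pi(e_1) = -\eta_\sigma(e_1)$ in (7) is the $a = e_1$ instance of Theorem~\ref{main2.1}, as $\phi_\pi^{e_1}\chi_V^{-1} = S_1$ and $\ep(S_{l-1},\psi^E_2) = -1$ for $l$ odd.

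The main obstacle is precisely this root-number bookkeeping: checking, uniformly across the several LLC-normalization cases, that the $\det(-1)$ and $\chi_V(-1)^{\dim}$ contributions of the non-self-dual part of $\phi_\pi$ and of the summand $\chi_V S_l$ (whose role depends on the parity of $m_{\phi_\pi}(\chi_V S_l)$) combine with the $(-1)$-powers from Proposition~\ref{alt} and the $S_{l-1}$-twist lemma to yield exactly $\ep(\phi_\pi)$, $\ep(\phi_\pi\otimes\chi_V)$, $\chi_V(-1)^{n/2}$, $\delta(\chi_W=\1)$, and so on. A secondary point is the case $E = F$ in which one member of the pair is an odd orthogonal or metaplectic group (cases (2), (3)), where one must also account for the central-character label in the parametrization; this is again absorbed by $\ep(\mu)\,\ep(\mu^\vee) = \det(\mu)(-1)$ and Corollary~\ref{nus}. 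Note finally that Proposition~\ref{alt}, which rests on Theorem~\ref{main2.1} and the almost-equal-rank results but not on the present proposition, is available at this point.
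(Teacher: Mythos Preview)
Your argument is circular. You invoke Proposition~\ref{alt} to evaluate $\eta_\pi(e_l)$, and you explicitly assert that Proposition~\ref{alt} ``rests on Theorem~\ref{main2.1} and the almost-equal-rank results but not on the present proposition''. That claim is false: in the paper's proof of Proposition~\ref{alt} the value of $\eta(e_l)$ is obtained by writing
\[
\eta(e_l)\;=\;\frac{\eta(z_\phi)}{\theta(\eta)(z_{\theta(\phi)})}\cdot\frac{\theta(\eta)(z_{\theta(\phi)})}{\eta(z_{\theta(\phi)})},
\]
where the second factor is Theorem~\ref{main2.1} at $a=z_{\theta(\phi)}$ but the first factor is exactly Proposition~\ref{center1}. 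In other words, Theorem~\ref{main2.1} by itself gives you a single relation among the three quantities $\eta_\pi(z_{\phi_\pi})$, $\eta_\sigma(z_{\phi_\sigma})$, and $\eta_\pi(e_l)$; to solve for the ratio $\eta_\pi(z_{\phi_\pi})/\eta_\sigma(z_{\phi_\sigma})$ you need an independent determination of $\eta_\pi(e_l)$, and the paper obtains that only \emph{after} Proposition~\ref{center1}. Your plan uses the output of Proposition~\ref{alt} as an input to Proposition~\ref{center1}, reversing the logical order.

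The paper's actual proof avoids Theorem~\ref{main2.1} entirely. It lifts $\sigma=\theta_{V_m,W_n}(\pi)$ back in the \emph{opposite} direction at the (almost) equal rank level: for instance, in case~(3) one considers $\pi'=\theta_{W_{m+1}^\bullet,V_m}(\sigma)$. Since $\theta_{W_n,V_m}(\sigma)=\pi$ is tempered with $m-n-\epsilon_0<-1$, Corollary~\ref{nontemp} forces $\Theta_{W_{m+1},V_m}(\sigma)=0$ for the space $W_{m+1}$ in the same Witt tower as $W_n$; hence $W_{m+1}^\bullet$ lies in the companion tower, giving $\eta_\pi(z_{\phi_\pi})=-\eta_{\pi'}(z_{\phi_{\pi'}})$. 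Theorem~\ref{Mp} (or Theorems~\ref{PE},~\ref{PA} in the other cases) then expresses $\eta_{\pi'}(z_{\phi_{\pi'}})$ in terms of $\eta_\sigma(z_{\phi_\sigma})$ and explicit $\ep$-factors of $\phi_\sigma=(\phi_\pi\otimes\chi_V^{-1}\chi_W)-\chi_W S_l$, from which the claimed identities follow by Lemma~\ref{delta}. This is the missing idea: an independent comparison of central elements via the almost-equal-rank theta lift of $\sigma$ and the non-vanishing constraint of Corollary~\ref{nontemp}, rather than via the first-name formula of Theorem~\ref{main2.1}.
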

\begin{proof}
(1) follows from Corollary \ref{nus} and Theorem \ref{param}.
\vskip 5pt

The proofs of (2)--(5) are similar.
So we prove (3) only.
By the assumption, $G(W_n)=\Oo(W_n)$ is an odd orthogonal group and 
$H(V_m)=\Mp(V_m)$ is a metaplectic group.
By Theorem \ref{Mp-O}, there is unique $W_{m+1}^\bullet$ such that 
$\pi'=\theta_{W^\bullet_{m+1},V_m}(\sigma)$ is nonzero.
Let $(\phi_{\pi'},\eta_{\pi'})$ be the $L$-parameter for $\pi'$.
Note that $\theta_{W_n,V_m}(\sigma)=\pi$ is tempered and $m-n-\epsilon_0<-1$.
By applying Corollary \ref{nontemp} to $\sigma \in \Irr(\Mp(V_m))$ and $\Oo(W_n)$,
we have $\Theta_{W_{m+1},V_m}(\sigma)=0$, 
where $W_{m+1}$ is the space which belongs to the same Witt tower as $W_n$.
This implies that $W_{m+1}^\bullet \not= W_{m+1}$.
Hence we have
\begin{align*}
\eta_\pi(z_{\phi_\pi}) = -\eta_{\pi'}(z_{\phi_{\pi'}}).
\end{align*}
On the other hand, by Theorem \ref{Mp}, we have
\[
\eta_{\pi'}(z_{\phi_{\pi'}}) = \eta_\sigma(z_{\phi_\sigma}) \cdot 
\ep(\phi_\sigma) \cdot \ep(\phi_\sigma \otimes \chi_W) \cdot \chi_W(-1)^{\half{m}}.
\]
Since $\phi_\sigma = (\phi_\pi \otimes \chi_V^{-1} \chi_W) \oplus \chi_W S_l$ 
by Theorem \ref{param}, using Lemma \ref{delta}, we have
\begin{align*}
\eta_{\pi'}(z_{\phi_{\pi'}}) 
&=\eta_\sigma(z_{\phi_\sigma}) \cdot \delta(\chi_W=\1) \cdot
\ep(\phi_\pi) \cdot \ep(\phi_\pi \otimes \chi_W) \cdot \chi_W(-1)^{\half{n-1}}.
\end{align*}
Hence we obtain (3).
\vskip 5pt

Using Theorems \ref{Mp}, \ref{PE}, \ref{PA}, Proposition \ref{tower}, and Corollary \ref{nontemp},
the proofs of the other cases are similar to that of the above case.
\end{proof}

If $l\in\{-1,0,1\}$, then we see that a similar assertion holds by using Theorem \ref{Mp} and
Prasad's conjectures (Theorems \ref{PE} and \ref{PA}).
This implies Theorem \ref{main1} (2) unless $E=F$, $m \not\equiv n \bmod 2$ and $\epsilon=-1$.
In this case, $G(W_n) = \Oo(W_n)$ is an odd orthogonal group, and
the first occurrence indices $m^\pm(\pi)$ are determined by 
the central character of $\pi \in \Irr(\Oo(W_n))$.
Hence the remaining issue of Theorem \ref{main1} (2) is a relation between 
the central character of $\pi$ and theta lifts $\Theta_{V_m,W_n}(\pi)$.
It will be treated in \S \ref{sec.central} (Proposition \ref{sign2}).
\vskip 5pt

%\subsection{Character conditions}
\subsection{Character conditions}
In this subsection, we derive the (initial condition) and the (alternating condition) 
in Theorem \ref{main1} (1).

\begin{prop}\label{alt}
Let $\pi \in \Irr_\temp(G(W_n))$ with $L$-parameter $(\phi,\eta)$.
Assume that $\Theta_{V_m,W_n}(\pi)\not=0$ and $l=n-m+\epsilon_0 \geq 2$.
Define $\kappa \in \{1,2\}$ by $\kappa \equiv l \bmod 2$.
Let $e_{i}$ be the element in $A_\phi$ corresponding to $\chi_V S_{i} \subset \phi$.
Then we have
\[
\eta(e_{\kappa+2i+2})=-\eta(e_{\kappa+2i}) 
\]
for $0 \leq i < (l-\kappa)/2$.
Moreover, if $\kappa=2$, then
\[
\eta(e_2)=
\left\{
\begin{aligned}
&\epsilon \cdot \delta(\chi_V=\1)
\iif\text{$E=F$ and $m \not\equiv n \bmod 2$},\\
&-1
\iif\text{$E\not=F$ and $m \equiv n \bmod 2$}.
\end{aligned}
\right.
\]
\end{prop}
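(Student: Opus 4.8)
The statement has two parts: the alternating relations, and---when $\kappa=2$---the value of $\eta(e_2)$. The plan is to prove both at once by induction on $n=\dim_E W_n$ (at each step $l$ drops by $2$), the key move being to pass from $\pi$ to its theta lift $\sigma\coloneqq\theta_{V_m,W_n}(\pi)$, and the base cases $l=2$ (for $\kappa=2$) and $l=3$ (for $\kappa=1$) being reduced to the (almost) equal-rank situation, which is governed by Prasad's conjectures (Theorems \ref{PE} and \ref{PA}) and the metaplectic--orthogonal case (Theorem \ref{Mp}). One may also assume $\dim V_m$ is not so small that $\phi$ fails to contain the chain $\chi_V S_\kappa+\dots+\chi_V S_l$ forced by Corollary \ref{nus}, as then the hypothesis $\Theta_{V_m,W_n}(\pi)\neq0$ is vacuous (cf.\ the analogous reduction in the proof of Theorem \ref{main2.1}).

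First I would set up the reduction. Since $l\geq2$ we have $\dim V_m=m\leq n+\epsilon_0-2<n+1+\epsilon_0$, so by Proposition \ref{temp} (1) the lift $\sigma$ is irreducible and tempered, and by Theorem \ref{param} its last name is $\theta(\phi)=(\phi\otimes\chi_V^{-1}\chi_W)-\chi_W S_l$, which still contains $\chi_W S_r$ (with the same, odd, multiplicities) for $r=\kappa,\kappa+2,\dots,l-2$. Now $\pi=\theta_{W_n,V_m}(\sigma)$ is nonzero and tempered; combining an index count from the conservation relation (Proposition \ref{cons}) with Theorems \ref{main2} (4) and \ref{main3} (3) applied to $\sigma$, one sees that $W_n$ must be the \emph{first} occurrence of $\sigma$ in its going-up Witt tower, whence (again by Proposition \ref{cons}) the first occurrence of $\sigma$ in the companion Witt tower of $W_n$ sits at index $l-2$, on a group of dimension $n-2(l-1)<n$. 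The inductive hypothesis therefore applies to $\sigma$ and this companion lift, yielding $\theta(\eta)(e^{\theta(\phi)}_{\kappa+2i+2})=-\theta(\eta)(e^{\theta(\phi)}_{\kappa+2i})$ for $0\leq i<(l-2-\kappa)/2$ together with, when $\kappa=2$, the prescribed value of $\theta(\eta)(e^{\theta(\phi)}_2)$ for the exchanged dual pair.

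Next I would transport these identities back to $\eta$ via Theorem \ref{main2.1}. For $r\leq l-2$ the generator $e_r\in A_\phi$ attached to $\chi_V S_r$ is the image of the generator of $A_{\theta(\phi)}$ attached to $\chi_W S_r$, so Theorem \ref{main2.1} gives $\theta(\eta)(e_r)/\eta(e_r)=\epsilon(S_r\otimes S_{l-1})\cdot d_r$, with $d_r$ the explicit correction factor of the line of that theorem relevant to the dual pair in question. Decomposing $S_r\otimes S_{l-1}$ by Clebsch--Gordan and evaluating the root numbers by Lemma \ref{crit} (using the multiplicity parities of Corollary \ref{nus}), one checks that $\epsilon(S_r\otimes S_{l-1})\,d_r$ varies with $r$ in exactly the way that converts the alternating relation for $\theta(\eta)$ into that for $\eta$; this propagates the alternating relation for $r=\kappa,\dots,l-4$ and turns the prescribed value of $\theta(\eta)(e_2)$ into the asserted value of $\eta(e_2)$ (matching $\delta(\chi_W=\1)$, the global sign $\epsilon$, and the root-number corrections gives $\epsilon\cdot\delta(\chi_V=\1)$, resp.\ $-1$). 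The base cases $l\in\{2,3\}$ are treated the same way, except that the companion lift of $\sigma$ now lands in the (almost) equal-rank range, where $\theta(\eta)$---and in particular $\theta(\eta)(z_{\theta(\phi)})$---is already known from Theorems \ref{PE}, \ref{PA} and \ref{Mp}; one feeds this into Proposition \ref{center1} and Theorem \ref{main2.1} to read off $\eta(e_2)$, resp.\ $\eta(e_3)/\eta(e_1)=-1$.

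It remains to produce the \emph{top} relation $\eta(e_l)=-\eta(e_{l-2})$, which the reduction above does not give because the guaranteed chain of $\sigma$ ends at $l-2$. If $m_\phi(\chi_V S_l)\geq2$ then $\chi_W S_l\subset\theta(\phi)$ and $e_l\in A_{\theta(\phi)}$; since the companion-tower first occurrence of $\sigma$ is \emph{exactly} at index $l-2$, index $l$ does not occur for $\sigma$, which forces $\theta(\eta)(e^{\theta(\phi)}_l)=+\theta(\eta)(e^{\theta(\phi)}_{l-2})$, and transporting this through Theorem \ref{main2.1} (the only new input being the single root number $\epsilon(S_{2l-2})$) yields $\eta(e_l)=-\eta(e_{l-2})$. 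If $m_\phi(\chi_V S_l)=1$ then $e_l\notin A_{\theta(\phi)}$, and instead I would use Proposition \ref{center1}: since $z_{\theta(\phi)}=z_\phi+e_l$ in $A_\phi$, the comparison of central elements together with Theorem \ref{main2.1} expresses $\eta(e_l)=\eta(z_\phi)/\eta(z_\phi+e_l)$ purely through root numbers of $\phi$, which Lemma \ref{crit} and Corollary \ref{nus} evaluate to $-\eta(e_{l-2})$. I expect the main difficulty to be the case-by-case epsilon-factor bookkeeping in the transport step---carrying out the Clebsch--Gordan computation for each dual-pair type ($E=F$ with $\epsilon=\pm1$ and the various parities of $m$ and $n$, and $E\neq F$), using the matching line of Theorem \ref{main2.1} and, in the orthogonal cases, reconciling with the central-character constraint---together with the subcase $m_\phi(\chi_V S_l)=1$ of the top relation, which is forced through the comparison of central elements.
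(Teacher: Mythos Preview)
Your approach differs substantially from the paper's, and it has a circularity problem. You propose induction on $n$, passing to $\sigma=\theta_{V_m,W_n}(\pi)$ and applying the inductive hypothesis to a companion-tower lift of $\sigma$ at index $l'=l-2$. For that you need $\Theta_{W'_{n'},V_m}(\sigma)\neq0$ with $n'=n-2(l-1)$ in the companion tower, which (via conservation) is equivalent to $n^{\up}(\sigma)=n$. You justify this by invoking Theorems \ref{main2} (4) and \ref{main3} (3) for $\sigma$; but in the paper's logical order these results are established \emph{after} Proposition \ref{alt} (indeed Theorem \ref{main3} (3) relies on Corollary \ref{temp2}, hence on Proposition \ref{pi0down} and on Theorem \ref{main1} (1) for discrete series, which in turn uses Proposition \ref{alt}). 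So unless you recast the whole of \S\ref{pf} as one large simultaneous induction on $\dim W_n$---which you have not done---you cannot cite those theorems here. Corollary \ref{nontemp} alone only tells you that the tower of $W_n$ is the going-up tower for $\sigma$, not that $n$ is the first occurrence.

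The paper avoids this entirely by never passing to $\sigma$'s companion tower. Instead it keeps $\pi$ fixed and varies the target: by the tower property (Proposition \ref{tower}), $\Theta_{V_{m+2i},W_n}(\pi)\neq0$ for every $i\geq0$, so one may apply Proposition \ref{center1} and Theorem \ref{main2.1} at each level $j=l,l-2,\dots,\kappa$. Concretely, writing $\theta_j$ for the lift at level $j$ and using $z_\phi=z_{\theta_j(\phi)}+e_j$, one gets a closed formula for
\[
\eta(e_j)=\frac{\eta(z_\phi)}{\theta_j(\eta)(z_{\theta_j(\phi)})}\cdot\frac{\theta_j(\eta)(z_{\theta_j(\phi)})}{\eta(z_{\theta_j(\phi)})},
\]
the first factor from Proposition \ref{center1} and the second from Theorem \ref{main2.1}. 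Taking $j=2$ yields the initial condition; taking the ratio of consecutive levels and applying Lemma \ref{crit} (with the odd-ness from Corollary \ref{nus}) yields the alternating condition. Notice that your own treatment of the ``top relation'' in the case $m_\phi(\chi_V S_l)=1$ is exactly this argument at $j=l$; the point is that the \emph{same} argument works uniformly at every $j\leq l$, so no induction on $n$, no passage to $\sigma$'s companion tower, and no appeal to later results is needed.
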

\begin{proof}
Let $(\theta(\phi),\theta(\eta))$ be the $L$-parameter for $\theta_{V_m,W_n}(\pi)$.
Note that
\[
z_\phi = z_{\theta(\phi)}+e_{l}.
\]
By applying Proposition \ref{center1} and Theorem \ref{main2.1} to 
$a=z_{\theta(\phi)} \in A_{\theta(\phi)} \subset A_\phi$, we have
\begin{align*}
&\eta(e_l)=\frac{\eta(z_\phi)}{\theta(\eta)(z_{\theta(\phi)})} 
\cdot \frac{\theta(\eta)(z_{\theta(\phi)})}{\eta(z_{\theta(\phi)})}\\
&=\left\{
\begin{aligned}
&\delta(\chi_{V}=\1)\cdot 
\ep(\phi\chi_{V}^{-1} \otimes S_{l-1}) \cdot \ep(\phi\chi_V^{-1})
\iif\text{$E=F$, $\epsilon = +1$ and $m$ is odd},\\
&-\ep(\phi \otimes S_{l-1}) \cdot \ep(\phi)
\iif\text{$E=F$, $\epsilon = -1$ and $n$ is odd},\\
&\eta(e_1) \cdot \ep(\phi\chi_{V}^{-1} \otimes S_{l-1}) \cdot \chi_{V}(-1)^{\half{l-1}}
\iif\text{$E=F$, $\epsilon=+1$ and $m\equiv n \equiv 0\bmod 2$},\\
&-\ep(\phi \chi_{V}^{-1},\psi^E_2) \cdot \ep(\phi\chi_{V}^{-1} \otimes S_{l-1},\psi^E_{2})	
\iif\text{$E\not=F$ and $m \equiv n \bmod 2$},\\
&\eta(e_1) \cdot \ep(\phi\chi_{V}^{-1} \otimes S_{l-1},\psi^E_{2})	
\iif\text{$E\not=F$ and $m \not\equiv n \bmod 2$}.
\end{aligned}
\right.
\end{align*}
If $E=F$, $\epsilon=-1$ and $m\equiv n \equiv 0\bmod 2$, 
applying Proposition \ref{center1} and Theorem \ref{main2.1} to 
$a=z_{\theta(\phi)}+e_1 = z_\phi + e_1 + e_l \in A_{\theta(\phi)} \subset A_\phi$.
we obtain
\begin{align*}
&\eta(e_l)=\frac{\eta(z_\phi+e_1)}{\theta(\eta)(z_{\theta(\phi)}+e_1)} 
\cdot \frac{\theta(\eta)(z_{\theta(\phi)}+e_1)}{\eta(z_{\theta(\phi)}+e_1)}\\
&=\eta(e_1) \cdot \epsilon(\phi \chi_V^{-1} \otimes S_{l-1}) \cdot \chi_W(-1)^{\half{l-1}}.
\end{align*}
\vskip 5pt

By the tower property (Proposition \ref{tower}), 
a similar equation for $\eta(e_{l-2i})$ holds for $i=0,1,\dots,(l-\kappa)/2$.
In particular, if $\kappa=2$, then we have
\[
\eta(e_2)=
\left\{
\begin{aligned}
&\epsilon \cdot \delta(\chi_V=\1)
\iif\text{$E=F$ and $m \not\equiv n \bmod 2$},\\
&-1
\iif\text{$E\not=F$ and $m \equiv n \bmod 2$}.
\end{aligned}
\right.
\]
Moreover, we have
\begin{align*}
&\frac{\eta(e_{\kappa+2i+2})}{\eta(e_{\kappa+2i})}
=\frac{\ep(\phi\chi_{V}^{-1} \otimes S_{\kappa+2i+1},\psi^E_{2})}
{\ep(\phi\chi_{V}^{-1} \otimes S_{\kappa+2i-1},\psi^E_{2})}
\times
\left\{
\begin{aligned}
&\det(\phi\chi_{V}^{-1})(-1)	\iif\text{$E=F$},\\
&1	\iif\text{$E\not=F$}
\end{aligned}
\right.
\end{align*}
for $0 \leq i <(l-\kappa)/2$.
By Lemma \ref{crit} and (odd-ness condition) in Theorem \ref{main1} (1), 
this is equal to $(-1)^{m_\phi(\chi_{V}S_{\kappa+2i})}=-1$.
\end{proof}
This is the (initial condition) and the (alternating condition) in Theorem \ref{main1} (1).
In particular, we have
\[
n-m^\down(\pi)+\epsilon_0 \in \TT
\]
for any $\pi \in \Irr_\temp(G(W_n))$.
\vskip 5pt

Also, we have
\[
\eta(e_1+e_l) = (-1)^{\half{l-1}}.
\]
Theorem \ref{main2.1} together with this equation implies that 
Theorem \ref{main2} (1).

\begin{rem}
We may apply the result shown above (i.e., Theorem \ref{main2} (1))
to the going-up tower sometimes.
Under the notation and assumption of Theorem \ref{main3} (1), 
we will show that $\theta_{V_m,W_n}(\pi)$ is tempered (Corollary \ref{temp2}).
If we knew the temperedness of $\theta_{V_m,W_n}(\pi)$, 
Theorem \ref{main2} (1) implies Theorem \ref{main3} (1).
\end{rem}
\vskip 5pt

The following proposition says that $l(\pi)=\max\,\TT=n-m^\down(\pi)+\epsilon_0$
in a special case.
\begin{prop}\label{non-alt}
Let $\pi \in \Irr_\temp(G(W_n))$ with $L$-parameter $(\phi,\eta)$.
Assume that
\begin{itemize}
\item
$l=n-m^\down(\pi)+\epsilon_0 \geq 0$;
\item
$\phi$ contains $\chi_{V}S_{l-2i}$ for $i=-1,0,\dots, (l-\kappa)/2$;
\item
the first occurrence 
$\sigma^\up=\theta_{V'_{m^\up(\pi)},W_n}(\pi)$ to the going-up tower $\VV^\up$ is tempered.
\end{itemize}
Then we have:
\begin{enumerate}
\item
If $l=0$, then the (initial condition) in Theorem \ref{main1} (1) does not hold.
Namely, 
\[
\eta(e_2)=
\left\{
\begin{aligned}
&-\epsilon \cdot \delta(\chi_V=\1)
\iif\text{$E=F$ and $m \not\equiv n \bmod 2$},\\
&+1
\iif\text{$E\not=F$ and $m \equiv n \bmod 2$}.
\end{aligned}
\right.
\]
\item
If $l>0$, then $m_\phi(\chi_{V}S_l)$ is odd, and the (alternating condition) in Theorem \ref{main1} (1)
does not hold.
Namely,
\[
\eta(e_{l+2}+e_{l})=-(-1)^{m_\phi(\chi_{V}S_l)}=+1.
\]
\end{enumerate}
\end{prop}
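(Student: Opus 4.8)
The plan is to transfer the problem to the first lift $\sigma^\up := \theta_{V'_{m^\up(\pi)},W_n}(\pi)$ on the going-up tower, which is tempered by hypothesis, and then back again. Put $m' = m^\up(\pi)$. By Howe duality (Theorem \ref{howe}) we have $\theta_{W_n,V'_{m'}}(\sigma^\up) = \pi$, and by the conservation relation (Proposition \ref{cons}) the dimension invariant attached to this reverse lift equals $l+2 \geq 2$. Hence Theorem \ref{param}, Corollary \ref{nus}, Proposition \ref{alt} and Theorem \ref{main2.1} all apply to the reverse lift, with the roles of $(V_m,W_n)$, $(\chi_V,\chi_W)$, $\epsilon$ interchanged and $l$ replaced by $l+2$. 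Theorem \ref{param} then gives that the last name $\phi^\up$ of $\sigma^\up$ satisfies $(\phi^\up\otimes\chi_W^{-1}\chi_V)-\chi_V S_{l+2}=\phi$, i.e.
\[
\phi^\up = (\phi\otimes\chi_V^{-1}\chi_W)\oplus\chi_W S_{l+2}.
\]
Since $\phi\supset\chi_V S_{l+2}$ by hypothesis, the multiplicity of $\chi_W S_{l+2}$ in $\phi^\up$ is $\geq 2$, so twisting by $\chi_V^{-1}\chi_W$ yields a canonical identification $A_\phi\cong A_{\phi^\up}$ matching $e_r$ with the element $e^\up_r$ of $\chi_W S_r\subset\phi^\up$, under which $z_{\phi^\up}=z_\phi+e_{l+2}$. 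Applying Corollary \ref{nus} to the reverse lift shows $m_{\phi^\up}(\chi_W S_r)$ is odd for $r=\kappa,\kappa+2,\dots,l$; as the extra summand $\chi_W S_{l+2}$ does not affect multiplicities at $r\leq l$, we get $m_\phi(\chi_V S_r)=m_{\phi^\up}(\chi_W S_r)$ odd for $r=\kappa,\dots,l$, and for $l>0$ this is the first half of~(2).

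For the first names, Proposition \ref{alt} applied to the reverse lift gives $\eta^\up(e^\up_{r+2})=-\eta^\up(e^\up_r)$ for $r=\kappa,\dots,l$ -- hence $\eta^\up(e^\up_{l+2}+e^\up_l)=-1$ -- and, when $\kappa=2$, the value of $\eta^\up(e^\up_2)$ forced by (initial condition), now with $\epsilon$ replaced by $-\epsilon$ and $\chi_V$ by $\chi_W$, the parity condition ``$m\not\equiv n$''/``$m\equiv n$'' being unchanged since $m^\up(\pi)\equiv m^\down(\pi)\bmod 2$. On the other hand, Theorem \ref{main2.1} applied to the reverse lift expresses $\eta(a)/\eta^\up(a)$, for $a\in A_\phi\cong A_{\phi^\up}$, as a product of twisted $\ep$-factors; specializing at $a=e_2$ (case~(1)) resp. $a=e_l,\,e_{l+2}$ (case~(2)) and using $\phi^{e_r}\chi_V^{-1}=S_r$, these reduce to expressions in $\ep(S_r\otimes S_{l+1})$, $\ep(\chi_W S_r)$, determinant twists and signs, all evaluable by Lemma \ref{crit} (in the packaged form of the lemma following Theorem \ref{main3}) together with the oddness of the multiplicities just established. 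Carrying this out yields in case~(2) that $\eta(e_{l+2}+e_l)=\eta^\up(e^\up_{l+2}+e^\up_l)\cdot(-1)=+1=-(-1)^{m_\phi(\chi_V S_l)}$, and in case~(1) that $\eta(e_2)$ is the negative of the value in (initial condition) -- i.e., in each case the pertinent condition of Theorem \ref{main1}\,(1) fails, as asserted.

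The hard part is this last computation: the twisted $\ep$-factors must be handled uniformly across $E=F$ vs. $E\neq F$, the parities of $m$ and $n$, and the sign $\epsilon=\pm1$ entering Theorem \ref{main2.1}, with careful accounting of how the interchange $(\chi_V,\chi_W)$, $\epsilon\mapsto-\epsilon$, $(m,n)\mapsto(n,m')$ alters each term and of how the twist by $\chi_V^{-1}\chi_W$ transports $z_\phi$ and the $e_r$. When $G(W_n)$ or $H(V'_{m'})$ is metaplectic or a full orthogonal group, one must also feed in the (almost) equal-rank results (Theorems \ref{Mp}, \ref{PE}, \ref{PA}) and compare central characters, exactly as in the proof of Proposition \ref{center1}. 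All the sign flips ultimately trace back to $\ep(S_a\otimes S_{a\pm1})$ for consecutive $a$ together with the oddness of $m_\phi(\chi_V S_r)$, but making this precise case-by-case is the bulk of the work.
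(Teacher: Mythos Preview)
Your approach is essentially the same as the paper's: transfer to $\sigma^\up$, apply Theorem~\ref{param}, Corollary~\ref{nus} and Proposition~\ref{alt} to the reverse lift $\theta_{W_n,V'_{m'}}(\sigma^\up)=\pi$ (with invariant $l+2$), read off that $m_{\phi^\up}(\chi_W S_l)=m_\phi(\chi_V S_l)$ is odd and $\eta^\up(e_{l+2}+e_l)=-1$, then compare $\eta$ with $\eta^\up$ via the first-name formula. The paper's proof is two lines shorter because by this point Theorem~\ref{main2}\,(1) has already been established (from Theorem~\ref{main2.1} together with $\nu=1$ after Proposition~\ref{alt}), so one simply cites it: the ratio $\eta(e_{l+2}+e_l)/\eta^\up(e_{l+2}+e_l)$ drops out of that formula as $-1$ without any further case analysis. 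Your final paragraph therefore overestimates the remaining work---no appeal to Theorems~\ref{Mp}, \ref{PE}, \ref{PA} or to central-character comparisons as in Proposition~\ref{center1} is needed here; all of that bookkeeping is already absorbed into Theorem~\ref{main2}\,(1).
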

\begin{proof}
First, we prove (2).
Let $(\phi_\sigma,\eta_\sigma)$ be the $L$-parameter for $\sigma^\up$.
Note that $\sigma^\up$ is tempered by the assumption, and 
$m^\up(\pi) - n - \epsilon_0 = l+2 \geq 2$ by the conservation relation (Proposition \ref{cons}). 
By applying Theorem \ref{param}, Corollary \ref{nus} and Proposition \ref{alt} to $\sigma^\up$,
we have
\[
\phi_\sigma=(\phi \otimes \chi_{V}^{-1}\chi_{W}) \oplus S_{l+2}\chi_{W},
\]
and we see that
$m_{\phi_\sigma}(\chi_{W}S_l)=m_\phi(\chi_{V}S_l)$ is odd, and
$\eta_\sigma(e_{l+2}+e_{l})=-1$.
Therefore it is enough to show $\eta(e_{l+2}+e_{l})/\eta_\sigma(e_{l+2}+e_{l})=-1$.
It follows from Theorem \ref{main2} (1).
Hence we have (2).
The proof of (1) is similar.
\end{proof}

By Proposition \ref{temp}, if $\pi$ is a discrete series representation, 
then the first occurrence $\sigma^\up=\theta_{V'_{m^\up(\pi)},W_n}(\pi)$ is tempered.
Hence by Proposition \ref{non-alt}, 
we see that
\[
n-m^\down(\pi)+\epsilon_0+2=l+2 \not\in \TT
\]
if $\pi$ is a discrete series representation.
This completes the proof of Theorem \ref{main1} (1) for discrete series representations.

%\subsection{Temperedness of theta lifts 2}
\subsection{Temperedness of theta lifts 2}
In this subsection, we discuss whether 
the first occurrence 
$\sigma=\theta_{V'_{m^\up(\pi)},W_n}(\pi)$ to the going-up tower $\VV^\up$ is tempered or not.
\vskip 5pt

Let $\pi \in \Irr_\temp(G(W_n))$ with $L$-parameter $(\phi,\eta)$.
Assume that $l=n-m^\down(\pi)+\epsilon_0 \geq 0$.
Define $\kappa \in \{1,2\}$ by $\kappa \equiv l \bmod 2$.
Then by Corollary \ref{nus}, we know that
$\phi$ contains $\chi_V S_{\kappa+2i}$ for $0 \leq i \leq (l-\kappa)/2$, 
and $m_\phi(\chi_V S_{\kappa+2i})$ is odd for $0 \leq i < (l-\kappa)/2$.
Note that $m^\up(\pi) - n - \epsilon_0 = l+2 \geq 2$. 
\vskip 5pt

Decompose $\phi=\phi' \oplus \phi_0 \oplus {}^c\phi'^\vee$ with $\phi_0 \in \Phi_\disc(G(W_{n_0}))$.
Assume that 
\[
\chi_V\tau_1 \times \dots \times \chi_V \tau_r \rtimes \pi_0 \twoheadrightarrow \pi
\]
for some $\tau_i \in \Irr_\disc(\GL_{k_i}(E))$ and $\pi_0 \in \Irr_\disc(G(W_{n_0}))$ with
$n_0=n-2\sum_{i=1}^r k_i$, so that
the $L$-parameter of $\pi_0$ is given by $(\phi_0,\eta|A_{\phi_0})$.
If $m \geq n+\epsilon_0$, 
then by a similar argument to \cite[Proposition C.4]{GI1}, 
we have
\[
\chi_W\tau_1 \times \dots \times \chi_W \tau_r \rtimes \Theta_{V_{m_0}, W_{n_0}}(\pi_0)
\twoheadrightarrow \Theta_{V_m,W_n}(\pi),
\]
where $m_0=m-2\sum_{i=1}^r k_i$.
In particular, if $\Theta_{V_m,W_n}(\pi)$ is nonzero, then
$\Theta_{V_{m_0}, W_{n_0}}(\pi_0)$ is also nonzero.
\vskip 5pt

\begin{lem}\label{d=d}
Suppose that $m^\down(\pi)<m^\up(\pi)$.
Then the going-down tower $\VV^\down$ with respect to $\pi$
is also the going-down tower $\VV^\down$ with respect to $\pi_0$.
\end{lem}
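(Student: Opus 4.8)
The plan is to read off the going-down tower of $\pi$ and of $\pi_0$ from Theorem \ref{main1} (2) and to check that the two towers coincide. Write $(\phi_0,\eta_0)$ for the $L$-parameter of $\pi_0$, so $\eta_0=\eta|A_{\phi_0}$ and $\phi=\phi'\oplus\phi_0\oplus{}^c\phi'^\vee$ with $\phi'$ the sum of the parameters of the $\chi_V\tau_i$; note $\dim\phi'=\sum_i k_i=(n-n_0)/2$. First I would dispose of the degenerate configurations. If $E=F$ and $\epsilon=-1$ there is a single Witt tower of symplectic spaces, so $\VV^\down$ with respect to $\pi$ and with respect to $\pi_0$ are literally the same tower and there is nothing to prove. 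If $m^+(\pi_0)=m^-(\pi_0)$ — which, by Theorem \ref{main1}, happens precisely when $\kappa=1$ and $\phi_0$ does not contain $\chi_V$ — then every companion tower is a going-down tower for $\pi_0$, so the claim is again trivial. Hence I may assume $E\neq F$ or $\epsilon=+1$ and that $m^\down(\pi_0)<m^\up(\pi_0)$; then, by Theorem \ref{main1} (2), $\VV^\down$ for $\pi$ (\resp $\pi_0$) equals $\VV^{\alpha}$ (\resp $\VV^{\alpha_0}$) for the explicit sign attached there to $(\phi,\eta)$ (\resp to $(\phi_0,\eta_0)$), and it remains to prove $\alpha=\alpha_0$.

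The key input, which I would establish next, is the identity $z_\phi=z_{\phi_0}$ in $A_\phi$ under the canonical inclusion $A_{\phi_0}\hookrightarrow A_\phi$: if $\rho$ is an irreducible constituent of $\phi$ of the same type as $\phi$, then $m_{{}^c\phi'^\vee}(\rho)=m_{\phi'}({}^c\rho^\vee)=m_{\phi'}(\rho)$ since ${}^c\rho^\vee\cong\rho$, hence $m_\phi(\rho)\equiv m_{\phi_0}(\rho)\bmod 2$, and summing over $\rho$ gives the claim. It follows that $\eta(z_\phi)=\eta_0(z_{\phi_0})$; moreover, when $\kappa=1$ the assumption $m^\down(\pi_0)<m^\up(\pi_0)$ forces $\chi_V\subset\phi_0$, so the basis element $e_1$ of $A_\phi$ attached to $\chi_V$ already lies in $A_{\phi_0}$ and agrees with the corresponding element for $\phi_0$. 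Since for $\kappa=1$ the relevant sign is just $\eta(z_\phi+e_1)$, this settles the case $\kappa=1$.

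For $\kappa=2$ the sign $\alpha$ is $\eta(z_\phi)$ times a product of root numbers of twists of $\phi$ and, when $E=F$, a normalizing power of $\chi_V(-1)$; by the previous step it suffices to check that this $\ep$-factor part is the same for $\phi$ and for $\phi_0$. By multiplicativity of $\ep$-factors the two differ only by the contribution of $\phi'\oplus{}^c\phi'^\vee$, and I would conclude using standard identities. Over $E=F$: applying $\ep(\sigma)\ep(\sigma^\vee)=\det(\sigma)(-1)$ with $\sigma=\phi'$ and with $\sigma=\phi'\chi_V$, and using $\chi_V^2=\1$, shows the extra factor is $\det(\phi')(-1)^2\cdot\chi_V(-1)^{2\dim\phi'}=1$. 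Over $E\neq F$: set $\sigma=\phi'\chi_V^{-1}$, so that the $\chi_V^{-1}$-twist of $\phi'\oplus{}^c\phi'^\vee$ equals $\sigma\oplus{}^c\sigma^\vee$ (using ${}^c\chi_V=\chi_V^{-1}$); since $\psi^E_2\circ c=\overline{\psi^E_2}$, transport of structure and the local functional equation give $\ep({}^c\sigma^\vee,\psi^E_2)=\ep(\sigma^\vee,\overline{\psi^E_2})=\ep(\sigma,\psi^E_2)^{-1}$, so the extra factor is again $1$. Hence $\alpha=\alpha_0$, which proves the lemma. I expect the real difficulty to lie not in any single computation but in the bookkeeping of the case distinctions — correctly isolating the degenerate configurations and keeping track of how $z_\phi$ and $e_1$ behave under $A_{\phi_0}\hookrightarrow A_\phi$ — after which the $\ep$-factor manipulations are routine.
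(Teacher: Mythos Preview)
Your argument is correct, but it takes a considerably longer route than the paper's own proof. The paper observes directly that the going-down tower $\VV^\down$ with respect to $\pi$ is characterized by the nonvanishing of $\Theta_{V_m,W_n}(\pi)$ at the (almost) equal rank level $m=n+\epsilon_0+2-\kappa$, and then invokes the fact established immediately before the lemma (the analogue of \cite[Proposition~C.4]{GI1}): for $m\geq n+\epsilon_0$ one has
\[
\chi_W\tau_1 \times \dots \times \chi_W \tau_r \rtimes \Theta_{V_{m_0}, W_{n_0}}(\pi_0)
\twoheadrightarrow \Theta_{V_m,W_n}(\pi),
\]
so nonvanishing on the right forces nonvanishing of $\Theta_{V_{m_0},W_{n_0}}(\pi_0)$ on the same tower. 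This is a three-line argument requiring no case analysis and no reference to the explicit formula for $\alpha$. Your approach instead pulls back the sign $\alpha$ from Theorem~\ref{main1}~(2) and checks $\alpha=\alpha_0$ via the identity $z_\phi=z_{\phi_0}$ and cancellation of the $\ep$-contributions of $\phi'\oplus{}^c\phi'^\vee$; this is valid (Theorem~\ref{main1}~(2) is, modulo the case you dispose of separately, already available at this point via Proposition~\ref{center1} and Prasad's conjecture), but it reintroduces all of the case distinctions that the formula for $\alpha$ carries. What your approach buys is an explicit confirmation that the invariant $\alpha$ is insensitive to parabolic induction --- a pleasant sanity check --- whereas the paper's argument trades explicitness for brevity by appealing only to nonvanishing, which is all the subsequent applications (Proposition~\ref{pi0down}, Corollary~\ref{temp2}) actually need.
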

\begin{proof}
Set $m=n+\epsilon_0+2-\kappa$.
Then $l=n-m+\epsilon_0=\kappa -2 \in \{0,-1\}$.
A tower $\VV$ is the going-down tower with respect to $\pi$
if and only if $\Theta_{V_m,W_n}(\pi)$ is nonzero for $V_m \in \VV$.
In this case, $\Theta_{V_{m_0}, W_{n_0}}(\pi_0)$ is also nonzero for $V_{m_0} \in \VV$.
This shows that $\VV$ is also the going-down tower with respect to $\pi_0$.
\end{proof}
\vskip 5pt

We determine the first occurrence index of $\pi_0$ in terms of the one of $\pi$.
\begin{prop}\label{pi0down}
Let notation be as above.
If $m^\down(\pi)=n+\epsilon_0-l$ with $l>0$, then
\[
m^\down(\pi_0)=\left\{
\begin{aligned}
&n_0+\epsilon_0-l	\iif \text{$m_\phi(\chi_V S_l)$ is odd},\\
&n_0+\epsilon_0-l+2	\iif \text{$m_\phi(\chi_V S_l)$ is even}.
\end{aligned}
\right.
\]
\end{prop}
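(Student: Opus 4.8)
The plan is to derive Proposition \ref{pi0down} from Theorem \ref{main1} (1) for the discrete series representation $\pi_0$ — which is already established — by comparing the combinatorial set $\TT$ attached to $\pi_0$ (call it $\TT_0$) with what Corollary \ref{nus} and Proposition \ref{alt} tell us about the $L$-parameter $(\phi,\eta)$ of $\pi$. First I would fix the bookkeeping. Since $m^{\down}(\pi)=n+\epsilon_0-l$ with $l>0$, the conservation relation (Proposition \ref{cons}) forces $m^{\down}(\pi)<m^{\up}(\pi)$, so Lemma \ref{d=d} applies and the going-down tower of $\pi_0$ coincides with that of $\pi$; since moreover $n_0\equiv n\bmod 2$, the integer $\kappa$ attached to $\pi_0$ equals the one attached to $\pi$. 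Hence Theorem \ref{main1} (1), applied to the discrete series $\pi_0$, gives $m^{\down}(\pi_0)=n_0+\epsilon_0-l(\pi_0)$ with $l(\pi_0)=\max\TT_0$, where $\TT_0$ is built from $\kappa$ and the $L$-parameter $(\phi_0,\eta_0)$ of $\pi_0$; here $\phi_0$ is the discrete part of $\phi$ and $\eta_0=\eta|A_{\phi_0}$. Two structural facts will be used repeatedly: (i) for an irreducible summand $\rho$ of $\phi$ of discrete type, $m_{\phi_0}(\rho)=1$ when $m_\phi(\rho)$ is odd and $m_{\phi_0}(\rho)=0$ when $m_\phi(\rho)$ is even; and (ii) whenever $m_\phi(\chi_V S_r)$ is odd, the basis vector $e_r\in A_{\phi_0}$ maps to $e_r\in A_\phi$, so $\eta_0(e_r)=\eta(e_r)$.

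Next I would record what Corollary \ref{nus} and Proposition \ref{alt}, applied to $\pi$ at its first occurrence $m^{\down}(\pi)$ in the going-down tower (which has level $n-m^{\down}(\pi)+\epsilon_0=l>0$), give: $\phi$ contains $\chi_V S_r$ for $r=\kappa,\kappa+2,\dots,l$; $m_\phi(\chi_V S_r)$ is odd for $r=\kappa,\dots,l-2$; if $\kappa=2$ then $\eta(e_2)$ takes the prescribed value; and $\eta(e_r)=-\eta(e_{r+2})$ for $r=\kappa,\dots,l-2$ (when $l=1$ there is nothing beyond the chain condition). The even case then follows at once: if $m_\phi(\chi_V S_l)$ is even, then by (i) $\phi_0$ does not contain $\chi_V S_l$, so the chain condition for $\phi_0$ cannot reach $l$ and every positive element of $\TT_0$ is $\le l-2$; on the other hand, by (i) and (ii) the chain, odd-ness, initial and alternating conditions at $l-2$ for $(\phi_0,\eta_0)$ reduce precisely to the four facts just listed, so $l-2\in\TT_0$ (and when $l=\kappa$ this reads $\kappa-2\in\TT_0$, which is automatic). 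Therefore $l(\pi_0)=\max\TT_0=l-2$, i.e. $m^{\down}(\pi_0)=n_0+\epsilon_0-l+2$.

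In the odd case $m_{\phi_0}(\chi_V S_l)=1$, and the same reduction shows that $l$ satisfies all four conditions for $(\phi_0,\eta_0)$, i.e. $l\in\TT_0$, whence $l(\pi_0)\ge l$. The remaining inequality $l(\pi_0)\le l$ is the main point, and the one I expect to be the real obstacle. Assuming for contradiction that $l':=l(\pi_0)\ge l+2$, one has $m^{\down}(\pi_0)=n_0+\epsilon_0-l'$, so the space $V_{m^{*}}$ of the common going-down tower with $m^{*}=n+\epsilon_0-l'$ satisfies $m^{*}<m^{\down}(\pi)$ while $m^{*}-2K=m^{\down}(\pi_0)$, where $2K=n-n_0$ is the total $\GL$-rank removed in a surjection $\chi_V\tau_1\times\dots\times\chi_V\tau_r\rtimes\pi_0\twoheadrightarrow\pi$ with $\pi_0\in\Irr_\disc$; hence $\Theta_{V_{m^{*}-2K},W_{n_0}}(\pi_0)\neq 0$, and one wants to propagate this upward to $\Theta_{V_{m^{*}},W_n}(\pi)\neq 0$ by peeling the factors $\chi_V\tau_i$, which would contradict that $m^{\down}(\pi)$ is the first occurrence of $\pi$. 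The genuine difficulty lies exactly in this propagation: Corollary \ref{pi0} is unavailable for a factor $\chi_V\tau_i=\chi_V\St_{l'}$ (the Steinberg excluded at level $l'$), a situation that does occur when $m_\phi(\chi_V S_{l'})\ge 3$, so such factors have to be treated separately — for instance through the extra $a=k-1$ term of Kudla's filtration in Proposition \ref{Ja}, or by an induction on $\dim\phi$ that removes one such Steinberg at a time and invokes Proposition \ref{pi0down} and Theorem \ref{main1} (1) in lower rank. Once $l(\pi_0)\le l$ is secured, $l(\pi_0)=l$ and $m^{\down}(\pi_0)=n_0+\epsilon_0-l$, which completes the proof.
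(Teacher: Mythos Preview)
Your treatment of the even case and of the lower bound $l\in\TT_0$ in the odd case is correct and matches the paper. The gap is exactly where you flagged it: the inequality $l(\pi_0)\le l$ (equivalently $m^{\down}(\pi_0)\ge n_0+\epsilon_0-l$) in the odd case. Your proposed route --- assume $l'=l(\pi_0)\ge l+2$, use $\Theta_{V_{m^{\down}(\pi_0)},W_{n_0}}(\pi_0)\ne 0$, and try to \emph{lift} this nonvanishing from $\pi_0$ back up to $\pi$ on the going-down tower at level $l'$ --- requires propagating nonvanishing in the wrong direction. Corollary \ref{pi0} and the remark before Lemma \ref{d=d} only push nonvanishing \emph{from} $\pi$ \emph{to} $\pi_0$, not the reverse; neither the $a=k-1$ piece of Proposition \ref{Ja} nor an induction on $\dim\phi$ gives you a clean statement of the form ``$\Theta(\pi_0)\ne 0\Rightarrow\Theta(\pi)\ne 0$'' at these deep levels, and you have not supplied one.

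The paper avoids this entirely by switching to the going-up tower. Since $m^{\up}(\pi)=n+\epsilon_0+l+2\ge n+\epsilon_0$, the remark before Lemma \ref{d=d} (i.e.\ the surjection $\chi_W\tau_1\times\dots\times\chi_W\tau_r\rtimes\Theta_{V_{m_0},W_{n_0}}(\pi_0)\twoheadrightarrow\Theta_{V_m,W_n}(\pi)$ for $m\ge n+\epsilon_0$) applies and, together with Lemma \ref{d=d}, gives $m^{\up}(\pi_0)\le n_0+\epsilon_0+l+2$. The conservation relation (Proposition \ref{cons}) then yields $m^{\down}(\pi_0)\ge n_0+\epsilon_0-l$, which is precisely the missing inequality. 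This is a one-line application of results already in hand, and it completely bypasses the obstruction you ran into; you should replace your contradiction argument with this.
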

\begin{proof}
Note that we have proven Theorem \ref{main1} (1) for the discrete series representation $\pi_0$.
By Corollary \ref{nus}, 
we see that $m_\phi(\chi_V S_{\kappa+2i})$ is odd 
for $0 \leq i < (l-\kappa)/2$, 
where we define $\kappa \in \{1,2\}$ by $\kappa \equiv l \bmod 2$.
If $m_\phi(\chi_V S_l)$ is even, then 
by applying Theorem \ref{main1} (1) to $\pi_0$, we have $m^\down(\pi_0)=n_0+\epsilon_0-l+2$.
\vskip 5pt

Suppose that $m_\phi(\chi_V S_l)$ is odd.
Note that $m^\up(\pi)=n+\epsilon_0+l+2$.
By Lemma \ref{d=d} and a remark before this lemma, 
we have $m^\up(\pi_0) \leq n_0+\epsilon_0+l+2$. 
Hence $m^\down(\pi_0) \geq n_0+\epsilon_0-l$.
On the other hand, by applying Theorem \ref{main1} (1) to $\pi_0$, 
we have $m^\down(\pi_0) \leq n_0+\epsilon_0-l$.
Therefore we have $m^\down(\pi_0) = n_0+\epsilon_0-l$.
\end{proof}

\begin{cor}\label{temp2}
Let $\pi \in \Irr_\temp(G(W_n))$ with $L$-parameter $(\phi,\eta)$.
Assume that $m^\down(\pi)=n+\epsilon_0-l$ with $l\geq0$, 
so that $m^\up(\pi)=n+\epsilon_0+l+2$.
Let $\sigma=\theta_{V'_{m^\up(\pi)},W_n}(\pi)$ be the first occurrence 
to the going-up tower $\VV^\up$.
\begin{enumerate}
\item
If $l=0$, then $\sigma$ is tempered.
\item
Suppose that $l>0$. 
Then $\sigma$ is tempered 
if and only if $m_\phi(\chi_V S_l)$ is odd.
\end{enumerate}
\end{cor}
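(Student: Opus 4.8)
The plan is to reduce the statement to the case of a discrete series representation $\pi$, which is covered by Proposition \ref{temp} (2), by descending along the reduction $\chi_V\tau_1\times\dots\times\chi_V\tau_r\rtimes\pi_0\twoheadrightarrow\pi$ recalled just before Lemma \ref{d=d}. If $\pi$ is already a discrete series representation, then $\phi$ is multiplicity free, so $m_\phi(\chi_V S_l)=1$ whenever $\phi\supset\chi_V S_l$; since $l=l(\pi)\in\TT$, the chain condition of Corollary \ref{nus} gives $\phi\supset\chi_V S_l$, so the hypothesis of (2) is automatic and in both (1) and (2) the assertion ``$\sigma$ is tempered'' is exactly Proposition \ref{temp} (2). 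So assume $\pi$ is not discrete series, and fix $\tau_i\in\Irr_\disc(\GL_{k_i}(E))$ and $\pi_0\in\Irr_\disc(G(W_{n_0}))$ with $n_0=n-2\sum_i k_i$ and $\chi_V\tau_1\times\dots\times\chi_V\tau_r\rtimes\pi_0\twoheadrightarrow\pi$, so that the $L$-parameter of $\pi_0$ is $(\phi_0,\eta|A_{\phi_0})$ with $\phi_0\subset\phi$. Since $l\geq0$ we have $m^\down(\pi)<m^\up(\pi)$, so Lemma \ref{d=d} applies and the going-down tower of $\pi$ — hence also its companion, the going-up tower $\VV^\up$ — coincides with that of $\pi_0$. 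Put $m_0\coloneqq m^\up(\pi)-2\sum_i k_i=n_0+\epsilon_0+l+2$.

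The crux is to locate $m_0$ relative to $m^\up(\pi_0)$. If $l>0$ and $m_\phi(\chi_V S_l)$ is odd, Proposition \ref{pi0down} gives $m^\down(\pi_0)=n_0+\epsilon_0-l$, hence $m^\up(\pi_0)=m_0$ by the conservation relation (Proposition \ref{cons}); if $l>0$ and $m_\phi(\chi_V S_l)$ is even it gives $m^\down(\pi_0)=n_0+\epsilon_0-l+2$, hence $m^\up(\pi_0)=m_0-2$. For $l=0$ one checks $l(\pi_0)=0$ directly: the reduction for big theta lifts with $m=m^\down(\pi)=n+\epsilon_0$ shows $\Theta_{V_{n_0+\epsilon_0},W_{n_0}}(\pi_0)\neq0$, so $l(\pi_0)\geq0$; and if $l(\pi_0)\geq2$ then $2\in\TT(\pi_0)$, which forces $\phi_0\supset\chi_V S_2$ and $\eta(e_2)$ equal to the value prescribed by the initial condition (a value depending only on $E$, $\epsilon$, $\chi_V$ and the parities of $n$ and $\dim V$, hence the same for $\pi$ and $\pi_0$), so that $2\in\TT(\pi)$, contradicting $l(\pi)=0$; thus $m^\up(\pi_0)=m_0$ in this case too.

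When $m_0=m^\up(\pi_0)$ — that is, $l=0$, or $l>0$ with $m_\phi(\chi_V S_l)$ odd — the representation $\Theta_{V'_{m_0},W_{n_0}}(\pi_0)$ is the first occurrence of the discrete series $\pi_0$ on its going-up tower, so all of its constituents are tempered by Proposition \ref{temp} (2); feeding this into the big-theta reduction $\chi_W\tau_1\times\dots\times\chi_W\tau_r\rtimes\Theta_{V'_{m_0},W_{n_0}}(\pi_0)\twoheadrightarrow\Theta_{V'_{m^\up(\pi)},W_n}(\pi)$ (applicable since $m^\up(\pi)\geq n+\epsilon_0$) exhibits $\sigma$ as a subquotient of a representation parabolically induced from a representation of a Levi subgroup all of whose constituents are tempered, so $\sigma$ is tempered. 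When $l>0$ and $m_\phi(\chi_V S_l)$ is even we have instead $m_0=m^\up(\pi_0)+2$, so $\theta_{V'_{m_0},W_{n_0}}(\pi_0)$ lies one step above the tempered first occurrence $\sigma_0\coloneqq\theta_{V'_{m^\up(\pi_0)},W_{n_0}}(\pi_0)$; by Proposition \ref{L-quot} (with $n_0-m^\up(\pi_0)+\epsilon_0=-l$) it is the non-tempered Langlands quotient of $\chi_W|\cdot|_E^{\half{l+1}}\rtimes\sigma_0$, so that $\chi_W|\cdot|_E^{\half{l+1}}$ lies in its cuspidal support. The same reduction then puts $\chi_W|\cdot|_E^{\half{l+1}}$ in the cuspidal support of $\sigma$, which therefore cannot be tempered.

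The main obstacle is the bookkeeping in the middle paragraph — transporting the first-occurrence index and the initial-condition datum through the reduction — together with the very last point, where one passes from the Langlands-quotient description of the lift of $\pi_0$ to the non-temperedness of $\sigma$; the cleanest way to make this rigorous is the observation that all irreducible constituents of a big theta lift share a common cuspidal support. Alternatively, non-temperedness in the even case can be obtained by applying Corollary \ref{nus} and Theorem \ref{param} to the backward lift $\theta_{W_n,V'_{m^\up(\pi)}}(\sigma)=\pi$ (for which the relevant parameter is $l+2>0$): were $\sigma$ tempered, these results would force $m_\phi(\chi_V S_l)$ to be odd.
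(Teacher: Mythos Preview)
Your reduction to the discrete series case and the forward direction (odd multiplicity, or $l=0$, implies $\sigma$ tempered) are correct and follow the paper's argument exactly: descend to $\pi_0$, use Proposition~\ref{pi0down} (or its $l=0$ analogue) to identify $m_0$ with $m^\up(\pi_0)$, then invoke Proposition~\ref{temp}~(2). Your $\TT$-set bookkeeping for $l=0$ is fine, though the paper would instead use the reduction at $m=m^\up(\pi)$ directly to get $m^\up(\pi_0)\leq m_0$, hence $m^\down(\pi_0)\geq n_0+\epsilon_0$, avoiding the chain/initial-condition check.

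The gap is in your primary argument for the converse (even multiplicity implies $\sigma$ non-tempered). Even granting that all constituents of $\Theta_{V'_{m_0},W_{n_0}}(\pi_0)$ share a common cuspidal support, the conclusion that $\chi_W|\cdot|_E^{(l+1)/2}$ lies in the cuspidal support of $\sigma$ does \emph{not} force $\sigma$ to be non-tempered: any discrete series representation of a classical group whose parameter contains $\chi_W S_{l+2}$ has $\chi_W|\cdot|_E^{(l+1)/2}$ in its cuspidal support and is nonetheless tempered. The cuspidal support of a tempered representation is constrained to be a union of balanced segments, not to lie on the unitary axis, so a single positive exponent proves nothing.

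Your alternative is the right fix and is exactly what the paper does (it cites the proof of Proposition~\ref{non-alt}): if $\sigma$ were tempered, apply Theorem~\ref{param} and Corollary~\ref{nus} to the backward lift $\theta_{W_n,V'_{m^\up(\pi)}}(\sigma)=\pi$, whose relevant gap is $l+2$; Corollary~\ref{nus} then forces $m_{\phi_\sigma}(\chi_W S_l)$ to be odd, and since $\phi_\sigma=(\phi\otimes\chi_V^{-1}\chi_W)\oplus\chi_W S_{l+2}$ this equals $m_\phi(\chi_V S_l)$. So drop the cuspidal-support paragraph and promote the alternative to the main argument.
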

\begin{proof}
We prove (2).
The proof of (1) is similar.
So we assume that $l>0$.
\vskip 5pt

If $\sigma$ is tempered, then we have proven that 
$m_\phi(\chi_V S_l)$ is odd in the proof of Proposition \ref{non-alt}.
\vskip 5pt

Conversely, suppose that $m_\phi(\chi_V S_l)$ is odd.
We may assume that
\[
\chi_V\tau_1 \times \dots \times \chi_V \tau_r \rtimes \pi_0 \twoheadrightarrow \pi
\]
for some $\tau_i \in \Irr_\disc(\GL_{k_i}(E))$ and $\pi_0 \in \Irr_\disc(G(W_{n_0}))$ with
$n_0=n-2\sum_{i=1}^r k_i$.
As we have seen before Lemma \ref{d=d}, 
we have
\[
\chi_W\tau_1 \times \dots \times \chi_W \tau_r \rtimes \Theta_{V_{m_0}, W_{n_0}}(\pi_0)
\twoheadrightarrow \Theta_{V_m,W_n}(\pi),
\]
where $m_0=m-2\sum_{i=1}^r k_i$.
Hence there exists an irreducible subquotient $\sigma_0$ of $\Theta_{V_{m_0}, W_{n_0}}(\pi_0)$
such that
\[
\chi_W\tau_1 \times \dots \times \chi_W \tau_r \rtimes \sigma_0
\twoheadrightarrow \sigma.
\]
Since $m_\phi(\chi_V S_l)$ is odd, 
by Proposition \ref{pi0down} together with the conservation relation (Proposition \ref{cons}), 
we see that $\Theta_{V_{m_0}, W_{n_0}}(\pi_0)$
is the first lift of a discrete series representation $\pi_0$ to going-up tower $\VV^\up$.
By Proposition \ref{temp} (2), an irreducible subquotient $\sigma_0$ of 
$\Theta_{V_{m_0}, W_{n_0}}(\pi_0)$ is tempered.
Therefore, $\sigma$ is also tempered.
\end{proof}

Corollary \ref{temp2} and Proposition \ref{pi0down}
imply that
\[
n-m^\down(\pi)+\epsilon_0+2=l+2 \not\in \TT
\]
for all tempered representations.
Hence we have $l(\pi)=\max\,\TT=n-m^\down(\pi)+\epsilon_0$.
This completes the proof of Theorem \ref{main1} (1).
Also, using Corollary \ref{temp2}, 
we obtain Theorem \ref{main3} (1) from Theorem \ref{main2} (1).

%\subsection{Non-tempered first lifts}
\subsection{Non-tempered first lifts}\label{first}
In this subsection, we prove Theorem \ref{main3} (2).
\vskip 5pt

Let $\pi \in \Irr_\temp(G(W_n))$ with $L$-parameter $(\phi,\eta)$.
Assume that $l = l(\pi) = n - m^\down(\pi) + \epsilon_0 > 0$.
Theorem \ref{main1} (1)
implies that 
\begin{itemize}
\item
$\phi$ contains $\chi_V S_l, \chi_V S_{l-2}, \dots, \chi_V S_\kappa$, 
where $\kappa \in \{1,2\}$ is defined by $\kappa \equiv l \bmod 2$;
\item
$m_\phi(\chi_V S_{\kappa+2i})$ is odd for $0 \leq i < (l - \kappa)/2$.
\end{itemize}
We put $m=m^\up(\pi)$.
Note that $m-n-\epsilon_0=l+2$.
Let $\sigma = \theta_{V_{m},W_{n}}(\pi)$
be the first occurrence of $\pi$ to the going-up tower $\VV^\up$.
By Corollary \ref{temp2}, we see that
$\sigma$ is non-tempered if and only if $m_\phi(\chi_V S_l)$ is even.
In this subsection, we assume these conditions.
\vskip 5pt

Suppose that $\sigma$ is the Langlands quotient of the standard module
\[
\tau_1 |\cdot|_E^{s_1} \times \dots \times \tau_r |\cdot|_E^{s_r} \rtimes \sigma_0,
\]
where 
$\tau_i \in \Irr_\disc(\GL_{k_i}(E))$, 
$\sigma_0 \in \Irr_\temp(H(V_{m_0}))$,
$2k_1+\dots+2k_r+m_0=m$, and
$s_1 \geq \dots \geq s_r >0$.
\vskip 5pt

First, we have the following: 
\begin{prop}
For any $i=1,\dots,r$, the exponent $s_i$ is in $(1/2)\Z$.
\end{prop}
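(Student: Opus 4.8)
The plan is to control the supercuspidal support of $\sigma$ and read the half‑integrality of the exponents off it. Concretely, I will establish:
\[
\text{every irreducible constituent of }\Theta_{V_m,W_n}(\pi)\text{ has supercuspidal support whose }\GL\text{-exponents lie in }(1/2)\Z .
\]
Granting this, the proposition follows immediately: $\sigma=\theta_{V_m,W_n}(\pi)$ is such a constituent, so its supercuspidal support contains that of each Langlands block $\tau_i|\cdot|_E^{s_i}$; since $\tau_i\in\Irr_\disc(\GL_{k_i}(E))$, the exponents of the supercuspidal support of $\tau_i$ all lie in $(1/2)\Z$, hence those of $\tau_i|\cdot|_E^{s_i}$ all lie in $s_i+(1/2)\Z$, and comparing these two facts forces $s_i\in(1/2)\Z$.

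To prove the boundedness claim I would induct on $\dim_E V_m$, in the slightly more general form: if $\pi\in\Irr(G(W_n))$ has supercuspidal support with all $\GL$-exponents in $(1/2)\Z$ — in particular if $\pi$ is tempered — then so does every constituent of $\Theta_{V_m,W_n}(\pi)$. The base case ($V_m$ anisotropic, $\Theta$ zero or a character) is trivial. For the inductive step, it suffices, running over the maximal parabolics $Q(Y_t)$ of $H(V_m)$ with Levi $\GL(Y_t)\times H(V_{m-2t})$, to show that every constituent $\rho\otimes\sigma_0$ of $R_{Q(Y_t)}(\Theta_{V_m,W_n}(\pi))$ has $\rho$ with supercuspidal-support exponents in $(1/2)\Z$. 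Using $\Theta_{V_m,W_n}(\pi)^\vee\cong\Hom_{G(W_n)}(\omega_{V_m,W_n},\pi)_\infty$ and the compatibility of the $H(V_m)$-side Jacquet functor with $\Hom_{G(W_n)}(-,\pi)$, one reduces to analysing $\Hom_{G(W_n)}(R_{Q(Y_t)}(\omega_{V_m,W_n}),\pi)$, where Kudla's filtration (Lemma \ref{kudla}, with the roles of $V_m$ and $W_n$ interchanged) applies: its successive quotients are induced from pieces of the shape
\[
\chi_W|{\det}_{Y_{t-b}}|_E^{\mu}\otimes\Sc(\Isom(X_b,Y'_b))\otimes\omega_{V_{m-2t},W_{n-2b}}
\]
with $\mu\in(1/2)\Z$. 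Applying $\Hom_{G(W_n)}(-,\pi)$ to such a piece, the $\GL(Y_{t-b})$-factor contributes the character $\chi_W|\det|_E^{\mu}$ (exponent $\mu\in(1/2)\Z$); the $\Sc(\Isom(X_b,Y'_b))$-factor transfers a $\GL(X_b)$-constituent of $R_{\overline{P(X_b)}}(\pi)$ to a $\GL(Y'_b)$-representation whose supercuspidal-support exponents are, up to sign and the unitary twist by $\chi_W\chi_V^{-1}$, those of $\pi$ itself, hence in $(1/2)\Z$; and the remaining $H(V_{m-2t})$-factor is a constituent of $\Theta_{V_{m-2t},W_{n-2b}}(\pi')^\vee$ for a Jacquet-module constituent $\pi'$ of $\pi$, whose supercuspidal support is a submultiset of that of $\pi$, so the induction hypothesis applies.

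The main obstacle is the bookkeeping in that last step: tracking the $c$-conjugate/contragredient operations and the $\chi_W\chi_V^{-1}$-twist produced when one pairs $\Sc(\Isom(X_b,Y'_b))$ against $R_{\overline{P(X_b)}}(\pi)$, so as to see that only the real parts of the exponents — which are merely permuted and negated and so stay in $(1/2)\Z$ — intervene, together with justifying that the Jacquet functor commutes with $\Hom_{G(W_n)}(-,\pi)$ on the relevant category; both are standard in the theta literature (compare the proof of Proposition \ref{temp} and \cite[\S 3]{GT1}) but need care. An alternative and more self-contained route avoids this: write $\pi$ as a quotient of $\chi_V t_1\times\dots\times\chi_V t_d\rtimes\pi_0$ with $\pi_0\in\Irr_\disc(G(W_{n_0}))$ and $d\geq 1$ (here $\pi$ is not discrete series, since $\sigma$ is non-tempered, by Proposition \ref{temp} (2)), use the analogue of \cite[Proposition C.4]{GI1} (applicable since $m=m^\up(\pi)\geq n+\epsilon_0$) to get a surjection $\chi_W t_1\times\dots\times\chi_W t_d\rtimes\Theta_{V_{m_0},W_{n_0}}(\pi_0)\twoheadrightarrow\Theta_{V_m,W_n}(\pi)$, and then observe that (as $m_\phi(\chi_V S_l)$ is even) Lemma \ref{d=d}, the conservation relation and Proposition \ref{pi0down} give $m_0=m^\up(\pi_0)+2$, so that Proposition \ref{temp} (2) together with Proposition \ref{L-quot} yields the boundedness claim for the constituents of $\Theta_{V_{m_0},W_{n_0}}(\pi_0)$. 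In either route the crux is the same: every exponent entering through Kudla's filtration, or through the jump of Witt tower, is manifestly in $(1/2)\Z$.
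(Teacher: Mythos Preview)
Your approach is genuinely different from the paper's, which uses Plancherel measures: by Theorem \ref{Pmeas} and Desideratum \ref{des} (\ref{P-hyp}) the product
\[
\gamma(s,\chi_W\phi_\tau\otimes\phi_\sigma^\vee,\psi_E)\cdot\gamma(-s,\chi_W^{-1}\phi_\tau^\vee\otimes\phi_\sigma,\psi_E^{-1})
\]
is expressed through the corresponding quantity for the tempered $\pi$, so its poles have real part in $(1/2)\Z$; one then picks an $s_i\notin(1/2)\Z$ maximising $s_i+(d_i-1)/2$ and produces a pole outside $(1/2)\Z$ by a suitable choice of $\phi_\tau$, a contradiction. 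This works entirely at the level of $L$-parameters and avoids any analysis of constituents of the big $\Theta$.

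Your first route (supercuspidal support via Kudla's filtration) is correct in spirit --- it is essentially Kudla's computation of the cuspidal data of $\Theta$ in \cite{Ku1} --- but the step you flag is not a formality. One does \emph{not} have, in general, that $R_{Q}$ applied to $\Hom_{G(W_n)}(\omega,\pi)_\infty$ agrees with $\Hom_{G(W_n)}(R_{Q}(\omega),\pi)_\infty$; what one needs instead is that any irreducible subquotient $\sigma'$ of $\Theta$ with a given cuspidal support can be detected by a nonzero map $\Theta\to\Ind_Q(\rho\otimes\sigma_0)$, after which Frobenius and the filtration apply. This is exactly the content of Kudla's argument, and citing it would close your argument; your inductive sketch as written does not.

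Your alternative route has a concrete gap. You correctly obtain $m_0=m^{\up}(\pi_0)+2$, but then invoke Proposition \ref{temp} (2) and Proposition \ref{L-quot} to control \emph{all constituents} of $\Theta_{V_{m_0},W_{n_0}}(\pi_0)$. Neither applies: Proposition \ref{temp} (2) is stated only at $m=m^{\up}(\pi_0)$, while Proposition \ref{L-quot} concerns only the \emph{small} theta lift $\theta_{V_{m_0},W_{n_0}}(\pi_0)$, not the full $\Theta$. The missing ingredient is a surjection of big theta lifts
\[
\chi_W|\cdot|_E^{(l+1)/2}\rtimes\Theta_{V_{m^{\up}(\pi_0)},W_{n_0}}(\pi_0)\twoheadrightarrow\Theta_{V_{m_0},W_{n_0}}(\pi_0),
\]
which does follow from the bottom piece $J^0$ of Kudla's filtration (as in the proof of the tower property in \cite{Ku1}), but is not what the propositions you cite assert. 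With that surjection in hand and Proposition \ref{temp} (2) at level $m^{\up}(\pi_0)$, your reduction goes through; without it, the argument is incomplete. Compared with the paper's Plancherel argument, both of your routes ultimately rely on Kudla's filtration to control cuspidal support, whereas the paper bypasses this entirely by reading the exponents off gamma-factor poles.
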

\begin{proof}
Consider the Plancherel measure (see Appendix \ref{Pm}).
By Theorem \ref{Pmeas}, we have
\[
\mu(\chi_W \tau |\cdot|_E^s \otimes \sigma)=
\mu(\chi_V \tau |\cdot|_E^s \otimes \pi) \cdot \gamma(s-\half{l-1},\tau,\psi_E)^{-1}
\cdot \gamma(-s-\half{l-1},\tau^\vee,\psi_E^{-1})^{-1}
\]
for any $\tau \in \GL_k(E)$.
In particular, by Desideratum \ref{des} (\ref{P-hyp}), we have
\begin{align*}
&\gamma(s,\chi_W\phi_\tau \otimes \phi_\sigma^\vee, \psi_E) \cdot 
\gamma(-s,\chi_W^{-1}\phi_\tau^\vee \otimes \phi_\sigma, \psi_E^{-1})
\\&=
\gamma(s,\chi_V\phi_\tau \otimes \phi_\pi^\vee, \psi_E) \cdot 
\gamma(-s,\chi_V^{-1}\phi_\tau^\vee \otimes \phi_\pi, \psi_E^{-1})
\cdot \gamma(s-\half{l-1},\phi_\tau,\psi_E)^{-1}
\cdot \gamma(-s-\half{l-1},\phi_\tau^\vee,\psi_E^{-1})^{-1}.
\end{align*}
Let $\AA$ be the set of $s_0 \in \C$ such that 
the left hand side of the above equation has a pole at $s=s_0$
for some unitary supercuspidal representation $\tau$ of $\GL(k,E)$.
Looking at the right hand side, we see that
\[
\{\mathrm{Re}(s_0)\ |\ s_0 \in \AA\} \subset \half{1}\Z.
\]
\vskip5pt 

Let $\phi_{\tau_i}$ be the irreducible representation of $\WD_E$ corresponding to $\tau_i$.
We may decompose $\phi_{\tau_i} \cong \phi_i \boxtimes S_{d_i}$, 
where $\phi_i$ is an irreducible representation of $W_E$ and $d_i$ is a positive integer.
Since 
\[
\phi_\sigma = \phi_{\tau_1}|\cdot|_E^{s_1} \oplus \dots \oplus \phi_{\tau_r}|\cdot|_E^{s_r}
\oplus \phi_{\sigma_0} \oplus 
{}^c\phi_{\tau_r}^\vee|\cdot|_E^{-s_r} \oplus \dots \oplus {}^c\phi_{\tau_1}^\vee|\cdot|_E^{-s_1}, 
\]
we have
\begin{align*}
&\gamma(s,\chi_W\phi_\tau \otimes \phi_\sigma^\vee, \psi_E) \cdot 
\gamma(-s,\chi_W^{-1}\phi_\tau^\vee \otimes \phi_\sigma, \psi_E^{-1})
\\&=\Big[\prod_{i=1}^r
\gamma(s-s_i, \chi_W\phi_\tau \otimes \phi_{\tau_i}^\vee, \psi_E)
\gamma(s+s_i, \chi_W\phi_\tau \otimes {}^c\phi_{\tau_i}, \psi_E)
\\&\quad\times
\gamma(-s-s_i, \chi_W^{-1}\phi_\tau^\vee \otimes \phi_{\tau_i}^\vee, \psi_E^{-1})
\gamma(-s+s_i, \chi_W^{-1}\phi_\tau^\vee \otimes {}^c\phi_{\tau_i}, \psi_E^{-1})
\Big]\\
&\quad\times
\gamma(s,\chi_W\phi_\tau \otimes \phi_{\sigma_0}^\vee, \psi_E) \cdot 
\gamma(-s,\chi_W^{-1}\phi_\tau^\vee \otimes \phi_{\sigma_0}, \psi_E^{-1}).
\end{align*}
\vskip 5pt

Now suppose that some $s_j$ is not in $(1/2)\Z$.
We may assume that $s_i \not \in (1/2)\Z$ and $s_i$ satisfies that
\[
\max\left\{s_j+\half{d_j-1}\ |\ s_j \not \in \half{1}\Z \right\}
= s_i + \half{d_i-1}.
\]
Taking $\phi_\tau = \chi_W^{-1} \phi_i$, by above equation, 
we see that 
$\gamma(s,\chi_W\phi_\tau \otimes \phi_\sigma^\vee, \psi_E) \cdot 
\gamma(-s,\chi_W^{-1}\phi_\tau^\vee \otimes \phi_\sigma, \psi_E^{-1})$
has a pole at $s= 1+ s_i + (d_i-1)/2$
since $\gamma(s-s_i, \chi_W\phi_\tau \otimes \phi_{\tau_i}^\vee, \psi_E)$ has a pole at this point.
Hence $1+ s_i + (d_i-1)/2 \in \AA$ but $1+ s_i + (d_i-1)/2 \not\in(1/2)\Z$.
This is a contradiction.
\end{proof}

\begin{cor}\label{unif}
We have $s_i=1/2$ and $\tau_i=\chi_W \St_{l+1}$ for any $i=1,\dots,r$.
\end{cor}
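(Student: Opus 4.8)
The plan is to upgrade the conclusion $s_i\in\tfrac12\Z_{>0}$ of the preceding proposition to the exact shape of the segments. One could push the pole analysis of the gamma-factor identity used there a little further, but it is cleaner to combine that congruence with the Jacquet-module nonvanishing criterion \cite[Proposition 3.1]{GT1} --- the same tool already invoked in the proof of Proposition \ref{temp} --- together with the first-occurrence hypothesis $m=m^\up(\pi)$, which gives $\Theta_{V_{m-2},W_n}(\pi)=0$.

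First I would realize $\sigma$ as a subrepresentation rather than a quotient: by Lemma \ref{sub-quot} and the Langlands classification, $\sigma$ is the unique irreducible subrepresentation of the reversed standard module
\[
{}^c\tau_1^\vee|\cdot|_E^{-s_1}\times{}^c\tau_2^\vee|\cdot|_E^{-s_2}\times\dots\times{}^c\tau_r^\vee|\cdot|_E^{-s_r}\rtimes\sigma_0,
\]
with $-s_1\le\dots\le-s_r<0$. Applying \cite[Proposition 3.1]{GT1} to the resulting nonzero map $\Theta_{V_m,W_n}(\pi)\twoheadrightarrow\sigma\hookrightarrow\Ind_Q^{H(V_m)}({}^c\tau_1^\vee|\cdot|_E^{-s_1}\otimes(\cdot))$, where $Q$ is the maximal parabolic with Levi $\GL_{k_1}(E)\times H(V_{m-2k_1})$, the alternative ``$\Theta_{V_{m-2},W_n}(\pi)\neq0$'' is excluded; unwinding the other alternative --- this is where the shift $\tfrac{l-1}{2}$ in the Plancherel relation of Theorem \ref{Pmeas} and the explicit Kudla filtration $J^a$ of $R_Q(\omega_{V_m,W_n})$ enter, and where the chain $\chi_V S_\kappa,\dots,\chi_V S_l\subset\phi_\pi$ furnished by Corollary \ref{nus} is used --- forces ${}^c\tau_1^\vee|\cdot|_E^{-s_1}=\chi_W\St_{l+1}|\cdot|_E^{-1/2}$, hence $s_1=1/2$ and, since $\chi_W$ is conjugate-self-dual and $\St_{l+1}$ is self-dual, $\tau_1|\cdot|_E^{s_1}=\chi_W\St_{l+1}|\cdot|_E^{1/2}$. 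As $s_1=\max_i s_i$ and each $s_i\ge\tfrac12$, we get $s_i=\tfrac12$ for every $i$.

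To handle the remaining $\tau_i$ I would peel the leading factor off: from $\sigma\hookrightarrow\chi_W\St_{l+1}|\cdot|_E^{-1/2}\rtimes\Sigma$ with $\Sigma={}^c\tau_2^\vee|\cdot|_E^{-1/2}\times\dots\times{}^c\tau_r^\vee|\cdot|_E^{-1/2}\rtimes\sigma_0$, a comparison of socles gives $\sigma\hookrightarrow\chi_W\St_{l+1}|\cdot|_E^{-1/2}\rtimes\sigma''$, where $\sigma''\in\Irr(H(V_{m-2(l+1)}))$ is the socle of $\Sigma$, i.e. the Langlands quotient of $\tau_2|\cdot|_E^{1/2}\times\dots\times\tau_r|\cdot|_E^{1/2}\rtimes\sigma_0$. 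A $V$-side Kudla-filtration argument (the analogue of Corollary \ref{pi0}, using Lemma \ref{sub-quot}) identifies $\sigma''$ with the small theta lift $\theta_{V_{m-2(l+1)},W_n}(\pi)$; since $m-2(l+1)=m^\up(\pi)-2l-2=n+\epsilon_0-l=m^\down(\pi)$ by the conservation relation (Proposition \ref{cons}), this is the first theta lift of $\pi$ to the going-down tower, which is tempered by Proposition \ref{temp} (1). But a Langlands quotient of a standard module with strictly positive exponents can be tempered only when there are no such exponents, so $r=1$; hence the unique $\tau_i$ is $\chi_W\St_{l+1}$, as desired.

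The step I expect to be the main obstacle is extracting the precise conclusion of \cite[Proposition 3.1]{GT1} in this setting --- namely that the peeled-off segment is exactly the Steinberg $\chi_W\St_{l+1}$ carried by the exponent $\tfrac12$, and not a shorter Steinberg carried by a larger exponent. This needs a careful bookkeeping of the terms $J^a$ in Kudla's filtration of $R_Q(\omega_{V_m,W_n})$ and of the shift produced by the $\GL$-part of the Weil representation, and it is precisely here that the hypothesis $l=l(\pi)>0$ (equivalently, the full chain in $\phi_\pi$ given by Corollary \ref{nus}) is indispensable. The identification of $\sigma''$ with the going-down theta lift, via a see-saw argument and the Howe duality conjecture (Theorem \ref{howe}), is a secondary technical point.
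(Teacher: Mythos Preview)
Your opening is fine and matches the paper: invoke \cite[Proposition~3.1]{GT1} (exactly as in the proof of Proposition~\ref{temp}) together with $\Theta_{V_{m-2},W_n}(\pi)=0$ to force $\tau_1|\cdot|_E^{s_1}=\chi_W\St_{l+1}|\cdot|_E^{1/2}$, and then use $s_1\ge s_i>0$, $s_i\in\tfrac12\Z$ to conclude $s_i=\tfrac12$ for every $i$.

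The gap is in your treatment of the remaining $\tau_i$. Your peeling argument claims that the socle $\sigma''$ of the truncated standard module is identified with $\theta_{V_{m-2(l+1)},W_n}(\pi)$, and that this is the first lift on the going-down tower. But $V_m$ lies in the going-up tower $\VV^\up$, and stripping hyperbolic planes keeps you in that \emph{same} Witt tower: $V_{m-2(l+1)}\in\VV^\up$. Since $m-2(l+1)=n+\epsilon_0-l<m^\up(\pi)$, the theta lift $\Theta_{V_{m-2(l+1)},W_n}(\pi)$ vanishes by the definition of first occurrence, so your identification yields $\sigma''=0$, a contradiction. No Kudla-filtration or see-saw manoeuvre lets you jump from $\VV^\up$ to $\VV^\down$ here; the two towers consist of genuinely non-isomorphic spaces in the same dimension. (You are in effect trying to prove Proposition~\ref{r=1} at this stage; the paper proves that separately, by a different mechanism: the bidirectional output of \cite[Proposition~3.1]{GT1} produces a surjection onto $\pi$ from an induced representation with a strictly positive exponent, contradicting temperedness of $\pi$.)

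The paper's argument for the corollary itself is much shorter and avoids this issue entirely. Once all exponents equal $\tfrac12$, the inducing data $\tau_1|\cdot|_E^{1/2},\dots,\tau_r|\cdot|_E^{1/2}$ can be permuted freely in the standard module (discrete series of $\GL$ with the same exponent commute, cf.\ \cite{Z}), so for each $j$ one may place $\tau_j$ in the leading slot and rerun the \cite[Proposition~3.1]{GT1} step to obtain $\tau_j=\chi_W\St_{l+1}$. This symmetry trick is what you are missing.
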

\begin{proof}
By \cite[Proposition 3.1]{GT1}, we know that $s_1=1/2$ and $\tau_1=\chi_W \St_{l+1}$.
Hence we have $s_i=1/2$ for any $i=1,\dots,r$.
Since each $\tau_i$ is a discrete series representation of a general linear group, 
we can interchange $\tau_i$ with $\tau_1$ (see e.g., \cite{Z}).
Hence we have $\tau_i=\chi_W \St_{l+1}$ for any $i=1,\dots,r$.
\end{proof}

The following is the key result.
\begin{prop}\label{r=1}
We have $r=1$.
\end{prop}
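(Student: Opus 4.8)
The plan is to show that if $r \geq 2$, then the standard module presentation of $\sigma = \theta_{V_{m^\up(\pi)},W_n}(\pi)$ with multiple copies of $\chi_W\St_{l+1}$ forces a contradiction with the first occurrence property, i.e.\ with $m^\up(\pi)$ being the \emph{first} nonzero lift to the going-up tower. The strategy will exploit the see-saw/Jacquet-module machinery already developed, particularly Corollary \ref{pi0} (the Kudla-filtration consequence) in the reverse direction.

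First I would use Corollary \ref{unif} to write $\sigma$ as the Langlands quotient of $\chi_W\St_{l+1}|\cdot|_E^{1/2} \times \dots \times \chi_W\St_{l+1}|\cdot|_E^{1/2} \rtimes \sigma_0$ with $r$ factors, where $\sigma_0 \in \Irr_\temp(H(V_{m_0}))$ and $m_0 = m^\up(\pi) - 2r(l+1)$. The $L$-parameter $\phi_{\sigma_0}$ is then $\phi_\sigma$ with the $2r$ copies of $\chi_W S_{l+1}$ arising from $\chi_W\St_{l+1}|\cdot|_E^{\pm 1/2}$ removed; by Theorem \ref{param} applied to $\sigma$ (tempered part), $\phi_\sigma$ restricted appropriately looks like $(\phi\otimes\chi_V^{-1}\chi_W) - \chi_W S_l \oplus (\text{copies of } \chi_W S_{l+1})$, so $\phi_{\sigma_0} = (\phi\otimes\chi_V^{-1}\chi_W) - \chi_W S_l$ (essentially the going-down parameter at level $m_1$). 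Now the key point: since $\sigma = \theta_{V_{m^\up},W_n}(\pi)$, applying the theta correspondence back, $\pi = \theta_{W_n,V_{m^\up}}(\sigma)$. I would then transfer the standard-module structure through the theta correspondence using Corollary \ref{pi0}: the presence of $\chi_W\St_{l+1}|\cdot|_E^{1/2}$ in the induction data for $\sigma$ should correspond, via the see-saw, to $\pi$ being (or embedding in) an induced representation $\chi_V\St_{l+1}|\cdot|_E^{1/2}\times\dots \rtimes \pi_0'$; but $\pi$ is tempered, forcing the exponents to vanish — which rules out $s_i = 1/2 \neq 0$ unless $r=1$ is consistent with the alternative branch of the dichotomy in Proposition \ref{Ja}, namely the $a = k-1$ exceptional case where $\tau = \St_k|\det|_E^{(l-k)/2}$.

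More concretely, the mechanism is: Proposition \ref{Ja} shows that the Jacquet module of the Weil representation, when paired against $\chi_V\,{}^c\tau^\vee \otimes \pi_0$, is nonzero only for $a=k$ (giving $\Ind(\chi_W\tau^\vee \otimes \dots)$, which would push the exponent through unchanged and contradict temperedness of $\pi$) or for the single exceptional case $a = k-1$ with $\tau = \St_k|\det_{X_k}|_E^{(l-k)/2}$. The exceptional case is precisely what allows ONE copy of $\chi_W\St_{l+1}$ to appear with exponent $1/2$: it is the ``boundary'' term $\chi_W S_{l+1}\otimes(|\cdot|_E^{1/2} + |\cdot|_E^{-1/2})$ in the formula of Theorem \ref{main3} (2). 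Since this exceptional branch can only be invoked once (it requires $k = l+1$ and the very specific Steinberg twist, consuming the unique ``extra'' copy of $\chi_V S_l$ in $\phi$ beyond what the chain condition mandates — recall $m_\phi(\chi_V S_l) = 2h$ is even, so there is a well-defined surplus), a second factor $\chi_W\St_{l+1}|\cdot|_E^{1/2}$ would have to come from the $a=k$ branch, forcing a nonzero exponent on the tempered representation $\pi$ side — impossible. Therefore $r = 1$.

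The hard part will be making the ``transfer through theta'' step rigorous: I need to carefully track how the Langlands data of $\sigma$ interact with the Kudla filtration when computing $\Hom_{H(V_m)}(\omega_{V_m,W_n}, \sigma)_\infty \cong \pi^\vee$, i.e.\ run the argument of Corollary \ref{pi0} with the roles of $V$ and $W$ swapped and with $\sigma$ (non-tempered) in place of a tempered input. This requires knowing the relevant Jacquet module $R_{Q(Y_k)}(\sigma)$ contains $\chi_W\St_{l+1}|\cdot|_E^{1/2}\otimes(\dots)$ with the correct tempered-or-lower complement, and then invoking the dichotomy of Proposition \ref{Ja} to conclude that each such factor beyond the first is incompatible with $\pi$ being tempered. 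A clean way to organize this is by induction on $r$: assuming $r \geq 2$, peel off one $\chi_W\St_{l+1}$ using the see-saw, reduce to a representation $\pi_0'$ of a smaller group whose going-up first lift still has a standard module with $r-1$ copies of $\chi_W\St_{l+1}$, and observe that the conservation relation (Proposition \ref{cons}) applied to $\pi_0'$ together with Proposition \ref{pi0down} pins down $m^\up(\pi_0')$ in a way that contradicts having $r - 1 \geq 1$ surplus factors unless $r - 1 = 0$.
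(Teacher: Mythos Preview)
Your overall strategy---transfer the standard-module data of $\sigma$ back through the theta correspondence via the Kudla filtration and contradict the temperedness of $\pi$---is the same as the paper's. But your explanation of \emph{why} only one copy of $\chi_W\St_{l+1}|\cdot|_E^{1/2}$ can pass through is not right, and this is where the argument actually lives.

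You claim the exceptional branch $a=k-1$ of Proposition~\ref{Ja} can be invoked only once because it ``consumes the unique extra copy of $\chi_V S_l$.'' That is not the mechanism. The paper's proof proceeds in two steps with \emph{different} gap parameters. For the first peel, one invokes \cite[Proposition~3.1]{GT1}: this produces $\sigma_1\in\Irr(H(V_{m_1}))$ with $m_1=m-2(l+1)$ together with a surjection
\[
\chi_V\St_l \rtimes \Theta_{W_{n_1},V_{m_1}}(\sigma_1)\twoheadrightarrow \pi,\qquad n_1=n-2l.
\]
The point is that this first peel yields the \emph{tempered} factor $\chi_V\St_l$ on the $\pi$-side (the exceptional branch at gap $l+2$), so no contradiction yet. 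Now the pair $(V_{m_1},W_{n_1})$ has gap $m_1-n_1-\epsilon_0=l$, not $l+2$. If $\sigma_1$ were non-tempered (i.e.\ $r\geq 2$), Corollary~\ref{unif} gives a second peel $\chi_W\St_{l+1}|\cdot|_E^{1/2}\rtimes\sigma_2\twoheadrightarrow\sigma_1$. For this second peel one applies Corollary~\ref{pi0} directly: at gap $l$ the exceptional $\tau$ is $\St_{l+1}|\det|_E^{(l-(l+1))/2}=\St_{l+1}|\det|_E^{-1/2}$, which is \emph{not} $\St_{l+1}|\det|_E^{1/2}$. Hence the exponent $1/2$ transfers unchanged, giving
\[
\chi_V\St_l\times\chi_V\St_{l+1}|\cdot|_E^{1/2}\rtimes\Theta_{W_{n_2},V_{m_2}}(\sigma_2)\twoheadrightarrow\pi,
\]
which violates Casselman's criterion. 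So the reason the second copy fails is that the first peel \emph{shifts the gap}, after which the exceptional condition in Proposition~\ref{Ja} no longer matches $\St_{l+1}|\cdot|_E^{1/2}$. Your induction-plus-conservation-relation alternative is unnecessary and, as stated, too vague to constitute a proof.
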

\begin{proof}
By (the proof of) Proposition 3.1 in \cite{GT1}, 
we can find an irreducible representation $\sigma_1$ of $H(V_{m_1})$ such that
\[
\Ind_{Q(Y_{l+1})}^{H(V_m)}(\chi_W \St_{l+1}|\cdot|_E^{1/2} \otimes \sigma_1)
\twoheadrightarrow \sigma,
\]
and
\[
\Ind_{P(X_l)}^{G(W_n)}(\chi_V \St_{l} \otimes \Theta_{W_{n_1},V_{m_1}}(\sigma_1))
\twoheadrightarrow  \pi,
\]
where we put $m_1=m-2(l+1)$ and $n_1=n-2l$.
We have to show that $\sigma_1$ is tempered.
Suppose for the sake of contradiction that $\sigma_1$ is not tempered.
Then by Corollary \ref{unif}, there exists $\sigma_2 \in \Irr(H(V_{m_2}))$ such that
\[ 
\Ind_{Q(Y'_{l+1})}^{H(V_{m_1})}(\chi_W \St_{l+1}|\cdot|_E^{1/2} \otimes \sigma_2)
\twoheadrightarrow \sigma_1,
\]
where $m_2 = m_1 - 2(l+1)$ and $V_{m_1}= Y'_{l+1} \oplus V_{m_2} \oplus (Y'_{l+1})^*$.
Since $m_1-n_1-\epsilon_0=l$, by Corollary \ref{pi0}, we have
\[
\Ind_{P(X'_{l+1})}^{G(W_{n_{1}})}
(\chi_V \St_{l+1}|\cdot|_E^{1/2} \otimes \Theta_{W_{n_2},V_{m_2}}(\sigma_2)) 
\twoheadrightarrow \Theta_{W_{n_{1}},V_{m_{1}}}(\sigma_{1}),
\]
where $n_2=n_1-2(l+1)$ and $W_{n_1}= X'_{l+1} \oplus W_{n_2} \oplus (X'_{l+1})^*$.
Combining these maps, we have
\[
\chi_V \St_l \times \chi_V \St_{l+1}|\cdot|_E^{1/2} 
\rtimes \Theta_{W_{n_2},V_{m_2}}(\sigma_2)
\twoheadrightarrow \pi.
\]
This contradicts that $\pi$ is tempered by Casselman's criterion.
\end{proof}

Now we are ready to prove Theorem \ref{main3} (2).
More precisely, we prove the following theorem:
\begin{thm}
Let $\pi \in \Irr_\temp(G(W_n))$ with $L$-parameter $(\phi,\eta)$.
Assume that 
\begin{itemize}
\item
$l = l(\pi) = n - m^\down(\pi) + \epsilon_0 > 0$;
\item
$m_\phi(\chi_V S_l)=2h$ for some $h>0$.
\end{itemize}
We write $\phi = \phi_0 \oplus (\chi_V S_l)^{\oplus 2h}$.
Put $n_0=n-2hl$ and $m_0=m-2hl-2$.
Let $\pi_0 \in \Irr_\temp(G(W_{n_0}))$ 
such that
\[
\chi_V\St_l \times \dots \chi_V \St_l \rtimes \pi_0 \twoheadrightarrow \pi,
\]
so that
the $L$-parameter of $\pi_0$ is $(\phi_0,\eta|A_{\phi_0})$.
Here, $\chi_V \St_l$ appears $h$-times.
We set $m=m^\up(\pi)$ and 
let $\sigma=\theta_{V_m,W_n}(\pi)$ be the first occurrence of $\pi$ 
to the going-up tower $\VV^\up$.
Then we have
\[
\chi_W \St_{l+1}|\cdot|_E^{1/2} \times \chi_W\St_l \times \dots \times \chi_W\St_l
\rtimes \sigma_0
\twoheadrightarrow \sigma,
\]
where 
$\sigma_0=\theta_{V_{m_0},W_{n_0}}(\pi_0)$,
and $\chi_W\St_l$ appears $(h - 1)$-times.
In particular, if we denote the $L$-parameter for $\sigma$ (\resp $\sigma_0$) 
by $(\phi_\sigma, \eta_\sigma)$ (\resp $(\phi_{\sigma_0}, \eta_{\sigma_0})$), 
then we have
\[
\phi_{\sigma_0}=\phi\chi_V^{-1}\chi_W - (\chi_W S_l)^{\oplus (2h-1)}
\quad \text{and} \quad
\phi_\sigma
=\phi_{\sigma_0} +(\chi_W S_l)^{\oplus 2(h-1)} + \chi_W S_{l+1} \otimes (|\cdot|_E^{1/2}+|\cdot|_E^{-1/2}).
\]
Moreover the canonical injection $A_{\phi_{\sigma_0}} \hookrightarrow A_{\phi_\sigma}$
is in fact bijective, and we have $\eta_\sigma|A_{\phi_{\sigma_0}}=\eta_{\sigma_0}$.
\end{thm}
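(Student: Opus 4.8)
The plan is to finish the analysis of the going-up first lift begun in this subsection. By Corollary \ref{unif} and Proposition \ref{r=1}, with $m=m^\up(\pi)$ and $m_1:=m-2(l+1)$, the representation $\sigma=\theta_{V_m,W_n}(\pi)$ is the Langlands quotient of $\chi_W\St_{l+1}|\cdot|_E^{1/2}\rtimes\sigma_1$ for some $\sigma_1\in\Irr_\temp(H(V_{m_1}))$; so everything reduces to identifying $\sigma_1$ with $\tilde\sigma_0:=(\chi_W\St_l)^{\times(h-1)}\rtimes\sigma_0$ and then reading off the $L$-parameter and first name of $\sigma$. First I would pin down $\sigma_0=\theta_{V_{m_0},W_{n_0}}(\pi_0)$. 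Since $\phi_0=\phi-(\chi_V S_l)^{\oplus 2h}$ no longer contains $\chi_V S_l$ but still contains $\chi_V S_\kappa,\dots,\chi_V S_{l-2}$ with odd multiplicities (inherited from the chain and odd-ness conditions for $\pi$), Theorem \ref{main1}~(1) applied to $\pi_0$ gives $l(\pi_0)=l-2$; together with Lemma \ref{d=d} (valid since $l(\pi)>0$, so $m^\down(\pi)<m^\up(\pi)$) and the conservation relation this shows $m_0=m^\up(\pi_0)$ and that $V_{m_0}$ is the first space of the going-up tower of $\pi_0$. As $m_{\phi_0}(\chi_V S_{l-2})=m_\phi(\chi_V S_{l-2})$ is odd, Corollary \ref{temp2} shows $\sigma_0$ is tempered, and (for $l\ge2$, so that $l(\pi_0)\ge0$) Theorem \ref{main3}~(1) applied to $\pi_0$ gives $\phi_{\sigma_0}=(\phi_0\otimes\chi_V^{-1}\chi_W)\oplus\chi_W S_l$, which rewrites as $\phi\chi_V^{-1}\chi_W-(\chi_W S_l)^{\oplus(2h-1)}$, along with the formula for $\eta_{\sigma_0}$. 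The boundary case $l=1$, where Theorem \ref{main3} is not available for $\pi_0$, is handled instead by the almost equal rank results (Theorems \ref{Mp}, \ref{PE}, \ref{PA}).

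Since $\phi_{\sigma_0}$ contains $\chi_W S_l$, each induction step $\chi_W\St_l\rtimes(-)$ starting from $\sigma_0$ is irreducible, so $\tilde\sigma_0$ is irreducible and tempered, with $\phi_{\tilde\sigma_0}=\phi_{\sigma_0}\oplus(\chi_W S_l)^{\oplus 2(h-1)}$, with $A_{\phi_{\tilde\sigma_0}}=A_{\phi_{\sigma_0}}$ canonically, and with $\eta_{\tilde\sigma_0}$ restricting to $\eta_{\sigma_0}$. The key step is then to show $\sigma$ is the Langlands quotient of $\chi_W\St_{l+1}|\cdot|_E^{1/2}\rtimes\tilde\sigma_0$, equivalently $\sigma_1=\tilde\sigma_0$. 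For this I would combine two surjections. On one hand, since $m=m^\up(\pi)\ge n+\epsilon_0$ and $(\chi_V\St_l)^{\times h}\rtimes\pi_0\twoheadrightarrow\pi$, the argument of \cite[Proposition C.4]{GI1} gives $(\chi_W\St_l)^{\times h}\rtimes\Theta_{V_{m_0+2},W_{n_0}}(\pi_0)\twoheadrightarrow\Theta_{V_m,W_n}(\pi)$ (note $m-2hl=m_0+2$), so by the Howe duality conjecture (Theorem \ref{howe}) the unique irreducible quotient $\sigma$ of the right-hand side is a quotient of $(\chi_W\St_l)^{\times h}\rtimes\theta_{V_{m_0+2},W_{n_0}}(\pi_0)$. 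On the other hand, Theorem \ref{main3}~(3) applied to $\pi_0$ identifies $\theta_{V_{m_0+2},W_{n_0}}(\pi_0)$ as the Langlands quotient of $\chi_W|\cdot|_E^{(l+1)/2}\rtimes\sigma_0$, whence $\sigma$ is a quotient of $(\chi_W\St_l)^{\times h}\times\chi_W|\cdot|_E^{(l+1)/2}\rtimes\sigma_0$. Now $\chi_W|\cdot|_E^{(l+1)/2}$ is linked to one factor $\chi_W\St_l$, the two having Langlands quotient $\chi_W\St_{l+1}|\cdot|_E^{1/2}$; using the Zelevinsky segment calculus on the $\GL$-part together with Lemma \ref{sub-quot} (to interchange sub- and quotient- statements via the $\MVW$ functor) this induced module is, after rearrangement, a standard module whose tempered base is $\tilde\sigma_0$ and whose single positive-exponent $\GL$-factor is $\chi_W\St_{l+1}|\cdot|_E^{1/2}$; such a module has a unique irreducible quotient, so $\sigma$ is the Langlands quotient of $\chi_W\St_{l+1}|\cdot|_E^{1/2}\rtimes\tilde\sigma_0$. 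This is exactly the asserted surjection, and comparing it with the decomposition of Proposition \ref{r=1} forces $\sigma_1=\tilde\sigma_0$.

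From here the $L$-parameter statements follow at once. The relation $\phi_\sigma=\phi_{\tilde\sigma_0}\oplus\chi_W S_{l+1}\otimes(|\cdot|_E^{1/2}+|\cdot|_E^{-1/2})=\phi_{\sigma_0}\oplus(\chi_W S_l)^{\oplus 2(h-1)}\oplus\chi_W S_{l+1}\otimes(|\cdot|_E^{1/2}+|\cdot|_E^{-1/2})$ is the claimed shape; the extra summands $(\chi_W S_l)^{\oplus 2(h-1)}$ (an even multiple of a summand already present in $\phi_{\sigma_0}$) and $\chi_W S_{l+1}|\cdot|_E^{\pm1/2}$ (not conjugate self-dual, hence in the ``$\phi'$'' part) contribute no new component, so the canonical injection $A_{\phi_{\sigma_0}}\hookrightarrow A_{\phi_\sigma}$ is bijective; and $\eta_\sigma|A_{\phi_{\sigma_0}}=\eta_{\tilde\sigma_0}|A_{\phi_{\sigma_0}}=\eta_{\sigma_0}$ since the first name of a Langlands quotient restricts to that of its tempered base on the component group of the latter.

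I expect the main obstacle to be the key step: ensuring that $\sigma$ is \emph{exactly} the Langlands quotient of the large induced module and not some other irreducible quotient produced by the (possibly reducible) radical of $\Theta_{V_{m_0+2},W_{n_0}}(\pi_0)$ after parabolic induction. This is precisely why one must feed in, in addition, the decomposition $\sigma=J(\chi_W\St_{l+1}|\cdot|_E^{1/2}\rtimes\sigma_1)$ from Proposition \ref{r=1}: knowing a priori that the Langlands data of $\sigma$ consist of a single $\GL$-factor $\chi_W\St_{l+1}|\cdot|_E^{1/2}$ over a tempered base, one can match cuspidal supports (equivalently $L$-parameters, via the already established Theorems \ref{param} and \ref{main1}~(1) applied to $\pi_0$ and the temperedness from Corollary \ref{temp2}) to conclude that the tempered base is forced to be $\tilde\sigma_0$. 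The subsidiary bookkeeping with linked segments on the $\GL$-side, the sub/quotient interchange through the $\MVW$ functor, and the separate treatment of the case $l=1$ are routine by comparison.
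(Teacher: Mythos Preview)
Your approach is genuinely different from the paper's---you work ``forward'' from $\pi_0$ to $\sigma_0=\theta_{V_{m_0},W_{n_0}}(\pi_0)$ and then try to climb up to $\sigma$, whereas the paper works ``backward'' from $\sigma$ down to $\pi_0$ via Kudla's filtration. Your preliminary computation of $m^\up(\pi_0)=m_0$ and of $\phi_{\sigma_0}$ (via Theorem~\ref{main1}(1), Lemma~\ref{d=d}, Corollary~\ref{temp2}, and Theorem~\ref{main3}(1) applied to $\pi_0$; Proposition~\ref{L-quot}, not Theorem~\ref{main3}(3), is what you actually need for the higher lift of $\pi_0$) is fine. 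The gap is in the ``key step.''

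Concretely, from $(\chi_W\St_l)^{\times h}\rtimes\Theta_{V_{m_0+2},W_{n_0}}(\pi_0)\twoheadrightarrow\Theta_{V_m,W_n}(\pi)$ you cannot pass to a surjection from $(\chi_W\St_l)^{\times h}\rtimes\theta_{V_{m_0+2},W_{n_0}}(\pi_0)$ onto $\sigma$: the big theta lift $\Theta_{V_{m_0+2},W_{n_0}}(\pi_0)$ is not tempered here, and the radical may well contribute after parabolic induction. Second, the induced representation $(\chi_W\St_l)^{\times h}\times\chi_W|\cdot|_E^{(l+1)/2}\rtimes\sigma_0$ is \emph{not} isomorphic to the standard module $\chi_W\St_{l+1}|\cdot|_E^{1/2}\rtimes\tilde\sigma_0$: the segments $[-(l-1)/2,(l-1)/2]$ and $\{(l+1)/2\}$ are linked, so no rearrangement via the $\MVW$ functor produces the latter module from the former; they even have different semisimplifications. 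Third, your fallback of ``matching cuspidal supports/L-parameters'' only pins down the \emph{last name} $\phi_{\sigma_1}=\phi_{\tilde\sigma_0}$; it gives no control over $\eta_{\sigma_1}$, so you cannot conclude $\sigma_1=\tilde\sigma_0$.

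The paper avoids these problems by reversing the direction. From Proposition~\ref{r=1} and the computation of $\phi_{\sigma_1}$ via $\pi_1$, it \emph{defines} $\sigma_0$ by the $L$-parameter $(\phi_{\sigma_0},\eta_\sigma|A_{\phi_{\sigma_0}})$, so that $\chi_W\St_{l+1}|\cdot|_E^{1/2}\times(\chi_W\St_l)^{\times(h-1)}\rtimes\sigma_0\twoheadrightarrow\sigma$ is automatic. It then embeds $\sigma$ into $(\chi_W\St_l)^{\times(h-1)}\times\chi_W\St_{l+1}|\cdot|_E^{-1/2}\rtimes\sigma_0$ (using that $\chi_W\St_{l+1}|\cdot|_E^{1/2}$ and $\chi_W\St_l$ are \emph{not} linked, together with Lemma~\ref{sub-quot}) and applies Kudla's filtration (Proposition~\ref{Ja}) step by step. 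Casselman's criterion kills the $J^{t}$ piece that would produce a non-tempered embedding of $\pi^\vee$, forcing the $J^l$ piece; this yields an embedding of $\pi^\vee$ into $(\chi_V\St_l)^{\times h}\rtimes\Theta_{W_{n_0},V_{m_0}}(\sigma_0)^\vee$, hence $\pi_0$ occurs as a subquotient of $\Theta_{W_{n_0},V_{m_0}}(\sigma_0)$. Finally, since $m_{\phi_{\sigma_0}}(\chi_W S_l)=1$, Proposition~\ref{irred} gives that $\Theta_{W_{n_0},V_{m_0}}(\sigma_0)$ is irreducible, so it equals $\pi_0$ and thus $\sigma_0=\theta_{V_{m_0},W_{n_0}}(\pi_0)$. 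The first-name identity $\eta_\sigma|A_{\phi_{\sigma_0}}=\eta_{\sigma_0}$ then falls out by construction. This backward route is what makes the first-name matching work, and it is the missing ingredient in your argument.
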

\begin{proof}
By \cite[Proposition 3.1]{GT1}, 
we can find $\sigma_1 \in \Irr(H(V_{m_1}))$ and $\pi_1 \in \Irr(G(W_{n_1}))$
with $m_1=m-2(l+1)$ and $n_1=n-2l$ such that
\begin{align*}
\Ind_{Q(Y_{l+1})}^{H(V_m)}(\chi_W \St_{l+1} |\cdot|_E^{1/2} \otimes \sigma_1)
\twoheadrightarrow \sigma, \quad
\Ind_{P(X_l)}^{G(W_n)}(\chi_V \St_l \otimes \pi_1)
\twoheadrightarrow \pi
\end{align*}
and $\pi_1$ is a subquotient of $\Theta_{W_{n_1},V_{m_1}}(\sigma_1)$.
Proposition \ref{r=1} says that $\sigma_1$ is tempered.
Hence $\theta_{W_{n_1},V_{m_1}}(\sigma_1)$ belongs to the same $L$-packet as $\pi_1$
by Proposition \ref{temp} and Lemma \ref{lemma L-packet}.
Therefore we have
\begin{align*}
\phi_\sigma&=\phi_{\sigma_1} + \chi_W S_{l+1} \otimes (|\cdot|_E^{1/2}+|\cdot|_E^{-1/2})\\
&=(\phi_{\pi_1}\chi_V^{-1}\chi_W + \chi_W S_l) 
+\chi_W S_{l+1} \otimes (|\cdot|_E^{1/2}+|\cdot|_E^{-1/2})\\
&=\phi\chi_V^{-1}\chi_W - \chi_W S_l +\chi_W S_{l+1} \otimes (|\cdot|_E^{1/2}+|\cdot|_E^{-1/2}),
\end{align*}
where we denote by $\phi_{\sigma_1}$ and $\phi_{\pi_1}$ 
the last names of the $L$-parameters for $\sigma_1$ and $\pi_1$, respectively.
\vskip 5pt

In particular, there exists $\sigma_0 \in \Irr_\temp(H(V_{m_0}))$ 
whose $L$-parameter is $(\phi_{\sigma_0},\eta_{\sigma_0})$
with
\[
\phi_{\sigma_0}=\phi\chi_V^{-1}\chi_W - (\chi_W S_l)^{\oplus (2h-1)}, 
\quad
\eta_{\sigma_0}=\eta_\sigma|A_{\phi_{\sigma_0}}
\] 
such that
\[
\chi_W \St_{l+1}|\cdot|_E^{1/2} \times \chi_W\St_l \times \dots \times \chi_W\St_l
\rtimes \sigma_0
\twoheadrightarrow \sigma.
\]
Note that 
$\chi_W\St_l \times \dots \times \chi_W\St_l \rtimes \sigma_0$ 
is irreducible since $\phi_{\sigma_0}$ contains $\chi_W S_l$, so that 
\[
\chi_W \St_{l+1}|\cdot|_E^{1/2} \times \chi_W\St_l \times \dots \times \chi_W\St_l
\rtimes \sigma_0
\]
is a standard module, which has a unique Langlands quotient.
We have to show that $\sigma_0=\theta_{V_{m_0}, W_{n_0}}(\pi_0)$.
Since $\chi_W \St_{l+1}|\cdot|_E^{1/2}$ and $\chi_W\St_l$ are not linked, we have
\[
\chi_W \St_{l+1}|\cdot|_E^{1/2} \times \chi_W\St_l \times \dots \times \chi_W\St_l
\rtimes \sigma_0
\cong
\chi_W\St_l \times \dots \times \chi_W\St_l \times 
\chi_W \St_{l+1}|\cdot|_E^{1/2} \rtimes \sigma_0.
\]
For the linked-ness and its properties, see \cite{Z}
(in particular, see \cite[Theorem 9.7]{Z}).
By Lemma \ref{sub-quot}, we have
\[
\sigma \hookrightarrow 
\chi_W\St_l \times \dots \times \chi_W\St_l \times 
\chi_W \St_{l+1}|\cdot|_E^{-1/2} \rtimes \sigma_0.
\]
Since $m-n-\epsilon_0=l+2$, by applying Corollary \ref{pi0} to 
$\chi_W\St_l \times \dots \times \chi_W\St_l \times 
\chi_W \St_{l+1}|\cdot|_E^{-1/2} \rtimes \sigma_0$, we have 
\begin{align*}
&\pi^\vee \hookrightarrow \Theta_{W_{n},V_m}(\sigma)^\vee
\cong
\Hom_{H(V_m)}(\omega_{V_m,W_n}, \sigma)_\infty\\
&\hookrightarrow 
\Hom_{H(V_m)}(\omega_{V_m,W_n}, 
\chi_W\St_l \times \dots \times \chi_W\St_l \times 
\chi_W \St_{l+1}|\cdot|_E^{-1/2} \rtimes \sigma_0)_\infty\\
&\cong
\chi_V\St_l \times \dots \times \chi_V\St_l 
\rtimes 
\Hom_{H(V_{m-2(h-1)l})}(\omega_{V_{m-2(h-1)l},W_{n-2(h-1)l}}, 
\Ind_{Q(Y_{l+1})}^{H(V_{m-2(h-1)l})}(
\chi_W \St_{l+1}|\cdot|_E^{-1/2} \otimes \sigma_0))_\infty.
\end{align*}
\vskip 5pt

To $\Hom_{H(V_{m-2(h-1)l})}(\omega_{V_{m-2(h-1)l},W_{n-2(h-1)l}}, 
\Ind_{Q(Y_{l+1})}^{H(V_{m-2(h-1)l})}(
\chi_W \St_{l+1}|\cdot|_E^{-1/2} \otimes \sigma_0))_\infty$, 
we cannot apply Corollary \ref{pi0}.
According to Proposition \ref{Ja}, 
$J^{l}$ and $J^{l+1}$ can contribute.
However, since 
\[
\Hom_{\GL(Y_{l+1}) \times H(V_{m_0})}(J^{l+1}, 
\chi_W \St_{l+1}|\cdot|_E^{-1/2} \otimes \sigma_0)_\infty
\cong
\big(
\Ind_{P(X_{l+1})}^{G(W_{n_0+2l})}(\chi_V \St_{l+1}|\cdot|_E^{1/2} \otimes 
\Theta_{W_{n_0-2},V_{m_0}}(\sigma_0))
\big)^\vee, 
\]
we have
\[
\Hom_{G(W_n)}(\pi^\vee, 
\chi_V\St_l \times \dots \times \chi_V\St_l 
\rtimes 
\Hom_{\GL(Y_{l+1}) \times H(V_{m_0})}(J^{l+1}, 
\chi_W \St_{l+1}|\cdot|_E^{-1/2} \otimes \sigma_0)_\infty
)=0
\]
by Casselman's temperedness criterion.
Hence we have
\begin{align*}
\pi^\vee &\hookrightarrow 
\chi_V\St_l \times \dots \times \chi_V\St_l 
\rtimes 
\Hom_{\GL(Y_{l+1}) \times H(V_{m_0})}(J^{l}, 
\chi_W \St_{l+1}|\cdot|_E^{-1/2} \otimes \sigma_0)_\infty\\
&\cong
\chi_V\St_l \times \dots \times \chi_V\St_l 
\times (\chi_V\St_l \rtimes \Theta_{W_{n-2hl},V_{m-2hl-2}}(\sigma_0)^\vee)
\end{align*}
by Proposition \ref{Ja}.
In particular, there exists an irreducible subquotient $\pi_0'$ of 
$\Theta_{W_{n_0},V_{m_0}}(\sigma_0)$ such that
\[
\chi_V\St_l \times \dots \times \chi_V\St_l \rtimes \pi'_0
\twoheadrightarrow \pi,
\]
where $\chi_V\St_l$ appears $h$-times.
This implies that the $L$-parameter for $\pi_0'$ is given by
$(\phi_0, \eta|A_{\phi_0})$, which is the same as the one for $\pi_0$.
Also, if $G(W_n)$ is an odd orthogonal group, 
the central character of $\pi_0'$ coincides with the one of $\pi_0$.
Hence we have $\pi_0' \cong \pi_0$.
Since $\phi_{\sigma_0}$ contains $\chi_W S_l$ with multiplicity one,
by Proposition \ref{irred}, we see that $\Theta_{W_{n_0},V_{m_0}}(\sigma_0)$ is irreducible, 
and so that 
$\Theta_{W_{n_0},V_{m_0}}(\sigma_0)=\theta_{W_{n_0},V_{m_0}}(\sigma_0)=\pi_0$.
In other words, we have $\sigma_0=\theta_{V_{m_0}, W_{n_0}}(\pi_0)$.
This completes the proof.
\end{proof}

%\subsection{Higher lifts}\label{higher}
\subsection{Higher lifts}\label{higher}
In this subsection, we prove Theorem \ref{main3} (3).
\vskip 5pt

Let $\pi \in \Irr_\temp(G(W_n))$ with $L$-parameter $(\phi,\eta)$, 
and $\sigma =\theta_{V_m,W_n}(\pi) \in \Irr(H(V_m))$ be the first occurrence 
to the going-up tower i.e., $m=m^\up(\pi)$.
Assume that $\sigma$ is non-tempered.
Then  $l(\pi)+2=m-n-\epsilon_0>2$.
Let $\sigma' = \theta_{V_{m'},W_n}(\pi)$ be a higher lift, i.e., $m'>m$.
The assertion of Theorem \ref{main3} (3) follows from \cite[Proposition 3.2]{GT1}
if we knew that this proposition can be applied to $\sigma$ and $\sigma'$.
So what we have to show is as follows:
\begin{prop}\label{GT32}
We can apply \cite[Proposition 3.2]{GT1} to $\sigma$ and $\sigma'$.
Namely, the same assertion as Proposition \ref{L-quot} is true for 
$\sigma =\theta_{V_m,W_n}(\pi)$ and $\sigma' = \theta_{V_{m'},W_n}(\pi)$.
\end{prop}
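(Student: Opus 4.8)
The plan is to revisit the proof of \cite[Proposition 3.2]{GT1} (reproduced as Proposition \ref{L-quot} above) and observe that the temperedness hypothesis on $\theta_{V_m,W_n}(\pi)$ is invoked there only in order to guarantee that the parabolically induced representations appearing in the argument are standard modules. That argument proceeds by induction on $r$: it identifies $\theta_{V_{m+2r},W_n}(\pi)$ as the unique irreducible quotient of $\chi_W|\cdot|_E^{\half{2r-1-l}}\rtimes\theta_{V_{m+2(r-1)},W_n}(\pi)$ by analysing $R_{Q(Y_1)}(\omega_{V_{m+2r},W_n})$ through Kudla's filtration (Lemma \ref{kudla}, with the roles of $V$ and $W$ exchanged) together with the uniqueness of the irreducible quotient of $\Theta_{V_{m+2r},W_n}(\pi)$ (Theorem \ref{howe}), and then combines this with the inductive description of $\theta_{V_{m+2(r-1)},W_n}(\pi)$. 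I claim this runs verbatim for our non-tempered $\sigma=\theta_{V_m,W_n}(\pi)$ once the relevant standard-module condition is checked. Here $m=m^\up(\pi)$, and by the conservation relation (Proposition \ref{cons}) we have $l\coloneqq n-m+\epsilon_0=-(l(\pi)+2)\leq 0$, with $l(\pi)\geq 1$ since $\sigma$ is non-tempered.

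First I would note that $\Theta_{V_{m+2r},W_n}(\pi)\not=0$ for all $r\geq 0$, which is immediate from $\sigma=\theta_{V_{m^\up(\pi)},W_n}(\pi)\not=0$ and the tower property (Proposition \ref{tower}). Next, by Theorem \ref{main3} (2), $\sigma$ is the Langlands quotient of the standard module
\[
\chi_W\St_{l(\pi)+1}|\cdot|_E^{1/2}\times\chi_W\St_{l(\pi)}\times\dots\times\chi_W\St_{l(\pi)}\rtimes\sigma_0,
\]
with $\sigma_0$ tempered and $\chi_W\St_{l(\pi)}$ occurring $h-1$ times (where $m_\phi(\chi_V S_{l(\pi)})=2h$). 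Since $\chi_W\St_{l(\pi)}\times\dots\times\chi_W\St_{l(\pi)}\rtimes\sigma_0$ is tempered, the only positive exponent occurring in the Langlands data of $\sigma$ is $1/2$, and every character in the cuspidal support of $\sigma$ has exponent at most $\half{l(\pi)+1}$ (coming from the top of the segment $\chi_W\St_{l(\pi)+1}|\cdot|_E^{1/2}$).

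The exponents appearing in the successive steps of the induction in \cite[Proposition 3.2]{GT1} are $s_j\coloneqq\half{2j-1-l}=\half{2j+l(\pi)+1}$ for $j\geq 1$, and these satisfy $s_r>\dots>s_1=\half{l(\pi)+3}>\half{l(\pi)+1}$. Hence for every $r$ the representation $\chi_W|\cdot|_E^{s_r}\times\dots\times\chi_W|\cdot|_E^{s_1}\rtimes\sigma$ is a standard module: the appended characters $\chi_W|\cdot|_E^{s_j}$ have strictly decreasing exponents, each strictly larger than every exponent occurring in $\sigma$, so the segment $\chi_W\St_{l(\pi)+1}|\cdot|_E^{1/2}$ and the tempered tail of $\sigma$ remain the tail of the standard module. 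Consequently, at the $j$-th step $\chi_W|\cdot|_E^{s_j}\rtimes\theta_{V_{m+2(j-1)},W_n}(\pi)$ has a unique irreducible quotient (for $j\geq 2$ because, by the inductive hypothesis, $s_j>s_{j-1}$ exceeds all exponents of $\theta_{V_{m+2(j-1)},W_n}(\pi)$, and for $j=1$ because $s_1>\half{l(\pi)+1}$), so the induction of \cite[Proposition 3.2]{GT1} goes through without change and yields precisely the description of $\theta_m(\phi)$ and $\theta_m(\eta)$ asserted in Theorem \ref{main3} (3).

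The main obstacle is thus just this standard-module bookkeeping in the presence of the non-tempered $\sigma$; once it is reduced to the numerical inequality $\half{l(\pi)+3}>\half{l(\pi)+1}$ there is nothing further to do, since every other ingredient of \cite[Proposition 3.2]{GT1} (the Jacquet-module computation via Kudla's filtration, the non-vanishing of the big theta lifts, and the uniqueness of their irreducible quotients) is insensitive to whether $\theta_{V_m,W_n}(\pi)$ is tempered. I would also remark that no new linking obstruction arises: although $\chi_W|\cdot|_E^{s_1}$ and the segment $\chi_W\St_{l(\pi)+1}|\cdot|_E^{1/2}$ inside $\sigma$ are linked in Zelevinsky's sense, this is irrelevant, as being a standard module is a condition on the ordering of the exponents of its segments only.
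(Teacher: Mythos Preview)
Your proposal has a genuine gap. You assert that ``the Jacquet-module computation via Kudla's filtration \dots\ is insensitive to whether $\theta_{V_m,W_n}(\pi)$ is tempered,'' but this is precisely the point at issue. The proof of \cite[Proposition~3.2]{GT1} uses a filtration (their Lemma~2.2) with pieces $R_t$ indexed by $t \geq 0$, and the temperedness of $\sigma$ is invoked there exactly to show that only $R_0$ contributes; it is \emph{not} merely a standard-module bookkeeping device. If some $R_t$ with $t>0$ contributed, one would obtain a nonzero $\GL(Y_t)$-map $\chi_W|{\det}_{Y_t}|^s \to R_{\overline{Q(Y_t)}}(\sigma)$ with $s=(m+r-n-\epsilon_0)/2 + t/2$, and this is what must be excluded before the inductive identification of $\theta_{V_{m+2r},W_n}(\pi)$ can proceed. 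Your exponent inequality $s_1>\half{l(\pi)+1}$ is used only to verify that the final induced module is standard, not to rule out these extra filtration contributions.

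The paper's argument is different from yours in spirit: rather than using the known Langlands data of $\sigma$, it takes the hypothetical contribution $\chi_W|{\det}_{Y_t}|^s \hookrightarrow R_{\overline{Q(Y_t)}}(\sigma)$, passes through Frobenius reciprocity and the MVW involution to obtain $\sigma \hookrightarrow \Ind_{Q(Y_t)}^{H(V_m)}(\chi_W|{\det}_{Y_t}|^{-s} \boxtimes \sigma_0)$, and then applies Kudla's filtration on the $G(W_n)$ side to produce an embedding of $\pi^\vee$ into the dual of $\Ind_{P(X_t)}^{G(W_n)}(\chi_V|{\det}_{X_t}|^{s} \boxtimes \Theta(\sigma_0))$; since $s>0$, this contradicts the temperedness of $\pi$ by Casselman's criterion. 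In other words, the paper transfers the problem back to $\pi$ and uses \emph{its} temperedness.

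That said, your ingredients are not far from a valid alternative. Since $m-n-\epsilon_0 = l(\pi)+2$, the troublesome exponent satisfies $s \geq \half{l(\pi)+3}$ for any $t \geq 1$ and $r \geq 0$. Your observation (via Theorem~\ref{main3}(2), already established) that every exponent in the cuspidal support of $\sigma$ is at most $\half{l(\pi)+1}$ would then directly exclude such a character from any Jacquet module $R_{\overline{Q(Y_t)}}(\sigma)$, giving the needed contradiction on the $H(V_m)$ side without passing back to $\pi$. If you rewrite your argument to make this the main step (rather than dismissing the filtration analysis as automatic), you obtain a correct proof by a route genuinely different from the paper's.
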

\begin{proof}
We freely use the notation of \cite{GT1}.
According to the proof of Proposition 3.2 in \cite{GT1}, 
it suffices show that 
only the $0$-th piece $R_0$ of the filtration of Lemma 2.2 in \cite{GT1} can contribute 
in the proof of Proposition 3.2 in \cite{GT1} for $\sigma$ and $\sigma'$.
\vskip 5pt

Suppose that $R_t$ contributes for some $t>0$.
Then we have a nonzero $\GL(Y_t)$-homomorphism
\[
\chi_W|{\det}_{Y_t}|^{s} \rightarrow R_{\overline{Q(Y_t)}}(\sigma), 
\]
where
\begin{itemize}
\item
$V_m=Y_t+V_{m_0}+Y_t^*$ with $m_0=m-2t$;
\item
$s=(m+r-n-\epsilon_0)/2+t/2>0$ for some $r\geq0$.
\end{itemize}
See also the argument after Lemma 2.2 in \cite{GT1}.
\vskip 5pt

Put
\[
\Sigma = \sum_{f\in \Hom_{\GL(Y_t)}(\chi_W|{\det}_{Y_t}|^s, R_{\overline{Q(Y_t)}}(\sigma))}
\mathrm{Im}(f).
\]
This is a $\GL(Y_t)\times H(V_{m_0})$-subrepresentation of $R_{\overline{Q(Y_t)}}(\sigma)$ 
of the form
\[
\Sigma = \chi_W|{\det}_{Y_t}|^s \boxtimes \Sigma_0,
\]
where $\Sigma_0$ is a nonzero smooth representation of $H(V_{m_0})$.
Since $R_{\overline{Q(Y_t)}}(\sigma)$ is finite length, so is $\Sigma_0$.
Hence we can find an irreducible subrepresentation $\sigma_0$ of $\Sigma_0$.
We obtain a nonzero $\GL(Y_t)\times H(V_{m_0})$-homomorphism
\[
\chi_W|{\det}_{Y_t}|^s \boxtimes \sigma_0 \rightarrow R_{\overline{Q(Y_t)}}(\sigma).
\]
By Bernstein's Frobenius reciprocity, we have a surjection
\[
\Ind_{Q(Y_t)}^{H(V_m)}(\chi_W|{\det}_{Y_t}|^s \boxtimes \sigma_0) \twoheadrightarrow \sigma.
\]
\vskip 5pt

By Lemma \ref{sub-quot}, this surjection gives an injection
\[
\sigma \hookrightarrow \Ind_{Q(Y_t)}^{H(V_m)}(\chi_W|{\det}_{Y_t}|^{-s} \boxtimes \sigma_0).
\]
Hence we have
\begin{align*}
\pi^* &\hookrightarrow \Hom_{H(V_m)}(\omega_{V_m,W_n}, \sigma)\\
&\hookrightarrow \Hom_{H(V_m)}(\omega_{V_m,W_n}, 
\Ind_{Q(Y_t)}^{H(V_m)}(\chi_W|{\det}_{Y_t}|^{-s} \boxtimes \sigma_0))\\
&\cong \Hom_{\GL(Y_t) \times H(V_{m_0})}(R_{Q(Y_t)}(\omega_{V_m,W_n}), 
\chi_W|{\det}_{Y_t}|^{-s} \boxtimes \sigma_0
).
\end{align*}
By Kudla's filtration (Lemma \ref{kudla}),
we see that there is a nonzero homomorphism
\[
\pi^\vee \rightarrow 
\Hom_{\GL(Y_t) \times H(V_{m_0})}(J^a, \chi_W|{\det}_{Y_t}|^{-s} \boxtimes \sigma_0)_\infty
\]
for some $0\leq a \leq t$.
\vskip 5pt

First, consider the case when $0 \leq a < t$.
By the definition of the normalized Jacquet module, we have
\[
R_{\overline{Q(Y_{t-a},Y_t)}}(\chi_W|{\det}_{Y_t}|^{-s})
= \chi_W |{\det}_{Y_{t-a}}|^{-s+a/2} \boxtimes \chi_W |{\det}_{Y'_a}|^{-s-(t-a)/2}.
\]
Note that $\GL(Y_{t-a})$ acts on $J^a$ by the character 
\[
\chi_W |{\det}_{Y_{t-a}}|^{(n-m+\epsilon_0+t-a)/2}.
\]
Since $t-a>0 \geq -r/2$, we have
\[
(n-m+\epsilon_0+t-a)/2 \not= -(m+r-n-\epsilon_0)/2-t/2+a/2.
\]
Hence we have
\[
\Hom_{G(W_n)}(\pi^\vee,
\Hom_{\GL(Y_t) \times H(V_{m_0})}(J^a, \chi_W|{\det}_{Y_t}|^{-s} \boxtimes \sigma_0)_\infty
)=0.
\]
\vskip 5pt

We conclude that there must be an injection
\[
\pi^\vee \hookrightarrow 
\Hom_{\GL(Y_t) \times H(V_{m_0})}(J^t, \chi_W|{\det}_{Y_t}|^{-s} \boxtimes \sigma_0)_\infty.
\]
However, 
\[
\Hom_{\GL(Y_t) \times H(V_{m_0})}(J^t, \chi_W|{\det}_{Y_t}|^{-s} \boxtimes \sigma_0)_\infty
\cong 
\big(\Ind_{P_t}^{G(W_{n})}
(\chi_V|{\det}_{X_t}|^s \boxtimes \Theta_{W_{n_0},V_{m_0}}(\sigma_0))\big)^\vee.
\]
Since $s>0$,
it has no irreducible tempered subrepresentations by Casselman's criterion.
\vskip 5pt

We obtain a contradiction, so that $R_t$ cannot contribute for any $t>0$.
\end{proof}

%\subsection{Central characters of representations of odd orthogonal groups}\label{sec.central}
\subsection{Central characters of representations of odd orthogonal groups}\label{sec.central}
Recall that for an odd orthogonal group $\Oo(V_m)$, 
our local Langlands correspondence described in \S \ref{LLC} or Appendix \ref{app LLC}
parametrizes $\Irr(\Oo(V_m))$ by the triples $(\phi,\eta,\nu)$.
More precisely, a pair $(\phi,\eta)$ corresponds to the set
\[
\{\sigma,\ \sigma \otimes \det\}
\]
for some $\sigma \in \Irr(\Oo(V_m))$, 
and 
\[
\nu \colon \Irr(\Oo(V_m)) \rightarrow \{\pm1\}
\]
is given by the central character, i.e.,
$\sigma(-\1_{V_m})=\nu(\sigma) \cdot \id$ for $\sigma \in \Irr(\Oo(V_m))$.
\vskip 5pt

In this subsection, 
we consider the theta correspondence for $(\Mp(W_n),\Oo(V_m))$, 
i.e., $E=F$, $\epsilon=+1$, $m$ is odd and $n$ is even.
We prove Theorems \ref{main2} (5), \ref{main3} (4) and 
complete the proof of Theorem \ref{main1} (2).
Namely, we treat the following two issues:
\begin{enumerate}
\item
For $\pi \in \Irr_\temp(\Mp(W_n))$ with $\theta_{V_m,W_n}(\pi)\not=0$, 
determine $\nu(\theta_{V_m,W_n}(\pi))$.
\item
For $[\sigma] \in \Irr_\temp(\Oo(V_m))$, 
determine
which tower
$\{\Theta_{W_n, V_m}(\sigma)\}_n$ or $\{\Theta_{W_n, V_m}(\sigma \otimes \det)\}_n$
is the going-down tower.
\end{enumerate}
\vskip 5pt

First, we consider (1).
Let $\pi \in \Irr(\Mp(W_n))$ and assume that $\sigma=\theta_{V_m,W_n}(\pi)$ is nonzero so that
$\sigma \in \Irr(\Oo(V_m))$.
We define $\epsilon(V)\in\{\pm1\}$ by
\[
\epsilon(V)=\left\{
\begin{aligned}
&+1	\iif \text{$\Oo(V_m)$ is split},\\
&-1	\iif \text{$\Oo(V_m)$ is non-split}.
\end{aligned}
\right.
\]
Note that $\epsilon(V)=\eta_\sigma(z_{\phi_\sigma})$ 
by Desideratum \ref{des} (3),
where $(\phi_\sigma,\eta_\sigma)$ is the $L$-parameter for $\sigma$.
The following proposition is Theorem \ref{main2} (5) and Theorem \ref{main3} (4).
\begin{prop}\label{sign}
Let $\pi \in \Irr_\temp(\Mp(W_n))$ with $L$-parameter $(\phi_\pi,\eta_\pi)$.
Assume that $\sigma=\theta_{V_m,W_n}(\pi)$ is nonzero.
Then we have
\[
\nu(\sigma)=\eta_\pi(z_{\phi_\pi}) \cdot \ep(\phi_\pi) \cdot \chi_V(-1)^{\half{n}}.
\]
\end{prop}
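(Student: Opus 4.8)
The plan is to compute $\nu(\sigma) = \sigma(-\1_{V_m})$, the scalar by which the central element $-\1_{V_m}\in\Oo(V_m)$ acts on $\sigma=\theta_{V_m,W_n}(\pi)$ (equivalently on every irreducible subquotient of the big theta lift $\Theta_{V_m,W_n}(\pi)$), by comparing the central action of $-\1_{V_m}$ on the Weil representation with the central action of $\Mp(W_n)$ on $\pi$. Throughout we are in the case $E=F$, $\epsilon=+1$, $m$ odd and $n$ even, so that $W_n$ is symplectic and $\chi_W=\1$.

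The crucial (and elementary) observation is that inside $\Sp(V_m\otimes W_n)$ the image of $-\1_{V_m}\in\Oo(V_m)$, the image of $-\1_{W_n}\in\Sp(W_n)$, and $-\1_{V_m\otimes W_n}$ all coincide. Hence, under Kudla's splittings $\Oo(V_m)\to\Mp(V_m\otimes W_n)$ (built from $\chi_V$ and $\psi$) and $\Mp(W_n)\to\Mp(V_m\otimes W_n)$ (built from $\chi_W=\1$ and $\psi$), the element $-\1_{V_m}$ and any fixed lift $\tilde z\in\Mp(W_n)$ of $-\1_{W_n}$ have images in $\Mp(V_m\otimes W_n)$ lying over the same point of $\Sp(V_m\otimes W_n)$; they therefore differ by an explicit constant $\varepsilon=\varepsilon(V_m,W_n,\psi)$, which I would compute from the defining cocycles (Rao's normalised cocycle together with the splitting factors of \cite{Ku2}, \cite{HKS}, \cite{GI1}). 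Applying the Weil representation $\omega=\omega_\psi$ of $\Mp(V_m\otimes W_n)$ and restricting to the maximal $\pi$-isotypic quotient $\pi\boxtimes\Theta_{V_m,W_n}(\pi)$ of $\omega_{V_m,W_n}$, on which $-\1_{V_m}$ acts by $\id\otimes\nu(\sigma)$ and $\tilde z$ acts by $\omega_\pi(\tilde z)\otimes\id$, this comparison yields
\[
\nu(\sigma) = \varepsilon\cdot\omega_\pi(\tilde z),
\]
where $\omega_\pi(\tilde z)$ is the value of the genuine central character of $\pi$ at $\tilde z$.

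To conclude, I would feed in the description of the genuine central character of $\Mp(W_n)$ in terms of $L$-parameters provided by the local Langlands correspondence for metaplectic groups (following \cite{GS}): for $\pi$ with parameter $(\phi_\pi,\eta_\pi)$ one has $\omega_\pi(\tilde z)=\eta_\pi(z_{\phi_\pi})$ up to an explicit Weil-index factor depending only on $\psi$ and $\dim\phi_\pi$. Combining this with the explicit value of $\varepsilon$ — from which the root number $\ep(\phi_\pi)=\ep(1/2,\phi_\pi,\psi)$ emerges by a $\gamma_\psi$-index identity, $\dim\phi_\pi=n$ being even — gives $\varepsilon\cdot\omega_\pi(\tilde z)=\eta_\pi(z_{\phi_\pi})\cdot\ep(\phi_\pi)\cdot\chi_V(-1)^{\half{n}}$, which is the asserted identity.

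The main obstacle is precisely this bookkeeping of the metaplectic $2$-cocycles: one must track the Weil indices $\gamma_\psi$ entering Kudla's splittings and the chosen lift $\tilde z$, and reconcile these normalisations with the convention fixing genuine characters of the centre of $\Mp(W_n)$; the appearance of $\ep(\phi_\pi)$ rather than a bare sign is the delicate point. Two consistency checks are available. First, since $\chi_W=\1$, for $m$ past the almost equal rank value the lift $\theta_{V_m,W_n}(\pi)$ is the Langlands quotient of a standard module parabolically induced from $\GL$-factors of the form $\chi_W\St_k|\cdot|_E^s$ over a lift of smaller rank (Theorems \ref{main2} (4) and \ref{main3} (3)), and each such $\GL$-factor acts by $1$ at $-\1$; hence $\nu(\theta_{V_m,W_n}(\pi))$ is independent of $m$ within each Witt tower, in agreement with the $m$-independence of the right-hand side. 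Second, at the almost equal rank member $V_{m_1}$ with $m_1=n+1$ the value of $\nu$ predicted by the formula must be compatible with Prasad's conjecture (Theorem \ref{PA}) for the pair $(\Mp(W_n),\Oo(V_{m_1}))$.
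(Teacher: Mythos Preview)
Your overall strategy---comparing the action of $-\1_{V_m}$ on the Weil representation with that of a fixed lift $\tilde z\in\Mp(W_n)$ of $-\1_{W_n}$---is sound, and is exactly the computation underlying the $m$-independence you list as a consistency check (this is \cite[\S 5.2]{GI1}, which the paper also invokes). But there is a genuine error in your bookkeeping of where $\ep(\phi_\pi)$ enters. The splitting constant $\varepsilon=\varepsilon(V_m,W_n,\psi)$ depends only on the dual pair data $(V_m,W_n,\chi_V,\psi)$ and is completely independent of $\pi$; therefore $\ep(\phi_\pi)$, which varies with $\phi_\pi$, \emph{cannot} emerge from $\varepsilon$ by any $\gamma_\psi$-identity as you claim. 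The factor $\ep(\phi_\pi)$ must already sit inside the genuine central character $\omega_\pi(\tilde z)$ itself: the correct formula has the shape $\omega_\pi(\tilde z)=\eta_\pi(z_{\phi_\pi})\cdot\ep(\phi_\pi)\cdot(\text{Weil-index factor depending only on }n,\psi)$, contradicting your assertion that the correction to $\eta_\pi(z_{\phi_\pi})$ depends only on $\dim\phi_\pi$. That central character formula for $\Mp(W_n)$ is not elementary splitting bookkeeping---it is essentially the $\epsilon$-dichotomy of \cite[Theorem 11.1]{GI1}, transported to $\Mp(W_n)$ through the almost-equal-rank theta lift that defines its LLC.

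The paper's proof is organised differently and avoids needing the metaplectic central character formula as a separate input. It uses \cite[\S 5.2]{GI1} only to reduce to $m\geq n+1$, then applies \cite[Theorem 11.1]{GI1} directly on the \emph{orthogonal} side to obtain $\nu(\sigma)=\ep(\phi_\sigma)\cdot\epsilon(V)$. The root number $\ep(\phi_\sigma)$ is computed from the last-name correspondence already established in Theorems \ref{main2} and \ref{main3}, and $\epsilon(V)=\eta_\sigma(z_{\phi_\sigma})$ is compared to $\eta_\pi(z_{\phi_\pi})$ via Theorem \ref{Mp}. Both routes ultimately rest on \cite[Theorem 11.1]{GI1}; the paper applies it to $\sigma$, whereas your route would have to apply it (implicitly, through the central character of $\pi$) to the defining theta lift of $\pi$.
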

\begin{proof}
By \cite[\S 5.2]{GI1}, we see that $\nu(\theta_{V_m,W_n}(\pi))$ does not depend on $m$.
Hence we may assume that $m \geq n+1$.
By applying \cite[Theorem 11.1]{GI1} to $\sigma$ in the theta correspondence for 
$(\Oo(V_m),\Mp(W_{m-1}))$, we have
\[
\nu(\sigma)=\ep(\phi_\sigma) \cdot \epsilon(V),
\]
where $\phi_\sigma$ is the $L$-parameter for $[\sigma]$.
By Theorems \ref{main2} and \ref{main3}, we see that
\[
\ep(\phi_\sigma)
=\left\{
\begin{aligned}
&\ep(\phi_\pi \otimes \chi_V) \iif \text{$\{V_m\}_m$ is the going-down tower},\\
&-\ep(\phi_\pi \otimes \chi_V) \iif \text{$\{V_m\}_m$ is the going-up tower}.
\end{aligned}
\right.
\]
On the other hand, by Theorem \ref{Mp}, we see that
\[
\epsilon(V)
=\left\{
\begin{aligned}
&\eta_\pi(z_{\phi_\pi}) \cdot \ep(\phi_\pi) \cdot \ep(\phi_\pi\otimes \chi_V) \cdot \chi_V(-1)^{\half{n}} 
\iif \text{$\{V_m\}_m$ is the going-down tower},\\
&-\eta_\pi(z_{\phi_\pi}) \cdot \ep(\phi_\pi) \cdot \ep(\phi_\pi\otimes \chi_V) \cdot \chi_V(-1)^{\half{n}}
\iif \text{$\{V_m\}_m$ is the going-up tower}.
\end{aligned}
\right.
\]
These equations imply the proposition.
\end{proof}

Next, we consider (2).
Let $\sigma \in \Irr(\Oo(V_m))$.
We compare two towers $\{\Theta_{W_n, V_m}(\sigma)\}_n$ and 
$\{\Theta_{W_n, V_m}(\sigma \otimes \det)\}_n$. 
\vskip5pt

\begin{prop}\label{sign2}
Let $\sigma \in \Irr_\temp(\Oo(V_m))$ with $L$-parameter $(\phi_\sigma, \eta_\sigma, \nu_\sigma)$.
Then $\{\Theta_{W_n, V_m}(\sigma)\}_n$ is the going-down tower with respect to $\sigma$, 
i.e., 
\[
\min\{n\ |\ \Theta_{W_n, V_m}(\sigma) \not=0\} 
\leq
\min\{n\ |\ \Theta_{W_n, V_m}(\sigma \otimes \det) \not=0\} 
\]
if and only if
\[
\nu_\sigma = \eta_\sigma(z_{\phi_\sigma}) \cdot \ep(\phi_\sigma).
\]
\end{prop}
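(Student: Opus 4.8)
The plan is to reduce everything to the already-established sign computations by "turning the see-saw around," exactly in the spirit of Proposition \ref{sign}. Fix $\sigma \in \Irr_\temp(\Oo(V_m))$ with parameter $(\phi_\sigma,\eta_\sigma,\nu_\sigma)$. Since $\Oo(V_m)$ with $m$ odd pairs with metaplectic groups $\Mp(W_n)$, I want to compare the two theta towers $\{\Theta_{W_n,V_m}(\sigma)\}$ and $\{\Theta_{W_n,V_m}(\sigma\otimes\det)\}$, and by the conservation relation (Proposition \ref{cons}) it suffices to identify which one is the going-down tower. The key observation is that $\sigma$ and $\sigma\otimes\det$ have the same $L$-parameter $\phi_\sigma$ and the same $\eta_\sigma$, but opposite central signs: $(\sigma\otimes\det)(-\1_{V_m}) = -\nu_\sigma\cdot\id$ (since $\dim V_m = m$ is odd, $\det(-\1_{V_m}) = -1$). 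So the dichotomy "$\sigma$ vs. $\sigma\otimes\det$" is precisely the dichotomy recorded by the sign $\nu$, and the two towers $W_n^+$, $W_n^-$ are distinguished by the parity of $n$ together with the central sign that occurs first.

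First I would recall, from the definition of $m^\pm$ in \S \ref{FT} for odd-dimensional $G(W_n)$, that when $n$ is odd the going-down versus going-up distinction for $\sigma$ is read off via the central character: $m^\pm(\pi)$ is defined using the representation among $\{\pi,\pi\otimes\det\}$ whose value at $-\1$ is $\pm\id$. But here the roles of $V$ and $W$ are swapped, so what I actually need is: for which parity of $n$ does the \emph{first} occurrence $\theta_{W_n,V_m}(\sigma)$ (as opposed to $\theta_{W_n,V_m}(\sigma\otimes\det)$) occur? Concretely, I would take $n$ with $n \equiv m-1 \bmod 2$ (the almost-equal-rank case), where Prasad's conjecture for the $(\Mp,\Oo)$-pair — Theorem \ref{PA}, or equivalently the already-proven Theorem \ref{Mp} run in the other direction — computes first occurrence indices and the parameters of first lifts. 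Applying Theorem \ref{Mp} to $\pi := \theta_{W_n,V_m}(\sigma)$ back into the $\Oo(V_m)$-side, one gets a formula of the shape $\nu(\theta_{V_m,W_n}(\pi)) = \eta_\pi(z_{\phi_\pi})\cdot\ep(\phi_\pi)\cdot\chi_V(-1)^{n/2}$ as in Proposition \ref{sign}; reading this as a constraint on $\sigma$ rather than on $\pi$, and using $\theta_{W_n,V_m}(\sigma)\not=0$ forces $\phi_\pi = \phi_\sigma$ up to the explicit twist/summand modifications of Theorems \ref{main2}--\ref{main3}, so that $\ep(\phi_\pi)$ is determined by $\ep(\phi_\sigma)$ (with a sign depending on going-down vs going-up, exactly as in the proof of Proposition \ref{sign}). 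Combining the two, the going-down tower is the one on which $\nu_\sigma$ agrees with $\eta_\sigma(z_{\phi_\sigma})\cdot\ep(\phi_\sigma)$.

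More precisely, here is the chain of equalities I would assemble. Run $\sigma$ (not $\sigma\otimes\det$) into its first lift $\pi = \theta_{W_{n_0},V_m}(\sigma)$ on the smaller tower. By Theorem \ref{Mp} (Prasad's conjecture for $(\Oo,\Mp)$, the almost-equal-rank case), this first lift lands on a specific metaplectic group, and moreover $\eta_\sigma(z_{\phi_\sigma})$ — which equals $\epsilon(V)$, the sign distinguishing split from non-split $\Oo(V_m)$, by Desideratum \ref{des}(3) — enters the formula for which group that is. Then reverse: $\nu(\sigma) = \nu(\theta_{V_m,W_{n_0}}(\pi))$ is computed by Proposition \ref{sign} (or \cite[Theorem 11.1]{GI1}) as $\ep(\phi_\pi)\cdot\epsilon(V)$ when $n_0$ corresponds to the split case, and $-\ep(\phi_\pi)\cdot\epsilon(V)$ otherwise; and by Theorems \ref{main2}, \ref{main3} the root number $\ep(\phi_\pi)$ equals $\ep(\phi_\sigma\otimes\chi_W)$ up to a sign recording whether $W_{n_0}$ is the going-down or going-up tower for $\sigma$. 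Tracking all three signs and using that $\chi_W = \1$ for the symplectic space $W$, the condition "$\{\Theta_{W_n,V_m}(\sigma)\}_n$ is the going-down tower" becomes exactly $\nu_\sigma = \eta_\sigma(z_{\phi_\sigma})\cdot\ep(\phi_\sigma)$, with the opposite sign giving the going-up tower; since $\sigma\otimes\det$ has central sign $-\nu_\sigma$ and the same $(\phi_\sigma,\eta_\sigma)$, this also pins down the going-down tower for $\sigma\otimes\det$ consistently.

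The main obstacle I expect is the bookkeeping of signs: there are at least three independent sign sources — $\epsilon(V) = \eta_\sigma(z_{\phi_\sigma})$, the going-down/going-up sign from Theorem \ref{main1}(2) applied on the $\Mp$-side, and the $\ep$-factor change $\ep(\phi_\pi)$ versus $\ep(\phi_\sigma)$ under the theta-induced parameter modification — and a sign error anywhere collapses the argument. The safest route is to first establish the claim in the almost-equal-rank case $n_0 = m \pm 1$ by directly invoking Theorem \ref{Mp} plus \cite[Theorem 11.1]{GI1}, where all signs are explicitly available, and then use the independence of $\nu(\theta_{V_m,W_n}(\cdot))$ from $n$ (already used in the proof of Proposition \ref{sign}, via \cite[\S 5.2]{GI1}) together with the conservation relation to propagate the conclusion to all $n$. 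A secondary subtlety is making sure the $L$-parameter of the first lift $\theta_{W_{n_0},V_m}(\sigma)$ is correctly read off from Theorems \ref{main2}--\ref{main3} in the direction $\Oo \to \Mp$ rather than $\Mp \to \Oo$; but since those theorems are symmetric in the two members of the dual pair (one just swaps the roles of $V$ and $W$ and of $\chi_V$ and $\chi_W$), this is a matter of careful substitution rather than new input.
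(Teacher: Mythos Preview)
Your proposal contains the correct core input --- \cite[Theorem 11.1]{GI1} at the almost-equal-rank case --- but wraps it in unnecessary machinery. The paper's proof is two lines: the tower $\{\Theta_{W_n,V_m}(\sigma)\}_n$ is going-down if and only if $\Theta_{W_{m-1},V_m}(\sigma)\neq 0$ (this is the characterization of $m^{\down}$ via the conservation relation), and \cite[Theorem 11.1]{GI1} says directly that $\Theta_{W_{m-1},V_m}(\sigma)\neq 0$ if and only if $\nu_\sigma = \epsilon(V)\cdot\ep(\phi_\sigma) = \eta_\sigma(z_{\phi_\sigma})\cdot\ep(\phi_\sigma)$. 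No detour through a first lift $\pi$, no Proposition~\ref{sign}, no Theorems~\ref{main2}--\ref{main3}, no sign-tracking across the parameter correspondence.

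Your longer route would in principle work, but it has a hidden cost you underestimate: to relate $\ep(\phi_\pi)$ and $\eta_\pi(z_{\phi_\pi})$ back to $\sigma$-data via Theorems~\ref{main2}--\ref{main3}, you must already know whether the first occurrence $n_0$ of $\sigma$ satisfies $n_0\le m-1$ or $n_0>m-1$ (i.e., which of Theorem~\ref{main2} or~\ref{main3} applies), which is exactly the dichotomy you are trying to establish. You can resolve this by a case split, but then each case reduces to the same computation the paper does directly. Your ``safest route'' paragraph is essentially the paper's proof; the rest of the apparatus (Proposition~\ref{sign}, independence of $\nu$ from $n$, reversing the see-saw) is not needed here and only creates opportunities for the sign errors you anticipate.
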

\begin{proof}
Note that $\{\Theta_{W_n, V_m}(\sigma)\}_n$ is the going-down tower
if and only if $\Theta_{W_{m-1},V_m}(\sigma)$ is nonzero.
This is equivalent to
$\nu_\sigma = \epsilon(V) \cdot \ep(\phi_\sigma) 
= \eta_\sigma(z_{\phi_\sigma}) \cdot \ep(\phi_\sigma)$
by \cite[Theorem 11.1]{GI1}.
\end{proof}
Together with Proposition \ref{center1}, this completes the proof of Theorem \ref{main1} (2).

\appendix
%\section{Preparations for the local Langlands correspondence}
%\section{Preparations for the local Langlands correspondence}
\section{Preparations for the local Langlands correspondence}
In this appendix, we recall some basic results on standard gamma factors, 
Plancherel measures, and
local factors associated to representations of Weil--Deligne groups.

%\subsection{Standard gamma factors}
\subsection{Standard gamma factors}\label{std}
Fix a non-trivial additive character $\psi$ of $F$.
For $\pi\in\Irr(G(W_n))$ and a character $\chi$ of $E^\times$, 
let $\gamma(s,\pi,\chi,\psi)$ be the standard $\gamma$-factor
defined by Lapid--Rallis \cite{LR} using the doubling method.
For its properties, see \cite{LR}, \cite{G} and \cite[\S 10, \S 11]{GI1}.
The property which we need is as follows:
\begin{prop}[{\cite[Theorem 11.2]{GI1}}]\label{pole}
Let $\pi\in\Irr_\temp(G(W_{n}))$.
Assume that $\Theta_{V_m,W_n}(\pi)\not=0$ and 
$l=n-m+\epsilon_0>0$.
Then $\gamma(s,\pi,\chi_V^{-1},\psi)$ has a pole at $s=\frac{l+1}{2}$.
\end{prop}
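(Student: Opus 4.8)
The plan is to deduce the pole of the standard $\gamma$-factor from the hypothesis $\Theta_{V_m,W_n}(\pi)\neq 0$ via the doubling method and the local Rallis inner product formula. Following Lapid--Rallis, $\gamma(s,\pi,\chi_V^{-1},\psi)$ satisfies the doubling functional equation: writing $W_n^\square=W_n\oplus W_n^-$, with $W_n^-$ being $W_n$ with the form negated, letting $P^\square$ be the Siegel parabolic of $G(W_n^\square)$ stabilising the diagonal Lagrangian, $I(s,\chi)=\Ind_{P^\square}^{G(W_n^\square)}(\chi\,|\det|^s)$ the associated degenerate principal series, $M_s\colon I(s,\chi_V^{-1})\to I(1-s,\chi_V)$ the standard intertwining operator, and $Z(s,f_s,\varphi,\varphi^\vee)$ the doubling integral of $f_s\in I(s,\chi_V^{-1})$ against the matrix coefficient $g\mapsto\pair{\pi(g)\varphi,\varphi^\vee}$, one has
\[
\gamma(s,\pi,\chi_V^{-1},\psi)\cdot Z(s,f_s,\varphi,\varphi^\vee)=Z(1-s,M_sf_s,\varphi,\varphi^\vee).
\]
Using that $\gamma(s,\pi,\chi_V^{-1},\psi)=\ep(s,\pi,\chi_V^{-1},\psi)\cdot L(1-s,\pi^\vee,\chi_V)/L(s,\pi,\chi_V^{-1})$, that the $\ep$-factor is entire and nonvanishing, and that the denominator $L$-factor is holomorphic and nonvanishing in the relevant half-plane, the assertion reduces to exhibiting a pole of $L(1-s,\pi^\vee,\chi_V)$ at $s=\half{l+1}$, i.e.\ a pole of $L(w,\pi^\vee,\chi_V)$ at the Siegel--Weil point $w=\half{1-l}=1-\half{l+1}$.

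To produce this pole I would use the doubling see-saw
\[
\xymatrix{
G(W_n^\square)\ar@{-}[d]\ar@{-}[dr] & H(V_m)\times H(V_m)\ar@{-}[d]\\
G(W_n)\times G(W_n^-)\ar@{-}[ur] & H(V_m)^\triangle
}
\]
together with Kudla's $G(W_n^\square)$-equivariant map $T_m\colon\omega_{V_m,W_n^\square}\to I(\half{1-l},\chi_V)$, which is nonzero in the present range $l>0$. From the hypothesis $\Theta_{V_m,W_n}(\pi)\neq 0$ one has $\Hom_{G(W_n)}(\omega_{V_m,W_n},\pi)\neq 0$, recalling that $\Hom_{G(W_n)}(\omega_{V_m,W_n},\pi)_\infty\cong\Theta_{V_m,W_n}(\pi)^\vee$; since $\pi$ is tempered, hence unitary, a standard argument with the $\MVW$ functor also gives $\Hom_{G(W_n^-)}(\omega_{V_m,W_n^-},\pi^\vee)\neq 0$, and combining these via $\omega_{V_m,W_n^\square}|_{G(W_n)\times G(W_n^-)}\cong\omega_{V_m,W_n}\otimes\omega_{V_m,W_n^-}$ yields a nonzero element of $\Hom_{G(W_n)\times G(W_n^-)}(\omega_{V_m,W_n^\square},\pi\boxtimes\pi^\vee)$. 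Transporting this through $T_m$ and the see-saw produces a nonzero $G(W_n^\square)$-equivariant, $\pi\boxtimes\pi^\vee$-valued functional on the image of $T_m$ inside $I(\half{1-l},\chi_V)$ --- concretely, a nonzero theta period for the dual pair $(G(W_n^\square),H(V_m))$ attached to $\pi\boxtimes\pi^\vee$. The local Rallis inner product formula then expresses this theta period as a nonzero constant times a value of the doubling integral $Z(\cdot,f_s,\varphi,\varphi^\vee)$ at the argument $\half{1-l}$, for suitable $f_s$ and matrix coefficients of $\pi$; since the doubling $L$-factor $L(w,\pi^\vee,\chi_V)$ is by definition the factor normalising this integral, the nonvanishing of that theta period forces $L(w,\pi^\vee,\chi_V)$ to have a pole at $w=\half{1-l}$. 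Running the displayed functional equation then transfers this to the pole of $\gamma(s,\pi,\chi_V^{-1},\psi)$ at $s=\half{l+1}$, as required.

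The main obstacle is the analytic bookkeeping around the special point. One must verify that the Siegel--Weil point attached to $\dim V_m=m$ is exactly $w=\half{1-l}$ in the conventions of \S\ref{spaces}; that $T_m$ is nonzero there, which rests on the Kudla--Rallis description of the composition series of $I(w,\chi_V)$ and of the images of the standard intertwining operators; and that, in the definition of the doubling $L$-factor, the normalising factor relating $Z(w,\cdot)$ to $L(w,\pi^\vee,\chi_V)$ has exactly the order at $w=\half{1-l}$ that turns a nonzero normalised doubling period into an honest pole of $L(w,\pi^\vee,\chi_V)$ --- this is precisely the delicate part of the local Rallis inner product formula in the ``convergent'' range $l>0$. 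Only the implication ``$\Theta_{V_m,W_n}(\pi)\neq 0\Rightarrow$ pole of $\gamma$ at $\half{l+1}$'' is needed here, so one may instead simply quote the full equivalence from the doubling theory of Kudla--Rallis and Yamana; but its analytic core is the local Rallis inner product formula outlined above.
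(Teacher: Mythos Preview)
The paper does not prove this proposition at all: it is stated in Appendix~A.1 purely as a citation of \cite[Theorem 11.2]{GI1}, with no argument given. So there is no ``paper's own proof'' to compare against beyond the reference.

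Your sketch is a faithful outline of the method actually used in \cite{GI1}: the doubling see-saw together with the Weil--Siegel map $T_m$ into the degenerate principal series at the point $s_0=\half{m-n-\epsilon_0}$, combined with the local Rallis inner product formula, is exactly how Gan--Ichino link nonvanishing of $\Theta_{V_m,W_n}(\pi)$ to the analytic behaviour of the doubling $L$- and $\gamma$-factors. Your reduction via $\gamma=\ep\cdot L(1-s,\pi^\vee,\chi_V)/L(s,\pi,\chi_V^{-1})$ is also correct: since $\pi$ is tempered, $L(s,\pi,\chi_V^{-1})$ is holomorphic at $s=\half{l+1}\geq 1$, and $L$-factors have no zeros, so a pole of the numerator at that point gives a pole of $\gamma$. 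The one place where your write-up is loose is the passage from ``nonzero theta period'' to ``pole of $L$'': in the precise local Rallis formula the doubling zeta integral is $L(s,\pi,\chi_V^{-1})$ times a normalised integral $Z^*$, and what the see-saw produces is nonvanishing of $Z^*$ at the Siegel--Weil point, not directly a pole of $L$; the actual argument in \cite{GI1} tracks the normalising factors and the reducibility of $I(s,\chi_V)$ at that point to extract the pole. You flag this yourself as ``the delicate part'', which is fair, but as written the sentence ``the nonvanishing of that theta period forces $L(w,\pi^\vee,\chi_V)$ to have a pole'' is not literally justified without that bookkeeping.
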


%\subsection{Plancherel measures}
\subsection{Plancherel measures}\label{Pm}
Let $G$ be a reductive group over $F$ and
$P=MU$ be a parabolic subgroup of $G$.
For $\pi\in \Irr(M)$, consider the normalized induced representation
\[
I_P^G(\pi)\coloneqq\Ind_P^G(\pi).
\]
We define an intertwining operator 
\[
J_{\overline{P}|P}(\pi)\colon I_P^G(\pi) \rightarrow I_{\overline{P}}^G(\pi)
\]
by
\[
J_{\overline{P}|P}(\pi)f(g)=\int_{\overline{U}}f(\overline{u}g)d\overline{u}
\quad\text{for $f \in I_P^G(\pi)$},
\]
where $\overline{P}=M\overline{U}$ is the parabolic subgroup of $G$ opposite to $P$.
Then there exists a rational function $\mu$ of $\pi$ such that
\[
J_{P|\overline{P}}(\pi) \circ J_{\overline{P}|P}(\pi) = \mu(\pi)^{-1}.
\]
The rational function $\mu$ is called the Plancherel measure associated to $I_P^G(\pi)$.
It is only well-defined up to a scalar 
since it depends on the choice of Haar measures on $U$ and $\overline{U}$.
We choose Haar measures as in \cite[\S B.2]{GI1}, 
which are determined by $\psi$.
We denote the corresponding Plancherel measure by $\mu_\psi$.
\vskip 5pt

Let $(V_m,W_n)$ be as in \S \ref{spaces},
and put $W_{n_1}=W_{n}+\H^k$ and $V_{m_1}=V_{m}+\H^k$
with $n_1=n+2k$ and $m_1=m+2k$.
We consider the maximal parabolic subgroups $P=M_PU_P$ and $Q=M_QU_Q$
of $G(W_{n_1})$ and $H(V_{m_1})$ with Levi components
\[
M_P=\GL_k(E)\times G(W_{n})
\quad\text{and}\quad
M_Q=\GL_k(E)\times H(V_{m}),
\]
respectively.

\begin{thm}[{\cite[Theorem 12.1]{GI1}}]\label{Pmeas}
Let $\pi\in\Irr(G(W_{n}))$ and put $\sigma=\theta_{V_{m},W_{n}}(\pi)$.
Assume that $\sigma\not=0$, so that $\sigma\in\Irr(H(V_{m}))$.
For $\tau\in\GL_k(E)$ and $s\in \C$, we put $\tau_s=\tau|\det|_E^s$.
Then we have
\begin{align*}
\frac{\mu_\psi(\tau_s\chi_V \otimes \pi)}{\mu_\psi(\tau_s\chi_W\otimes \sigma)}
&=\gamma(s-\frac{l-1}{2},\tau,\psi_E)\cdot\gamma(-s-\frac{l-1}{2},\tau^\vee,\psi_E^{-1}).
\end{align*}
\end{thm}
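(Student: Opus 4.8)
The plan is to realize both Plancherel measures inside the theta correspondence for the enlarged dual pair $(G(W_{n_1}),H(V_{m_1}))$ and to reduce the identity to an explicit scalar computation on the Weil representation. Applying Kudla's filtration (Lemma \ref{kudla}) to $R_{P}(\omega_{V_{m_1},W_{n_1}})$, the bottom piece $J^{k}$, combined with Frobenius reciprocity, produces a non-zero $H(V_{m_1})$-equivariant map relating $\tau_s\chi_W\rtimes\sigma$ and $\Theta_{V_{m_1},W_{n_1}}(\tau_s\chi_V\rtimes\pi)$ (up to contragredient), and the key structural point is that this map is implemented already on $\omega_{V_{m_1},W_{n_1}}$, since the splitting $G(W_{n_1})\times H(V_{m_1})\to\Mp(W_{n_1}\otimes V_{m_1})$ is compatible with the parabolics $P$ and $Q$. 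Consequently the standard intertwining operators $J_{\overline P|P}$ on $\tau_s\chi_V\rtimes\pi$ and $J_{\overline Q|Q}$ on $\tau_s\chi_W\rtimes\sigma$ are carried into one another by this theta map, and by the uniqueness in the Howe duality conjecture (Theorem \ref{howe}) they agree up to a single meromorphic scalar $c(s)$; the opposite direction gives an analogous scalar $c'(s)$. Composing the two directions and using $J_{P|\overline P}\circ J_{\overline P|P}=\mu_\psi(\tau_s\chi_V\otimes\pi)^{-1}$ together with its analogue on the $H(V_{m_1})$-side yields
\[
\frac{\mu_\psi(\tau_s\chi_V\otimes\pi)}{\mu_\psi(\tau_s\chi_W\otimes\sigma)}=c(s)\cdot c'(s).
\]

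To compute $c(s)c'(s)$, one unfolds the Weil representation in a mixed (partial Schr\"odinger) model adapted to $P$ and $Q$, in which the operator implementing the change of parabolic is an integration over the opposite unipotent radical. Up to its reductive part this unipotent radical is a space of $E$-linear maps on which the $\GL_k(E)$-factor acts through $\chi_V$ and powers of $|{\det}|_E$, and after the induced change of variables the integral becomes a Godement--Jacquet local zeta integral $Z(s',\tau,\Phi,\psi_E)$ for $\GL_k(E)$ whose spectral parameter $s'$ is an affine translate of $s$; combining the modulus characters of $P$ and $Q$, the $\chi_V$- and $\chi_W$-twists carried by the Weil representation, and the normalization of the Godement--Jacquet integral produces exactly the shift by $\tfrac{l-1}{2}=\tfrac{n-m+\epsilon_0-1}{2}$. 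The Godement--Jacquet functional equation then evaluates $c(s)=\gamma(s-\tfrac{l-1}{2},\tau,\psi_E)$, while $c'(s)$, attached to the parabolic opposite to $P$, contributes the mirror factor $\gamma(-s-\tfrac{l-1}{2},\tau^\vee,\psi_E^{-1})$. Since both sides of the asserted identity are rational functions of $q_F^{-s}$, it suffices to prove it on a non-empty Zariski-open set of $s$, where one may take $\tau$ unitary supercuspidal so that all the relevant $\Hom$-spaces are one-dimensional and all integrals converge; the general $\tau$ then follows from the multiplicativity of Plancherel measures and of the $\GL$-$\gamma$-factor under parabolic induction in the $\tau$-variable.

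The main obstacle is precisely this scalar computation: one must choose the mixed model and test vectors so that the unipotent integral is literally a Godement--Jacquet integral, match the normalizing characters and the $\chi_V$ versus $\chi_W$ bookkeeping, track the half-integral shift $\tfrac{l-1}{2}$ coming from the modulus characters, and remain consistent with the Haar measures normalized by $\psi$ as in \cite[\S B.2]{GI1} and with the split/non-split normalization of the $\gamma$-factor. A secondary point, needed for the reduction to go through, is that $c(s)$ does not depend on the auxiliary choices of model and test vectors; this again follows from the one-dimensionality of the pertinent $\Hom$-spaces, that is, from Theorem \ref{howe}.
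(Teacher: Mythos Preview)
The paper does not prove this statement at all: it is quoted verbatim as \cite[Theorem 12.1]{GI1} and used as a black box. There is no ``paper's own proof'' to compare against.

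Your sketch is, in broad strokes, a fair summary of how the result is actually established in \cite{GI1}. The main ingredients there are indeed a mixed Schr\"odinger model for $\omega_{V_{m_1},W_{n_1}}$ adapted to the parabolics $P$ and $Q$, an explicit realization of the intertwining operators $J_{\overline P|P}$ and $J_{\overline Q|Q}$ as integrals on this model, and a computation showing that their ratio is governed by Godement--Jacquet zeta integrals for $\GL_k(E)$, whence the pair of $\gamma$-factors with the shift by $\tfrac{l-1}{2}$. Two cautions about how you have phrased things: first, in \cite{GI1} the passage from one intertwining operator to the other is carried out by a direct computation in the mixed model rather than by an abstract ``uniqueness in Howe duality'' argument, so invoking Theorem \ref{howe} to pin down $c(s)$ is somewhat misleading (the one-dimensionality you need is that of a space of Whittaker-type functionals on $\GL_k$, not of $\Hom$-spaces arising from Howe duality); second, the bookkeeping of the half-integral shift and of the Haar-measure normalizations is delicate and is where most of the work in \cite{GI1} lies, so ``combining the modulus characters \dots\ produces exactly the shift'' hides the genuine content. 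If you intend this as more than a pointer to \cite{GI1}, those two points would need to be filled in.
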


For metaplectic groups, we have to replace $\GL_k(E)$ with 
its double cover $\cl\GL_k(E)$.
More precisely, see \cite[\S 2.2--\S 2.5]{GS} and \cite[\S 2.5 and \S 2.6]{GI1}.

%\subsection{Representations of $\WD_E$}
\subsection{Representations of Weil--Deligne groups}\label{repWD}
We denote by $W_E$ and $\WD_E=W_E \times \SL_2(\C)$
the Weil group and Weil--Deligne group of $E$, respectively.
Let $I_E$ be the inertia subgroup of $W_E$.
We fix a geometric Frobenius element $\Frob_E$ of $W_E$.
\vskip 5pt

If $E\not=F$, we regard $W_E$ as a subgroup $W_F$ such that $W_F/W_E \cong \Gal(E/F)$ 
and fix $s\in W_F \setminus W_E$.
If $E=F$, we put $s=1$.
\vskip 5pt

Let $M$ be a finite dimensional vector space over $\C$.
We say that a homomorphism $\phi \colon \WD_E \rightarrow \GL(M)$
is a representation of $\WD_E$ if
\begin{itemize}
\item
$\phi(\Frob_E)$ is semi-simple;
\item
the restriction of $\phi$ to $W_E$ is smooth;
\item
the restriction of $\phi$ to $\SL_2(\C)$ is algebraic.
\end{itemize}
We call $\phi$ tempered if the image of $W_E$ is bounded.
Let $\phi^\vee$ be the contragredient representation of $\phi$ defined by
$\phi^\vee(w)={}^t \phi(w)^{-1}$.
We define a representation ${}^c\phi$ of $\WD_E$ by
${}^c\phi(w)=\phi(sws^{-1})$.
Then the equivalence class of ${}^c\phi$ is independent of the choice of $s$.
\vskip 5pt

Fix $b\in\{\pm1\}$.
We say that $\phi$ is conjugate self-dual with sign $b$
if there exists a non-degenerate bilinear form 
$B\colon M \times M \rightarrow \C$ such that
\[
\left\{
\begin{aligned}
&B(\phi(w)x,\phi(sws^{-1})y) = B(x,y),\\
&B(y,x) = b \cdot B(x,\phi(s^2)y)
\end{aligned}
\right.
\]
for $x,y\in M$ and $w\in \WD_E$.
In this case, $\phi$ is equivalent to ${}^c\phi^\vee$.
If $E=F$, then $s=1$ and ${}^c\phi=\phi$.
In this case, we call $\phi$ self-dual with sign $b$.
We also say that $\phi$ is 
\[
\left\{
\begin{aligned}
&\text{orthogonal}\iif \text{$\phi$ is self-dual with sign $+1$},\\
&\text{symplectic}\iif \text{$\phi$ is self-dual with sign $-1$},\\
&\text{conjugate-orthogonal}\iif \text{$\phi$ is conjugate self-dual with sign $+1$},\\
&\text{conjugate-symplectic}\iif \text{$\phi$ is conjugate self-dual with sign $-1$}.
\end{aligned}
\right.
\]
More precisely, see \cite[\S 3]{GGP}.
\vskip 5pt

For each positive integer $k$, 
there exists a unique irreducible algebraic representation $S_k$ of $\SL_2(\C)$
with dimension $k$.
It is easy to see that
$S_k$ is (conjugate) self-dual with sign $(-1)^{k-1}$.
Moreover we have
\[
S_a \otimes S_b \cong \bigoplus_{k=1}^{\min\{a,b\}}S_{a+b+1-2k}
=S_{a+b-1} \oplus S_{a+b-3} \oplus \dots \oplus S_{|a-b|+1}
\]
for positive integers $a$ and $b$.

%\subsection{Local factors}
\subsection{Local factors}
We define local factors associated to representations of $\WD_E$.
Fix a non-trivial additive character $\psi'$ of $E$.
A representation $\phi$ of $\WD_E$ is written by
\[
\phi=\bigoplus_{n\geq1}\phi_n\boxtimes S_n,
\]
where $(\phi_n,M_n)$ is a representation of $W_E$.
Let $M_n^{I_E}$ be the subspace of $M_n$ consisting of $I_E$-fixed vectors.
Note that $M_n^{I_E}$ is a subrepresentation of $M_n$ and
$\phi_n(\Frob_E) \in \GL(M_n^{I_E})$ is independent of the choice of $\Frob_E$.
We define the local factors associated $\phi$ by
\begin{align*}
L(s,\phi)&=\prod_{n\geq1} \det(\1-q^{-(s+\frac{n-1}{2})}\phi_n(\Frob_E)|M_n^{I_E})^{-1}
=\prod_{n\geq1}L(s+\frac{n-1}{2},\phi_n),\\ 
\ep(s,\phi,\psi')
&=\prod_{n\geq1}\ep(s,\phi_n,\psi')^n\det(-q^{\frac{1}{2}-s}\phi_n(\Frob_E)|M_n^{I_E})^{n-1},\\
\gamma(s,\phi,\psi')&=\ep(s,\phi,\psi')\frac{L(1-s,\phi^\vee)}{L(s,\phi)}.
\end{align*}
For the definition of $\ep(s,\phi_n,\psi')$, see \cite[\S 3]{T}.
For $c \in E^\times$, we define the non-trivial additive character $\psi'_c$ of $E$ by 
$\psi'_c(x)=\psi'(cx)$.
It is known that
\[
\ep(s, \phi, \psi'_c)=\det(\phi)(c) \cdot |c|_E^{\dim(\phi)(s - \half{1})} \cdot \ep(s,\phi,\psi').
\]
The local functional equation asserts that
\[
\gamma(s,\phi,\psi') \cdot \gamma(1-s, \phi^\vee, \psi'^{-1})=1
\quad\text{or}\quad
\ep(s,\phi,\psi') \cdot \ep(1-s, \phi^\vee, \psi')=\det(\phi)(-1).
\]
In particular, if $\phi$ is self-dual with $\det(\phi)=\1$, 
then $\ep(1/2,\phi,\psi')$ is in $\{\pm1\}$ and independent of $\psi'$.
In this case, we write $\ep(\phi)\coloneqq\ep(1/2,\phi,\psi')$.
For $a\not\equiv b\bmod 2$, we have
\[
\ep(S_a \otimes S_b)=(-1)^{\min\{a,b\}}.
\]
If $E\not=F$ and $\phi$ is conjugate self-dual, 
then we write $\ep(\phi,\psi')\coloneqq\ep(1/2,\phi,\psi')$. 
By \cite[Propostition 5.1]{GGP}, if $E\not=F$ and ${}^c\psi'=\psi'^{-1}$, 
then $\ep(\phi,\psi')\in\{\pm1\}$.
Here, ${}^c\psi(x) = \psi({}^c x)$ for $x \in E$, where
${}^c x$ is the conjugate of $x$.
\vskip 5pt

We need some lemmas for local factors.
\begin{lem}\label{gamma}
Let $\phi$ be an irreducible representation of $W_E$ and $l$ be a positive integer.
Then we have
\[
\ep(s,\phi,\psi')^l\ep(-s,\phi^\vee,\psi'^{-1})^l
=\ep(s-\frac{l-1}{2},\phi,\psi')\ep(-s-\frac{l-1}{2},\phi^\vee,\psi'^{-1}),
\]
and
\[
\gamma(s,\phi\otimes S_l,\psi')\gamma(-s,\phi^\vee\otimes S_l,\psi'^{-1})
=\gamma(s-\frac{l-1}{2},\phi,\psi')\gamma(-s-\frac{l-1}{2},\phi^\vee,\psi'^{-1}).
\]
\end{lem}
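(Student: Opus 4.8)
The plan is to reduce both identities in Lemma \ref{gamma} to elementary manipulations of the Artin--Deligne formalism for $\varepsilon$- and $\gamma$-factors, using only the decomposition $\phi \otimes S_l \cong \bigoplus_{j=0}^{l-1} \phi \otimes |\cdot|_E^{(l-1)/2 - j}$ of a representation of $W_E$ times $S_l$ into its $\SL_2(\C)$-weight spaces, together with the standard twisting formula $\varepsilon(s, \phi \otimes |\cdot|_E^{t}, \psi') = \varepsilon(s+t, \phi, \psi')$ and the multiplicativity of $\varepsilon$ and $\gamma$ in direct sums. First I would establish the $\varepsilon$-factor identity. Using that $\phi$ is irreducible, hence $M_n^{I_E}$ plays no role beyond the given formula, I would write
\[
\varepsilon(s, \phi, \psi')^{l} = \prod_{j=0}^{l-1} \varepsilon(s, \phi, \psi'),
\]
and reindex the product so that the $l$ copies are relabelled by the shifts $s - \frac{l-1}{2}, s - \frac{l-1}{2} + 1, \ldots, s + \frac{l-1}{2}$; the point is that the set of arguments $\{s + j : 0 \le j \le l-1\}$ for the factor $\varepsilon(\,\cdot\,, \phi, \psi')$ is the same (as a multiset) whether we center at $s$ or shift. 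Doing the analogous bookkeeping for $\varepsilon(-s, \phi^\vee, \psi'^{-1})^l$ and pairing the two products termwise gives the first displayed equation, after noticing that the telescoping of shifts matches up $+j$ with $-j$ in the two factors.

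Next I would deduce the $\gamma$-factor identity. The cleanest route is to use the definition $\gamma(s, \phi \otimes S_l, \psi') = \varepsilon(s, \phi\otimes S_l, \psi') \cdot L(1-s, \phi^\vee \otimes S_l)/L(s, \phi \otimes S_l)$ together with the formula $L(s, \phi \otimes S_l) = \prod_{j=0}^{l-1} L\bigl(s + \frac{l-1}{2} - j, \phi\bigr)$ recorded in the subsection on local factors, and the corresponding product expression for $\varepsilon(s, \phi \otimes S_l, \psi')$. Multiplying $\gamma(s, \phi \otimes S_l, \psi')$ by $\gamma(-s, \phi^\vee \otimes S_l, \psi'^{-1})$ and expanding both as products over $j$, the local functional equation $\gamma(u, \phi, \psi')\gamma(1-u, \phi^\vee, \psi'^{-1}) = 1$ applied to the ``interior'' shifts causes massive cancellation: for each pair of indices $j, j'$ with $j + j' = l - 1$ the corresponding $L$- and $\varepsilon$-contributions collapse, leaving precisely the two boundary terms $\gamma(s - \frac{l-1}{2}, \phi, \psi')$ and $\gamma(-s - \frac{l-1}{2}, \phi^\vee, \psi'^{-1})$. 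One must keep careful track of the $\det(-q^{1/2 - s}\phi_n(\Frob_E) \mid M_n^{I_E})^{n-1}$ correction terms in the definition of $\varepsilon(s, \phi \otimes S_l, \psi')$, but since $\phi$ here is a representation of $W_E$ (so $n = l$ is the only weight) these are absorbed into the twisting identity already used for the $\varepsilon$-part, and in fact the $\gamma$-identity can alternatively just be read off by dividing the two sides of the already-proven $\varepsilon$-identity into the $L$-factor ratios.

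The main obstacle, such as it is, is purely combinatorial: matching the multiset of argument-shifts $\{-\frac{l-1}{2}, -\frac{l-1}{2}+1, \dots, \frac{l-1}{2}\}$ appearing on the two sides and checking that the functional-equation cancellations pair up exactly, with the correct behaviour of $\psi'$ versus $\psi'^{-1}$ under $s \mapsto -s$. There is no analytic difficulty and no appeal to the deeper results of the paper; the lemma is a formal consequence of the definitions in Section \ref{repWD} and the local factor subsection, so I expect the proof to be short and I would present it by first doing the $\varepsilon$-case carefully and then either repeating the same reindexing for the $L$-ratios or invoking the functional equation to shortcut the $\gamma$-case.
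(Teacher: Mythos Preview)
Your overall strategy is sound and matches the spirit of the paper, whose proof is simply ``Straightforward.'' However, there is one concrete error in your write-up that you should fix before presenting it.

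You claim that the formula
\[
L(s, \phi \otimes S_l) = \prod_{j=0}^{l-1} L\bigl(s + \tfrac{l-1}{2} - j, \phi\bigr)
\]
is ``recorded in the subsection on local factors.'' It is not. What the paper records is $L(s, \phi \boxtimes S_l) = L\bigl(s + \tfrac{l-1}{2}, \phi\bigr)$, a \emph{single} factor: for the Weil--Deligne representation $\phi \boxtimes S_l$ only the top weight survives in the $L$-function, because the definition uses $M_n^{I_E}$ and not the full underlying $W_E$-module. The same remark applies to your ``corresponding product expression'' for $\ep(s, \phi \otimes S_l, \psi')$; the paper's formula has an extra determinant correction $\det(-q^{1/2-s}\phi(\Frob_E)\mid M^{I_E})^{l-1}$, not a clean product over shifts.

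What \emph{is} true, and what rescues your telescoping argument, is that the $\gamma$-factor of a Weil--Deligne representation coincides with the $\gamma$-factor of its underlying $W_E$-representation (the extra $L$- and $\ep$-contributions cancel). Hence
\[
\gamma(s, \phi \boxtimes S_l, \psi') = \prod_{j=0}^{l-1} \gamma\bigl(s + \tfrac{l-1}{2} - j, \phi, \psi'\bigr),
\]
and then your pairing via the functional equation $\gamma(u,\phi,\psi')\gamma(1-u,\phi^\vee,\psi'^{-1})=1$ does leave exactly the two boundary terms $j=l-1$ on each side. So cite this property of $\gamma$ directly rather than the (false) product formulas for $L$ and $\ep$.

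For the $\ep$-identity, your ``reindexing'' description is murkier than necessary. The cleanest argument is: from $\ep(s,\phi,\psi')\ep(1-s,\phi^\vee,\psi'^{-1})=1$ one sees that $\ep(s,\phi,\psi')\ep(-s,\phi^\vee,\psi'^{-1})$ is a monomial in $q^{-s}$ times its reciprocal shifted by $1$, hence equals a constant $q^{a(\phi,\psi')}$ independent of $s$. Both sides of the claimed identity then equal $q^{l\cdot a(\phi,\psi')}$.
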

\begin{proof}
Straightforward.
\end{proof}
\vskip 5pt

\begin{lem}\label{crit}
Let $\psi'$ be a non-trivial additive character of $E$, 
$\phi$ be a representation of $\WD_E$, and
$l$ be a positive integer.
Assume that 
\begin{itemize}
\item
$\psi'|F=\1$, i.e., ${}^c\psi'=\psi'^{-1}$ if $E\not=F$;
\item
$\phi$ is conjugate self-dual  with sign $(-1)^{l-1}$ if $E\not=F$;
\item
$\phi$ is self-dual  with sign $(-1)^{l-1}$ if $E=F$.
\end{itemize}
We define $\alpha_l(\phi)\in\{\pm1\}$ by
\[
\alpha_l(\phi)=\frac{\ep(\phi \otimes S_{l+1},\psi')}{\ep(\phi \otimes S_{l-1},\psi')}
\times \left\{
\begin{aligned}
&\det(\phi)(-1)	\iif E=F,\\
&1	\iif E\not=F.
\end{aligned}
\right.
\]
Here, if $l=1$, then we interpret $\ep(\phi \otimes S_{l-1},\psi') \coloneqq 1$. 
\begin{enumerate}
\item
Suppose that $\phi$ is irreducible. 
Then $\alpha_l(\phi)=-1$ if and only if $\phi=S_l$.
\item
If $\phi=\phi_0\oplus {}^c\phi_0^\vee$, then $\alpha_l(\phi)=1$.
\item
In general, $\alpha_l(\phi)=(-1)^{m_\phi(S_l)}$, 
where $m_\phi(S_l)$ is the multiplicity of $S_l$ in $\phi$.
\end{enumerate}
\end{lem}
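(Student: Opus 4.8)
The plan is to reduce the general statement (3) to the two special cases (1) and (2), and then prove those by a direct computation with $\ep$-factors. The key preliminary observation is that $\alpha_l$ is multiplicative under direct sums: if $\phi=\phi'\oplus\phi''$ (both satisfying the hypotheses with the same $l$) then $\alpha_l(\phi)=\alpha_l(\phi')\alpha_l(\phi'')$. This holds because both $\ep(s,-,\psi')$ and each correction factor $\det(-q^{\half{1}-s}(-)(\Frob_E)\,|\,(-)^{I_E})$ entering the definition of $\ep$ for representations of $\WD_E$ are multiplicative under $\oplus$, and so is $\det$. Granting this, I would decompose $\phi$ into irreducible constituents: those which are not conjugate self-dual occur in pairs $\phi_0\oplus{}^c\phi_0^\vee$ and contribute $1$ by (2); a conjugate self-dual constituent $\psi$ of the ``wrong'' sign $(-1)^l$ necessarily occurs with even multiplicity --- since ${}^c\psi^\vee\cong\psi$, it appears with equal multiplicity in the ``$\phi'$'' and ``${}^c\phi'^\vee$'' parts of the standard decomposition of \S\ref{LLC} --- so such constituents again contribute $1$ by (2); and a conjugate self-dual constituent of sign $(-1)^{l-1}$ contributes $1$ unless it equals $S_l$, in which case each copy contributes $-1$ by (1). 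Multiplying, $\alpha_l(\phi)=(-1)^{m_\phi(S_l)}$, which is (3).

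For (2), write $\phi\otimes S_k=\mu_k\oplus{}^c\mu_k^\vee$ with $\mu_k=\phi_0\otimes S_k$ (using ${}^cS_k^\vee\cong S_k$), and compute $\ep(\half{1},\phi\otimes S_k,\psi')=\ep(\half{1},\mu_k,\psi')\,\ep(\half{1},{}^c\mu_k^\vee,\psi')$. When $E=F$ one has ${}^c\mu_k^\vee=\mu_k^\vee$, and the functional equation $\ep(s,\sigma,\psi')\ep(1-s,\sigma^\vee,\psi')=\det(\sigma)(-1)$ at $s=\half{1}$ makes this product equal to $\det(\mu_k)(-1)=\det(\phi_0)(-1)^k$; the ratio for $k=l+1$ over $k=l-1$ is then $\det(\phi_0)(-1)^2=1$, and the normalizing factor is $\det(\phi_0\oplus\phi_0^\vee)(-1)=1$, so $\alpha_l(\phi)=1$. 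When $E\neq F$ I would use $\ep(s,{}^c\sigma,\psi')=\ep(s,\sigma,{}^c\psi')$ together with ${}^c\psi'=\psi'^{-1}$ and $\ep(s,\sigma,\psi'^{-1})=\det(\sigma)(-1)\,\ep(s,\sigma,\psi')$ and the functional equation to deduce $\ep(\half{1},{}^c\mu_k^\vee,\psi')=\ep(\half{1},\mu_k,\psi')^{-1}$, whence $\ep(\half{1},\phi\otimes S_k,\psi')=1$ for every $k$ and $\alpha_l(\phi)=1$.

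For (1), write the irreducible $\phi$ as $\rho\boxtimes S_d$ with $\rho$ an irreducible representation of $W_E$; conjugate self-duality of $\phi$ forces $\rho$ to be conjugate self-dual of sign $(-1)^{l-d}$. Decomposing $S_d\otimes S_{l\pm1}$ into irreducibles and using that every $\ep$-factor in sight lies in $\{\pm1\}$ (by \cite[Proposition 5.1]{GGP} and, for $E=F$, the symplectic/trivial-determinant case), so that the defining ratio of $\alpha_l(\phi)$ equals the corresponding product, the computation of $\alpha_l(\phi)$ collapses to an explicit finite product of factors $\ep(\half{1},\rho\boxtimes S_j,\psi')$; via $\ep(s,\rho\boxtimes S_j,\psi')=\ep(s,\rho,\psi')^j\det(-q^{\half{1}-s}\rho(\Frob_E)\,|\,M_\rho^{I_E})^{j-1}$ this reduces to a power of $\ep(\half{1},\rho,\psi')^2$ times a power of $\det(-\rho(\Frob_E)\,|\,M_\rho^{I_E})$. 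If $\rho$ is ramified then $M_\rho^{I_E}=0$ and $\ep(\half{1},\rho,\psi')^2=\det(\rho)(-1)$ for $E=F$ (and $=1$ for $E\neq F$); in the $E=F$ case this exactly cancels the normalizing factor $\det(\phi)(-1)=\det(\rho)(-1)^d$, so $\alpha_l(\phi)=1$, consistent with (1) since a ramified $\phi$ is never $S_l$. If $\rho$ is unramified then $\rho$ is a conjugate self-dual unramified character $\chi$, so $\chi(\Frob_E)^2=1$ and $\ep(\half{1},\chi,\psi')\in\{\pm1\}$; tracking the correction factors through the three regimes $d<l$, $d=l$, $d>l$ shows that the only surviving $-1$ is the correction factor in $\ep(\half{1},\1\boxtimes S_{2l},\psi')$, which appears precisely when $\chi=\1$ and $d=l$, i.e. exactly when $\phi=S_l$.

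The main obstacle will be the bookkeeping in (1): carefully tracking the correction factors $\det(-q^{\half{1}-s}\rho(\Frob_E)\,|\,M_\rho^{I_E})^{j-1}$ in $\ep(\rho\boxtimes S_j,\psi')$ when $\rho$ is unramified, and matching the summands of $S_d\otimes S_{l+1}$ against those of $S_d\otimes S_{l-1}$ in the regimes $d<l$, $d=l$, $d>l$ so as to isolate exactly the contribution of $S_l=\1\boxtimes S_l$. A secondary technical point, used in (2) and implicitly in (1), is the Galois behaviour $\ep(s,{}^c\sigma,\psi')=\ep(s,\sigma,{}^c\psi')$ and the fact that conjugate self-dual $\ep$-factors are $\pm1$ at $s=\half{1}$ when ${}^c\psi'=\psi'^{-1}$ (\cite[Proposition 5.1]{GGP}), which is what licenses replacing the ratio of $\ep$-factors in the definition of $\alpha_l$ by the corresponding product throughout.
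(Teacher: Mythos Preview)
Your proposal is correct and supplies exactly the kind of direct $\ep$-factor computation that the paper declines to write out: the paper's entire proof of this lemma is the single word ``Straightforward.'' Your reduction of (3) to (1) and (2) via multiplicativity of $\alpha_l$, together with the even-multiplicity observation for wrong-sign irreducible constituents, is the natural way to organize the argument, and your treatments of (2) (via the functional equation and the Galois behaviour $\ep(s,{}^c\sigma,\psi')=\ep(s,\sigma,{}^c\psi')$) and of (1) (writing $\phi=\rho\boxtimes S_d$ and separating the ramified and unramified cases of $\rho$) are sound.

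One small remark on exposition: your justification that a wrong-sign conjugate self-dual irreducible constituent $\psi$ occurs with even multiplicity (``it appears with equal multiplicity in the $\phi'$ and ${}^c\phi'^\vee$ parts'') leans on a particular choice of the decomposition in \S\ref{LLC}, which is not canonical. The cleaner statement is the standard structural one: if $\phi$ carries a nondegenerate form of sign $b$ and $\psi$ is an irreducible self-dual constituent of sign $-b$, then the multiplicity space $\Hom_{\WD_E}(\psi,\phi)$ inherits a nondegenerate alternating form, hence has even dimension. This is what actually forces the even multiplicity, independent of how one splits off $\phi'$. With that adjustment the argument is complete.
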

\begin{proof}
Straightforward.
\end{proof}
\vskip 5pt

For a character $\chi$ of $E^\times$, we put
\[
\delta(\chi = \1) = \left\{
\begin{aligned}
&1\iif \chi = \1, \\
&-1\iif \chi \not= \1.
\end{aligned}
\right.
\]
\begin{lem}\label{delta}
Let $\chi$ be a quadratic character of $E^\times$, and $k$ be a positive integer. 
Then $\chi \otimes S_{2k}$ is a symplectic representation of $\WD_E$, and satisfies
\[
\ep(\chi \otimes S_{2k}) = - \delta(\chi=\1) \cdot \chi(-1)^k.
\]
\end{lem}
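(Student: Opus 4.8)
```latex
\begin{proof}[Proof proposal]
The plan is to reduce the statement to the two elementary facts that (i) $\chi \otimes S_{2k}$ is symplectic and (ii) its root number has the claimed value, treating these in turn. For (i): since $S_{2k}$ is self-dual of sign $(-1)^{2k-1} = -1$ (symplectic) and $\chi$ is self-dual of sign $+1$ (it is quadratic, hence orthogonal), the tensor product $\chi \otimes S_{2k}$ is self-dual of sign $(-1)\cdot(+1) = -1$, i.e.\ symplectic. One should also note $\det(\chi \otimes S_{2k}) = \chi^{2k} \cdot \det(S_{2k}) = \1$ since $\chi$ is quadratic and $S_{2k}$ has trivial determinant, so $\ep(1/2, \chi \otimes S_{2k}, \psi')$ is indeed a well-defined sign independent of $\psi'$, justifying the notation $\ep(\chi \otimes S_{2k})$.

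For (ii), the idea is to run an induction on $k$ using the additivity of $\ep$-factors under direct sums together with the decomposition $S_2 \otimes S_{2k-1} \cong S_{2k} \oplus S_{2k-2}$ recorded in Appendix~\ref{repWD}. This gives
\[
\ep(\chi \otimes S_{2k}) \cdot \ep(\chi \otimes S_{2k-2}) = \ep\bigl(\chi \otimes (S_2 \otimes S_{2k-1})\bigr) = \ep\bigl((\chi \otimes S_2) \otimes S_{2k-1}\bigr).
\]
The right-hand side can be handled by Lemma~\ref{crit} applied to the irreducible representation $\phi = \chi \otimes S_2$ with $l = 2k-1$: indeed $\chi \otimes S_2$ is self-dual with sign $-1 = (-1)^{l-1}$, and the quotient $\alpha_{2k-1}(\chi \otimes S_2) = \ep((\chi \otimes S_2)\otimes S_{2k})/\ep((\chi\otimes S_2)\otimes S_{2k-2}) \cdot \det(\chi\otimes S_2)(-1)$ equals $-1$ precisely when $\chi \otimes S_2 = S_{2k-1}$, which never happens (dimensions $2$ versus $2k-1$ for $k \geq 1$ can only agree when... in fact $S_{2k-1}$ has odd dimension so equality fails for all $k$; for $k=1$ one checks directly). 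So $\alpha_{2k-1}(\chi \otimes S_2) = +1$. Hmm — but I need to relate $\ep((\chi \otimes S_2)\otimes S_{2k})$ back to $\ep(\chi \otimes S_{2k+1})$-type terms, which reintroduces odd tensor factors; a cleaner route is to compute the base case $k=1$ by hand and then use $S_3 \otimes S_{2k} \cong S_{2k+2} \oplus S_{2k} \oplus S_{2k-2}$ to climb by two. The base case $\ep(\chi \otimes S_2) = -\delta(\chi = \1)\chi(-1)$ should follow from the standard formula $\ep(\chi) \cdot \ep(\chi^{-1} = \chi) $ computation, or more directly: write $S_2$ as the restriction of the standard $2$-dimensional representation and use $\ep(\chi \otimes S_2, \psi') = \ep(1/2, \chi, \psi')^2 \cdot \det(\text{something})$; since $\chi$ is quadratic, $\ep(1/2,\chi,\psi')^2 = \chi(-1)$ when $\chi \neq \1$ by the functional equation $\ep(s,\chi,\psi')\ep(1-s,\chi^{-1},\psi') = \chi(-1)$, while for $\chi = \1$ one gets $\ep(S_2) = -1$ directly from $\ep(S_a \otimes S_b) = (-1)^{\min\{a,b\}}$ with the appropriate bookkeeping.

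The main obstacle I anticipate is getting the base case $\ep(\chi \otimes S_2)$ exactly right, including the sign conventions for $\ep$-factors of ramified quadratic characters and the correct accounting of the $\det(-q^{1/2-s}\phi_n(\Frob_E))^{n-1}$ correction term in the definition of $\ep(s,\phi,\psi')$ for $\phi = \chi \boxtimes S_2$ (here $n = 2$, so this term contributes $\det(-q^{1/2-s}\chi(\Frob_E))^{1}$, and at $s = 1/2$ this is $-\chi(\Frob_E)\cdot(\text{value on }I_E\text{-invariants})$, which must be reconciled with $\chi(-1)$ via local class field theory). Everything else is routine manipulation with the multiplicativity of $\ep$ in direct sums and the representation ring identities for $S_a \otimes S_b$. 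The final bookkeeping is to verify that the inductive formula $\ep(\chi \otimes S_{2k}) = -\delta(\chi=\1)\chi(-1)^k$ is consistent: each step up by two multiplies the previous value by $\chi(-1)$ (coming from the $\alpha$-computation via Lemma~\ref{crit}), and the stated formula indeed has this property since $(-\delta(\chi=\1)\chi(-1)^{k+1})/(-\delta(\chi=\1)\chi(-1)^k) = \chi(-1)$.
\end{proof}
```
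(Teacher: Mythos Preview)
Your application of Lemma~\ref{crit} contains a sign error: with $l = 2k-1$ odd, the hypothesis requires $\phi$ to be self-dual of sign $(-1)^{l-1} = +1$, but $\chi \otimes S_2$ is symplectic. The right choice is simply $\phi = \chi$ itself (which is orthogonal) with $l = 2k-1$. Then $\det(\chi)(-1) = \chi(-1)$ and, for $k \geq 2$, $m_\chi(S_{2k-1}) = 0$, giving the clean recursion $\ep(\chi \otimes S_{2k}) = \chi(-1)\cdot\ep(\chi \otimes S_{2(k-1)})$; the base case $k=1$ comes from the convention $\ep(\phi \otimes S_0) \coloneqq 1$ together with $m_\chi(S_1) = 1$ if $\chi = \1$ and $0$ otherwise, which yields $\ep(\chi \otimes S_2) = -\delta(\chi=\1)\chi(-1)$ immediately. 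Your detour through $\phi = \chi \otimes S_2$ and $S_3 \otimes S_{2k}$ is unnecessary and, as written, does not meet the hypothesis of the lemma you invoke.

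The paper's argument is more direct still: it reads off the result in one line from the very definition
\[
\ep\bigl(\tfrac12, \chi \boxtimes S_{2k}, \psi'\bigr)
  = \ep\bigl(\tfrac12,\chi,\psi'\bigr)^{2k}\cdot
    \det\bigl(-\chi(\Frob_E)\mid \C(\chi)^{I_E}\bigr)^{2k-1},
\]
using $\ep(\tfrac12,\chi,\psi')^{2} = \chi(-1)$ (functional equation for $\chi = \chi^\vee$) and a two-line case check for the determinant factor: if $\chi$ is ramified the $I_E$-invariants vanish so the determinant is $1$; if $\chi$ is unramified then $-\chi(\Frob_E)$ is $-1$ or $1$ according as $\chi = \1$ or not. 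In every case this factor equals $-\delta(\chi=\1)$, and the odd exponent $2k-1$ preserves the sign. You in fact sketch exactly this computation in your paragraph on the base case but stop short of executing it; the paper's point is that once you do, there is nothing left to induct on.
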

\begin{proof}
Since $\chi$ and $S_{2k}$ is self-dual representations with sign $+1$ and $-1$, respectively, 
we see that $\chi \otimes S_{2k}$ has sign $-1$.
By the definition of the $\ep$-factor, we have
\[
\ep(\chi \otimes S_{2k}) = \ep(\chi,\psi)^{2k} \cdot \det(-\chi(\Frob_E) | \C(\chi)^{I_E})^{2k-1}
=\chi(-1)^k \cdot \det(-\chi(\Frob_E) | \C(\chi)^{I_E})^{2k-1},
\]
where $\C(\chi)$ denotes the space of $\chi$.
If $\chi$ is ramified, then $\C(\chi)^{I_E}=0$ so that $\det(-\chi(\Frob_E) | \C(\chi)^{I_E})=1$.
If $\chi$ is unramified, then we have
\[
\det(-\chi(\Frob_E) | \C(\chi)^{I_E})=
\left\{
\begin{aligned}
&-1	\iif \chi=\1,\\
&1	\iif \text{$\chi$ is the unique non-trivial unramified quadratic character}.
\end{aligned}
\right.
\]
Hence for any quadratic character $\chi$, we have
$\det(-\chi(\Frob_E) | \C(\chi)^{I_E})=-\delta(\chi=\1)$.
\end{proof}
\vskip 5pt

The following lemma is \cite[Lemma 12.3]{GS} and \cite[Lemma A.6]{GI2}.
\begin{lem}\label{converse}
Let $\phi_1$, $\phi_2$ be a tempered representations of $\WD_E$
with same dimension $n$.
Assume that
\[
\gamma(s,\phi_1^\vee \otimes \phi_\rho, \psi') \cdot 
\gamma(-s, \phi_1 \otimes \phi_\rho^\vee, \psi'^{-1})
=\gamma(s,\phi_2^\vee \otimes \phi_\rho, \psi') \cdot 
\gamma(-s, \phi_2 \otimes \phi_\rho^\vee, \psi'^{-1})
\]
for every irreducible representation $\phi_\rho$ of $W_E$.
Then 
\[
\phi_1\cong \phi_2
\]
as representations of $\WD_E$.
\end{lem}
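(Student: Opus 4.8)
The plan is a \emph{pole-separation} argument at the level of rational functions in $q^{-s}$. Writing $\gamma(s,\Phi,\psi')=\ep(s,\Phi,\psi')\cdot L(1-s,\Phi^\vee)/L(s,\Phi)$ and observing that each $\ep$-factor is a monomial $c\cdot q^{\alpha s}$, entire and nowhere vanishing, the hypothesis becomes, for every irreducible representation $\phi_\rho$ of $W_E$, an identity of rational functions
\[
\frac{L(1-s,\phi_1\otimes\phi_\rho^\vee)\,L(1+s,\phi_1^\vee\otimes\phi_\rho)}{L(s,\phi_1^\vee\otimes\phi_\rho)\,L(-s,\phi_1\otimes\phi_\rho^\vee)}
= c(\phi_\rho)\,q^{\alpha(\phi_\rho)s}\cdot
\frac{L(1-s,\phi_2\otimes\phi_\rho^\vee)\,L(1+s,\phi_2^\vee\otimes\phi_\rho)}{L(s,\phi_2^\vee\otimes\phi_\rho)\,L(-s,\phi_2\otimes\phi_\rho^\vee)}.
\]
The target is to deduce from this the equalities $L(s,\phi_1\otimes\phi_\rho^\vee)=L(s,\phi_2\otimes\phi_\rho^\vee)$ for all such $\phi_\rho$; since $\phi_\rho$ is allowed to range over all of $\Irr(W_E)$, replacing $\phi_\rho$ by $\phi_\rho^\vee$ and using $(\phi_i^\vee\otimes\phi_\rho)^\vee=\phi_i\otimes\phi_\rho^\vee$ shows that the factors $L(s,\phi_i^\vee\otimes\phi_\rho)$ are then controlled as well.

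First I would bring in temperedness. Decomposing $\phi_i=\bigoplus_{n\geq1}\phi_{i,n}\boxtimes S_n$ with each $\phi_{i,n}$ a bounded representation of $W_E$, every $L$-factor above is zero-free and has all of its poles confined to the finitely many vertical lines $\mathrm{Re}(s)=-\half{n-1}$ (or their reflections). Since the two sides of the displayed identity differ only by the nowhere-vanishing monomial $c(\phi_\rho)q^{\alpha(\phi_\rho)s}$, their divisors of poles and of zeros must coincide; comparing these divisors vertical line by vertical line, working inward from the right-most line that actually occurs, one matches the two sides one $\SL_2$-type at a time and extracts $L(s,\phi_1\otimes\phi_\rho^\vee)=L(s,\phi_2\otimes\phi_\rho^\vee)$ (and likewise with $\phi_i^\vee\otimes\phi_\rho$). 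Finally, fixing a unitary irreducible $\phi_\rho$ and writing $L(s,\phi_i\otimes\phi_\rho^\vee)=\prod_{n\geq1}L\!\left(s+\half{n-1},\phi_{i,n}\otimes\phi_\rho^\vee\right)$, the order of the pole at $s=-\half{n-1}$ receives a contribution only from the $n$-th factor (any other factor would need a pole of a tempered $W_E$-factor off the imaginary axis) and equals $m_{\phi_{i,n}}(\phi_\rho)=m_{\phi_i}(\phi_\rho\boxtimes S_n)$ by Schur's lemma. Hence $m_{\phi_1}(\phi_\rho\boxtimes S_n)=m_{\phi_2}(\phi_\rho\boxtimes S_n)$ for all $n$ and all unitary irreducible $\phi_\rho$ of $W_E$, and since a tempered representation of $\WD_E$ is the direct sum of its pieces $\phi_\rho\boxtimes S_n$, this gives $\phi_1\cong\phi_2$.

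The main difficulty will be the pole-separation step. A priori a pole of $L(1-s,\phi_i\otimes\phi_\rho^\vee)$ can be cancelled by a zero originating from $L(-s,\phi_i\otimes\phi_\rho^\vee)$ — this occurs precisely when $\phi_i$ contains $\phi_\rho\boxtimes S_n$ and $\phi_\rho\boxtimes S_{n+2}$ with the same unramified twist — so the $L$-factors cannot be read off naively from a single $\phi_\rho$; the point is to organise the divisor comparison over all $\phi_\rho$ and all vertical lines simultaneously so that such cancellations are accounted for. This is exactly the bookkeeping carried out in \cite[Lemma 12.3]{GS} and \cite[Lemma A.6]{GI2}, which one may alternatively just cite.
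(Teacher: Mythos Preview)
Your proposal is correct and in fact goes further than the paper itself, which simply records that this is \cite[Lemma~12.3]{GS} and \cite[Lemma~A.6]{GI2} without reproducing any argument. Your pole-separation sketch is precisely the content of those references: compare divisors on the right-most vertical line (where no cancellation from a higher $\SL_2$-type can occur), read off the top multiplicity, strip it, and induct downwards. Since you end by citing the same two sources, your write-up is fully consistent with---and more informative than---the paper's treatment.
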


%\section{Local Langlands correspondence}\label{app LLC}
%\section{Local Langlands correspondence}\label{app LLC}
\section{Local Langlands correspondence}\label{app LLC}
In this paper, we assume the local Langlands correspondence for classical groups, 
which parametrizes irreducible representations.
For general linear groups, 
it was established by Harris--Taylor \cite{HT}, Henniart \cite{He}, and Scholze \cite{Sc}.
For other classical groups, 
it is known by Arthur \cite{Ar}, Mok \cite{Mo}, and Kaletha--M\'{i}nguez--Shin--White \cite{KMSW},
under some assumption on the stabilization of twisted trace formulas.
For this assumption, see also the series of papers
\cite{Stab1}, \cite{Stab2}, \cite{Stab3}, \cite{Stab4}, \cite{Stab5}, 
\cite{Stab6}, \cite{Stab7}, \cite{Stab8}, \cite{Stab9} and \cite{Stab10}
of Waldspurger and M{\oe}glin--Waldspurger, 
and papers of Chaudouard--Laumon \cite{CL1} and \cite{CL2}.
For metaplectic groups,
it was established by the second author and Savin \cite{GS}.
In this appendix, we summarize some of its properties which are used in this paper.

%\subsection{$L$-parameters and its component groups}
\subsection{Parameters and its component groups}
In this subsection, we define parameters and its component groups 
for (possibly disconnected) classical groups.
More precisely, see \cite{Ar} and \cite{GGP}.
\vskip 5pt

Fix $\epsilon\in\{\pm1\}$.
Let $V_m$ be an $\epsilon$-Hermitian space of dimension $m$ and
$G=H(V_m)$ be the isometry group of $V_m$.
Let $\Phi(H(V_m))$ is the set of equivalence classes of representations $\phi$ of $\WD_E$ 
of dimension $m-\epsilon_0$
which are
\[
\left\{
\begin{aligned}
&\text{conjugate self-dual with sign $(-1)^{m-1}$}, 
\iif E\not=F,\\
&\text{self-dual with sign $+1$ such that $\det(\phi)=\chi_V$}, 
\iif E=F,\ \epsilon=+1\text{ and }m\in2\Z,\\
&\text{self-dual with sign $-\epsilon$ such that $\det(\phi)=\1$}, \other.
\end{aligned}
\right.
\]
In particular, if $E=F$, $\epsilon=+1$ and $m=1$, then
$\Phi(H(V_1))=\{\text{the zero representation of $\WD_E$}\}$.
We call an element in $\Phi(H(V_m))$ a parameter for $H(V_m)$.
We denote by $\Phi_\temp(H(V_{m}))$ the subset of equivalence classes of
tempered representations.
\vskip 5pt

If $E=F$ and $G=H(V_m)$, we denote by $\widehat{G}$ the Langlands dual group of $G$.
It is given by
\[
\widehat{G}=
\left\{
\begin{aligned}
&\Sp_{m-1}(\C) \iif \text{$E=F$, $\epsilon=+1$ and $m$ is odd},\\
&\SO_{m+1}(\C) \iif \text{$E=F$, $\epsilon=-1$},\\
&\SO_m(\C)	\iif \text{$E=F$, $\epsilon=+1$ and $m$ is even}.
\end{aligned}
\right.
\]
Let $\phi \in \Phi(H(V_m))$. 
We denote the space of $\phi$ by $M$ and 
the $\WD_E$-invariant bilinear form on $M$ by $B$.
Let 
\[
C_\phi=\{g \in \GL(M)\ |\ 
\text{$B(gx,gy)=B(x,y)$ for any $x, y \in M$,
and 
$g \phi(w) = \phi(w) g$ for any $w \in \WD_E$}
\}
\]
be the centralizer of $\mathrm{Im}(\phi)$ in $\mathrm{Aut}(M,B)$.
Also we put
\[
C_\phi^+ =
\left\{
\begin{aligned}
&C_\phi \cap \SL(M) \iif \text{$E=F$ and $m$ is even},  \\
&C_\phi \other.
\end{aligned}
\right.
\]
Finally, we define the component groups $A_\phi$ and $A_\phi^+$ of $\phi$ by
\[
A_\phi = \pi_0(C_\phi)
\quad\text{and}\quad
A_\phi^+ = \pi_0(C_\phi^+),
\]
respectively.
\vskip 5pt

Let $\phi\in\Phi(H(V_m))$.
For an irreducible representation $\phi_0$ of $\WD_E$, 
we denote the multiplicity of $\phi_0$ in $\phi$ by $m_\phi(\phi_0)$.
We can decompose 
\[
\phi=m_1\phi_1+\dots+m_r\phi_r+\phi'+{}^c\phi'^\vee,
\]
where $\phi_1,\dots, \phi_r$ are distinct irreducible (conjugate) self-dual representations
of $\WD_E$ with the same type as $\phi$, $m_i=m_\phi(\phi_i)$, 
and $\phi'$ is a sum of irreducible representations of $\WD_E$
which are not (conjugate) self-dual with the same type as $\phi$.
Then by \cite[\S 4]{GGP}, $A_\phi$ is described as follows:
\[
A_\phi=\bigoplus_{i=1}^{r}(\Z/2\Z) a_i \cong (\Z/2\Z)^r.
\]
Namely, $A_\phi$ is a free $\Z/2\Z$-module of rank $r$ with a canonical basis $\{a_i\}$
indexed by the summands $\phi_i$ of $\phi$.
For $a=a_{i_1}+\dots+a_{i_k} \in A_\phi$ with $1\leq i_1 < \dots < i_k \leq r$, 
we put
\[
\phi^{a}=\phi_{i_1} \oplus \dots \oplus \phi_{i_k}.
\]
Also, we denote 
\[
z_\phi \coloneqq \sum_{i=1}^r m_\phi(\phi_i)\cdot a_i 
=\sum_{i=1}^s m_i \cdot a_i \in A_\phi.
\]
This is the image of $-\1$ in $C_\phi$.
We call $z_\phi$ the central element in $A_\phi$.
The determinant map $\det \colon \GL(M) \rightarrow \C^\times$ gives a homomorphism
\begin{align*}
\det \colon A_\phi \rightarrow \Z/2\Z, \quad
\sum_{i=1}^r \ep_i a_i \mapsto \sum_{i=1}^r \ep_i \cdot \dim(\phi_i),
\end{align*}
where $\ep_i\in\{0,1\} = \Z/2\Z$.
Then the group $A_\phi^+$ can be described as follows (\cite[Theorem 8.1]{GGP}):
\[
A_\phi^+=
\left\{
\begin{aligned}
&\ker(\det)\iif \text{$E=F$ and $m$ is even},\\
&A_\phi	\other.
\end{aligned}
\right.
\]
\vskip 5pt

We say that a parameter $\phi$ is discrete if
$m_i=1$ for any $i=1,\dots, r$ and $\phi'=0$, i.e.,
$\phi$ is a multiplicity-free sum of irreducible (conjugate) self-dual representations of $\WD_E$ 
with the same type as $\phi$.
We denote by $\Phi_\disc(H(V_{m}))$ the subset of equivalence classes of
discrete parameters.
Then we have a sequence
\[
\Phi_\disc(H(V_{m})) \subset \Phi_\temp(H(V_{m})) \subset \Phi(H(V_{m})).
\]

%\subsection{Local Langlands correspondence for classical groups}
\subsection{Local Langlands correspondence for connected classical groups}
In this subsection, we introduce $\Pi(H(V_m))$ and 
state some properties of the local Langlands correspondence which we need.
\vskip 5pt

First, we consider orthogonal groups.
So we assume that $E=F$ and $\epsilon=+1$,
and we write $H(V_m)=\Oo(V_m)$.
We define equivalence relations $\sim_{\det}$ on $\Irr(\Oo(V_m))$ and 
$\sim_\ep$ on $\Irr(\SO(V_m))$ by 
\[
\sigma \sim_{\det} \sigma \otimes \det
\quad\text{and}\quad
\sigma_0 \sim_{\ep} \sigma_0^\ep
\]
for $\sigma \in \Irr(\Oo(V_m))$ and $\sigma_0 \in \Irr(\SO(V_m))$. 
Here, we fix an element $\ep \in \Oo(V_m) \setminus \SO(V_m)$
and define $\sigma_0^\ep$ by $\sigma_0^\ep(h)=\sigma_0(\ep^{-1}h\ep)$ 
for $\sigma_0 \in \Irr(\SO(V_m))$ and $h \in \SO(V_m)$.
Note that $\sigma|\SO(V_m) \cong (\sigma \otimes \det)|\SO(V_m)$ for $\sigma \in \Irr(\Oo(V_m))$,
and 
$\Ind_{\SO(V_m)}^{\Oo(V_m)}(\sigma_0) \cong \Ind_{\SO(V_m)}^{\Oo(V_m)}(\sigma_0^\ep)$
for $\sigma_0 \in \Irr(\SO(V_m))$. 
The restriction and the induction give a canonical bijection 
\[
\Irr(\Oo(V_m))/\sim_{\det} \longleftrightarrow \Irr(\SO(V_m))/\sim_\ep.
\]
In \cite{Ar}, one has parametrized not $\Irr(\SO(V_m))$ but $\Irr(\SO(V_m))/\sim_\ep$.
Via the above bijection, we translate the parametrization for $\Irr(\Oo(V_m))/\sim_{\det}$.
\vskip 5pt

We return the general setting.
Let $E$ be either $F$ or a quadratic extension of $F$, 
$V_m$ be an $\epsilon$-Hermitian space of dimension $m$ for fixed $\epsilon \in \{\pm1\}$,
and $H(V_m)$ be the isometry group of $V_m$.
We define $\Pi(H(V_m))$ by
\[
\Pi(H(V_m))=
\left\{
\begin{aligned}
&\Irr(H(V_m))/\sim_{\det} \iif \text{$E=F$ and $\epsilon=+1$},\\
&\Irr(H(V_m))	\other.
\end{aligned}
\right.
\]
For $\pi \in \Irr(H(V_m))$, we denote the image of $\pi$ under the canonical map 
$\Irr(H(V_m)) \rightarrow \Pi(H(V_m))$ by $[\pi]$.
Also, we denote the image of $\Irr_*(H(V_m))$ in $\Pi(H(V_m))$ by $\Pi_*(H(V_m))$
for $*=\disc$ or $\temp$.
\vskip 5pt

If $E\not=F$ or $\epsilon=+1$, 
then there exist exactly two Witt towers $\VV$ and $\VV'$
such that $V_m \in \VV$ and
\[
\left\{
\begin{aligned}
&\dim(V_m) \equiv \dim(V'_{m'}) \bmod 2 \iif E\not=F, \\
&\disc(V_{m}) = \disc(V'_{m'}) \iif \text{$E=F$ and $\epsilon=+1$}
\end{aligned}
\right.
\]
for $V'_{m'} \in \VV'$.
Let $\VV^+$ be the Witt tower whose anisotropic space is
\[
\left\{
\begin{aligned}
&0	\iif \text{$E\not=F$ and $m$ is even},\\
&(E,1)	\iif \text{$E\not=F$, $m$ is odd and $\epsilon=+1$},\\
&(E,\delta)	\iif \text{$E\not=F$, $m$ is odd and $\epsilon=-1$},\\
&0	\iif \text{$E=F$, $m$ is even and $\disc(V_m)=1$},\\
&(F(\sqrt{d}), \tr_{F(\sqrt{d})/F})	
\iif \text{$E=F$, $m$ is even and $d \coloneqq \disc(V_m)\not=1$ in $F^\times/F^{\times2}$},\\
&(F, 2\disc(V_m))	\iif \text{$E=F$ and $m$ is odd}.
\end{aligned}
\right.
\]
We denote the other Witt tower by $\VV^-$.
A pure inner form of $H(V_m)$ is $H(V^+_m)$ or $H(V^-_m)$, where $V^\pm_m\in\VV^\pm$.
If $E=F$ and $\epsilon=-1$, a pure inner form of $H(V_m)$ is $H(V_m)$ itself only.
\vskip 5pt

Now we are ready to describe the desiderata for the Langlands correspondence.
\begin{des}\label{des}
\begin{enumerate}
\item
There exists a canonical surjection
\[
\bigsqcup_{V_m^\bullet} \Pi(H(V_m^\bullet))\rightarrow \Phi(H(V_m)),
\]
where $V_m^\bullet$ runs over the spaces such that 
$H(V_m^\bullet)$ is a pure inner form of $H(V_m)$.
For $\phi \in \Phi(H(V_m))$, we denote by $\Pi_\phi^0$
the inverse image of $\phi$ under this map,
and call $\Pi_\phi^0$ the $L$-packet of $\phi$.
\item
There exists a bijection
\[
\iota \colon \Pi_\phi^0 \rightarrow \widehat{A_\phi^+},
\]
which satisfies certain character identities.
Here, we denote by $\widehat{A_\phi^+}$ the Pontryagin dual of $A_\phi^+$.
\item\label{center}
Let $[\pi] \in \Pi_\phi^0$ with $\iota([\pi])=\eta$.
Then $[\pi] \in \Pi(H(V^-_m))$ if and only if $z_\phi \in A_\phi^+$ and $\eta(z_\phi)=-1$.
\item
We have 
\[
\bigsqcup_{V_m^\bullet} \Pi_*(H(V^\bullet_m)) = \bigsqcup_{\phi \in \Phi_*(H(V_m))}\Pi_\phi^0
\]
for $* \in \{\disc, \temp\}$.
\item
Assume that $\phi=\phi_\tau+\phi_0+{}^c\phi_\tau^\vee$, 
where $\phi_0$ is an element in $\Phi_\temp(H(V_{m_0}))$ and 
$\phi_\tau$ is an irreducible tempered representation of $\WD_E$
which corresponds to $\tau \in \Irr_\temp(\GL_k(E))$.
Then the induced representation 
\[
\Ind_Q^{H(V_m)}(\tau \otimes \pi_0)
\]
is a direct sum of tempered representations of $H(V_m)$,
where $Q$ is a parabolic subgroup of $H(V_m)$ with
Levi subgroup $L_Q=\GL_k(E) \times H(V_{m_0})$ and
$\pi_0$ is (a representative of) an element in $\Pi_{\phi_0}^0$.
The $L$-packet $\Pi_\phi^0$ is given by
\[
\Pi_\phi^0 = \{[\pi]\ |\ \pi \subset \Ind_Q^{H(V_m)}(\tau \otimes \pi_0), [\pi_0] \in \Pi_{\phi_0}^0\}.
\]
Moreover if $\pi \subset \Ind_Q^{H(V_m)}(\tau \otimes \pi_0)$, 
then $\iota([\pi])|A_{\phi_0}^+ = \iota([\pi_0])$.
\item
Assume that 
\[
\phi=\phi_{\tau_1}|\cdot|^{s_1} + \dots +\phi_{\tau_r}|\cdot|^{s_r} + \phi_0
+{}^c(\phi_{\tau_1}|\cdot|^{s_1} + \dots +\phi_{\tau_r}|\cdot|^{s_r})^\vee,
\]
where $\phi_0$ is an element in $\Phi_\temp(H(V_{m_0}))$, 
$\phi_{\tau_i}$ is an irreducible tempered representation of $\WD_E$
which corresponds to $\tau_i \in \Irr_\temp(\GL_{k_i}(E))$,
and $s_1 \geq \dots \geq s_r>0$.
Then the $L$-packet $\Pi_\phi^0$ consists of (the equivalence classes of)
the unique irreducible quotients $\pi$ of 
the standard modules
\[
\tau_1|\det|_F^{s_1} \times \dots \times \tau_r|\det|_F^{s_r} \rtimes \pi_0,
\]
where $\pi_0$ runs over (representatives of) elements of $\Pi_{\phi_0}^0$.
Moreover if $\pi$ is the unique irreducible quotient of 
$\tau_1|\det|_F^{s_1} \times \dots \times \tau_r|\det|_F^{s_r} \rtimes \pi_0$, 
then $\iota([\pi])|A_{\phi_0}^+ = \iota([\pi_0])$.
\item\label{G-hyp}
The local Langlands correspondence respects the standard $\gamma$-factor.
Namely, we have
\[
\gamma(s,\pi,\chi,\psi) = \gamma(s, \phi \otimes \chi, \psi_E)
\]
for $\pi \in \Irr(H(V_m))$ whose parameter is $\phi$, and any character $\chi$ of $E^\times$.
Here, we put $\psi_E=\psi \circ \tr_{E/F}$.
\item\label{P-hyp}
The Plancherel measures are invariants of an $L$-packet.
Namely, if $\pi_1,\pi_2$ have the same parameter $\phi$, then we have
\[
\mu_\psi(\tau_s\otimes \pi_1)=\mu_\psi(\tau_s\otimes \pi_2)
\]
for any $\tau\in\Irr(\GL_k(E))$.
In particular, by a result of Shahidi \cite{Sh}, we have
\[
\mu_\psi(\tau_s\otimes\pi)
=\gamma(s,\phi_\tau\otimes\phi^\vee,\psi_E)\cdot \gamma(-s,\phi_\tau^\vee\otimes\phi,\psi_E^{-1})
\cdot \gamma(2s,R\circ\phi_\tau,\psi)\cdot \gamma(-2s,R\circ\phi_\tau^\vee,\psi^{-1})
\]
for any $\pi$ whose parameter is $\phi\in\Phi(H(V_m))$,
where 
\[
R=
\left\{
\begin{aligned}
&\Asai^+	\iif \text{$E\not=F$ and $m$ is even},\\
&\Asai^-	\iif \text{$E\not=F$ and $m$ is odd},\\
&\Sym^2		\iif \text{$E=F$, $\epsilon=+1$ and $m$ is odd},\\
&\wedge^2	\other.
\end{aligned}
\right.
\]
\end{enumerate}
\end{des}

The desiderata \ref{des} (\ref{G-hyp}) and (\ref{P-hyp}), 
at least for quasi-split classical groups, should follow from \cite{Ar} and \cite{Mo}, 
supplemented by some results of many others.
For non-quasi-split unitary groups, see also \cite{KMSW} and \cite[\S 1.4, Theorem 1.4.1]{Moe2}.
\vskip 5pt

\begin{rem}\label{non-can}
The bijection $\iota \colon \Pi_\phi^0 \rightarrow \widehat{A_\phi^+}$ may not be canonical.
It is determined by a choice of a Whittaker datum of a quasi-split pure inner form $H(V_m^\bullet)$.
If $m$ is odd, then $H(V_m^\bullet)$ has a unique Whittaker datum, so that $\iota$ is canonical.
Otherwise, we choose the Whittaker datum such that
\[
\iota=
\left\{
\begin{aligned}
&J_{\psi^E}	&\text{in \cite{GI2}}	\iif \text{$E\not=F$ and $\epsilon = +1$},\\
&J_\psi	&\text{in \cite{GI2}}	\iif \text{$E\not=F$ and $\epsilon = -1$},\\
&\iota_{\w_1}	&\text{in \cite{At}}	\iif \text{$E=F$ and $\epsilon = +1$},\\
&\iota_{\w'_1}	&\text{in \cite{At}}	\iif \text{$E=F$ and $\epsilon = -1$}.
\end{aligned}
\right.
\]
Here, in the first case, we fix $\delta \in E$ such that $\tr_{E/F}(\delta)=0$ and put
$\psi^E(x) = \psi(\half{1}\tr_{E/F}(\delta x))$ for $x \in E$.
\end{rem}
\vskip 5pt

The $L$-parameter for the contragredient representation $\pi^\vee$ of $\pi$ 
is described by Kaletha \cite{Ka}.
\begin{prop}[{\cite[Theorem 4.9]{Ka}}]\label{contra}
Let $\pi \in \Irr(H(V_m))$ with $L$-parameter $(\phi_\pi,\eta_\pi)$.
We denote the $L$-parameter for $\pi^\vee$ by $(\phi_{\pi^\vee},\eta_{\pi^\vee})$.
Then we have $\phi_{\pi^\vee} = \phi_\pi^\vee$. 
In particular, the component groups $A_{\phi_\pi}^+$ and $A_{\phi_{\pi}^\vee}^+$ 
are canonically identified.
Moreover, we have $\eta_{\pi^\vee}=\eta_\pi \cdot \eta_0$,
where $\eta_0$ is given by
\[
\eta_0(a)=
\left\{
\begin{aligned}
&\omega_{E/F}(-1)^{\dim(\phi_\pi^{a})}	\iif \text{$E\not=F$ and $m$ is even},\\
&\det(\phi_\pi^{a})(-1)	\iif \text{$E=F$ and $\epsilon=-1$},\\
&1	\other
\end{aligned}
\right.
\]
for $a \in A_{\phi_\pi}^+$.
\end{prop}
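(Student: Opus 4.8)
The plan is to reduce the assertion to the case of discrete series representations and then to read it off from the endoscopic character identities that characterise the bijection $\iota$; this is essentially Kaletha's argument in \cite{Ka}, and I will only sketch it.

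First I would observe that both operations $\pi \mapsto \pi^\vee$ and $\phi \mapsto \phi^\vee$ are compatible with parabolic induction and with the Langlands quotient construction. Indeed, by Lemma \ref{sub-quot} the contragredient of the Langlands quotient of $\tau_1|\det|_E^{s_1} \times \dots \times \tau_r|\det|_E^{s_r} \rtimes \pi_0$ is the Langlands quotient of ${}^c\tau_1^\vee|\det|_E^{s_1} \times \dots \times {}^c\tau_r^\vee|\det|_E^{s_r} \rtimes \pi_0^\vee$, while dualising $\phi$ has the corresponding effect on the data entering Desideratum \ref{des} (6). Since there $\iota([\pi])|A_{\phi_0}^+ = \iota([\pi_0])$, and the summands $\phi_{\tau_i}|\cdot|^{s_i}$ contribute nothing to $A_\phi^+$, the identity $\eta_{\pi^\vee} = \eta_\pi \cdot \eta_0$ for general $\phi$ follows once it is known for $\phi_0$, so it suffices to treat $\pi \in \Pi_\temp(H(V_m))$, and then, using Desideratum \ref{des} (5), $\pi \in \Pi_\disc(H(V_m))$. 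For tempered $\pi$ the equality of last names $\phi_{\pi^\vee} = \phi_\pi^\vee$ follows from the compatibility of the LLC with $\gamma$-factors and Plancherel measures (Desideratum \ref{des} (7), (8)): dualising $\pi$ dualises the inducing data in the intertwining operators and, via Desideratum \ref{des} (8) and the local functional equation, replaces every twisted $\gamma$-factor $\gamma(s, \phi_\pi \otimes \rho, \psi_E)$ by $\gamma(s, \phi_\pi^\vee \otimes \rho, \psi_E)$ up to explicit constants which cancel; Lemma \ref{converse} then forces $\phi_{\pi^\vee} = \phi_\pi^\vee$, and this gives the canonical identification $A_{\phi_\pi}^+ \cong A_{\phi_\pi^\vee}^+$ sending the basis vector attached to $\phi_i$ to that attached to $\phi_i^\vee$.

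The first-name formula is the substantive point and the main obstacle. The bijection $\iota$ is pinned down by the requirement that, for a semisimple $s \in C_\phi^+$ with associated endoscopic group $H_s$ and parameter $\phi_s$, the virtual character $\sum_{[\pi]\in\Pi_\phi^0}\iota([\pi])(s)\,\Theta_\pi$ is the Langlands--Shelstad transfer, normalised by the fixed Whittaker datum $\w$, of the stable character of $\phi_s$. Applying $\pi \mapsto \pi^\vee$ replaces $\Theta_\pi(g)$ by $\Theta_\pi(g^{-1})$, which by the MVW property (Proposition \ref{mvw}) equals $\Theta_\pi(\delta g \delta^{-1})$ for a fixed $E$-conjugate-linear involution $\delta$; thus the whole identity is transported along $\mathrm{Ad}(\delta)$. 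Two discrepancies then have to be measured: (i) $\pi^\vee$ is generic not for $\w$ but for the Whittaker datum $\w^\vee = \mathrm{Ad}(\delta)\,\w$ (equivalently, the one obtained by replacing $\psi$ with $\psi^{-1}$); and (ii) the two normalisations $\iota_\w$ and $\iota_{\w^\vee}$ of the packet differ by an explicit character of $A_\phi^+$ read off from the Langlands--Shelstad transfer factors.

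Carrying out this comparison is exactly Kaletha's computation. When $m$ is odd there is a single Whittaker datum and the two effects are absent or cancel, so $\eta_0 = 1$. In the remaining cases the change-of-Whittaker-datum character is $a \mapsto \omega_{E/F}(-1)^{\dim(\phi_\pi^a)}$ for $E \ne F$ with $m$ even, and $a \mapsto \det(\phi_\pi^a)(-1)$ for $E = F$ with $\epsilon = -1$ — the parity phenomena forced by $\wedge^2$ versus $\Sym^2$ entering here — and $\eta_0 = 1$ otherwise. The hard part is the explicit evaluation of the Langlands--Shelstad transfer factors under the conjugate-linear involution $\delta$; once that is in hand, the rest is bookkeeping with the structure of $A_\phi^+$ from Desideratum \ref{des} and Appendix \ref{app LLC}.
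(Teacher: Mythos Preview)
The paper does not give its own proof of this proposition: it is stated with the attribution \cite[Theorem~4.9]{Ka} in the header and simply quoted as an input from Kaletha's work. Your sketch is therefore not competing with any argument in the paper itself; it is a summary of Kaletha's proof, and as such it is broadly accurate. The reduction to tempered and then discrete parameters via Desideratum~\ref{des} (5), (6), the identification of the last name $\phi_{\pi^\vee}=\phi_\pi^\vee$, the use of MVW to relate $\Theta_{\pi^\vee}$ to $\Theta_\pi$ transported by an outer involution, and the interpretation of $\eta_0$ as the change-of-Whittaker-datum character are exactly the ingredients of \cite{Ka}. If anything, your derivation of $\phi_{\pi^\vee}=\phi_\pi^\vee$ via Plancherel measures and Lemma~\ref{converse} is more circuitous than necessary: in Kaletha's setup this equality comes directly from applying the Chevalley involution to the character identities, rather than from analytic invariants. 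But this is a matter of taste, not a gap.
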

\vskip 5pt

\begin{rem}\label{Sp}
If $H(V_m) = \Sp(V_m)$ is a symplectic group, 
then $z_\phi \not \in A_\phi^+$ so that 
\[
A_\phi = A_\phi^+ \oplus (\Z/2\Z)z_\phi
\]
for each $\phi \in \Phi(\Sp(V_m))$.
Hence we may identify $\widehat{A_\phi^+}$ with
\[
\{\eta \in \widehat{A_\phi}\ |\ \eta(z_\phi) = 1\} \subset \widehat{A_\phi}.
\]
\end{rem}

If $H(V_m)$ is not an orthogonal group, we have $\Pi(H(V_m)) = \Irr(H(V_m))$.
In this case, we set $\Pi_\phi = \Pi_\phi^0$ for $\phi \in \Phi(H(V_m))$.
Using Remark \ref{Sp}, unless $H(V_m)$ is an orthogonal group, 
we may regard $\iota$ as an injection
\[
\iota \colon \Pi_\phi \hookrightarrow \widehat{A_\phi}.
\]
If $\pi \in \Pi_{\phi}$ and $\iota(\pi)=\eta \in \widehat{A_\phi}$, 
we call $(\phi,\eta)$ the $L$-parameter for $\pi$.

%\subsection{Local Langlands correspondence for full orthogonal groups}
\subsection{Local Langlands correspondence for full orthogonal groups}
In this subsection, we explain the parametrization of $\Irr(\Oo(V_m))$.
Through this subsection, we assume $E=F$ and $\epsilon=+1$,
so that $H(V_{m})=\Oo(V_{m})$.
For $\phi \in \Phi(\Oo(V_m))$, we define the $L$-packet $\Pi_\phi$ of $\Oo(V_m)$, which 
is a subset of $\sqcup_{V_m^\bullet}\Irr(\Oo(V_m^\bullet))$ by 
the inverse image of $\Pi_\phi^0$ under the canonical map
\[
\bigsqcup_{V_m^\bullet}\Irr(\Oo(V_m^\bullet)) \rightarrow 
\bigsqcup_{V_m^\bullet}\Pi(\Oo(V_m^\bullet)) 
= \bigsqcup_{V_m^\bullet}\Irr(\Oo(V_m^\bullet))/\sim_{\det}.
\]
In the rest of this subsection, we parametrize $\Pi_\phi$.
\vskip 5pt

First, we assume that $m$ is odd.
Then $\Oo(V_{m}) = \SO(V_m) \times \{\pm\1_{V_m}\}$.
Any representation $\pi \in \Irr(\Oo(V_m))$ is determined by 
its image $[\pi]$ in $\Irr(\Oo(V_{m}))/\sim_{\det}$
and its central character $\omega_\pi \colon \{\pm\1_{V_m}\} \rightarrow \C^\times$.
Hence we have a bijection
\[
\Pi_\phi \rightarrow \widehat{A_\phi} \times \{\pm1\},\ 
\pi \mapsto (\iota([\pi]), \omega_\pi(-\1_{V_m^\bullet})).
\]
If $\pi \in \Pi_\phi$ corresponds to $(\eta, \nu) \in \widehat{A_\phi} \times \{\pm1\}$, 
we call the triple $(\phi, \eta, \nu)$ the $L$-parameter for $\pi$.
\vskip 5pt

Next, we assume that $m$ is even.
For $\phi \in \Phi(\Oo(V_m))$, we have an inclusion $A_\phi^+ \subset A_\phi$, so that
we obtain a canonical surjection
\[
\widehat{A_\phi} \twoheadrightarrow \widehat{A_\phi^+}.
\]
\begin{prop}
For $\phi \in \Pi_\phi$, we have $\#\Pi_\phi = \# \widehat{A_\phi}$.
Moreover, the following are equivalent:
\begin{itemize}
\item
$[A_\phi : A_\phi^+]=2$; 
\item
$\pi \otimes \det \not\cong \pi$ for some $\pi \in \Pi_\phi$;
\item
$\pi \otimes \det \not\cong \pi$ for any $\pi \in \Pi_\phi$.
\end{itemize}
\end{prop}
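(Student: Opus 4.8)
The plan is to compare $\Pi_\phi$ with $\Pi_\phi^0$ through restriction to special orthogonal groups, and to interpret the index $[A_\phi:A_\phi^+]$ representation-theoretically. By definition $\Pi_\phi$ is the preimage of $\Pi_\phi^0$ under the surjection
\[
\bigsqcup_{V_m^\bullet}\Irr(\Oo(V_m^\bullet)) \twoheadrightarrow \bigsqcup_{V_m^\bullet}\Pi(\Oo(V_m^\bullet)) = \bigsqcup_{V_m^\bullet}\Irr(\Oo(V_m^\bullet))/\sim_{\det},
\]
whose fibres are exactly the sets $\{\pi,\pi\otimes\det\}$; such a fibre has one element when $\pi\cong\pi\otimes\det$ and two elements otherwise. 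By Clifford theory for the index-two inclusion $\SO(V_m^\bullet)\subset\Oo(V_m^\bullet)$, the first case occurs precisely when $\pi|_{\SO(V_m^\bullet)}$ is reducible, say $\sigma_0\oplus\sigma_0^\ep$ with $\sigma_0\not\cong\sigma_0^\ep$, and the second when $\pi|_{\SO(V_m^\bullet)}$ is irreducible (hence $\ep$-fixed). On the other hand Desideratum \ref{des}(2) gives a bijection $\iota\colon\Pi_\phi^0\to\widehat{A_\phi^+}$, so $\#\Pi_\phi^0=|A_\phi^+|$. Thus the whole proposition reduces to showing that the fibre size above is constant over $\Pi_\phi$ and equal to $[A_\phi:A_\phi^+]$: granting this, $\#\Pi_\phi=[A_\phi:A_\phi^+]\cdot|A_\phi^+|=|A_\phi|=\#\widehat{A_\phi}$, and the three listed conditions hold or fail simultaneously.

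The next step is the elementary identification of the index. Since $E=F$, $\epsilon=+1$ and $m$ is even, we have $A_\phi^+=\ker(\det\colon A_\phi\to\Z/2\Z)$, so $[A_\phi:A_\phi^+]\in\{1,2\}$, with value $2$ exactly when this $\det$ is nonzero, i.e. when some basis vector $a_i$ has $\det(a_i)=\dim(\phi_i)$ odd. Every irreducible self-dual representation of $\WD_F$ of odd dimension is orthogonal, so this says precisely that $\phi$ contains an irreducible orthogonal representation of odd dimension. The same count applied to $C_\phi$ and $C_\phi^+=C_\phi\cap\SL(M)$ shows $[A_\phi:A_\phi^+]=[C_\phi:C_\phi\cap\SO(m,\C)]$, so $[A_\phi:A_\phi^+]=2$ is also equivalent to $C_\phi\not\subset\SO(m,\C)$.

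What remains, and is the heart of the matter, is the following dichotomy: if $C_\phi\not\subset\SO(m,\C)$ then $\sigma_0^\ep\cong\sigma_0$ for every $\sigma_0$ in the $\SO$-packet attached to $\phi$ (equivalently $\pi\not\cong\pi\otimes\det$ for every $\pi\in\Pi_\phi$), while if $C_\phi\subset\SO(m,\C)$ then $\sigma_0^\ep\not\cong\sigma_0$ for every such $\sigma_0$ (equivalently $\pi\cong\pi\otimes\det$ for every $\pi\in\Pi_\phi$); in other words the $\det$-twist fixes either all or none of $\Pi_\phi$, according as $C_\phi$ does or does not meet $\Oo(m,\C)\setminus\SO(m,\C)$. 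Combined with the Clifford-theoretic description of the fibres in the first paragraph, this gives constant fibre size $[A_\phi:A_\phi^+]$, and the conclusion follows as indicated; it also recovers the equivalence recorded at the end of Section \ref{LLC}. I expect this dichotomy to be the main obstacle: it is a feature of the fine structure of Arthur's classification for even special orthogonal groups — morally, when $\phi$ has an odd orthogonal summand the determinant-$(-1)$ symmetry of $\phi$ realises the outer automorphism of $\SO(V_m^\bullet)$, which therefore acts trivially on the packet, and otherwise it acts without fixed points — and it was worked out for full orthogonal groups in the prequel \cite{AG} (see also \cite[\S 8]{GGP}). I would invoke it from there rather than reprove it, the remaining arguments being bookkeeping with finite abelian groups and the representation theory of an index-two subgroup.
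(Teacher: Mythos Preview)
Your argument is correct and is essentially an unpacking of the paper's own proof, which consists of the single sentence ``This follows from \cite[Proposition 3.2]{At}.'' The Clifford-theoretic reduction and index computation you spell out are exactly what underlies that citation, and the key dichotomy you isolate (constant fibre size governed by whether $C_\phi\subset\SO(m,\C)$) is precisely the content of \cite[Proposition 3.2]{At}; your choice to cite \cite{AG} instead is harmless, as the relevant fact is established in both.
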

\begin{proof}
This follows from \cite[Proposition 3.2]{At}.
\end{proof}
\vskip 5pt

We fix $\epsilon \in \Oo(V_{m}) \setminus \SO(V_m)$ as in \cite{AG}, 
which depends on the choice of Whittaker datum.
Then \cite[Theorem 2.2.4]{Ar} gives a bijection
\[
\iota \colon \Pi_\phi \rightarrow \widehat{A_\phi}
\]
which satisfies a similar condition of Desiderata \ref{des} (2) -- (8), and
such that the diagram
\[
\begin{CD}
\Pi_\phi @>\iota>> \widehat{A_\phi}\\
@VVV @VVV\\
\Pi_\phi^0 @>\iota>> \widehat{A_\phi^+}
\end{CD}
\]
is commutative.
More precisely, see \cite{AG}.
If $\pi \in \Pi_\phi$ and $\iota(\pi)=\eta$, 
we call $(\phi, \eta)$ the $L$-parameter for $\pi$.

%\subsection{Local Langlands correspondence for metaplectic groups}
\subsection{Local Langlands correspondence for metaplectic groups}
In this subsection, we explain the parametrization of $\Irr(\Mp(W_{2n}))$.
Let $(W_{2n},V_m)$ be as in \S \ref{spaces}.
Through this subsection, we assume $E=F$, $\epsilon=+1$ and $m=2n+1$,
so that $G(W_{2n})=\Mp(W_{2n})$ and $H(V_{m})=\Oo(V_{2n+1})$.
\vskip 5pt

First, we recall a result of Gan--Savin.
\begin{thm}[{\cite[Theorem 1.1 and Corollary 1.2]{GS}}]\label{Mp-O}
Let $c \in F^\times / F^{\times2}$.
The theta correspondence gives a natural bijection (depending on the choice of $\psi$)
\[
\Irr(\Mp(W_{2n})) \rightarrow \bigsqcup_{V_{2n+1}^\bullet}\Irr(\Oo(V_{2n+1}^\bullet))/\sim_{\det}
=\bigsqcup_{V_{2n+1}^\bullet}\Pi(\Oo(V_{2n+1}^\bullet)),
\]
where the union is taken over all the isomorphism classes of orthogonal spaces
$V_{2n+1}^\bullet$ over $F$ with $\dim(V_{2n+1}^\bullet)=2n+1$ and $\disc(V_{2n+1}^\bullet)=c$.
\end{thm}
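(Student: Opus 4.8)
The plan is to deduce this from the Howe duality conjecture (Theorem \ref{howe}) together with the conservation relation (Proposition \ref{cons}), following Gan--Savin \cite{GS}. Fix $c \in F^\times/F^{\times 2}$ and the additive character $\psi$ (together with the auxiliary characters $\bchi$) used to define the genuine Weil representations of the dual pairs $\Mp(W_{2n}) \times \Oo(V_{2n+1}^\bullet)$. The orthogonal spaces of dimension $2n+1$ and discriminant $c$ are the members $V_{2n+1}^+ \in \VV^+$ and $V_{2n+1}^- \in \VV^-$ of the companion pair of Witt towers of discriminant $c$ described in \S\ref{FT}. I would then define the map in the statement by sending $\pi \in \Irr(\Mp(W_{2n}))$ to the class $[\theta(\pi)]$ of its theta lift at this level.

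First I would establish the dichotomy on the metaplectic side. For $\pi \in \Irr(\Mp(W_{2n}))$ the conservation relation reads $m^+(\pi)+m^-(\pi) = 4n+4$ (here $\epsilon_0 = 1$), so $m^{\down}(\pi)\leq 2n+2$; since $\VV^\pm$ consist of odd-dimensional spaces this forces $m^{\down}(\pi)\leq 2n+1$ and $m^{\up}(\pi)\geq 2n+3$. By the tower property (Proposition \ref{tower}), exactly one of $\Theta_{V_{2n+1}^+,W_{2n}}(\pi)$ and $\Theta_{V_{2n+1}^-,W_{2n}}(\pi)$ is nonzero, and by Theorem \ref{howe} it has a unique irreducible quotient $\theta(\pi) \in \Irr(\Oo(V_{2n+1}^\bullet))$. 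Applying Theorem \ref{howe} once more in the opposite direction (the surjection $\Theta_{V_{2n+1}^\bullet,W_{2n}}(\pi)\twoheadrightarrow\theta(\pi)$ forces $\Theta_{W_{2n},V_{2n+1}^\bullet}(\theta(\pi))\neq 0$ with $\pi$ as its unique irreducible quotient) yields $\theta_{W_{2n},V_{2n+1}^\bullet}(\theta(\pi))\cong\pi$, so $\pi\mapsto\theta(\pi)$ is injective into $\bigsqcup_{V_{2n+1}^\bullet}\Irr(\Oo(V_{2n+1}^\bullet))$.

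Next I would analyze the image, where the key point is the analogous dichotomy on the orthogonal side. For $\sigma \in \Irr(\Oo(V_{2n+1}^\bullet))$ the conservation relation reads $m^+(\sigma)+m^-(\sigma) = 4n+2$ (now $\epsilon_0 = -1$), and for an odd orthogonal group the two indices $m^\pm(\sigma)$ are attached, via the definition in \S\ref{FT}, to the two representations $\sigma$ and $\sigma\otimes\det$, which have opposite central characters because $-1_{V_{2n+1}}$ has determinant $-1$. Hence $m^{\down}(\sigma)\leq 2n$ and $m^{\up}(\sigma)\geq 2n+2$, so exactly one of $\Theta_{W_{2n},V_{2n+1}^\bullet}(\sigma)$ and $\Theta_{W_{2n},V_{2n+1}^\bullet}(\sigma\otimes\det)$ is nonzero. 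It follows that the image of $\pi\mapsto\theta(\pi)$ meets each $\sim_{\det}$-class at most once, and in fact exactly once: given $[\sigma]$, take the representative $\widetilde\sigma$ with $\Theta_{W_{2n},V_{2n+1}^\bullet}(\widetilde\sigma)\neq 0$ and put $\pi = \theta_{W_{2n},V_{2n+1}^\bullet}(\widetilde\sigma)$, so that $\theta(\pi)=\widetilde\sigma$ and $[\theta(\pi)]=[\sigma]$. Therefore $\pi\mapsto[\theta(\pi)]$ is a bijection onto $\bigsqcup_{V_{2n+1}^\bullet}\Pi(\Oo(V_{2n+1}^\bullet))$.

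I expect the main difficulty to lie not in any single step but in the bookkeeping that underlies the orthogonal-side dichotomy and the naturality of the construction: identifying the two first-occurrence indices $m^\pm(\sigma)$ of an odd orthogonal representation with the central-character split and checking this against the conventions of \S\ref{FT}, and verifying that varying $\psi$ (resp.\ $c$) produces exactly the asserted bijections onto spaces of discriminant $c$. All of this is carried out in \cite{GS}.
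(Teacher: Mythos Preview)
Your argument is correct and matches the approach the paper itself sketches in the paragraph immediately following the theorem (and which underlies \cite{GS}): the metaplectic-side dichotomy from the conservation relation and tower property, the orthogonal-side dichotomy identifying $m^\pm(\sigma)$ with the $\{\sigma,\sigma\otimes\det\}$ split via the central character of $-\1_{V_{2n+1}}$, and the mutual inverse property of $\theta$ from Howe duality. Note that the paper does not give its own proof here but simply cites \cite{GS}; your write-up fills in exactly the details the paper summarizes, so there is nothing to add.
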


We describe this map more precisely.
There are exactly two isomorphism classes $V_{2n+1}$ and $V_{2n+1}'$ 
such that $\dim(V_{2n+1}) = \dim(V_{2n+1}') = 2n+1$ and 
$\disc(V_{2n+1}) = \disc(V_{2n+1}')=c$.
For $\pi \in \Irr(\Mp(W_{2n}))$, exactly one of two theta lifts 
$\Theta_{V_{2n+1},W_{2n}}(\pi)$ and $\Theta_{V_{2n+1}',W_{2n}}(\pi)$
is nonzero. 
If $\Theta_{V_{2n+1}^\bullet,W_{2n}}(\pi)$ is nonzero, then
the image of $\pi$ under this map is $[\theta_{V_{2n+1}^\bullet,W_{2n}}(\pi)]$.
Also, the inverse image can be described as follows:
For $\sigma \in \Irr(\Oo(V_{2n+1}^\bullet))$, 
exactly one of two theta lifts $\Theta_{W_{2n},V_{2n+1}^\bullet}(\sigma)$ and
$\Theta_{W_{2n},V_{2n+1}^\bullet}(\sigma \otimes \det)$ is nonzero,
and the image of $[\sigma] \in \Pi(\Oo(V_{2n+1}^\bullet))$
under the inverse map is the nonzero small theta lift $\theta_{W_{2n}, V_{2n+1}^\bullet}(\sigma)$ 
or $\theta_{W_{2n}, V_{2n+1}^\bullet}(\sigma \otimes \det)$.

\begin{cor}
The theta correspondence for $(\Mp(W_{2n},\Oo(V_{2n+1}^\bullet)))$ 
with $\disc(V_{2n+1}^\bullet)=1$
and the local Langlands correspondence for $\Oo(V_{2n+1}^\bullet)$
gives a surjection (depending on $\psi$) 
\[
\Irr(\Mp(W_{2n})) \rightarrow  \Phi(\Oo(V_{2n+1})).
\]
For $\phi \in \Phi(\Oo(V_{2n+1}))$, 
we denote by $\Pi_\phi$ the inverse image of $\phi$ under this map, 
and call $\Pi_\phi$ the $L$-packet of $\phi$. 
Moreover, the composition of $\iota$ for $\Oo(V_{2n+1})$ and theta lifts 
gives a bijection (depending on $\psi$) 
\[
\iota \colon \Pi_\phi \rightarrow \widehat{A_\phi}.
\]
\end{cor}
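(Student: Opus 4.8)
The plan is to obtain this corollary as a purely formal consequence of two results that are already in hand: the Gan--Savin parametrization of $\Irr(\Mp(W_{2n}))$ by theta lifting (Theorem \ref{Mp-O}), and the local Langlands correspondence for odd orthogonal groups as recorded in Desideratum \ref{des}. There is no new input to produce; the work consists only in checking that the targets of these two maps match so that they can be composed.

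First I would fix the square class appearing in Theorem \ref{Mp-O} to be the discriminant $1$, and apply that theorem: theta lifting relative to $\psi$ gives a bijection
\[
\Irr(\Mp(W_{2n})) \;\xrightarrow{\ \sim\ }\; \bigsqcup_{V_{2n+1}^\bullet} \Pi(\Oo(V_{2n+1}^\bullet)),
\]
where $V_{2n+1}^\bullet$ runs over the two isometry classes of $(2n+1)$-dimensional quadratic spaces of discriminant $1$. These are precisely $V_{2n+1}^+$ and its companion $V_{2n+1}^-$, and their isometry groups are exactly the pure inner forms of $\Oo(V_{2n+1})$; hence the right-hand side is the disjoint union of $\Pi(\Oo(V_{2n+1}^\bullet))$ over pure inner forms, which by Desideratum \ref{des} (1) equals $\bigsqcup_{\phi \in \Phi(\Oo(V_{2n+1}))} \Pi_\phi^0$. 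Passing from $\Irr(\Oo)$ to $\Pi(\Oo) = \Irr(\Oo)/\!\sim_{\det}$ on the orthogonal side is not a loss of information but is forced: there is no $\det$ available to twist by on $\Mp(W_{2n})$, and $\theta_{V_{2n+1}^\bullet,W_{2n}}(\pi)$ is itself only well-defined up to such a twist, so its class $[\theta_{V_{2n+1}^\bullet,W_{2n}}(\pi)]$ is the natural object. Since $m = 2n+1$ is odd, $\Phi(\Oo(V_{2n+1}))$ does not depend on the discriminant, so the choice is harmless.

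Next I would use Desideratum \ref{des} (2): because $m = 2n+1$ is odd we have $A_\phi^+ = A_\phi$ for every $\phi$, so $\iota$ is already a bijection $\Pi_\phi^0 \xrightarrow{\ \sim\ } \widehat{A_\phi}$ (with the Whittaker normalization fixed in Appendix \ref{app LLC}). Composing the Gan--Savin bijection with the canonical surjection $\bigsqcup_{V_{2n+1}^\bullet}\Pi(\Oo(V_{2n+1}^\bullet)) \to \Phi(\Oo(V_{2n+1}))$ produces the asserted surjection $\Irr(\Mp(W_{2n})) \to \Phi(\Oo(V_{2n+1}))$; setting $\Pi_\phi$ to be its fibre over $\phi$ — equivalently, the preimage of $\Pi_\phi^0$ under theta — the Gan--Savin bijection restricts to a bijection $\Pi_\phi \xrightarrow{\ \sim\ } \Pi_\phi^0$, and composing with $\iota$ gives the desired bijection $\iota \colon \Pi_\phi \to \widehat{A_\phi}$. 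Dependence on $\psi$ enters only through Theorem \ref{Mp-O} and, for $\iota$, through the fixed Whittaker datum on the orthogonal side.

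I do not expect a real obstacle: the substance is entirely carried by Theorem \ref{Mp-O} and the stated properties of the LLC, and the proof reduces to the composability check above — that the two discriminant-$1$ spaces of Theorem \ref{Mp-O} are exactly the pure inner forms indexing the orthogonal $L$-packets, and that $A_\phi^+ = A_\phi$ in the odd case. The only point deserving a word of care is the compatibility between the description in Theorem \ref{Mp-O} of which space $V_{2n+1}^\bullet$ carries a given lift and the central-element condition of Desideratum \ref{des} (3) (which records the same information on the LLC side); but both are governed by the single sign $\iota(\pi)(z_\phi)$, so this compatibility is automatic.
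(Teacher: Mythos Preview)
Your proposal is correct and follows exactly the route the paper intends: the corollary is stated without proof because it is an immediate formal consequence of composing the Gan--Savin bijection of Theorem \ref{Mp-O} (with $c=1$) with the surjection and packet bijection of Desideratum \ref{des} (1)--(2), using that $A_\phi^+ = A_\phi$ for odd $m$. Your write-up in fact supplies more detail than the paper does.
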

We define $\Phi(\Mp(W_{2n})) \coloneqq \Phi(\Oo(V_{2n+1}))$.
For $*=\disc$ or $\temp$, we put 
$\Phi_*(\Mp(W_{2n})) \coloneqq \Phi_*(\Oo(V_{2n+1}))$.
Then by \cite[Theorem 1.3]{GS},
we see that Desideratum \ref{des} (1), (2), (4), (5), (6), (7) and 
(8) for $R=\Sym^2$ are satisfied.
\vskip 5pt

We also need to know the theta correspondence for $(\Mp(W_{2n}, \Oo(V_{2n+1})))$
with $\disc(V_{2n+1})=c$.
Then $\chi_{V}=\chi_c$, where $\chi_c$ is the quadratic character of $F^\times$ 
associated to $c \in F^\times / F^{\times2}$.
\begin{thm}\label{Mp}
We write $c = \disc(V_{2n+1})$.
Let $\pi \in \Irr(\Mp(W_{2n}))$ and $\sigma \in \Irr(\Oo(V_{2n+1}))$
with $L$-parameters $(\phi_\pi,\eta_\pi)$ and $(\phi_\sigma, \eta_\sigma)$, respectively.
Assume that $\sigma=\theta_{V_{2n+1},W_{2n}}(\pi)$.
Then we have the following:
\begin{enumerate}
\item
We have
\[
\phi_\sigma = \phi_\pi \otimes \chi_c.
\]
In particular, we have a canonical identification $A_{\phi_\pi} = A_{\phi_\sigma}$.
\item
The characters $\eta_\pi$ and $\eta_{\sigma}$ are related by
\[
\eta_\sigma(a) / \eta_\pi(a) = 
\ep(\phi_\pi^{a}) \cdot \ep(\phi_\pi^{a} \otimes \chi_c) 
\cdot \chi_c(-1)^{\half{1}\dim(\phi_\pi^{a})} \in \{\pm1\}
\]
for any $a \in A_{\phi_\pi} = A_{\phi_\sigma}$.
\item
Let $(\phi_{\pi^\vee},\eta_{\pi^\vee})$ be the $L$-parameter for $\pi^\vee \in \Irr(\Mp(W_{2n}))$.
Then we have
\[
\phi_{\pi^\vee} = \phi_\pi \otimes \chi_{-1}
\quad\text{and}\quad
\eta_{\pi^\vee}(a) / \eta_\pi(a) = 
\ep(\phi_\pi^{a}) \cdot \ep(\phi_\pi^{a} \otimes \chi_{-1}) 
\cdot \chi_{-1}(-1)^{\half{1}\dim(\phi_\pi^{a})} \in \{\pm1\}
\]
for any $a \in A_{\phi_\pi} = A_{\phi_{\pi^\vee}}$.
\end{enumerate}
\end{thm}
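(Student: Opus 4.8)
The plan is to reduce everything to the case $\disc(V_{2n+1})=1$ --- which is precisely how Gan--Savin (Theorem \ref{Mp-O} and the corollary following it) define the local Langlands correspondence for $\Mp(W_{2n})$ --- and to isolate the effect of a general discriminant as a root number, via the doubling method.

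\textbf{Part (1): last names.} Here we are in the equal-rank situation, so $l=n-m+\epsilon_0=2n-(2n+1)+1=0$. Specializing Theorem \ref{Pmeas} gives $\mu_\psi(\tau_s\chi_c\otimes\pi)=\mu_\psi(\tau_s\otimes\sigma)$ for every $\tau\in\Irr(\GL_k(E))$, because the correction factor $\gamma(s+\half{1},\tau,\psi_E)\gamma(-s+\half{1},\tau^\vee,\psi_E^{-1})$ equals $1$ by the functional equation. Applying Desideratum \ref{des} (\ref{P-hyp}) on both sides (both $\Mp(W_{2n})$ and $\Oo(V_{2n+1})$ have $R=\Sym^2$, and $\Sym^2(\phi_\tau\otimes\chi_c)=\Sym^2(\phi_\tau)$ since $\chi_c$ is quadratic), the adjoint Plancherel factors cancel and one obtains $\gamma(s,\phi_\tau\otimes(\phi_\pi\chi_c)^\vee,\psi_E)\gamma(-s,\phi_\tau^\vee\otimes\phi_\pi\chi_c,\psi_E^{-1})=\gamma(s,\phi_\tau\otimes\phi_\sigma^\vee,\psi_E)\gamma(-s,\phi_\tau^\vee\otimes\phi_\sigma,\psi_E^{-1})$ for every irreducible $\phi_\tau$ of $W_E$. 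When $\pi$ (hence $\sigma$) is tempered, Lemma \ref{converse} forces $\phi_\sigma=\phi_\pi\otimes\chi_c$. For non-tempered $\pi$ one reduces to the tempered --- indeed discrete series --- case, since both the equal-rank theta lift and the operation $\phi\mapsto\phi\otimes\chi_c$ are compatible with parabolic induction and with passage to Langlands quotients (Desideratum \ref{des} (5), (6) and the analogue of Corollary \ref{pi0}).

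\textbf{Part (2): first names.} I would exploit that scaling the form of $V_{2n+1}$ by $c$ leaves the group $\Oo(V_{2n+1})$ unchanged, multiplies its discriminant by $c$ (as $2n+1$ is odd), and, on the Weil representation of $W_{2n}\otimes V_{2n+1}$, amounts to replacing $\psi$ by $\psi_c$; hence $\theta_{V_{2n+1}^{(c)},W_{2n},\psi}(\pi)=\theta_{V_{2n+1}^{(1)},W_{2n},\psi_c}(\pi)$ with $\disc(V_{2n+1}^{(1)})=1$. By the Gan--Savin construction and the normalization of $\iota$ for $\Mp(W_{2n})$ by $\psi$, the parameter $(\phi_\sigma,\eta_\sigma)$ of the right-hand side is the one attached to $\pi$ through the discriminant-$1$ lift but computed with $\psi_c$ in place of $\psi$. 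The change $\psi\mapsto\psi_c$ of Whittaker datum of $\Mp(W_{2n})$ twists $\iota$ by the character $a\mapsto\ep(\phi_\pi^a)\cdot\ep(\phi_\pi^a\otimes\chi_c)\cdot\chi_c(-1)^{\half{1}\dim(\phi_\pi^a)}$ of $A_{\phi_\pi}$ --- the standard change-of-Whittaker-datum formula in the LLC --- which together with Part (1) gives the asserted ratio $\eta_\sigma(a)/\eta_\pi(a)$. The same discrepancy can be produced intrinsically by applying the doubling see-saw computation of \cite[\S 11, \S 12]{GI1} to $\sigma$ in the almost-equal-rank pair $(\Oo(V_{2n+1}),\Mp(W_{2n}))$ together with Prasad's conjecture (Theorem \ref{PE}), which serves as an independent consistency check.

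\textbf{Part (3) and the main obstacle.} The contragredient statement follows by combining Parts (1) and (2) with the MVW involution: since $\Oo(V_{2n+1})$ is of MVW type we have $\sigma^\vee\cong\sigma$, while $\pi^\vee$ is the conjugate of $\pi$ by a similitude of factor $-1$, so $\theta_{V_{2n+1},W_{2n},\psi}(\pi^\vee)=\theta_{V_{2n+1},W_{2n},\psi_{-1}}(\pi)$; re-running Parts (1) and (2) with $c$ replaced by $-1$ then yields $\phi_{\pi^\vee}=\phi_\pi\otimes\chi_{-1}$ and the formula for $\eta_{\pi^\vee}/\eta_\pi$ (this is also Proposition \ref{contra} transported through the Gan--Savin bijection). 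I expect the delicate point throughout to be Part (2): pinning down, without sign errors, the two normalizations of $\iota$ (that for $\Mp(W_{2n})$ depends on $\psi$, that for $\Oo(V_{2n+1})$ is canonical by uniqueness of its Whittaker datum) and propagating the root-number discrepancy through the correspondence. This is exactly where the doubling see-saw of \cite{GI1} and the epsilon-dichotomy of the Gross--Prasad conjecture must be used; by contrast the last-name statement (1) and the contragredient statement (3) are comparatively formal once (2) and the $\gamma$-factor compatibility are in hand.
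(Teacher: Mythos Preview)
Your sketch is essentially correct and tracks the argument in \cite{GS}, which is all the paper does here: its proof is the one-line citation ``This follows from \cite[Theorem~1.5]{GS}. See also \cite[\S3.6]{At}.'' So you are not diverging from the paper; you are unpacking what it cites.

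Two points worth tightening. First, in Part~(2) your reduction ``scale the form by $c$ to trade $\disc=c$ for $\psi\mapsto\psi_c$'' is the right move, but the sentence ``the change $\psi\mapsto\psi_c$ of Whittaker datum of $\Mp(W_{2n})$ twists $\iota$ by the character $a\mapsto\ep(\phi_\pi^a)\ep(\phi_\pi^a\otimes\chi_c)\chi_c(-1)^{\frac12\dim\phi_\pi^a}$ --- the standard change-of-Whittaker-datum formula'' is exactly the statement to be proved: for $\Mp$ this formula is not inherited from Kaletha's theorem for linear groups, it is the content of \cite[Theorem~1.5]{GS}, established there via the doubling see-saw and the epsilon dichotomy (the central-value computation of the doubling zeta integral). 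You do point to the see-saw, but you should present it as the proof rather than as an ``independent consistency check''. Second, your parenthetical reference to Theorem~\ref{PE} is misplaced: that is the unitary equal-rank case ($E\neq F$), not the metaplectic--odd-orthogonal case; the relevant input here is the $\Mp$--$\Oo_{2n+1}$ epsilon dichotomy of \cite{GS} and \cite[Theorem~11.1]{GI1}.

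Part~(1) via Plancherel measures and Lemma~\ref{converse} is clean and is how \cite{GS} argues; your observation that the adjoint $\Sym^2$ factors match (since $\chi_c^2=\1$) is correct. Part~(3) via MVW and the $\psi\mapsto\psi_{-1}$ trick is also the standard route; just be careful that scaling $V^{(1)}$ by $-1$ sends one pure inner form to a specific pure inner form of discriminant $-1$, and that this bookkeeping is absorbed into $\eta(z_\phi)$ on both sides.
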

\begin{proof}
This follows from {\cite[Theorem 1.5]{GS}}.
See also \cite[\S 3.6]{At}.
\end{proof}

%\section{Gross--Prasad conjectures}
\section{Gross--Prasad conjecture}\label{GPconj}
To prove main theorems, we used two highly non-trivial results.
The one is the Gross--Prasad conjecture, 
which gives an answer for restriction problems.
The other is Prasad's conjectures, which describe the local theta correspondence 
for (almost) equal rank cases.
In this appendix, we state the Gross--Prasad conjecture (GP).
\vskip 5pt

The Gross--Prasad conjecture consists of four cases; 
orthogonal, hermitian, symplectic-metaplectic, and skew-hermitian cases.
For each cases, the statements are slightly different.
So we states each cases separately.
We refer the reader to \cite[\S 6 and \S 18]{GGP} for 
a discussion of the various subtleties in the definition of the characters which appear in the statements of conjecture.
\vskip 5pt

First, we state the GP conjecture for the orthogonal cases.
\begin{thm}[GP conjecture for the orthogonal cases]\label{GGP-O}
For an orthogonal space $V_m^\bullet$, 
we put $V_{m+1}^\bullet=V_m^\bullet\oplus L_{(-1)^{m+1}}$, 
where $L_{(-1)^{m+1}}$ is the orthogonal space of dimension $1$ and discriminant $(-1)^{m+1}$.
We set $V_{\mathrm{even}}$ and $V_{\mathrm{odd}}$ so that 
\[
\{V_{\mathrm{even}},\ V_{\mathrm{odd}}\} = \{V_m,\ V_{m+1}\}
\quad\text{and}\quad
\dim(V_{\mathrm{even}}) \in 2\Z.
\]
For $\phi \in \Phi_\temp(\Oo(V_{\mathrm{even}}))$, 
$\phi' \in \Phi_\temp(\Oo(V_{\mathrm{odd}}))$
and $\nu \in \{\pm1\}$, 
there exists a unique pair $(\sigma,\sigma') \in \Pi_\phi \times \Pi_{\phi'}$ 
such that
\begin{itemize}
\item
$\sigma \otimes \sigma'$ is a representation of 
$\Oo(V_m^\bullet) \times \Oo(V_{m+1}^\bullet)$ for some $V_m^\bullet$;
\item
the central character of $\sigma'$ corresponds to $\nu$;
\item
$\Hom_{\Oo(V_m^\bullet)}(\sigma \otimes \sigma', \C)\not=0$.
\end{itemize}
Moreover, $\iota(\sigma)$ and $\iota(\sigma')$ are given by
\begin{align*}
&\left\{
\begin{aligned}
\iota(\sigma)(a) &= \ep(\phi^{a} \otimes \phi') 
\cdot \det(\phi^{a})(-1)^{\half{1}\dim(\phi')} \cdot \nu^{\dim(\phi^{a})},\\
\iota(\sigma')(a') &= \ep(\phi \otimes \phi'^{a'}) 
\cdot \det(\phi)(-1)^{\half{1}\dim(\phi'^{a'})} 
\end{aligned}
\right.
\end{align*}
for $a \in A_\phi$ and $a' \in A_{\phi'}$.
\end{thm}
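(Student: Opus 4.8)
The plan is to deduce Theorem~\ref{GGP-O} from Waldspurger's theorem (\cite{W2}--\cite{W5}), which is the corresponding statement for special orthogonal groups, together with the comparison between the local Langlands correspondence for $\Oo(V_m)$ and for $\SO(V_m)$ carried out in the prequel \cite{AG}. First I would record the multiplicity-one property: for tempered $\sigma,\sigma'$ one has $\dim\Hom_{\Oo(V_m^\bullet)}(\sigma\otimes\sigma',\C)\leq1$, which in the orthogonal case holds for the full orthogonal group and is recalled in \cite{AG} (building on Aizenbud--Gourevitch--Rallis--Schiffmann). Granting this, the task reduces to exhibiting inside $\Pi_\phi\times\Pi_{\phi'}$ a single distinguished pair $(\sigma,\sigma')$ with the prescribed central character $\nu$ on the odd-dimensional side, and to computing $\iota(\sigma)$ and $\iota(\sigma')$.

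To locate the pair, I would pass to special orthogonal groups. Restricting $\sigma$ and $\sigma'$ to $\SO(V_m^\bullet)$ and $\SO(V_{m+1}^\bullet)$ and matching invariant functionals constituent by constituent --- an $\Oo(V_m^\bullet)$-invariant form is a fortiori $\SO(V_m^\bullet)$-invariant, and an $\SO(V_m^\bullet)$-invariant form on a constituent of the restriction produces an $\Oo(V_m^\bullet)$-invariant form on an appropriate $\det$-twist --- one sees that the $\Oo$-branching problem is governed by the $\SO$-branching problem. Waldspurger's theorem then supplies the unique distinguished pair of $\SO$-representations and the associated characters of the Arthur component groups, which under \cite{AG} are $A_\phi^+$ and (for the odd-dimensional factor) $A_{\phi'}$. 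Lifting back to the full orthogonal groups: on the odd-dimensional side, $\sigma'$ is pinned down by $\iota(\sigma')\in\widehat{A_{\phi'}}$ together with the central sign $\nu$; on the even-dimensional side, $\sigma$ is pinned down by $\iota(\sigma)\in\widehat{A_\phi}$, whose restriction to $\widehat{A_\phi^+}$ is Waldspurger's character --- recalling that $\sigma\cong\sigma\otimes\det$ precisely when $\phi$ contains no odd-dimensional orthogonal summand, in which case $A_\phi=A_\phi^+$.

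The remaining and most delicate point is to fix the $\det$-twist ambiguity, i.e.\ to compute $\iota(\sigma)$ on the generator of $A_\phi$ not lying in $A_\phi^+$ (when $[A_\phi:A_\phi^+]=2$), and to check that it equals the value prescribed by $a\mapsto\ep(\phi^a\otimes\phi')\cdot\det(\phi^a)(-1)^{\half{1}\dim(\phi')}\cdot\nu^{\dim(\phi^a)}$, with its $\nu$-dependence. For this I would invoke the normalization of $\iota$ for even orthogonal groups adopted in \cite{AG} (via a choice of $\ep\in\Oo(V_m)\setminus\SO(V_m)$ compatible with a fixed Whittaker datum) and the precise relation, established there, between this normalization and the central character of the paired odd-dimensional representation; this is exactly where the theta-correspondence and see-saw arguments of \cite{AG} enter, and where the sign $\nu$ gets transported. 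I expect this step --- reconciling the $\Oo$-normalization of the component-group character with the sign $\nu$ --- to be the main obstacle; once it is settled, the two displayed formulas follow from Waldspurger's formulas for $\SO$ and the sign bookkeeping of \cite[\S6, \S18]{GGP}.
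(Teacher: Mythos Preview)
Your proposal is essentially correct and matches the paper's treatment: the paper does not prove this theorem but cites it, noting that the special orthogonal case is due to Waldspurger \cite{W2}--\cite{W5} and that the extension to full orthogonal groups is carried out in the prequel \cite{AG}. Your sketch---reduce to Waldspurger's $\SO$-theorem via restriction/induction, then use the $\Oo$-versus-$\SO$ comparison of \cite{AG} to resolve the $\det$-twist ambiguity and pin down the $\nu$-dependence---is exactly the strategy of \cite{AG}, so there is nothing to add.
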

The GP conjecture for the special orthogonal cases 
was proven by \cite{W2}, \cite{W3}, \cite{W4}, \cite{W5}.
In \cite{AG}, the authors extended this result to the full orthogonal cases
under an assumption on LLC for $\Oo(V_{2n})$.
\vskip 5pt

Secondly, we state the GP conjecture for the hermitian cases.
\begin{thm}[GP conjecture for the hermitian cases]\label{GGP-H}
Suppose that $E \not=F$.
For a hermitian space $V_m^\bullet$, 
we put $V_{m+1}^\bullet=V_m^\bullet\oplus L_{(-1)^{m}}$, 
where $L_{(-1)^{m}}$ is the hermitian space of dimension $1$ and discriminant $(-1)^{m}$.
For $\phi \in \Phi_\temp(\U(V_{m}))$ and $\phi' \in \Phi_\temp(\U(V_{m+1}))$, 
there exists a unique pair $(\sigma,\sigma') \in \Pi_\phi \times \Pi_{\phi'}$ 
such that
$\sigma \otimes \sigma'$ is a representation of 
$\U(V_m^\bullet) \times \U(V_{m+1}^\bullet)$ for some $V_m^\bullet$, and
\[
\Hom_{\U(V_m^\bullet)}(\sigma \otimes \sigma', \C)\not=0.
\]
Moreover, $\iota(\sigma)$ and $\iota(\sigma')$ are given by
\begin{align*}
&\left\{
\begin{aligned}
\iota(\sigma)(a) &= \omega_{E/F}(-1)^{(m+1)\dim(\phi^a)} \cdot \ep(\phi^{a} \otimes \phi', \psi^E_{2}),\\
\iota(\sigma')(a') &= \omega_{E/F}(-1)^{m\dim(\phi'^{a'})} \cdot \ep(\phi \otimes \phi'^{a'}, \psi^E_{2}) 
\end{aligned}
\right.
\end{align*}
for $a \in A_\phi$ and $a' \in A_{\phi'}$.
\end{thm}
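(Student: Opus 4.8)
\textbf{Proof plan for the GP conjecture in the hermitian case (Theorem \ref{GGP-H}).}

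The plan is to reduce the hermitian case to the orthogonal case of the Gross--Prasad conjecture by means of the theta correspondence between unitary groups and (split) orthogonal groups, or — more precisely — to observe that this statement is \emph{not} proved here but is an input, quoted from the work of Beuzart-Plessis on the local Gross--Prasad conjecture for unitary groups. So the first thing I would do is make explicit that Theorem \ref{GGP-H} is the combination of (i) the multiplicity-one statement $\dim \Hom_{\U(V_m^\bullet)}(\sigma \otimes \sigma', \C) \leq 1$, summed over the two pure inner forms $V_m^\bullet$, which is the Hermitian analogue of the theorem of Aizenbud--Gourevitch--Rallis--Schiffmann and Sun, and (ii) the explicit determination of the unique pair $(\sigma, \sigma')$ realizing the nonzero Hom-space in terms of the $\varepsilon$-factor formula, which is the main theorem of Beuzart-Plessis (\cite{BP1}--\cite{BP3}) for tempered parameters.

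Concretely, the steps I would carry out are: first, normalize all the data — fix the additive character $\psi^E_2$ with ${}^c\psi^E_2 = (\psi^E_2)^{-1}$, so that by \cite[Proposition 5.1]{GGP} the root numbers $\ep(\phi^a \otimes \phi', \psi^E_2)$ and $\ep(\phi \otimes \phi'^{a'}, \psi^E_2)$ lie in $\{\pm 1\}$ and the right-hand sides of the character formulas are well-defined; and fix the Whittaker datum for the quasi-split unitary groups so that the labelling map $\iota$ matches the convention $J_{\psi^E}$ of \cite{GI2} recorded in Remark \ref{non-can}. Second, verify that the two candidate characters $\iota(\sigma) \in \widehat{A_\phi}$ and $\iota(\sigma') \in \widehat{A_{\phi'}}$ defined by the $\ep$-factor formulas are indeed characters, i.e.\ that they are multiplicative in $a$ and $a'$ — this is the additivity of $\ep(\phi^a \otimes \phi', \psi^E_2)$ in the virtual-representation variable $\phi^a$, together with the sign-consistency check $\iota(\sigma)(z_\phi) = \pm 1$ matching Desideratum \ref{des} (\ref{center}), which pins down on which pure inner form $V_m^\bullet$ the pair lives. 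Third, appeal to Beuzart-Plessis's theorem for the existence and uniqueness of the representative realizing the branching, and match his parametrization (which is phrased via the Langlands--Vogan packet and a different but equivalent choice of Whittaker datum) with the one of \cite{Mo}/\cite{KMSW} used throughout this paper; the dictionary between the two labellings is exactly the reciprocity $\iota \leftrightarrow J_{\psi^E}$ of Remark \ref{non-can}, so no new sign is introduced.

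The main obstacle, as usual in this circle of results, is bookkeeping rather than conceptual: making the Whittaker-datum normalizations on the unitary side compatible with those implicit in Beuzart-Plessis's statement, and tracking the various quadratic twists and the power $\omega_{E/F}(-1)$ that enter when $m$ is even versus odd (this is the factor $\omega_{E/F}(-1)^{(m+1)\dim(\phi^a)}$, resp.\ $\omega_{E/F}(-1)^{m\dim(\phi'^{a'})}$, in the two formulas). I would isolate this into a short lemma comparing the character identities of \cite{BP1}--\cite{BP3} with the normalizations of Appendix \ref{app LLC}, and then the theorem follows formally. Since for the purposes of the present paper Theorem \ref{GGP-H} is used only as a black box (it is quoted in the proof of Theorem \ref{main2.1}), I would not reprove any part of Beuzart-Plessis's argument but simply cite it, recording only the normalization lemma in detail.
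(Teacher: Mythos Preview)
Your proposal is correct and matches the paper's treatment: the paper does not prove Theorem \ref{GGP-H} at all but simply records, immediately after the statement, that ``The GP conjecture for the hermitian cases was proven by \cite{BP1}, \cite{BP2}, \cite{BP3}.'' Your additional plan to write out a normalization lemma reconciling Beuzart-Plessis's Whittaker-datum conventions with those of Appendix \ref{app LLC} is more than the paper actually does (it leaves this implicit), though it is a sensible precaution; note also that your opening suggestion of reducing to the orthogonal case via theta correspondence is not how Beuzart-Plessis proceeds---he adapts Waldspurger's local trace formula methods directly to unitary groups---so you were right to discard that idea.
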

The GP conjecture for the hermitian cases 
was proven by \cite{BP1}, \cite{BP2}, \cite{BP3}.
\vskip 5pt

Thirdly, we state the GP conjecture for the symplectic-metaplectic cases.
\begin{thm}[GP conjecture for the symplectic-metaplectic cases]\label{GGP-SM}
Let $W_n$ be a symplectic space.
For $c\in F^\times$, 
we denote by $\omega_{\psi_c}$ be the Weil representation of $\Mp(W_n \otimes L_1)$ 
associated to the additive character $\psi_c(x)\coloneqq\psi(cx)$ of $F$, 
where $L_1$ is the orthogonal space of dimension $1$ and discriminant $1$.
For $\phi \in \Phi_\temp(\Sp(W_n))$ and $\phi' \in \Phi_\temp(\Mp(W_{n}))$, 
there exists a unique pair $(\pi,\pi') \in \Pi_\phi \times \Pi_{\phi'}$ 
such that
\[
\Hom_{\Mp(W_n)}(\pi \otimes \pi', \omega_{\psi_c})\not=0.
\]
Moreover, $\iota(\pi)$ and $\iota(\pi')$ are given by
\begin{align*}
&\left\{
\begin{aligned}
\iota(\pi)(a) &= \ep(\phi^{a}\chi_c \otimes \phi') \cdot \ep(\phi\chi_c \otimes \phi')^{\det(a)}
\cdot \det(\phi^{a})(-1)^{\half{1}\dim(\phi')} \cdot \det(\phi^a)(c),\\
\iota(\pi')(a') &= \ep(\phi\chi_c \otimes \phi'^{a'}) \cdot \ep(\phi'^{a'}) 
\cdot \chi_c(-1)^{\half{1}\dim(\phi'^{a'})}
\end{aligned}
\right.
\end{align*}
for $a \in A_\phi$ and $a' \in A_{\phi'}$.
\end{thm}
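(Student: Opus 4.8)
The statement is the Fourier--Jacobi case of the Gross--Prasad conjecture for the symplectic--metaplectic pair, so the natural strategy is to \emph{not} prove it from scratch but to deduce it from the already-established cases via the theta correspondence, exactly in the spirit of \cite{GI2} and \cite{At}. Concretely, I would use the see-saw identity relating the pair $(\Sp(W_n), \Mp(W_n))$ with the Fourier--Jacobi model to a pair $(\Oo(V^\bullet), \Oo(V^\bullet))$ with the Bessel model (or, in the almost-equal-rank setup, the pair appearing in Prasad's conjectures of Appendix D). The key point is that for tempered $\pi, \pi'$ the Weil representation $\omega_{\psi_c}$ decomposes through theta lifts, and the Fourier--Jacobi period $\Hom_{\Mp(W_n)}(\pi\otimes\pi', \omega_{\psi_c})$ is identified, via a see-saw diagram, with a Bessel period $\Hom_{\Oo(U)}(\theta(\pi)\otimes\theta(\pi'), \C)$ (or a slight variant), where $\theta(\pi), \theta(\pi')$ are the big theta lifts of $\pi, \pi'$ to orthogonal groups of dimension one larger. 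The latter is governed by Theorem~\ref{GGP-O}.

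\textbf{Key steps, in order.} First I would fix the auxiliary orthogonal spaces: lift $\pi \in \Pi_\phi$ to a representation $\sigma$ of an odd orthogonal group $\Oo(V_{n+1}^\bullet)$ and $\pi' \in \Pi_{\phi'}$ to a representation $\sigma'$ of an even orthogonal group, using the almost-equal-rank theta correspondence, so that by Theorem~\ref{Mp} (and its odd/even analogues) the $L$-parameters and characters of $\sigma, \sigma'$ are related to $\phi\chi_c, \phi'$ in an explicit way. Second, I would write down the see-saw diagram whose two pairs are $\big(\Sp(W_n)\times\Sp(W_n),\ \Oo(V_{n+1}^\bullet)\big)$ on one side and $\big(\Mp(W_n),\ \Oo(V_n^\bullet)\times\Oo(L_{(-1)^{n+1}})\big)$ (or the appropriate variant for $\disc=c$) on the other, so that the Fourier--Jacobi period for $(\pi,\pi')$ becomes a Bessel period for $(\sigma,\sigma')$ up to the one-dimensional Weil-representation factor. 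Third, existence and uniqueness of the pair $(\pi,\pi')$ follow from existence and uniqueness of the Bessel pair $(\sigma,\sigma')$ in Theorem~\ref{GGP-O}, together with the bijectivity of the theta correspondence on the relevant packets (Howe duality, Theorem~\ref{howe}, plus Lemma~\ref{lemma L-packet} to ensure the lift stays in a single packet). Fourth, I would transport the character formulas of Theorem~\ref{GGP-O} through these identifications; here one uses $\ep$-factor identities of the type in Lemma~\ref{crit} and Lemma~\ref{delta}, the relation between $\iota(\sigma)$ and $\iota(\pi)$ from Theorem~\ref{Mp}~(2), and bookkeeping of the $\det(\phi^a)(-1)$ and $\det(\phi^a)(c)$ twists and the $\ep(\phi\chi_c\otimes\phi')^{\det(a)}$ ambiguity (the latter reflecting the two Whittaker data/central-character choices on the even orthogonal side, matching Remark~\ref{non-can}).

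\textbf{Main obstacle.} The hard part will be the precise matching of the characters, i.e.\ verifying that after passing through the theta correspondence the Bessel formula of Theorem~\ref{GGP-O} reproduces \emph{exactly} the stated Fourier--Jacobi formula, including the subtle sign twists: the $\det(\phi^a)(c)$ factor (which arises from the dependence of $\omega_{\psi_c}$ on $c$, via the standard formula $\ep(s,\rho,\psi_c')=\det(\rho)(c)\,|c|^{\dim\rho(s-1/2)}\,\ep(s,\rho,\psi')$), the $\det(\phi^a)(-1)^{\frac12\dim(\phi')}$ factor, and the $\ep(\phi\chi_c\otimes\phi')^{\det(a)}$ term which only appears because $A_\phi^+ \subsetneq A_\phi$ for symplectic groups (Remark~\ref{Sp}) so the lift's character is only well-defined on $A_\phi^+$ and must be extended. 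Keeping track of which Whittaker normalization is in force (so that the $\iota$ used for $\Mp(W_n)$, $\Sp(W_n)$, and the orthogonal groups are all the ones fixed in Remark~\ref{non-can}) is where errors are easiest to make, and I would handle it by checking the formula on $z_\phi$ and on $\det$ separately, reducing to the central-element comparisons already carried out in Proposition~\ref{center1} and the $\pi^\vee$-compatibility of Proposition~\ref{contra} and Theorem~\ref{Mp}~(3).
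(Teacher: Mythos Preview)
Your overall strategy --- reduce the Fourier--Jacobi period on $\Sp(W_n)\times\Mp(W_n)$ to a Bessel period on orthogonal groups via a see-saw and the almost-equal-rank theta correspondence, then invoke Theorem~\ref{GGP-O} --- is correct, and is indeed the method by which this result was originally established in \cite{At}. However, the paper does not reprove Theorem~\ref{GGP-SM}: it simply cites \cite{At} for the case $c=1$, and for general $c$ invokes \cite[Proposition~18.1]{GGP}, which explains directly how the distinguished pair $(\pi,\pi')$ and the character formulas change when one replaces $\psi$ by $\psi_c$. Thus the paper's treatment of general $c$ is a short reduction to $c=1$ rather than a repetition of the see-saw argument with twisted data, whereas your sketch proposes running the whole argument with $c$ built in; both are legitimate, but the paper's route is shorter once $c=1$ is in hand.

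One genuine caution: in your final paragraph you propose verifying the delicate sign factors by ``reducing to the central-element comparisons already carried out in Proposition~\ref{center1}.'' That proposition is proved \emph{using} the GP conjectures (including Theorem~\ref{GGP-SM} itself, through Theorem~\ref{main2.1}), so appealing to it here would be circular. The bookkeeping of the $\det(\phi^a)(c)$, $\det(\phi^a)(-1)^{\frac{1}{2}\dim(\phi')}$, and $\ep(\phi\chi_c\otimes\phi')^{\det(a)}$ twists must be done directly from Theorem~\ref{Mp}, Theorem~\ref{GGP-O}, and the standard $\ep$-factor transformation under change of additive character, without borrowing from \S\ref{pf}.
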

The GP conjecture for the symplectic-metaplectic cases 
was proven by \cite{At} when $c=1$.
For general $c$, it follows from \cite[Proposition 18.1]{GGP} and the case when $c=1$. 
\vskip 5pt

Finally, we state the GP conjecture for the skew-hermitian cases.
\begin{thm}[GP conjecture for the skew-hermitian cases]\label{GGP-SH}
Suppose that $E \not=F$.
Let $W_n$ be a skew-hermitian space.
For a character $\chi$ of $E^\times$ such that $\chi|F^\times=\omega_{E/F}$, 
we denote by $\omega_{\psi,\chi}$ the Weil representation of $\U(W_n)$ 
associated to $\psi$ and $\chi$.
For $\phi, \phi' \in \Phi_\temp(\U(W_n))$, 
there exists a unique pair $(\pi,\pi') \in \Pi_\phi \times \Pi_{\phi'}$ 
such that
$\pi$ and $\pi'$ are representations of the same group $\U(W_n^\bullet)$ and
\[
\Hom_{\U(W_n^\bullet)}(\pi \otimes \pi', \omega_{\psi,\chi})\not=0.
\]
Moreover, $\iota(\pi)$ and $\iota(\pi')$ are by
\begin{align*}
&\left\{
\begin{aligned}
\iota(\pi)(a) &=
\ep(\phi^{a} \otimes \phi' \otimes \chi^{-1}, \psi^E_{2}),\\
\iota(\pi')(a') &
=\ep(\phi \otimes \phi'^{a'} \otimes \chi^{-1}, \psi^E_{2}) 
\end{aligned}
\right.
\end{align*}
for $a \in A_\phi$ and $a' \in A_{\phi'}$.
\end{thm}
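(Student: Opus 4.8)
The plan is to deduce this Fourier--Jacobi (equal rank, skew-hermitian) case from the already established Bessel case for unitary groups (Theorem~\ref{GGP-H}) by transporting the problem through the theta correspondence in the almost equal rank case, following Gan--Ichino~\cite{GI2}. Fix a character $\chi$ of $E^\times$ with $\chi|F^\times=\omega_{E/F}$ and a one-dimensional hermitian space $V_1$ over $E$. The first point is that $\omega_{\psi,\chi}$, as a representation of the diagonal $\U(W_n^\bullet)$, is the restriction of the Weil representation $\omega_{\psi,\chi',V_1,W_n^\bullet}$ for a splitting character $\chi'$ determined by $\chi$ and $\disc(V_1)$; this is what feeds the skew-hermitian period into the hermitian theta machinery.

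Next I would set up the relevant see-saw. Given $\pi\in\Pi_\phi$ and $\pi'\in\Pi_{\phi'}$ on a common group $\U(W_n^\bullet)$, form $V_{n+1}=V_n\oplus V_1$ with $V_n$ chosen in the Witt tower making the lifts below first occurrences, and put $\sigma=\theta_{V_n,W_n^\bullet}(\pi)$, $\sigma'=\theta_{V_{n+1},W_n^\bullet}(\pi')$. Since $\dim V_{n+1}=n+1=n+\epsilon_0+1$ sits exactly at the almost equal rank boundary, the conservation relation (Proposition~\ref{cons}) and the tower property (Proposition~\ref{tower}) guarantee that $\sigma'$ is nonzero for the appropriate choice of companion spaces, while the nonvanishing of $\sigma$ (which is the genuinely equal rank, $l=0$ case) is precisely the condition controlled by the character $\eta=\iota(\pi)$ — this is the source of the existence-and-uniqueness assertion. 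The see-saw identity for the dual pairs $(\U(V_{n+1}),\U(W_n^\bullet))$ and $(\U(V_n),\U(W_n^\bullet))\times(\U(V_1),\U(W_n^\bullet))$,
\[
\xymatrix{
\U(V_{n+1}) \ar@{-}[dr] \ar@{-}[d] & \U(W_n^\bullet) \times \U(W_n^\bullet) \ar@{-}[d]\\
\U(V_n) \times \U(V_1) \ar@{-}[ur] & \U(W_n^\bullet),
}
\]
together with the Howe duality conjecture (Theorem~\ref{howe}) to isolate the relevant irreducible constituents and Propositions~\ref{mvw} and \ref{contra} to pass to contragredients, then yields a Hom-space identity of the shape $\Hom_{\U(W_n^\bullet)}(\pi\otimes\pi'\otimes\omega_{\psi,\chi},\C)\cong\Hom_{\U(V_n)}(\sigma\otimes\sigma',\C)$, i.e. the Fourier--Jacobi period becomes the codimension-one Bessel period for $\U(V_n)\subset\U(V_{n+1})$.

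With this in hand I would apply Theorem~\ref{GGP-H} to the pair $(\sigma,\sigma')\in\Pi_{\theta(\phi)}\times\Pi_{\theta(\phi')}$, which produces the unique distinguished pair and expresses $\iota(\sigma)$, $\iota(\sigma')$ in terms of $\ep(\theta(\phi)^a\otimes\theta(\phi'),\psi^E_2)$ and signs $\omega_{E/F}(-1)^{(\dim)\cdot(\dim)}$. Then Prasad's conjecture for almost equal rank unitary dual pairs (Theorem~\ref{PE}; see also the $E\neq F$ cases of Theorem~\ref{main2}(2),(3) and \cite{GI2}) supplies $\theta(\phi)=\phi\otimes\chi_V^{-1}\chi_W$ (resp.\ $\theta(\phi')=(\phi'\otimes\chi_V^{-1}\chi_W)\oplus\chi_W$) and the ratios $\iota(\sigma)/\iota(\pi)$, $\iota(\sigma')/\iota(\pi')$ as products of twisted root numbers such as $\ep(\phi^a\chi_V^{-1},\psi^E_2)$. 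Substituting these back converts the Bessel-side formula into a statement about $\iota(\pi)=\eta$ and $\iota(\pi')=\eta'$.

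The main obstacle is the final reduction of the resulting expression to the clean answer $\iota(\pi)(a)=\ep(\phi^a\otimes\phi'\otimes\chi^{-1},\psi^E_2)$ (and symmetrically for $\iota(\pi')$). This is an elementary but delicate computation with local constants: one uses the local functional equation, the twisting rule $\ep(s,\phi,\psi'_c)=\det(\phi)(c)\,|c|_E^{\dim(\phi)(s-1/2)}\,\ep(s,\phi,\psi')$, multiplicativity over $\WD_E$-decompositions, and Lemmas~\ref{crit} and \ref{delta}, to verify that the $\chi_V^{-1}\chi_W$-twists coming from Prasad's conjecture, the discriminant of $V_1$, the discrepancy between $\psi^E$ and $\psi^E_2$, and the $\omega_{E/F}(-1)$-signs in the Bessel recipe all cancel except for a single remaining twist by $\chi^{-1}$, with no residual sign — and that this happens uniformly in the parity of $n$ and across the two Witt towers. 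Carrying out this bookkeeping is where essentially all the work lies; it is done in \cite{GI2}.
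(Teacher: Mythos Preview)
The paper does not give its own proof of this theorem; immediately after the statement it simply records that the skew-hermitian case was proven in \cite{GI2}. Your proposal correctly sketches the argument of \cite{GI2}---reducing the Fourier--Jacobi period to the Bessel period for $\U(V_n)\subset\U(V_{n+1})$ via the see-saw and Prasad's conjectures (Theorems~\ref{PE} and \ref{PA}), then invoking Theorem~\ref{GGP-H} and unwinding the epsilon factors---so you are reproducing exactly what the paper is citing; note only that the equal-rank lift $\sigma=\theta_{V_n,W_n^\bullet}(\pi)$ is governed by Theorem~\ref{PE} while the almost-equal-rank lift $\sigma'=\theta_{V_{n+1},W_n^\bullet}(\pi')$ is governed by Theorem~\ref{PA}, not both by \ref{PE} as you wrote.
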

The GP conjecture for the skew-hermitian cases 
was proven by \cite{GI2}.
We also use the following form.
\begin{cor}\label{GGP-HS}
Let notation be as above.
For $\phi, \phi' \in \Phi_\temp(\U(W_n))$, 
there exists a unique pair $(\pi,\pi') \in \Pi_\phi \times \Pi_{\phi'}$ 
such that
$\pi$ and $\pi'$ are representations of the same group $\U(W_n^\bullet)$ and
\[
\Hom_{\U(W_n^\bullet)}(\pi \otimes \pi', \overline{\omega_{\psi,\chi}})\not=0.
\]
Moreover, $\iota(\pi)$ and $\iota(\pi')$ are given as follows:
\begin{align*}
\left\{
\begin{aligned}
\iota(\pi)(a) &=
\omega_{E/F}(-1)^{\dim(\phi^a)} \cdot \ep(\phi^{a} \otimes \phi' \otimes \chi, \psi^E_{2}),\\
\iota(\pi')(a') &= \omega_{E/F}(-1)^{\dim(\phi'^{a'})} \cdot 
\ep(\phi \otimes \phi'^{a'} \otimes \chi, \psi^E_{2}) 
\end{aligned}
\right.
\end{align*}
for $a \in A_\phi$ and $a' \in A_{\phi'}$.
\end{cor}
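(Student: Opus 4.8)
The plan is to deduce Corollary~\ref{GGP-HS} from Theorem~\ref{GGP-SH} by applying the contragredient functor (equivalently, complex conjugation) to the skew-Hermitian branching problem, and then to transport the formulas for $\iota$ through the contragredient using Proposition~\ref{contra} together with the standard properties of local $\epsilon$-factors.

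First I would record the following equivalence. Since $\pi$ and $\pi'$ are tempered, hence unitary, one has $\overline{\pi}\cong\pi^\vee$ and $\overline{\pi'}\cong\pi'^\vee$, and the Weil representation carries a unitary structure, so $\overline{\overline{\omega_{\psi,\chi}}}\cong\omega_{\psi,\chi}$. Conjugating a $\U(W_n^\bullet)$-invariant trilinear form (or, what amounts to the same thing, applying the $\MVW$ involution, under which $\omega_{\psi,\chi}^{\MVW}\cong\omega_{\psi,\chi}^\vee$) gives
\[
\Hom_{\U(W_n^\bullet)}(\pi \otimes \pi', \overline{\omega_{\psi,\chi}}) \neq 0
\quad\Longleftrightarrow\quad
\Hom_{\U(W_n^\bullet)}(\pi^\vee \otimes \pi'^\vee, \omega_{\psi,\chi}) \neq 0 .
\]
Now fix $\phi,\phi'\in\Phi_\temp(\U(W_n))$. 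Their contragredients $\phi^\vee={}^c\phi$ and $\phi'^\vee={}^c\phi'$ are again conjugate self-dual with the same sign, hence lie in $\Phi_\temp(\U(W_n))$, so Theorem~\ref{GGP-SH} applied to the pair $(\phi^\vee,\phi'^\vee)$ produces a unique $(\tau,\tau')\in\Pi_{\phi^\vee}\times\Pi_{\phi'^\vee}$, representations of a common $\U(W_n^\bullet)$, with $\Hom_{\U(W_n^\bullet)}(\tau\otimes\tau'\otimes\omega_{\psi,\chi},\C)\neq0$. Setting $\pi=\tau^\vee\in\Pi_\phi$ and $\pi'=\tau'^\vee\in\Pi_{\phi'}$, the displayed equivalence gives the asserted nonvanishing; and uniqueness of $(\pi,\pi')$ follows from uniqueness of $(\tau,\tau')$ because taking contragredient is a bijection $\Pi_\phi\to\Pi_{\phi^\vee}$.

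It then remains to identify $\iota(\pi)$ and $\iota(\pi')$. By Proposition~\ref{contra} the component groups $A_{\phi^\vee}$ and $A_\phi$ are canonically identified, with $(\phi^\vee)^a=(\phi^a)^\vee$, and $\iota(\tau^\vee)=\iota(\tau)\cdot\eta_0$ where $\eta_0(a)=\omega_{E/F}(-1)^{\dim(\phi^a)}$ if $\dim W_n$ is even and $\eta_0=1$ if $\dim W_n$ is odd. Substituting the formula of Theorem~\ref{GGP-SH}, $\iota(\tau)(a)=\ep((\phi^a)^\vee\otimes(\phi')^\vee\otimes\chi^{-1},\psi^E_2)$, and rewriting $(\phi^a)^\vee\otimes(\phi')^\vee\otimes\chi^{-1}=(\phi^a\otimes\phi'\otimes\chi)^\vee$, I would invoke the local functional equation together with the twisting formula $\ep(s,\rho,\psi'_c)=\det(\rho)(c)\,|c|_E^{\dim(\rho)(s-1/2)}\,\ep(s,\rho,\psi')$ — using that ${}^c\psi^E_2=(\psi^E_2)^{-1}$ — to convert $\ep((\phi^a\otimes\phi'\otimes\chi)^\vee,\psi^E_2)$ back into a root number of $\phi^a\otimes\phi'\otimes\chi$ against $\psi^E_2$, up to an explicit power of $\omega_{E/F}(-1)$. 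Combining this with the factor $\eta_0(a)$ should produce exactly $\iota(\pi)(a)=\omega_{E/F}(-1)^{\dim(\phi^a)}\cdot\ep(\phi^a\otimes\phi'\otimes\chi,\psi^E_2)$; the computation for $\iota(\pi')$ is identical with $\phi$ and $\phi'$ interchanged.

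The only genuine work, and the step most likely to hide a sign error, is this last one: carefully tracking the determinant characters and the switch $\psi^E_2\leftrightarrow(\psi^E_2)^{-1}$ when passing from $\ep(\rho^\vee,\psi^E_2)$ to a root number of $\rho$, and checking that the residual character is precisely $\omega_{E/F}(-1)^{\dim(\phi^a)}$ — in a manner consistent with Desideratum~\ref{des}(3) for the central element $z_\phi$, which pins down on which pure inner form $\U(W_n^\bullet)$ the pair $(\pi,\pi')$ lives.
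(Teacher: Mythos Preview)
Your approach is essentially the same as the paper's: the paper's proof reads in full ``Since $\pi$ and $\pi'$ are tempered, we have $\pi^\vee=\overline{\pi}$ and $\pi'^\vee=\overline{\pi'}$. The assertion follows from Theorem~\ref{GGP-SH} and Proposition~\ref{contra}.'' You have simply unpacked these two sentences, correctly identifying that one passes to contragredients to reduce the $\overline{\omega_{\psi,\chi}}$ problem to the $\omega_{\psi,\chi}$ problem of Theorem~\ref{GGP-SH}, and then uses Kaletha's description of $\iota(\pi^\vee)$ in Proposition~\ref{contra} to rewrite the resulting characters; the $\epsilon$-factor manipulation you sketch (via the functional equation and the conjugate-self-duality of $\phi^a\otimes\phi'\otimes\chi$) is indeed the content hidden behind the paper's appeal to Proposition~\ref{contra}.
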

\begin{proof}
Since $\pi$ and $\pi'$ are tempered,
we have $\pi^\vee=\overline{\pi}$ and $\pi'^\vee=\overline{\pi'}$.
The assertion follows from Theorem \ref{GGP-SH} and Proposition \ref{contra}.
\end{proof}
\vskip 5pt

We also need the following lemma.
\begin{lem}\label{Lemma 12.5}
Let $V_m$ be a Hermitian space of dimension $m$ and 
$W_n$ be a skew-Hermitian space of dimension $n$.
Put $V_{m+1}=V_m \oplus L$ for some line $L$.
If $E=F$, we set $G(W_n)$ and $G'(W_n)$ to be $\Sp(W_n)$ or $\Mp(W_n')$ such that
$\{G(W_n), G(W_n')\}=\{\Sp(W_n), \Mp(W_n)\}$.
Let $\omega=\omega_{\psi_c}$ or $\omega_{\psi,\chi}$.
\begin{enumerate}
\item
For $\sigma \in \Irr_\temp(H(V_{m+1}))$, there exists $\sigma' \in \Irr_\temp(H(V_m))$ such that
$\Hom_{H(V_m)}(\sigma \otimes \sigma', \C)\not=0$.
\item
For $\pi \in \Irr_\temp(G(W_n))$, there exists $\pi' \in \Irr_\temp(G'(W_n))$ such that
$\Hom_{G(W_n)}(\pi \otimes \pi', \omega)\not=0$.
\end{enumerate}
\end{lem}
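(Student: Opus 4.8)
\textbf{Proof proposal for Lemma \ref{Lemma 12.5}.}
The plan is to deduce both statements from the Gross--Prasad conjecture (Theorems \ref{GGP-O} through \ref{GGP-SH} and Corollary \ref{GGP-HS}), which asserts not only uniqueness but \emph{existence} of a distinguished pair inside any two prescribed tempered $L$-packets. For (1), given $\sigma \in \Irr_\temp(H(V_{m+1}))$, let $\phi'$ be its last name, which is a tempered parameter in $\Phi_\temp(H(V_{m+1}))$. I would then choose any tempered parameter $\phi \in \Phi_\temp(H(V_m))$ whatsoever --- for instance a sum of unramified characters, or $S_{m-\epsilon_0}$ when that has the right type --- so that $\Phi_\temp(H(V_m))$ is visibly nonempty. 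Applying the relevant GP theorem to the pair $(\phi, \phi')$ (choosing also a central-character sign $\nu$ in the orthogonal case so that $\sigma$ sits on the correct space $V_m^\bullet$) produces a pair $(\sigma_0, \sigma') \in \Pi_\phi \times \Pi_{\phi'}$ with nonzero $\Hom$. The one subtlety is that the GP theorem outputs a specific member $\sigma_0$ of $\Pi_{\phi'}$, which need not be the given $\sigma$; so instead I would run the argument the other way, fixing $\phi'$ as the last name of $\sigma$ and letting $\phi$ range over all tempered parameters for $H(V_m)$. As $\phi$ and the sign $\nu$ vary, the second component of the GP pair ranges over a set of representations in $\Pi_{\phi'}$ on the space containing $\sigma$; the content of the GP conjecture (surjectivity of $\iota$ together with the explicit formula for $\iota(\sigma')$ in terms of root numbers $\ep(\phi \otimes \phi'^{a'})$) is precisely that, for a suitable choice of $\phi$, this second component is exactly $\sigma$. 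Concretely, $\iota(\sigma)$ is a fixed character of $A_{\phi'}$, and one selects $\phi$ (for instance a large direct sum of characters) whose root numbers realize that character; then $\sigma' = \sigma$ and $\sigma_0 \in \Pi_\phi$ is the desired $\sigma'$ in the notation of the lemma.

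For (2), the argument is formally identical but uses the symplectic-metaplectic case (Theorem \ref{GGP-SM}) when $E = F$ and the skew-Hermitian case (Theorem \ref{GGP-SH}, via Corollary \ref{GGP-HS} if $\overline{\omega}$ appears) when $E \not= F$. Here the Weil representation $\omega = \omega_{\psi_c}$ or $\omega_{\psi,\chi}$ plays the role of the trivial representation in the restriction problem: given $\pi \in \Irr_\temp(G(W_n))$ with last name $\phi$, I would apply the relevant GP theorem to the pair of parameters $(\phi, \phi')$ where $\phi'$ ranges over tempered parameters for $G'(W_n)$, obtaining a pair $(\pi_0, \pi')$ with $\Hom_{G(W_n)}(\pi_0 \otimes \pi', \omega) \not= 0$; as before, varying $\phi'$ (and invoking surjectivity of $\iota$ on $\Pi_\phi$, together with the explicit formula $\iota(\pi)(a) = \ep(\phi^a \chi_c \otimes \phi') \cdots$) one arranges $\pi_0 = \pi$. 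The only point requiring a little care is that $\Phi_\temp(G'(W_n))$ is nonempty and rich enough that the root numbers $\{\ep(\phi^a \chi_c \otimes \phi')\}_{a \in A_\phi}$, as $\phi'$ varies, realize the prescribed character $\iota(\pi) \in \widehat{A_\phi}$; this is standard, since one may take $\phi'$ to be a direct sum of (conjugate) self-dual characters of $W_E$ in general position, for which the local root numbers can be computed and adjusted independently over the basis $\{a_i\}$ of $A_\phi$.

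The main obstacle --- really the only nontrivial point --- is this last realizability step: showing that the explicit epsilon-factor characters appearing in the GP formulas sweep out the entire dual group $\widehat{A_\phi}$ (resp.\ $\widehat{A_{\phi'}}$) as the auxiliary parameter varies over tempered parameters of the neighbouring group. This is where one genuinely uses that the GP correspondence is a bijection $\Pi_\phi \times \Pi_{\phi'} \to$ (pairs with nonzero Hom) and not merely an existence statement for one pair. In practice I expect this to follow immediately by choosing the auxiliary parameter to be a sum of sufficiently many characters in general position and citing the standard independence of local root numbers twisted by distinct characters (as in \cite[\S 6, \S 18]{GGP}); no new computation beyond what is already invoked elsewhere in the paper is needed. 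I would then close by remarking that the tempered-ness of $\sigma'$ (resp.\ $\pi'$) is automatic since its parameter $\phi$ (resp.\ $\phi'$) was chosen tempered, which is what Desideratum \ref{des} (4) guarantees.
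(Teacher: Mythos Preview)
Your approach is genuinely different from the paper's. The paper does \emph{not} deduce the lemma from the Gross--Prasad theorems; instead it gives a direct analytic argument, following Lemma~12.5 of \cite{GS}. One writes down the local period integral of tempered matrix coefficients and uses its absolute convergence (established in \cite{II}, \cite{Ha}, \cite{X2}, \cite{X1} in the four cases) together with a Plancherel expansion: a strictly positive integral is decomposed over $\Irr_\temp(H(V_m))$ (resp.\ $\Irr_\temp(G'(W_n))$), and positivity forces one of the $\Hom$-spaces in the expansion to be nonzero. No appeal to GP or to any epsilon-factor computation is made.

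Your route via GP plus a ``realizability'' step is reasonable in spirit, and the realizability statement you need --- that as the auxiliary tempered parameter $\phi$ varies, the root-number characters $a' \mapsto \ep(\phi \otimes \phi'^{a'})\cdot(\ldots)$ exhaust $\widehat{A_{\phi'}}$ --- does appear in the literature (it is essentially the content of \cite[\S18]{GGP} and is used in the inductive proofs of GP). But two cautions. First, this step is not the triviality you make it sound: the auxiliary parameter $\phi$ is constrained to have a fixed dimension and type, so ``pad with characters in general position'' must be done while respecting that constraint, and the contributions of the padding to the various $\ep(\phi\otimes\phi'_i)$ are not independent for free --- one really has to cite or reproduce the construction in \cite{GGP}. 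Second, your sentence ``this is where one genuinely uses that the GP correspondence is a bijection $\Pi_\phi\times\Pi_{\phi'}\to(\text{pairs with nonzero Hom})$'' is not quite right: GP gives, for each $(\phi,\phi')$, a \emph{single} distinguished pair, not a bijection onto all pairs with nonzero Hom; what you actually use is the explicit formula for that pair together with the exhaustion of $\widehat{A_{\phi'}}$ as $\phi$ varies.

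In short: your argument can be made to work with a precise citation for the exhaustion step, but it is a different and logically heavier proof than the paper's, which is a self-contained convergence/Plancherel argument independent of GP.
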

\begin{proof}
The proof is similar to that of Lemma 12.5 in \cite{GS}.
The absolutely convergence of double integrals which we need 
are proven in \cite{II} for orthogonal cases, \cite{Ha} for hermitian cases, 
\cite{X2} for symplectic-metaplectic cases,
and \cite{X1} for skew-hermitian cases.
\end{proof}

%\section{Prasad's conjectures}
\section{Prasad's conjectures}\label{Pconj}
In this appendix, we state Prasad's conjectures \cite{P}, which are the other highly non-trivial result.
\vskip 5pt

Let $(V_m,W_n)$ be as in \S \ref{spaces}.
We have fixed a non-trivial additive character $\psi$ of $F$, and
$\delta \in E^\times$ such that $\tr_{E/F}(\delta)=0$ if $E\not=F$.
Recall that we put
\[
\psi^E_c(x)=\psi(\half{c}\tr_{E/F}(\delta x))
\]
for $x\in E$ and $c \in F^\times$.
If $c=1$, we simply write $\psi^E=\psi^E_1$.
For a representation $\phi$ of $\WD_E$, we write
$\ep(\phi,\psi_c^E)=\ep(1/2, \phi,\psi_c^E)$.
\vskip 5pt

First, we state Prasad's conjecture for the equal rank case:
\begin{thm}[Prasad's conjecture for the equal rank case]\label{PE}
Assume that $E\not=F$ and $m=n$.
Hence $G(W_n)=\U(W_n)$ and $H(V_m^\pm)=\U(V_n^\pm)$.
Let $\pi \in \Irr(\U(W_n))$ with $L$-parameter $(\phi,\eta)$.
Then we have the following:
\begin{enumerate}
\item
There is a unique pure inner form $U(V_n^\bullet)$ of $U(V_n)$ such that 
$\Theta_{V_n^\bullet,W_n}(\pi)$ is nonzero.
\item
For given $U(V_n^\bullet)$, the theta lift $\Theta_{V_n^\bullet,W_n}(\pi)$ is nonzero 
if and only if
\[
\ep(\phi \otimes \chi_V^{-1}, \psi^E_2) = 
\omega_{E/F}(\delta^{-n} \cdot \disc(V_n^\bullet) \cdot \disc(W_n)).
\]
\item
Suppose $\Theta_{V_n^\bullet,W_n}(\pi)$ is nonzero.
Let $(\theta(\phi),\theta(\eta))$ be the $L$-parameter of $\theta_{V_n^\bullet,W_n}(\pi)$. 
Then $\theta(\phi)=\phi \otimes \chi_V^{-1}\chi_W$. 
In particular, we have a canonical identification $A_\phi = A_{\theta(\phi)}$.
Moreover, we have
\[
\theta(\eta)(a)/\eta(a)=\ep(\phi^{a}\otimes\chi_V^{-1},\psi^E_2)
\]
for $a \in A_\phi=A_{\theta(\phi)}$.
\end{enumerate}
\end{thm}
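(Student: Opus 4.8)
The statement to prove is Theorem~\ref{PE} (Prasad's conjecture for the equal rank unitary case). This is cited as an input (Appendix~\ref{Pconj}) rather than something proved in the paper, so strictly speaking the ``proof'' is a reference to \cite{GI2} and \cite{P}. Nonetheless, the plan I would follow to establish it from the tools assembled in this paper is as follows.

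\textbf{Strategy.} The equal-rank unitary case is governed by the see-saw identity relating theta correspondence for $\U(W_n)\times\U(V_n)$ to branching for $\U(W_n)\times\U(W_{n-1})$, combined with the skew-Hermitian Gross--Prasad conjecture (Theorem~\ref{GGP-SH} and Corollary~\ref{GGP-HS}). First I would recall that, since $m=n$, we have $l=n-m+\epsilon_0=0$, so the conservation relation (Proposition~\ref{cons}) gives $m^{\up}(\pi)+m^{\down}(\pi)=2n+2$, i.e.\ the two first-occurrence indices in the two towers $\VV^{\pm}$ are $n$ and $n+2$. This immediately yields (1): $\Theta_{V_n^\bullet,W_n}(\pi)\neq 0$ for exactly one of the two companion spaces $V_n^{\pm}$. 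For (2), the dichotomy of which tower ``goes down'' must be expressed as an epsilon-factor condition; I would deduce this from the doubling-method computation of the standard $\gamma$-factor (Proposition~\ref{pole}, Desideratum~\ref{des}(\ref{G-hyp})) together with the explicit formula for the discriminant-dependence of local root numbers, $\ep(s,\phi,\psi'_c)=\det(\phi)(c)\cdot|c|_E^{\dim(\phi)(s-1/2)}\cdot\ep(s,\phi,\psi')$, to match the sign $\ep(\phi\otimes\chi_V^{-1},\psi_2^E)$ against $\omega_{E/F}(\delta^{-n}\disc(V_n^\bullet)\disc(W_n))$.

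\textbf{Key steps for (3).} The heart is the identification of $\theta(\eta)$. I would set $\sigma=\theta_{V_n^\bullet,W_n}(\pi)$ and use that $\theta(\phi)=\phi\otimes\chi_V^{-1}\chi_W$ (the $l=0$ analogue of Theorem~\ref{param}; here there is no $\chi_WS_l$ to subtract), so $A_\phi=A_{\theta(\phi)}$ canonically. To compute $\theta(\eta)(a)$, I would pick, via Lemma~\ref{Lemma 12.5}(2), a tempered $\pi'\in\Irr(\U(W_n))$ with $\Hom_{\U(W_n)}(\pi\otimes\pi',\omega_{\psi,\chi})\neq 0$ for a suitable character $\chi$ with $\chi|F^\times=\omega_{E/F}$, and then feed $\pi=\theta_{W_n,V_n^\bullet}(\sigma)$ into the see-saw diagram
\[
\xymatrix{
\U(W_n)\times\U(W_n) \ar@{-}[dr] \ar@{-}[d] & \U(V_{n+1}^\bullet) \ar@{-}[d]\\
\U(W_n) \ar@{-}[ur] & \U(V_n^\bullet)\times\U(L)
}
\]
exactly as in the proof of Theorem~\ref{main2.1}. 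This produces an irreducible constituent $\sigma'$ of $\Theta_{V_{n+1}^\bullet,W_n}(\pi'^\vee)$ with $\Hom_{\U(V_n^\bullet)}(\sigma\otimes\sigma'^\vee,\C)\neq 0$ (using unitarity of tempered representations to pass between $\sigma,\sigma^\vee$ and between $\omega_{\psi,\chi},\overline{\omega_{\psi,\chi}}$ via Corollary~\ref{GGP-HS}). Now I would apply the Hermitian GP conjecture (Theorem~\ref{GGP-H}) to the pair $(\sigma,\sigma'^\vee)$ on $\U(V_n^\bullet)\times\U(V_{n+1}^\bullet)$ and the skew-Hermitian GP conjecture to the pair $(\pi,\pi')$ on $\U(W_n)$, obtaining two explicit formulas for $\eta(a)$ and $\theta(\eta)(a)$ in terms of root numbers of $\phi^a$ against the last names $\phi_{\pi'}$, $\phi_{\sigma'^\vee}$. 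The last names satisfy $\phi_{\sigma'^\vee}=(\phi_{\pi'}\otimes\chi_{V_{n+1}}-S_0)\otimes\chi_{W_n}^{-1}$ with $S_0=0$ when $l=0$, i.e.\ simply $\phi_{\sigma'^\vee}=\phi_{\pi'}\otimes\chi_{V_{n+1}}\chi_{W_n}^{-1}$, by the $l=0$ version of Theorem~\ref{param}/Theorem~\ref{main2}(3). Dividing the two formulas and simplifying the epsilon factors (using multiplicativity and the computation of $\omega_{E/F}(-1)$-powers exactly as in Theorem~\ref{main2.1}) should collapse everything to $\theta(\eta)(a)/\eta(a)=\ep(\phi^a\otimes\chi_V^{-1},\psi_2^E)$.

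\textbf{Main obstacle.} The delicate point is bookkeeping of the various normalizations: the splitting characters $\bchi=(\chi_V,\chi_W)$, the auxiliary character $\chi$ entering the Weil representation $\omega_{\psi,\chi}$ of the odd unitary group, and the additive character $\psi_2^E$ appearing in the GP root numbers. Getting the $\omega_{E/F}(-1)$-factors and the $\chi(-1)$-factors to cancel correctly — and verifying that the Whittaker-datum choices implicit in $\iota$ (Remark~\ref{non-can}) are compatible on both sides of the see-saw — is where all the subtlety lies; \cite[\S 6,\S 18]{GGP} are the reference for these sign conventions. Once the normalizations are pinned down, parts (1) and (2) are formal consequences of conservation plus the doubling $\gamma$-factor, and (3) follows from the GP dichotomy as above. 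I would carry out the steps in the order: (a) $l=0$ specialization of Theorem~\ref{param}; (b) conservation $\Rightarrow$ (1); (c) doubling $\gamma$-factor $\Rightarrow$ (2); (d) see-saw $+$ Lemma~\ref{Lemma 12.5} $+$ GP (Hermitian and skew-Hermitian) $\Rightarrow$ (3).
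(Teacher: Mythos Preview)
You are right that the paper does not prove Theorem~\ref{PE}; it is quoted as an input, with the one-line justification after Theorem~\ref{PA}: ``Prasad's conjectures (Theorems~\ref{PE} and~\ref{PA}) are established by \cite{GI2} when $E\neq F$.'' So the paper's ``proof'' is simply this citation. Your reconstruction is broadly in the spirit of \cite{GI2} (see-saw plus the Bessel case of GP), and your derivation of (1) from the conservation relation together with the parity constraint $m^{\down}(\pi)\equiv n\bmod 2$ is correct.

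There are, however, two genuine gaps in your sketch. First, a circularity issue: you appeal to ``Theorem~\ref{main2}(3)'' to compute the last name of $\sigma'^\vee$, but in this paper Theorem~\ref{main2}(3) \emph{is} Theorem~\ref{PE} (the paper says so verbatim in the paragraph following Theorem~\ref{main2}). What you actually need for the see-saw partner $\sigma'$, which lives on $\U(V_{n+1}^\bullet)$, is the almost-equal-rank case Theorem~\ref{PA} ($l=-1$), not~\ref{PE}; and for the last name $\theta(\phi)$ of $\sigma$ itself you would need an $l=0$ analogue of Theorem~\ref{param}, which is not proved in the paper (Theorem~\ref{param} assumes $l>0$) and whose Plancherel-measure computation does not trivialize at $l=0$.

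Second, your step (c), ``doubling $\gamma$-factor $\Rightarrow$ (2)'', is where the real content lies and is too optimistic. Proposition~\ref{pole} requires $l>0$ and gives no information at $l=0$; the epsilon dichotomy (2) is not a consequence of the pole structure of the standard $\gamma$-factor. In \cite{GI2} parts (2) and (3) are obtained together: one runs the see-saw with the Hermitian GP (Theorem~\ref{GGP-H}) as input to get the character formula of (3), and then evaluating that formula at $a=z_\phi$ and comparing with Desideratum~\ref{des}(\ref{center}) yields (2). Alternatively, \cite{GI2} uses the local intertwining relation from \cite{Mo}/\cite{KMSW}. Either way, (2) is the deep step, not a bookkeeping afterthought.
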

\vskip 5pt

Next, we state Prasad's conjecture for the almost equal rank case.
If $E=F$ and $\epsilon=-1$, then
$G(W_n)=\Oo(W_n)$ and $H(V_m)=\Sp(V_m)$.
Recall that for $\pi \in \Irr(\Oo(W_n))$, 
we may consider the two theta lifts 
$\Theta_{V_m,W_n}(\pi)$ and $\Theta_{V_m,W_n}(\pi \otimes \det)$.
\begin{thm}[Prasad's conjecture for the almost equal rank case]\label{PA}
Assume that $l=n-m+\epsilon_0=-1$.
Let $\pi \in \Irr(G(W_n))$ with $L$-parameter $(\phi,\eta)$.
Then we have the following:
\begin{enumerate}
\item[($\mathrm{i}$)]
Suppose that $\phi$ does not contain $\chi_V$.
\begin{enumerate}
\item
For any pure inner form $H(V_m^\bullet)$ of $H(V_m)$, 
the theta lift $\Theta_{V_m^\bullet,W_n}(\pi)$ is nonzero. 
\item
Let $(\theta(\phi),\theta(\eta))$ be the $L$-parameter of $\theta_{V_m^\bullet,W_n}(\pi)$.
Then $\theta(\phi) = (\phi \otimes \chi_V^{-1}\chi_W) \oplus \chi_W$. 
Hence there is a canonical injection $A_\phi \hookrightarrow A_{\theta(\phi)}$.
\item
We have $[A_{\theta(\phi)}:A_\phi]=2$. 
\item
The character $\theta(\eta)$ of $A_{\theta(\phi)}$ satisfies
\[
\theta(\eta)|A_\phi = \eta.
\]
\end{enumerate}
\item[($\mathrm{ii}$)]
Suppose that $\phi$ contains $\chi_V$.
\begin{enumerate}
\item
Exactly one of two theta lifts $\Theta_{V_m,W_n}(\pi)$ and $\Theta_{V'_m,W_n}(\pi)$
(or $\Theta_{V_m,W_n}(\pi)$ and $\Theta_{V_m,W_n}(\pi \otimes \det)$)
is nonzero.
\item
$\Theta_{V_m^\bullet,W_n}(\pi)$ is nonzero if and only if
\[
\left\{
\begin{aligned}
&\eta(z_\phi + e_1) = 1 \iif \text{$G(W_n)=\Oo(W_n)$ and $H(V_m)=\Sp(V_m)$}, \\
&V_m^\bullet \in \VV^{\eta(z_\phi)} \other.
\end{aligned}
\right.
\]
Here, $e_1$ is the element in $A_\phi$ corresponding to $\chi_V$.
\item
Suppose that $\Theta_{V_m^\bullet,W_n}(\pi)$ is nonzero.
Let $(\theta(\phi),\theta(\eta))$ be the $L$-parameter of $\theta_{V_m^\bullet,W_n}(\pi)$.
Then $\theta(\phi) = (\phi \otimes \chi_V^{-1}\chi_W) \oplus \chi_W$.
Hence there is a canonical injection $A_\phi \hookrightarrow A_{\theta(\phi)}$.
\item
We have $[A_{\theta(\phi)}:A_\phi]=1$.
\item
The character $\theta(\eta)$ of $A_{\theta(\phi)}$ satisfies
\[
\theta(\eta)|A_\phi = \eta.
\]
\end{enumerate}
\end{enumerate}
\end{thm}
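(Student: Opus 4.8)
The plan is to obtain Theorem~\ref{PA} along the lines of \cite{GI2} (the unitary cases), \cite{At} and \cite{AG} (the symplectic-orthogonal cases), and \cite{GS} (the metaplectic-orthogonal cases); the three inputs are the behaviour of $\Theta$ under the doubling $\gamma$-factors and Plancherel measures, the Gross--Prasad conjecture of Appendix~\ref{GPconj}, and Prasad's equal-rank conjecture (Theorem~\ref{PE}) together with the Howe duality theorem. First I would reduce to the case where $\pi$ is a discrete series representation: a general $\pi$ is a Langlands quotient of a standard module induced from a discrete series representation of a group of smaller rank, and one propagates the assertion upward using Desideratum~\ref{des} (5)--(6), the compatibility of $\Theta$ with parabolic induction (a variant of Corollary~\ref{pi0}, cf.\ \cite[\S C]{GI1}), and the description of $\theta$ of a Langlands quotient (Proposition~\ref{L-quot} and its generalisations). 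From now on $\pi$ is a discrete series representation.

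The first step is then to identify the last name $\theta(\phi)$. Since $l=n-m+\epsilon_0=-1$, i.e.\ $m=n+1+\epsilon_0$, Proposition~\ref{temp} (1) applies and shows that every irreducible subquotient of $\Theta_{V_m,W_n}(\pi)$ is tempered. Granting this, the Plancherel-measure argument used to prove Theorem~\ref{param} goes through verbatim: by Theorem~\ref{Pmeas}, Desideratum~\ref{des} (\ref{G-hyp}), (\ref{P-hyp}) and Lemma~\ref{gamma}, the last name $\phi_{\theta(\pi)}$ of $\theta_{V_m,W_n}(\pi)$ satisfies
\[
\gamma(s,\phi_\tau\chi_W\otimes\phi_{\theta(\pi)}^\vee,\psi_E)\,\gamma(-s,(\phi_\tau\chi_W)^\vee\otimes\phi_{\theta(\pi)},\psi_E^{-1})
=\gamma(s,\phi_\tau\chi_W\otimes\theta(\phi)^\vee,\psi_E)\,\gamma(-s,(\phi_\tau\chi_W)^\vee\otimes\theta(\phi),\psi_E^{-1})
\]
for every irreducible representation $\phi_\tau$ of $W_E$, where $\theta(\phi)=(\phi\otimes\chi_V^{-1}\chi_W)\oplus\chi_W$; hence $\phi_{\theta(\pi)}=\theta(\phi)$ by Lemma~\ref{converse}. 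This gives the formula for $\theta(\phi)$ and the canonical injection $A_\phi\hookrightarrow A_{\theta(\phi)}$ in both cases. Since $\theta(\phi)$ contains $\chi_W$ with multiplicity $m_\phi(\chi_V)+1$, the index $[A_{\theta(\phi)}:A_\phi]$ equals $2$ when $\phi$ does not contain $\chi_V$ (case (i)) and equals $1$ when $\phi$ contains $\chi_V$ (case (ii)); this is (i)(b)--(c) and (ii)(c)--(d).

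The heart of the matter is the nonvanishing criterion and the computation of $\theta(\eta)|A_\phi$. Here I would exploit the Rallis see-saw attached to $(W_n,\ V_m=V_{m-1}\oplus L)$ with $L$ a line, in which $(W_n,V_{m-1})$ is the equal-rank pair (as $n-(m-1)+\epsilon_0=0$):
\[
\xymatrix{
H(V_m) \ar@{-}[dr] \ar@{-}[d] & G(W_n)\times G(W_n) \ar@{-}[d]\\
H(V_{m-1})\times H(L) \ar@{-}[ur] & G(W_n)
}
\]
Using Lemma~\ref{Lemma 12.5} to supply a tempered $\sigma'\in\Irr(H(V_{m-1}))$ carrying a nonzero Bessel, \resp Fourier--Jacobi, functional for $H(V_m)\supset H(V_{m-1})$ against $\theta_{V_m,W_n}(\pi)$, the see-saw identity converts the nonvanishing of that functional into the nonvanishing of a Fourier--Jacobi functional for the pair $(\pi,\ \theta_{W_n,V_{m-1}}(\sigma'))$. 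The latter is evaluated by Prasad's equal-rank conjecture (Theorem~\ref{PE}), and the former by the Gross--Prasad conjecture (Theorems~\ref{GGP-O}--\ref{GGP-SH} and Corollary~\ref{GGP-HS}). Running this over all $\sigma'$ in the relevant $L$-packet and matching the two resulting products of root numbers---the cancellation being carried out with Lemmas~\ref{crit} and~\ref{delta}---one reads off simultaneously which inner forms $V_m^\bullet$ make $\Theta_{V_m^\bullet,W_n}(\pi)$ nonzero, yielding (i)(a) and (ii)(a)--(b), and the identity $\theta(\eta)|A_\phi=\eta$, yielding (i)(d) and (ii)(e). In case (i) the value of $\theta(\eta)$ on the extra generator of $A_{\theta(\phi)}$ is then pinned down by Desideratum~\ref{des} (\ref{center}), equivalently Desideratum~\ref{desLLC} (6), i.e.\ by which of $V_m^+,V_m^-$ supports the lift; in case (ii) there is no extra generator.

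I expect the main obstacle to be this last step, the simultaneous extraction of the nonvanishing and of $\theta(\eta)$ from the two sides of the see-saw. The delicate points are to track the Whittaker normalisation of the LLC (Remark~\ref{non-can}) through the splitting of the metaplectic cover used to define $\Theta$---this is the source of the twist by $\psi^E_2$ and the auxiliary signs (such as $\chi_V(-1)^{\half{n}}$) that are absorbed into $\iota$ and hence do not appear explicitly in the statement of Theorem~\ref{PA}---and to carry out the $\ep$-factor bookkeeping in the matching without sign errors, as well as to ensure the reduction to discrete series is clean (no spurious non-tempered constituents appear, which one checks via Kudla's filtration, Lemma~\ref{kudla}). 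As in \cite{At}, \cite{AG} and \cite{GS}, the most efficient organisation is to establish Theorem~\ref{PA} and the Fourier--Jacobi case of the Gross--Prasad conjecture together by a single induction on the rank, the see-saw furnishing the passage between them; for the full argument we refer the reader to those papers.
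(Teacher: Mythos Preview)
The paper does not prove Theorem~\ref{PA}. It is stated in Appendix~\ref{Pconj} as an input to the main results, with the proof attributed to external references: ``Prasad's conjectures (Theorems \ref{PE} and \ref{PA}) are established by \cite{GI2} when $E\not=F$. When $E=F$, Theorem \ref{PA} is proven by \cite{At} and \cite{AG}.'' So there is no in-paper proof to compare your sketch against.

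Your outline is a reasonable summary of the strategy in those cited works---the see-saw passing between the Bessel and Fourier--Jacobi cases of Gross--Prasad, the equal-rank case (Theorem~\ref{PE}) as the base, and the Plancherel/gamma-factor identification of the last name---and you yourself defer to \cite{At}, \cite{AG}, \cite{GS} at the end. One small correction: your reduction step invokes Proposition~\ref{temp} (1), Theorem~\ref{param}, and Lemma~\ref{converse} from the present paper, but in the logical structure here Theorem~\ref{PA} is a \emph{prerequisite} for the main theorems (it is used, for instance, in the proofs of Theorem~\ref{main2.1}, Proposition~\ref{center1}, and Proposition~\ref{non-alt}), so one cannot cite those results when proving Theorem~\ref{PA} without circularity. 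The actual proofs in \cite{GI2}, \cite{At}, \cite{AG} develop the needed temperedness and last-name identification independently (and indeed earlier), which is what your sketch should point to rather than the present paper's propositions.
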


Prasad's conjectures (Theorems \ref{PE} and \ref{PA}) 
are established by \cite{GI2} when $E\not=F$. 
When $E=F$, 
Theorem \ref{PA} is proven by \cite{At} and \cite{AG}.
\vskip 5pt

By the conservation relation (Proposition \ref{cons}), 
for any $\pi \in \Irr(G(W_n))$, we have
\[
m^\down(\pi) \leq n+\epsilon_0+1.
\]
If $m^\down(\pi)= n+ \epsilon_0 +1$, then $m^\up(\pi)=m^\down(\pi)=n+ \epsilon_0 +1$.
Namely, both of two theta lifts $\Theta_{V_m^\bullet,W_n}(\pi)$ with $m=n+ \epsilon_0 +1$ 
are nonzero.
In this case, $\phi$ does not contain $\chi_V$ by Theorem \ref{PA}.

%\begin{thebibliography}

\end{document}